\numberwithin{equation}{subsection}
\theoremstyle{plain}
\newtheorem{thm}{Theorem}[section]
\newtheorem{lemma}[thm]{Lemma}
\newtheorem{prop}[thm]{Proposition}
\newtheorem{cor}[thm]{Corollary}
\newtheorem{thma}{Theorem}
\theoremstyle{definition}
\newtheorem{conj}{Conjecture}
\newtheorem{exmp}[thm]{Example}
\theoremstyle{remark}
\newtheorem{rmk}[thm]{Remark}
\newcommand{\Hb}{\mathbb{H}}
\newcommand{\SL}{{\mathrm{SL}}}
\newcommand{\Zb}{\mathbb{Z}}
\newcommand{\Cb}{\mathbb{C}}
\newcommand{\Qb}{\mathbb{Q}}
\newcommand{\Nb}{\mathbb{N}}
\newcommand{\lp}{\left (}
\newcommand{\rp}{\right )}
\newcommand{\pf}{\mathfrak{p}}
\newcommand{\Gal}{{\mathrm{Gal}}}
\newcommand{\varep}{\varepsilon}
\newcommand{\Mp}{{\mathrm{Mp}}}
\newcommand{\Rb}{\mathbb{R}}
\newcommand{\smat}[4]{\left(\begin{smallmatrix}
                 #1 & #2\\
                 #3 & #4
\end{smallmatrix}\right)}
\newcommand{\pmat}[4]{\begin{pmatrix}
                 #1 & #2\\
                 #3 & #4
\end{pmatrix}}
\newcommand{\GL}{{\mathrm{GL}}}
\newcommand{\half}{{\tfrac{1}{2}}}
\newcommand{\Ac}{{\mathcal{A}}}
\newcommand{\ef}{\mathfrak{e}}
\newcommand{\ebf}{{\mathbf{e}}}
\newcommand{\Hc}{{\mathcal{H}}}
\newcommand{\Ss}{\mathscr{S}}
\newcommand{\Dc}{{\mathcal{D}}}
\newcommand{\SO}{{\mathrm{SO}}}
\newcommand{\Aut}{{\mathrm{Aut}}}
\newcommand{\zbar}{{\overline{z}}}
\newcommand{\ua}{{\underline{a}}}
\newcommand{\ub}{{\underline{b}}}
\newcommand{\ur}{{\underline{r}}}
\newcommand{\us}{{\underline{s}}}
\newcommand{\Aco}{A}
\newcommand{\Br}{\mathrm{B}}
\newcommand{\wtM}{\widetilde{M}}
\newcommand{\thetab}{\boldsymbol{\theta}}
\newcommand{\transpose}[1]{{\prescript{t}{}{#1}}}
\newcommand{\domega}{d\omega}
\newcommand{\tp}{\tilde{p}}
\newcommand{\hp}{\check{p}}
\newcommand{\sr}{\mathrm{s}}
\newcommand{\Cr}{\mathrm{C}}
\newcommand{\crm}{\mathrm{c}}
\newcommand{\rt}{\mathrm{sqrt}}
\newcommand{\Fc}{{\mathcal{F}}}
\newcommand{\Oc}{\mathcal{O}}
\newcommand{\df}{\mathfrak{d}}
\newcommand{\Cl}{{\mathrm{Cl}}}
\newcommand{\af}{\mathfrak{a}}
\newcommand{\Nm}{{\mathrm{Nm}}}
\newcommand{\ord}{{\mathrm{ord}}}
\newcommand{\bfrak}{\mathfrak{b}}
\newcommand{\sgn}{\mathrm{sgn}}
\newcommand{\Bf}{\mathfrak{B}}
\newcommand{\Af}{\mathfrak{A}}
\newcommand{\kro}[2]{\left( \tfrac{#1}{#2} \right)}
\newcommand{\lf}{\mathfrak{l}}
\newcommand{\Ec}{{\mathcal{E}}}
\newcommand{\tth}{\textsuperscript{th }}
\newcommand{\tr}{\mathrm{tr}}
\newcommand{\slf}{{\mathfrak{sl}}}
\newcommand{\spf}{{\mathfrak{sp}}}
\newcommand{\Sp}{\mathrm{Sp}}
\newcommand{\Wb}{\mathbb{W}}
\newcommand{\zf}{\mathfrak{z}}
\newcommand{\wf}{\mathfrak{w}}
   \def\MR#1{}
\begin{document}
\title{Average CM-values of Higher Green's Function and Factorization}
\author[Yingkun Li]{Yingkun Li}
\address{Fachbereich Mathematik,
Technische Universit\"at Darmstadt, Schlossgartenstrasse 7, D--64289
Darmstadt, Germany}
\email{li@mathematik.tu-darmstadt.de}
\thanks{The author is partially supported by the DFG grant BR-2163/4-2, an NSF postdoctoral fellowship and the LOEWE research unit USAG.}

\date{\today}
\maketitle

\begin{abstract}
In this paper, we prove an averaged version of an algebraicity conjecture in \cite{GKZ87} concerning the values of higher Green's function at CM points. Furthermore, we give the factorization of the ideal generated by such algebraic value in the spirit of the famous work of Gross and Zagier on singular moduli.
\end{abstract}
\tableofcontents
\section{Introduction}

Let $j(z)$ be the modular $j$-invariant on the modular curve $X_\Gamma := \Gamma \backslash \Hb$ with $\Gamma := \SL_2(\Zb)$ and $\Hb$ the upper-half complex plane. 
Its value at a CM point $z \in \Hb$, i.e.\ an algebraic number in an imaginary quadratic field $K \subset \Cb$, is called a singular moduli. 
The theory of complex multiplication tells us such a singular moduli is algebraic and generates an abelian extension of $K$. This is the extension of the well-known theorem of Kronecker-Weber from $\Qb$ to $K$, and part of Kronecker's Jugendtraum. 

On the other hand, the modular $j$-invariant also plays an important role in arithmetic intersection theory as the function $G_1(z_1, z_2) := 2\log|j(z_1) - j(z_2)|$ is the automorphic Green's function on $X_\Gamma^2$ with logarithm singularity along the diagonal $T_1 \subset X_\Gamma^2$. Values of $G_1$ at a divisor $Z$ on $X_\Gamma^2 \backslash T_1$ then give the archimedean part of the arithmetic intersection between $T_1$ and $Z$. When $Z = Z_\chi$ is the average of all pairs of CM points with discriminants $d_1$ and $d_2$ (see \eqref{eq:Zchi}), this arithmetic intersection is related to the Fourier coefficients of an incoherent Eisenstein series associated to $Z_\chi$.
This is the essential ingredient behind the analytic proof of the factorization of the norm of difference of singular moduli by Gross and Zagier in \cite{GZ85}.

The Green's function $G_1$ is a member of a family of \textit{higher Green's functions} $G_k$ on $X_\Gamma^2 \backslash T_1$ for $k \in \Nb$ (see \eqref{eq:Gs}), which are eigenfunctions under the hyperbolic Laplacians of both $z_1$ and $z_2$ with the same eigenvalue $k(1-k)$. 
They also play an important role in the calculation of arithmetic intersections by giving the archimedean height paring of CM-cycles in Kuga-Sato varieties \cite{Zhang97}.
Even though complex multiplication does not apply to special values of higher Green's functions, one still expects them to be of algebraic nature.
In \cite{GZ86}, Gross and Zagier made the following conjecture.
\begin{conj}
  \label{conj:gkz}
Let $G_{k, f}$ be the higher Green's function associated to a weakly holomorphic modular form $f \in M^!_{2-2k}$  for $k \ge 2$ (see \eqref{eq:Gkf}).
Suppose $f$ has rational Fourier coefficients.
For a CM point $Z = (z_1, z_2) \in X_\Gamma^2$ not on the singularity $T_f$ of $G_{k, f}$ (see \eqref{eq:Tf}), there exists $\alpha = \alpha_{\chi, f} \in \overline{\Qb} \subset \Cb$ such that  
$$
 G_{k, f}(Z) = (d_1d_2)^{\tfrac{1-k}{2}} \log |\alpha|,
$$
where $d_j$ is the discriminant of $z_j$.
\end{conj}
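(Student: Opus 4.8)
The plan is to realize $G_{k,f}$ as a regularized theta (Borcherds--Bruinier) lift on an orthogonal Shimura variety and then to extract its CM value as a Fourier coefficient of an associated automorphic form whose arithmetic nature can be made explicit. Using the accidental isomorphism one identifies $X_\Gamma^2$ with a connected component of the Shimura variety $X_V$ attached to $V = (M_2(\Qb),\det)$, a rational quadratic space of signature $(2,2)$; under it the family $G_{k,f}$, $f\in M^!_{2-2k}$, becomes the regularized theta lift of the vector-valued form attached to $f$, with singular divisor $T_f = \sum_{n<0} c_f(n)\, Z(-n)$ a special divisor. The first step is to make this dictionary precise, and in particular to translate the CM point $Z=(z_1,z_2)$, with $z_j$ of discriminant $d_j$, into a rational negative-definite plane $W\subset V$ whose orthogonal complement $W^\perp$ is positive definite. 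The pair $(W,W^\perp)$ carries a natural CM structure --- generically by the quartic CM field $E = K_1K_2$ with $K_j = \Qb(\sqrt{d_j})$, and by a degenerate \'etale algebra in the coincident cases (such as $d_1=d_2$); the associated CM $0$-cycle $Z(W)\subset X_V$ is defined over an abelian extension of the reflex field, and $(d_1d_2)^{(k-1)/2}\, G_{k,f}(Z)$ is to be read off from the value of the theta lift on it.

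Second, apply a see-saw identity for the dual pairs inside $(\Mp_2,\mathrm{O}(V))$: restricting the theta kernel defining $G_{k,f}$ to $Z(W)$ and unfolding along $\mathrm{O}(W)\times\mathrm{O}(W^\perp)$ produces an identity
\[
(d_1d_2)^{\frac{k-1}{2}}\, G_{k,f}(Z) \;=\; \kappa\cdot \langle f,\, \mathcal{G}_W\rangle^{\mathrm{reg}},
\]
where $\kappa\in\Qb^\times$ is explicit and $\mathcal{G}_W$ is a weight-$2k$ vector-valued form attached to the CM datum $W$ --- in general a harmonic Maass form --- concretely the generating series $\mathcal{G}_W = \sum_m a^+(m)\, q^m + (\text{non-holomorphic part})$ whose coefficients are the CM values along $Z(W)$ of the natural Green functions of the special divisors $Z(m)$. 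Since $f$ is weakly holomorphic while $\mathcal{G}_W$ has at most polynomial growth, the regularized Petersson pairing collapses, by Stokes' theorem and the Bruinier--Funke duality, to a finite sum $\sum_{n\le 0} c_f(n)\, a^+(-n)$ plus a controlled contribution of the non-holomorphic part; it therefore suffices to prove that every coefficient $a^+(m)$ occurring is a rational multiple of $\log|\beta_m|$ for some $\beta_m\in\overline{\Qb}^\times$. Granting this, one gets $(d_1d_2)^{(k-1)/2}\, G_{k,f}(Z) = -\log|\alpha|$ with $\alpha$ a product of powers of the $\beta_m$ having rational exponents assembled from $\kappa$ and the $c_f(n)$, and the rationality of the $c_f(n)$ is exactly what keeps $\alpha$ in $\overline{\Qb}$.

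Third --- the arithmetic heart of the matter --- one evaluates the coefficients $a^+(m)$. Interpreting them, via Kudla's program and arithmetic Siegel--Weil, as the archimedean contributions to the arithmetic height pairings $\bigl\langle \widehat{Z}(m),\, \widehat{Z}(W)\bigr\rangle$ on an integral model of $X_V$, each $a^+(m)$ is a finite sum of local terms: for the finite primes $p$ at which $Z(W)$ meets $Z(m)$ in positive characteristic one gets a rational multiple of $\log p$, with multiplicity a (derivative of a) local Whittaker/representation density of the relevant quadratic lattice; for the archimedean place one gets the Green-function value, which after the $(d_1d_2)^{(k-1)/2}$ normalization is again a logarithm of an algebraic number --- by the Chowla--Selberg formula, i.e.\ the evaluation at CM points of the relevant Eisenstein series, in the spirit of the Gross--Zagier factorization. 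Summing local and archimedean contributions gives $a^+(m) = -\log|\beta_m|$ with $\beta_m$ an algebraic number supported at the finitely many bad primes of $Z(W)$, which would close the argument. An equivalent packaging, which also shows how the averaged theorem of the present paper feeds in, is to establish the analogous identity for every $\chi$-twisted average $\sum_\sigma \chi(\sigma)\, G_{k,f}(Z^\sigma)$ over the Galois action on the CM orbit --- each such twisted average being a $\chi$-twisted theta lift, hence a logarithm of an algebraic number --- and then to recover the individual value by Fourier inversion over the Galois group.

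The main obstacle is precisely the passage from a genus (or class-group) average to a single class. Siegel--Weil identifies an Eisenstein series only with the full genus sum of theta series, so the clean factorization of the averaged CM value --- the content of the present paper --- does not by itself yield the pointwise statement; removing the averaging forces one to control the cuspidal, non-Eisenstein contribution to $\mathcal{G}_W$, that is, to prove the algebraicity --- up to a common period that cancels inside the logarithm --- of the CM-period ratios of the weight-$2k$ binary theta series attached to the Hecke characters of $E$, and for the twisted-average route to do this for all characters $\chi$ simultaneously. Plausible partial resolutions are to restrict to $(d_1,d_2)$ for which the cuspidal part provably vanishes (small class numbers, or suitable ramification hypotheses) or to invoke Shimura's theory of periods of CM modular forms to control the ratios directly. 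Secondary technical points are pinning down the normalization of the theta kernel so that the weight comes out exactly $2k$ with the factor $(d_1d_2)^{(1-k)/2}$ as in the statement, bounding the non-holomorphic contribution in the regularized pairing, and handling the degenerate CM algebras (coincident discriminants, or $Z$ close to $T_f$) where the regularization requires extra care.
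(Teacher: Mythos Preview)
The statement you are addressing is labeled \emph{Conjecture} in the paper and is not proved there; the paper establishes only an averaged version (Theorem~\ref{thm:main}), namely the case $Z=Z_\chi$ a full CM $0$-cycle attached to an odd genus character, with $k$ even. So there is no ``paper's own proof'' of this statement to compare against. What the paper does is roughly your second step in the special averaged situation: it realizes $G_{k,f}(Z_\chi)$ via a regularized theta integral, replaces the incoherent Eisenstein series that would arise from Siegel--Weil by an $\mathrm{O}(2,2)$ theta lift of a deformed weight-one form (Propositions~\ref{prop:lift1}, \ref{prop:preimage}), and then reads off the value as a finite combination of explicit Fourier coefficients $\tilde C_\chi(\lambda_0)$ whose factorization (Proposition~\ref{prop:fac}) yields the $\log|\gamma_f/\gamma_f'|$ shape. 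This is close in spirit to your see-saw/Fourier-coefficient framework, but note the paper's replacement is \emph{not} a harmonic Maass form and does not sit inside the arithmetic Siegel--Weil picture you invoke.

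Your proposal is an honest outline, and you have correctly located the genuine obstruction yourself: the passage from the genus/class-group average to a single CM point. Everything in your third paragraph that would resolve this --- controlling the cuspidal contribution to $\mathcal G_W$, or proving algebraicity of the relevant CM-period ratios for all twisting characters simultaneously --- is exactly what is not known in general and is why the statement remains conjectural. Your suggested workarounds (class number one, Shimura's period theory) recover only the cases already mentioned in the paper's introduction (Zhang, Viazovska for $d_1=d_2$; Mellit for $k=2$, $z_1=i$; the forthcoming \cite{BEY} for odd $k$ with one-sided averaging). In particular, the ``Fourier inversion over the Galois group'' route requires the twisted-average identity for \emph{every} character of the CM class group, not just genus characters, and for non-genus characters no analogue of the paper's construction is available. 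So as a proof this has a real gap at precisely the point you flag; as a program it is reasonable, but it does not go beyond what the paper's introduction already surveys.
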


The first instance of this conjecture was proved by Gross, Kohnen and Zagier \cite{GKZ87}, where they consider the value of $G_{k, f}$ at $Z_\chi$ for $k$ \textit{odd} and $\sqrt{\Delta } \not \in \Qb$ with $\Delta := d_1 d_2$. In this case, $\alpha$ can be taken to be a positive rational number, and has an explicit factorization generalizing \cite{GZ85}. 
To state this result, let $F$ be the real quadratic field $\Qb(\sqrt{\Delta}) \subset \Rb$ with ring of integers $\Oc_F$. Denote $\chi$ the genus character corresponding to the CM extension $K := \Qb(\sqrt{d_1}, \sqrt{d_2})$ over $F$. 
For each integral ideal $\af \subset \Oc_F$, we can attach an integral ideal $I^+_\chi(\af) \subset \Oc_F$ given by
\begin{equation}
  \label{eq:I+}
  I^+_\chi(\af) := \prod_{\bfrak \mid \af} (\bfrak \cdot \bfrak')^{\chi(\bfrak)},
\end{equation}
where $'$ denotes the real conjugation in $\Gal(F/\Qb)$. Notice that $I^+_\chi(\af)$ is $\Gal(F/\Qb)$-invariant, hence principal, and has a unique positive integer as its generator. If $\af = (\lambda)$ is principal, we simply write $I^+_\chi(\lambda)$ for $I^+_\chi(\af)$.

For each positive integer $m$, we have a finite set $\mathrm{S}_m$ of elements in $\Oc_F$ defined by
\begin{equation}
  \label{eq:Sm}
  \mathrm{S}_m := \{\lambda \in \Oc_F: \lambda \sqrt{\Delta} \gg 0,~ \lambda \in \tfrac{1}{2} \Zb + \tfrac{m}{2} \sqrt{\Delta} \}.
\end{equation}
For each $\lambda \in \Oc_F$ and integers $m \in \Zb, k \ge 1$, denote 
\begin{equation}
  \label{eq:Ck}
  C_k(\lambda, m) := -(-m \sqrt{\Delta})^{k-1} P_{k-1} \lp \frac{\tr(\lambda)}{m \sqrt{\Delta}} \rp \in \Zb,
\end{equation}
where $P_{k-1}(t)$ is the $(k-1)$\tth Legendre polynomial (see \eqref{eq:Taylor}). 
The result of Gross-Zagier and Gross-Kohnen-Zagier can be phrased as follows.

\begin{thma}[Theorem 1.3 in \cite{GZ85} and Theorem 2 in Section V of \cite{GKZ87}]
\label{thm:GKZ}
Suppose $d_1, d_2 < 0$ are co-prime, fundamental discriminants. For odd $k \ge 1$ and $f \in M^!_{2-2k}$ with integral Fourier coefficients $c_f(n)$, let $\alpha^+ = \alpha^+_{\chi, f} \in \Qb$ be the unique positive generator of the ideal 
$$
\prod_{m \ge 1} \prod_{\lambda \in \mathrm{S}_m} I^+_\chi(\lambda)^{c_f(-m) C_k(\lambda, m)}.
$$
Then $ G_{k, f}(Z_\chi) = \Delta^{(1-k)/2} \log (\alpha^+_{})$. 
\end{thma}

\begin{rmk}
  When $k = 1$, the result above is the factorization of norm of difference of singular moduli by Gross and Zagier.
\end{rmk}

Since the work of Gross-Kohnen-Zagier, there has been a lot of progress proving conjecture \ref{conj:gkz}. 
When the discriminants $\Delta = d_1d_2$ is a perfect square, this was proved by Zhang \cite{Zhang97} conditional on the positivity of the height parings of Heegner cycles on Kuga-Sato varieties. 
Later in \cite{Viazovska11}, Viazovska gave an unconditional proof using the machinery of theta liftings. 
When $\Delta$ is not a perfect square, i.e.\ $F$ is a real quadratic field, Mellit proved the conjecture for $k = 2$, $z_1 = i$, $z_2$ arbitrary in his thesis \cite{Mellit08}. 
In a recent work \cite{BEY}, the conjecture is proved when one averages over all CM points $(z_1, z_2)$ with $z_1$ having discriminant $d_1$. 
Most recently, conjecture \ref{conj:gkz} is proved in \cite{Li21} for any CM point $Z = (z_1, z_2)$ and \textit{odd} $k \ge 3$. 
In these cases, the algebraic number $\alpha$ is in an abelian extension of an imaginary quadratic field. In general, one expects $\alpha$ to be in an abelian extension of the CM field $K$.

With the (sometimes conjectural) knowledge of the algebraicity of $\alpha$, it is very natural to ask for the factorization of the ideal generated by it.
For \textit{even} $k \ge 2$, this is not known even when one takes the average $Z = Z_\chi$. 
The goal of this work is to provide a result in this setting, which complements Theorem \ref{thm:GKZ} dealing with the cases that $k$ is odd. 
The subtlety created by the parity of $k$ is made clear in the following example by Mellit \cite{Mellit08}
$$
G_2 \lp i, \frac{-1 + \sqrt{-7}}{2} \rp = \frac{8}{\sqrt{4\cdot 7}} \log \frac{8 - 3\sqrt{7}}{8 + 3\sqrt{7}},
$$
where $d_1 = -4, d_2 = -7$. 
The algebraic number $\frac{8 - 3\sqrt{7}}{8 + 3\sqrt{7}}$ is not rational. Furthermore, it is a unit and its norm to $\Qb$ is 1.
It turns out that the right modification for even $k$ is to replace the integral ideal $I^+_\chi(\af)$ with the fractional ideal
\begin{equation}
  \label{eq:I-}
  I^-_\chi(\af) := \prod_{\bfrak \mid \af} \lp \frac{\bfrak}{ \bfrak' } \rp^{\chi(\bfrak)}
\end{equation}
for every integral ideal $\af \subset \Oc_F$. 
Since $I^-_\chi(\af)' = I^-_\chi(\af)^{-1}$, the ideal $I^-_\chi(\af)$ is not necessarily principal.
One can overcome this by raising it to a suitable power (for example the class number of $F$).
With this minor difference, we can now state the analog of Theorem \ref{thm:GKZ} for even $k$ as follows.
\begin{thma}[Theorem \ref{thm:main}]
  \label{thma:main1}
Suppose $d_1, d_2 < 0$ are co-prime, fundamental discriminants.
For even $k \ge 2$ and $f \in M^!_{2-2k}$ with integral Fourier coefficients $c_f(n)$, there exists $\kappa = \kappa_{f, F} \in \Nb$ and a positive generator $\alpha^- = \alpha^-_{\chi, f, \kappa} \in F$ of the fractional ideal
$$
\prod_{m \ge 1} \prod_{\lambda \in \mathrm{S}_m} I^-_\chi(\lambda)^{c_f(-m) C_k(\lambda, m)\kappa}
$$
such that $\kappa \cdot G_{k, f}(Z_\chi) = \Delta^{(1-k)/2} \log (\alpha^-_{})$. 
\end{thma}

\begin{rmk}
Even though the result is an existence statement, the algebraic number $\alpha^-$ is \textit{explicitly} constructed. 
  This theorem then also gives a constructive proof of Conjecture \ref{conj:gkz} when $Z_\chi$ consists of just one point. 
\end{rmk}

To give a numerical example, we take $k = 4$ and $f(\tau) = q^{-1} + O(1) \in M_{-6}$. Such $f$ exists (and is unique) since there is no non-trivial cusp form in $S_8$. If we take the discriminants $-7$ and $-23$, then the CM points are $(z_7, z_{23, 0}), (z_7, z_{23, 1})$ and $(z_7, z_{23, -1})$, where $z_d = z_{d, 0} := \frac{1 + \sqrt{-d}}{2}$ and $z_{23, \pm 1} = \frac{\pm 1 + \sqrt{-23}}{4}$. 
Theorem \ref{thma:main1} above gives us $\alpha^- \in F$ and $\kappa \in \Nb$ such that
$$
 \sum_{j = -1, 0, 1} G_{4, f}(z_7, z_{23, j}) = \frac{1}{{161}^{3/2} \kappa} \log (\alpha^-).
% \left| \frac{\gamma_f}{\gamma_f'} \right|.
$$
The factorization tells us that $\alpha^-$ generates the fractional ideal 
$$
\lp 
\lp \frac{\pf_5}{\pf_5'} \rp^{-2878} 
\lp \frac{\pf_{17}}{\pf_{17}'}\rp^{-3580} 
\lp \frac{\pf_{19}}{\pf_{19}'}\rp^{2628}
\rp^\kappa,
$$
where $\pf_\ell = (\pi_\ell)$ are prime ideals in $F = \Qb(\sqrt{161})$ with 
$$\pi_5 = 38 + 3\sqrt{161}, \pi_{17} = 12 + \sqrt{161}, \pi_{19} = 25 + 2\sqrt{161}.$$ 
Numerically, we have computed the left hand side to be $-4.157888612785..$ and bounded $\kappa$ explicitly.
This helps us to see that we can take $\kappa = 1$ and 
$$
\alpha^- = 
\lp \frac{\pi_5}{\pi_5'} \rp^{-2878} 
\lp \frac{\pi_{17}}{\pi_{17}'}\rp^{-3580} 
\lp \frac{\pi_{19}}{\pi_{19}'}\rp^{2628} 
\varep_F^{1168},
$$
where $\varep_F = 11775 + 928\sqrt{161}$ is the fundamental unit of $F$.

In \cite{GKZ87}, the authors proved Theorem \ref{thm:GKZ} by expressing the special value of higher Green's function at CM points as linear combinations of the holomorphic part Fourier coefficients of certain real-analytic Eisenstein series, which are \textit{incoherent} in the sense of Kudla \cite{Kudla97}. These Fourier coefficients can be computed exactly and are rational multiples of logarithms of integers. 

Comparing the theorems above, it is then clear that one does not expect the value $G_{k, f}(Z_\chi)$ to be directly related to these Fourier coefficients when $k$ is even. In fact, the method in \cite{GKZ87} applies to all $k \ge 1$ with $G_{k, f}$ replaced by a function which is odd under the Atkin-Lehner involution when $k$ is even (see page 502 in \cite{GKZ87}). For level one, this returns the trivial equation. 

To prove Theorem \ref{thma:main1}, we take this analytic approach and replace the incoherent Eisenstein series with the $\mathrm{O}(2, 2)$ theta lift of a weight one real-analytic modular form on $\SL_2$ over $\Qb$. 
In the context of the Siegel-Weil formula \cite{KR88a, KR88b}, the incoherent Hilbert Eisenstein series used in \cite{GZ85} and \cite{BY06} lies on $\SL_2$ over the real quadratic field $F$ and is the integral of a theta function over the orthogonal group, i.e.~ sum of big CM points. 
Even though our lift goes in the opposite direction, the exceptional isomorphism between $\mathrm{O}(2, 2)$ over $\Qb$ and $\SL_2$ over $F$ makes it a viable idea.
The key step then is to find the right input and theta kernel.

The input is a weight one modular form, which was studied in \cite{CL20} and used to construct weight one harmonic Maass forms associated to real quadratic fields. 
The form is constructed by deforming a theta integral from $\mathrm{O}(1, 1)$ to $\SL_2$, and has no singularity. Therefore, it can be fed into the theta lifting machinery without regularization and the output will have no singularity either. Unlike the incoherent Eisenstein series in \cite{GKZ87}, our replacement will not behave nicely under the Laplacian. Fortunately, it still has a holomorphic part, which can be explicitly calculated. 
This, along with an error term arising from the non-holomorphic part, equals to $G_{k, f}(Z_\chi)$ (see Theorem \ref{thm:main1}). One then needs to carefully analyze the error term to see that it contributes a rational multiple of $\log \varep_F$ with $\varep_F$ the fundamental unit of $F$. This is the reason we need $\kappa$ in Theorem \ref{thma:main1}. 

The theta kernel we used are also quite interesting. 
For each $k \in 2\Nb$, we use a different Schwartz function formed from the Gaussian with a suitable polynomial $\tp_{k-1}$ in the sense of \cite{Borcherds98} (see section \ref{subsec:lift2}). They are easy to describe in the polynomial Fock space, and particular cases are related to the Schwartz forms studied by Kudla and Millson \cite{KM90}. 
To evaluate such a theta lift, we require a formula by Borcherds \cite[Theorem 7.1]{Borcherds98} with polynomials in both the plus and minus parts.
In addition, we also need the recent work \cite{AS18} on the Fourier expansion of Millson theta lifts.

It would be interesting to see how our formula fits into the context of arithmetic intersection theory, i.e.\ Gross-Zagier formula and various generalizations \cite{Zhang97, YZZ13, BY09, BKY12, BHY15, AGHMP18}.
In the Gross-Zagier type formulas, one has an equality between a special value of the derivative of a suitable $L$-function and an appropriate arithmetic intersection number, which has contributions from the archimedean and non-archimedean parts. The former are special values of Green's functions and the latter are local intersection numbers.
After explicit calculations, one can match the non-archimedean part with Fourier coefficients of incoherent Eisenstein series. 
When the $L$-function is identically zero, e.g.\ attached to the trivial cusp form, then one obtains an equality (up to sign) between the archimedean and non-archimedean contributions. 
This, for example, gives the factorization of singular moduli. Our result suggests that there should be a similar interpretation of Fourier coefficients of our replacement in this setting, with which one can then prove a Gross-Zagier type formula by replacing $f$ with a harmonic Maass form. 
It would then be interesting to generalize our proof to the cases when $f$ has higher level.

We organize the paper into three parts. 
The first part consists of sections \ref{sec:prelim} and \ref{sec:deform}, where we discuss some preliminary notions about theta lifts, real quadratic fields and the deformed theta integral studied in \cite{CL20}. The main results are the two counting propositions \ref{prop:count} and \ref{prop:rewrite2}. 
This can be considered as an analysis of the information contained in the non-archimedean component of the theta lift.
The archimedean component is contained in the last part of the paper. It consists of the last section and the appendix, where we give the relevant theta functions and study in details various theta lifts involved. 
Between these two technical parts is section \ref{sec:hGf}, which is the middle part of the paper. It consists of information about higher Green's function, the main result (Theorem \ref{thm:main}), and its proof. Assuming the first and last parts, the reader can read through the middle part to get the rough idea of the proof.

\vspace{.2in}

\noindent \textbf{Acknowledgement.} We thank Don Zagier for informing us about this conjecture and providing us with the unpublished note \cite{ZagierNote}. We also thank Jan Bruinier for helpful conversations and providing us with a preliminary draft of \cite{BEY}, Richard Borcherds for helpful communication concerning \cite{Borcherds98}, and Stephan Ehlen for helpful exchanges about numerical computations.

\section{Preliminary Facts}
\label{sec:prelim}
\subsection{Weil Representation and Modular Forms}
\label{subsec:lattice}
We first follow the convention of \cite{Borcherds98} to recall the Weil representation arising from finite quadratic modules.
Let ${\Mp}_2(\Rb)$ be the metaplectic two-fold cover of $\SL_2(\Rb)$ consisting of elements $(A,  \phi(\tau))$ with $ A = \smat{a}{b}{c}{d} \in \SL_2(\Rb)$ and $\phi: \Hb \to \Cb$ holomorphic function satisfying $\phi(\tau)^2 = c\tau + d$. 
Let $\Mp_2(\Zb) \subset {\Mp}_2(\Rb)$ be the inverse image of $\SL_2(\Zb) \subset \SL_2(\Rb)$. 
It is generated by $T := (\smat{1}{1}{0}{1}, 1)$ and $S = (\smat{0}{-1}{1}{0}, \sqrt{\tau})$, where $\sqrt{\cdot}$ denotes the principal branch of the holomorphic square root.

Let $\Gamma \subset \Mp_2(\Zb)$ be a subgroup of finite index and $\rho: \Gamma \to \GL(W)$ a representation on a finite dimensional $\Cb$-vector space $W$.
A real-analytic function $f: \Hb \to W$ is called \textit{modular} with respect to $\rho$ and weight $k \in \half \Zb$ if 
\begin{equation}
  \label{eq:modular}
(f \mid_k (A, \phi))(\tau) := \phi(\tau)^{-2k} f(A \cdot z) = \rho((A, \phi)) \cdot f(\tau)
\end{equation}
for all $(A, \phi) \in \Gamma$ and $\tau \in \Hb$.
We denote the space of all such functions and the usual subspace of weakly holomorphic, holomorphic and cuspidal forms by $\Ac_{k, \rho}(\Gamma), M^!_{k, \rho}(\Gamma), M_{k, \rho}(\Gamma), S_{k, \rho}(\Gamma)$ respectively. We sometimes omit $\rho$, resp.\ $\Gamma$, if it is trivial, resp.\ $\Mp_2(\Zb)$.
Given $f \in \Ac_{k, \rho}(\Gamma)$, we denote
\begin{equation}
  \label{eq:c}
  f^c(\tau) := v^{k} \overline{f(\tau)} \in \Ac_{-k, \overline{\rho}}(\Gamma)
\end{equation}
for convenience.

The representations that we are mainly interested in come from arithmetic.
Let $L$ be an even, integral lattice with quadratic form $Q$ of signature $(n^+, n^-)$, the dual lattice $L^\vee$ and the associated finite quadratic module 
\begin{equation}
  \label{eq:AL}
 \Aco_L := L^\vee/L,
\end{equation}
on which $Q$ becomes a quadratic form valued in $\Qb/\Zb$.
Let $\Br(\cdot, \cdot)$ be the associated bilinear form.
The vector space $\Cb[\Aco_L] := \bigoplus_{h \in \Aco_L} \Cb \ef_h$ is naturally an $\Mp_2(\Zb)$-module via the Weil representation $\rho_L$ defined by
\begin{equation}
  \label{eq:Weil_rep}
  \begin{split}
    \rho_L(T)(\ef_h) &:= \ebf(Q(h)) \ef_h, \\
\rho_L(S)(\ef_h) &:= \frac{\ebf(-(n^+ - n^-)/8)}{\sqrt{|A_L|}} \sum_{\mu \in A_L} \ebf(-\Br(h, \mu)) \ef_\mu.
  \end{split}
\end{equation}
Despite the notation, $\rho_L$ only depends on the finite quadratic module $A_L$. 
There is a natural hermitian pairing $\langle, \rangle$ on $\Cb[A_L]$ given by $\langle \ef_{h_1}, \ef_{h_2} \rangle := 1$ if $h_1 = h_2$ and zero otherwise. With respect to this pairing $\rho_L$ is a unitary representation.
When $n^+ + n^-$ is even, $\rho_L$ factors through $\Gamma := \SL_2(\Zb)$.
To understand $A_L$, it is sometimes useful to consider the $p$-component
\begin{equation}
  \label{eq:ALp}
  A_{L, p} := A_L \otimes_\Zb \Zb_p
\end{equation}
with $\Zb_p$ the $p$-adic integers for a prime $p \in \Nb$. 
It is a finite quadratic module with value in $\Zb[\tfrac{1}{p}]/\Zb$.
%with respect to the quadratic form $Q \otimes \Zb_p$.
Through the natural map $A_L \to A_{L, p}$ given by $h \mapsto h \otimes 1$, we have
\begin{equation}
  \label{eq:ALdecomp}
  A_L = \bigoplus_{p\text{ prime}} A_{L, p}
\end{equation}
as finite quadratic modules.

Suppose $M \subset L$ is a sublattice of finite index and $s: L^\vee/M \to A_L$ the natural surjection. Then $L^\vee \subset M^\vee$ and there is a linear map $\psi: \Cb[A_M] \to \Cb[A_L]$ defined by 
\begin{equation}
  \label{eq:psi}
  \psi(\ef_h) :=
  \begin{cases}
    \ef_{s(h)}, & h \in L^\vee/M, \\
0, & \text{otherwise.}
  \end{cases}
\end{equation}
The adjoint of $\psi$ with respect to the hermitian pairing $\langle \cdot, \cdot \rangle$ is
\begin{equation}
  \label{eq:phi}
  \begin{split}
      \phi:  \Cb[A_L] &\to \Cb[A_M]\\
\ef_h &\mapsto \sum_{\mu \in L^\vee/M, s(\mu) = h} \ef_\mu.
  \end{split}
\end{equation}
It is straightforward to check that $\psi$ and $\phi$ are $\Gamma$-linear with respect to the Weil representations above and $\psi \circ \phi$ is $[L:M]$ times the identity map on $\Cb[A_L]$.

\subsection{Symmetric Space and Theta Function}
\label{subsec:theta}
We will now introduce the two-variable theta function associated to a suitable Schwartz function (see \eqref{eq:varphi1}). This can be done either adelically as in \cite{KR88a} or classically as in \cite{Borcherds98}, and the connection between the two are spelled out clearly in section 1 of \cite{Kudla03}. We take an approach closer to that of Borcherds, while keeping the adelic perspective visible in the setup as well. 

For $n \in \Nb$, let $\Rb^n$ be the Euclidean space with the metric 
$$
|\underline{x}|^2 := \sum_{1 \le j \le n} x_j^2 
$$
for $\underline{x} = (x_1, x_2, \dots, x_n) \in \Rb^n$.
With respect to the quadratic form $Q_n(\underline{x}) := \frac{1}{2} |\underline{x}|^2$, the space $\Rb^n$ becomes a real quadratic space.
Let $\Delta_\Rb:= \sum_{j} \partial_{x_j}^2$ be the Laplacian on $\Rb^{n}$ and define for $\alpha > 0$ the following operator
\begin{equation}
  \label{eq:Hop}
  \Hc_\alpha (f) := e^{- \Delta_\Rb/(8\pi \alpha)} f.
\end{equation}
on a smooth function $f: \Rb^{n} \to \Cb$. 
In particular, we consider $f = p$ a polynomial homogeneous of degree $m$.
It is easy to check that 
\begin{equation}
  \label{eq:Phom}
  \Hc_1(p)(\alpha \cdot \underline{x}) = \alpha^{m} \Hc_{\alpha^2} (p)(\underline{x}).
\end{equation}
for any $\alpha \in \Rb, \underline{x} \in \Rb^{n}$.
Using such a polynomial, we define the following Schwartz function on $\Rb^{n}$
\begin{equation}
  \label{eq:varphi}
  \varphi(\underline{x}; p) := \Hc_1(p)(\underline{x}) e^{-{\pi} |\underline{x}|^2},
\end{equation}
which is an eigenfunction of eigenvalue $i^{n}$ with respect to Fourier transform on $\Rb^n$.

Suppose $n^+, n^- \in \Nb$ with $n = n^+ + n^-$.
Let $\Rb^{n^+, n^-} := \Rb^{n^+} \oplus \Rb^{n^-}$ be the real quadratic space with the quadratic form $Q_0 := Q_{n^+} - Q_{n^-}$.
Suppose $p$ is a polynomial on $\Rb^n$ homogeneous of degrees $m^+$ and $m^-$ on $\underline{x}^+ \in \Rb^{n^+}$ and $\underline{x}^- \in \Rb^{n^-}$ respectively.
Then we can form the Schwartz function
\begin{equation}
  \label{eq:varphi1}
  \varphi(\underline{x}; p) := \Hc_1(p)(\underline{x}) e^{-2 \pi Q_0(\underline{x})} \in \Ss(\Rb^{n^+, n^-})
\end{equation}
for $\underline{x} \in \Rb^{n^+, n^-}$, which is also an eigenfunction under the Fourier transform on $\Rb^{n^+, n^-}$. Furthermore, it is in the subspace of $\Ss(\Rb^{n^+, n^-})$ called the polynomial Fock space \cite{KM90}.

For a real quadratic space $(V, Q)$ with signature $(n^+, n^-)$, denote $\Dc_V$ the oriented Grassmannian of $V$, i.e.\ the set of oriented, positive definite $n^+$-dimensional subspaces of $V$.
Then every $w \in \Dc_V$ induces an isometry 
\begin{equation}
  \label{eq:nuw}
  \begin{split}
     \nu_w : V_\Rb &\to \Rb^{n^+, n^-}
  \end{split}
\end{equation}
by projecting $\lambda \in V$ to $w$ and its orthogonal complement $w^\perp \subset V$. We denote these projections by $\lambda_w$ and $\lambda_{w^\perp}$.
For any function $f: \Rb^{n^+, n^-} \to \Cb$, we use $f_w: V_\Rb \to \Cb$ to denote the composition $f \circ \nu_w$. The quadratic form $Q_w := Q_0 \circ \nu_w$ is half the majorant $(\cdot, \cdot)_w$ defined by
\begin{equation}
  \label{eq:majorant}
  (\lambda, \lambda)_w := (\lambda, \lambda) - 2(\lambda_{w^\perp}, \lambda_{w^\perp})
\end{equation}
for $\lambda \in V$.
Given $\tau = u + iv \in \Hb$, we can now define a Schwartz function
\begin{equation}
  \label{eq:varphi2}
  \begin{split}
      \varphi(\lambda; \tau, w; p) &:= {v}^{n/4 - k/2} \varphi_w(\sqrt{v} \lambda; p) \ebf(Q(\lambda) u)\\
&= v^{n^-/2 + m^-} e^{-\Delta_\Rb/(8\pi v)}(p)(\nu_w(\lambda)) \ebf \lp Q(\lambda_w) \tau + Q(\lambda_{w^\perp}) \overline{\tau} \rp
  \end{split}
\end{equation}
for $\lambda \in V$ with $k := \frac{n^+ - n^-}{2} + m^+ - m^-$.

Let $L \subset V$ be an even, integral lattice of rank $n$ with dual lattice $L^\vee$ and finite quadratic module $A_L := L^\vee/L$. We can define the theta function $\Theta_L(\tau, w, p)$ valued in $\Cb[A_L]$ by
\begin{equation}
  \label{eq:ThetaL}
  \Theta_{L}(\tau, w; p) = 
 \sum_{h \in A_L} \ef_h \sum_{\lambda \in L + h}  \varphi(\lambda; \tau, w; p),
\end{equation}
and denote the $\ef_h$ component of $\Theta_L$ by $\Theta_{L, h}$.
We omit $p$ in the notation if it is the constant 1.
 Let $\Gamma_L$ be the kernel of the surjection $\SO(L) \to \Aut(A_L)$.
Then $\Theta_L(\tau, w; p)$ is clearly $\Gamma_L$ invariant in $w \in \Dc_V$. 
By Poisson summation, one can show that $\Theta_L(\tau, w; p) \in \Ac_{k, \rho_L}$ (see \cite[Theorem 4.1]{Borcherds98}).
For $m \in \Qb$ and $h \in A_L$, denote as usual
\begin{equation}
  \label{eq:Lmh}
  L_{m, h} := \{\lambda \in L + h: Q(\lambda) = m\},~
L(m, h) := \Gamma_L \backslash L_{m, h}.
\end{equation}
Then $\Theta_{L, h}$ has the Fourier expansion
$$
  \Theta_{L, h}(\tau, w; p) = 
v^{n/4-k/2} \sum_{m \in \Qb} \ebf(mu) \sum_{\lambda \in L_{m, h}} 
\varphi_w(\sqrt{v} \lambda; p) 
$$
For a finite index sublattice $M \subset L$, it is easy to check that 
\begin{equation}
  \label{eq:equivariant}
  \psi(\Theta_M(\tau, w; p)) = \Theta_L(\tau, w; p),
\end{equation}
where $\psi$ is the map in \eqref{eq:psi}.

\begin{exmp}
  \label{ex:P}
Suppose $V = \Rb^{1, 0}$ and $p(x) = p_b(x) = x^b$ for $b \in \Nb$. Then the polynomial in the theta function above can be expressed in terms of the Hermite polynomials.
In particular, we have
\begin{equation}
  \label{eq:hermite}
  \Hc_1(p_b)(x) e^{-\pi x^2} = (-4\pi)^{-b} (\partial_x - 2\pi x)^b e^{-\pi x^2}.
\end{equation}
For $N \in \Nb$, the lattice $\Zb$ is even integral with respect to the quadratic form $Q_N(x) := Nx^2$, and its dual lattice is given by $\frac{1}{2N} \Zb$.
We denote this lattice and the associated Weil representation by $P_N$ and $\rho_N$ respectively.
Since $P_N$ is definite, the Grassmannian consists of one point $o$, which induces
\begin{align*}
\nu_o:   P_N \otimes \Rb & \to \Rb^{1, 0}\\
r & \mapsto \sqrt{2 N}r.
\end{align*}
Using this lattice, we can form the theta series
\begin{equation}
  \label{eq:thetan}
  \theta_N(\tau; b) := \Theta_{P_N}(\tau, o; p_b) = \sum_{h \in \frac{1}{2N}\Zb/\Zb} \ef_h \sum_{r \in \Zb + h} 
\Hc_{v}(p_b)(\sqrt{2N}r)  \ebf (N r^2 \tau) \in \Ac_{b + 1/2, \rho_N}.
\end{equation}
For $b = 0, 1$, $\theta_N(\tau; b)$ is holomorphic in $M_{1/2, \rho_N}$ and $S_{3/2, \rho_N}$ respectively. If $P_N^-$ denotes the lattice $(\Zb, -Q_N)$, then $\Theta_{P^-_N}(\tau, o; p_b) = \theta^c_N({\tau}; b)$.
\end{exmp}

\begin{exmp}
  \label{ex:M2Z}
Let $(V, Q) = (M_2(\Rb), \det)$ be the real quadratic space of signature $(2, 2)$ with $M_2(\Rb)$ the vector space of 2 by 2 matrices with entries in $\Rb$. The Grassmannian $\Dc_V$ is identified with $\Hb^2$ via 
\begin{equation}
\label{eq:w}
\underline{z} = (z_1, z_2) \in \Hb^2 \mapsto w(\underline{z}) := \Rb \Re Z(\underline{z})
\oplus \Rb \Im Z(\underline{z}) \in \Dc_V,
\end{equation}
where we denote
\begin{equation}
  \label{eq:Z}
  Z(\underline{z}) = X(\underline{z}) + iY(\underline{z}) :=  \frac{1}{\sqrt{2y_1y_2} } \pmat{z_1 \overline{z_2}}{\overline{z_2}}{z_1}{1},  \;
Z^\perp(\underline{z}) = X^\perp(\underline{z}) + iY^\perp(\underline{z}) := \frac{1}{\sqrt{2 y_1y_2}} \pmat{z_1z_2}{z_2}{z_1}{1}.
\end{equation}
It will also be important later for us to consider the diagonally embedded $\Hb$ in $\Hb \times \Hb$. In that case, we use $z \in \Hb$ to denote its image $(z, z) \in \Hb \times \Hb$ and also the point $w((z, z)) \in \Dc_V$.
The vector $Y:= Y(z) = \tfrac{1}{\sqrt{2}}\smat{0}{-1}{1}{0}$ is independent of $z$ and the isometry $\nu_z: V \to \Rb^{2, 2}$ is then given by 
\begin{equation}
  \label{eq:nu}
\nu_z(\Lambda) :=  (\Lambda_z, \Br(\Lambda, Y), \Re(\Lambda_{z^\perp}), \Im(\Lambda_{z^\perp})) \in \Rb^{2, 2}, 
\end{equation}
where we denote
\begin{equation}
  \label{eq:Lambdaz}
  \Lambda_z := \Br(\Lambda, X(z)), \; \Lambda_{z^\perp} := \Br(\Lambda, Z^\perp(z)).
\end{equation}

The lattice $L = M_2(\Zb) \subset V$ is even and unimodular. The theta function $\Theta_{L}(\tau, w(\underline{z}))$ has the explicit form
\begin{equation}
\label{eq:Theta}
\Theta_{L}(\tau, w(\underline{z})) =  v \sum_{a, b, c, d \in \Zb} \ebf \lp 
\frac{|az_1 z_2 + bz_1 + c z_2 + d|^2 \tau - |az_1 \zbar_2 + bz_1 + c \zbar_2 + d|^2 \overline{\tau}}{4 y_1 y_2} 
\rp.
\end{equation}
From the result above, we know that $\Theta_L(\tau, w(\underline{z})) \in \Ac_0$ as a function of $\tau \in \Hb$ for any $\underline{z} \in \Hb^2$. 
\end{exmp}

Suppose $f(\tau) \in \Ac_{k, \rho_L}$ is regular on $\Gamma \backslash \Hb$ and has sufficient decay near the cusp, then one can integrate $\langle f(\tau), \Theta_L(\tau, w; p) \rangle v^{k} d\mu(\tau)$ on $\Gamma \backslash \Hb$ to obtain a function on $\Dc_V$. 
If $f$ is allowed to have linear exponential growths at the cusps and be nearly holomorphic, i.e. $L^n_\tau f = 0$ for $n \gg 0$, then Borcherds defined regularized theta integral
\begin{equation}
  \label{eq:Phireg}
  \begin{split}
      \Phi_L(w; p, f) &=   \int^\mathrm{reg}_{\Gamma \backslash \Hb} \langle f(\tau), \Theta_L(\tau, w; p) \rangle v^{k} d\mu(\tau)\\
&:= \mathrm{Const}_{s = 0}\lim_{T \to \infty} \int_{\Fc_T} \langle f(\tau), \Theta_L(\tau, w; p) \rangle v^{k - s} d\mu(\tau)
  \end{split}
\end{equation}
 following the regularization idea of Harvey and Moore \cite{HM96}. 
This is defined by integrating first over the truncated fundamental domain $\Fc_T := \{\tau = u+ iv \in \Hb: |\tau| > 1, |u| < 1/2, v < T\}$, before taking the limit $T \to \infty$ for $\Re(s) \gg 0$, analytically continuing this to $s \in \Cb$ and taking the constant term in the Laurent expansion in $s \in \Cb$.
The same integral can be defined for harmonic Maass forms as well \cite{Bruinier02, BF04}.

On the other hand, one could also integrate orthogonal modular forms in the variable $w$ to produce symplectic modular forms on $\Hb$. In \cite{Hecke26}, Hecke computed one of the first such theta integral and associated weight one cusp forms to real quadratic fields.
{
  This is a special case of the Siegel-Weil formula \cite{KR88a, KR88b}. 
  In the next section, we will review Hecke's construction along with its deformation.
}

\subsection{Real Quadratic Field}
\label{subsec:F}
From now on, let $F \subset \Rb$ be a real quadratic field with fundamental discriminant $\Delta > 1$. 
Its ring of integer and inverse different are given by $\Oc_F := \Zb + \Zb \omega_\Delta, \omega_\Delta := \frac{\Delta + \sqrt{\Delta}}{2}$ and $\df^{-1} := \frac{1}{\sqrt{\Delta}} \Oc_F$ respectively.
We also denote $\Delta_0$ the square-free part of $\Delta$, i.e.\ $\Delta = \Delta_0 \cdot c^2$ for some $c \in \Nb$.
Since $\Delta$ is fundamental, it is in one of the following forms
\begin{itemize}
\item $\Delta \equiv 1 \bmod{4}$ and $\Delta = \Delta_0$,
\item $\Delta \equiv 0 \bmod{4}$ and $\Delta/4 = \Delta_0 \equiv 2, 3 \bmod{4}$.
\end{itemize}
We denote the (narrow) class group of $F$ by $\Cl^{(+)}(F)$, which is the quotient of
\begin{equation}
  \label{eq:Idf}
  I_\df := \{ \Oc_F\text{-fractional ideal } \af \subset F : \af \text{ and } \df \text{ are relatively prime to each other.}\}
\end{equation}
by its subgroup of principal ideals (with totally positive generators).
By the Chebotarev density theorem, every class in $\Cl^+(F)$ has infinitely many prime representatives in $I_\df$.
For each prime $p \mid \Delta$, let $\df_p$ denote the unique (ramified) prime ideal in $\Oc_F$ above $p$. They are the only prime ideals dividing $\df$.
We use $'$ to denote the non-trivial Galois conjugation, with which we can define the norm map 
$
  \Nm(\alpha) := \alpha \alpha'
$
on $F$. Denote 
\begin{equation}
  \label{eq:NmFpm}
 \Nm(F)_\pm := \{m \in \Qb: \pm m > 0 \text{ and there exists } \alpha \in F \text{ with } \Nm(\alpha) = m \}. 
\end{equation}
If $F$ does not have an element of norm $-1$, then $\Nm(F)_+$ and $\Nm(F)_-$ are disjoint. This happens when $\Delta$ is divisible by a prime congruent to $3$ modulo $4$.

The group of units $\Oc_F^\times$ is generated by the fundamental unit $\varep_F > 1$. 
If $\Delta$ is divisible by a prime congruent to 3 modulo 4, then $\varep_F$ is totally positive, and the canonical map $\Cl^+(F) \to \Cl(F)$ is a 2-1 surjection.
Besides the usual norm map, we will also need the map on $\Oc_F$-fractional ideals
\begin{equation}
  \label{eq:Nm-}
  \Nm^-(\af) :=  \frac{\af}{\af'},
\end{equation}
whose image is exactly the kernel of $\Nm$.

Suppose $Q = \Nm$ is the quadratic form. Then $\Oc_F$ is an even integral lattice of signature $(1, 1)$, whose discriminant kernel is denoted by $\Gamma_\Delta$.
The Galois conjugation is an element in $\mathrm{O}(\Oc_F)$.
On the finite quadratic module
\begin{equation}
  \label{eq:ADelta}
 A_\Delta := \df^{-1}/\Oc_F,
\end{equation}
which is identified with $\Zb/(\Delta/ \gcd(\Delta, 2)) \Zb \times \Zb/(\gcd(\Delta, 2)) \Zb$ by the map 
\begin{equation}
  \label{eq:fqmmap}
\frac{a + b\omega_\Delta}{\sqrt{\Delta}} \mapsto  (a, b),
\end{equation}
the Galois conjugation acts as the map sending $h \in A_\Delta$ to $-h$.
The multiplication on $F$ also induces a multiplication $A_\Delta \times A_\Delta \to A_\Delta$.
For us later, it will be convenient to work with the $p$-components $A_{\Delta, p}$. If $p \nmid \Delta$, then $A_{\Delta, p}$ is trivial.
When $p \mid \Delta$ is an odd prime, it is easily seen that
\begin{equation}
  \label{eq:ADp}
A_{\Delta, p} \cong (\Zb/p\Zb, a \mapsto \alpha_{\Delta, p} \tfrac{a^2}{p}).   
\end{equation}
for some $\alpha_{\Delta, p} \in (\Zb/p\Zb)^\times$ depending on $\Delta$.
For $p = 2 \mid \Delta$, there are two possibilities
\begin{equation}
  \label{eq:AD2}
A_{\Delta, 2} \cong
\begin{cases}
   ((\Zb/2\Zb)^2 , (a, b) \mapsto   \tfrac{a^2 + b^2}{4}), & \ord_2(\Delta) = 2, \\
(\Zb/4\Zb \times \Zb/2\Zb, (a, b) \mapsto  \tfrac{\pm a^2 + 6b^2}{8}), & \ord_2(\Delta) = 3,
\end{cases}
\end{equation}
since $\Delta$ is a fundamental discriminant.
Note that the $\pm$'s must be chosen consistently and the choices depend on $\Delta$.

For every prime $p \mid \Delta$, we define an automorphism $\sigma_p$ on $A_{\Delta, p}$ by
\begin{equation}
  \label{eq:sigmap}
  \sigma_p(h) :=
  \begin{cases}
    (b, a), & \text{if } p = 2, \ord_2(\Delta) = 2, \text{ and }h = (a, b) \in (\Zb/2\Zb)^2 \cong A_{\Delta, 2} , \\
-h, &\text{ otherwise,}
  \end{cases}
\end{equation}
which has order 2. It is easy to check that for every prime $p\mid \Delta$ and distinct $h, \tilde{h} \in A_{\Delta, p}$
\begin{equation}
  \label{eq:equivalence}
Q_p(h) = Q_p(\tilde{h}) \Leftrightarrow \sigma_p(h) = \tilde{h}.  
\end{equation}
Since $A_\Delta = \oplus_{p \mid \Delta\text{ prime}} A_{\Delta, p}$ as a finite quadratic modules in \eqref{eq:ALdecomp}, we also use $\sigma_p$ to denote the isometry on $A_\Delta$ that acts as $\sigma_{p}$ on $A_{\Delta, p}$, and trivially on the other components.
These automorphisms all have order 2 and commute with each other. 
We denote the group they generate by $\mathrm{G}_\Delta$, which contains the subgroup
\begin{equation}
  \label{eq:GDeltah}
  \mathrm{G}_{\Delta, h}:= \{ \sigma \in \mathrm{G}_\Delta: \sigma(h) = h\}
\end{equation}
and has the following property.

\begin{lemma}
  \label{lemma:Qh}  
In the notations above, $h$ is in $\df_p \df^{-1}/\Oc_F \subset A_\Delta$ if and only if $\sigma_p$ fixes $h$.
Suppose $Q(h) = \frac{n}{\Delta}$ with $n \in \Zb/\Delta\Zb$, then $|\mathrm{G}_{\Delta, h}| = 2^{\omega(\gcd(n, \Delta))}$, where $\omega(n)$ is the number of distinct prime factors of $n \in \Zb$.
Furthermore, if there exists $\tilde{h} \in A_\Delta$ satisfying $Q(h) = Q(\tilde{h})$, then $\tilde{h} = \sigma(h)$ for some $\sigma \in \mathrm{G}_\Delta$.
\end{lemma}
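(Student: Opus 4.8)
The plan is to reduce everything to the $p$-components $A_{\Delta, p}$ via the decomposition $A_\Delta = \bigoplus_{p \mid \Delta} A_{\Delta, p}$, and then argue prime by prime using the explicit descriptions \eqref{eq:ADp} and \eqref{eq:AD2} together with the characterization \eqref{eq:equivalence}. For the first claim, I would identify $\df_p \df^{-1}/\Oc_F$ concretely: since $\df = \prod_{q \mid \Delta} \df_q$ and $\df_p^2 = (p)$ in $\Oc_F$, the ideal $\df_p \df^{-1}$ is the subgroup of $\df^{-1}/\Oc_F$ whose $p$-part is the proper subgroup $p^{-1}\Zb_p$-scaled piece, i.e.\ the kernel of multiplication-by-$\df_p$. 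In the coordinates of \eqref{eq:ADp} this is $2A_{\Delta, p} = \{0\}$ when $p$ is odd, wait — more carefully, for odd $p$ one has $A_{\Delta, p} \cong \Zb/p\Zb$ and $\df_p \df^{-1}/\Oc_F$ corresponds to the unique index-$p$ subgroup, which is $\{0\}$; and $\sigma_p(h) = -h = h$ on $\Zb/p\Zb$ forces $2h = 0$, hence $h = 0$ since $p$ is odd. For $p = 2$ with $\ord_2(\Delta) = 3$ the same reasoning applies with $\sigma_2 = -1$ on $\Zb/4\Zb \times \Zb/2\Zb$, so $\sigma_2(h) = h \iff 2h = 0$, which cuts out the correct index-$2$ subgroup. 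The subtle case is $p = 2$, $\ord_2(\Delta) = 2$, where $A_{\Delta, 2} \cong (\Zb/2\Zb)^2$, every element is $2$-torsion, and $\sigma_2$ is the coordinate swap $(a,b) \mapsto (b,a)$ rather than $-1$; here $\sigma_2$ fixes $h = (a,b)$ iff $a = b$, and one checks directly that $\{(0,0),(1,1)\}$ is exactly $\df_2 \df^{-1}/\Oc_F$ (the norm form $\tfrac{a^2+b^2}{4}$ takes the value $0 \in \Qb/\Zb$ precisely on this subgroup, matching the ramified-prime structure). I would verify this last identification by a short explicit computation with the basis $1, \omega_\Delta$ of $\Oc_F$.

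For the counting statement, write $Q(h) = \tfrac{n}{\Delta}$ and decompose $n = \prod_{p \mid \Delta} n_p$ according to the $p$-parts of $Q(h)$, so that $Q_p(h) = \tfrac{n_p}{p^{\ord_p(\Delta)}}$ with $\gcd(n_p, p) \in \{1, p^{\ord_p(\Delta)}\}$ in the odd case — more precisely $p \mid \gcd(n, \Delta)$ at $p$ iff $Q_p(h) = 0$ iff (by the first part) $\sigma_p$ fixes $h$. Since $\mathrm{G}_\Delta = \prod_{p \mid \Delta} \langle \sigma_p \rangle$ and $\mathrm{G}_{\Delta, h} = \prod_{p \mid \Delta} \langle \sigma_p \rangle_{h}$, we get $|\mathrm{G}_{\Delta, h}| = \prod_{p : \sigma_p(h) = h} 2 = 2^{\#\{p \mid \Delta : \sigma_p(h) = h\}} = 2^{\#\{p : p \mid \gcd(n,\Delta)\}} = 2^{\omega(\gcd(n,\Delta))}$, where the third equality is exactly the first claim of the lemma applied componentwise. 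The one point to be careful about is that $\gcd(n, \Delta)$ only sees whether each prime $p$ divides $n$, not the higher power; but the equivalence "$p \mid n_p \iff Q_p(h) = 0 \iff \sigma_p$ fixes $h$" holds regardless, since $A_{\Delta, p}$ is a $p$-group on which $Q_p$ takes the value $0$ only on the $\sigma_p$-fixed part, and this is what \eqref{eq:equivalence} with $\tilde h = 0$ (or its swap-analogue at $p=2$) encodes.

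For the final assertion — that any $\tilde h$ with $Q(\tilde h) = Q(h)$ is of the form $\sigma(h)$ for $\sigma \in \mathrm{G}_\Delta$ — I would again go componentwise: $Q(h) = Q(\tilde h)$ in $\Qb/\Zb$ forces $Q_p(h) = Q_p(\tilde h)$ in $\Qb_p/\Zb_p$ for every $p \mid \Delta$, and then \eqref{eq:equivalence} gives $\tilde h_p = \sigma_p(h_p)$ whenever $\tilde h_p \neq h_p$, while if $\tilde h_p = h_p$ we take the identity in that slot; assembling these over all $p$ produces the required $\sigma = \prod_p \sigma_p^{\epsilon_p} \in \mathrm{G}_\Delta$. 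The main obstacle I anticipate is purely bookkeeping at the prime $2$: one must handle the three genuinely different structures ($\ord_2(\Delta) \in \{0, 2, 3\}$) and the non-standard definition of $\sigma_2$ in the $\ord_2(\Delta) = 2$ case, checking in each that "$Q_2$-value equal" is equivalent to "related by $\sigma_2$" and that "$\sigma_2$ fixes $h$" cuts out $\df_2\df^{-1}/\Oc_F$. Once the $p = 2$ case is pinned down by direct calculation with the fundamental-discriminant normal forms in \eqref{eq:AD2}, the rest is a clean product decomposition. I would therefore open the proof by recording \eqref{eq:ALdecomp} for $A_\Delta$, then dispatch odd $p$ in one line, then treat $p = 2$ case by case, and finally combine.
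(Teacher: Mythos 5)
Your overall plan --- decompose $A_\Delta$ into $p$-components via \eqref{eq:ALdecomp}, verify the first claim directly with care at $p = 2$, deduce the counting formula from the first claim, and obtain the last claim prime-by-prime from \eqref{eq:equivalence} --- matches the paper's proof, and your argument for the third claim is identical to it. However, your justification of the counting formula contains a genuine error: the asserted chain ``$p \mid n_p \iff Q_p(h) = 0 \iff \sigma_p$ fixes $h$'' breaks in the middle when $p = 2$ and $\ord_2(\Delta) = 2$. There $A_{\Delta, 2} \cong (\Zb/2\Zb)^2$ with $Q_2(a,b) = \tfrac{a^2 + b^2}{4}$ and $\sigma_2(a,b) = (b,a)$, so $\sigma_2$ fixes $(1,1)$ even though $Q_2(1,1) = \tfrac{1}{2} \neq 0$ in $\Qb/\Zb$; your parenthetical in the first part, asserting that $Q_2$ vanishes precisely on $\{(0,0),(1,1)\}$, is the same slip --- $Q_2$ vanishes only at $(0,0)$. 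Concretely, with $\Delta = 12$, $h_2 = (1,1)$, $h_3 = 0$, one has $Q(h) = \tfrac{1}{2} = \tfrac{6}{12}$, so $n = 6$, $\gcd(n,\Delta)=6$, and the correct count is $|\mathrm{G}_{\Delta, h}| = 2^{\omega(6)} = 4$, whereas your criterion $Q_p(h) = 0$ reports only $p = 3$ and would give $2$.

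The bridging fact the paper actually uses (stated in its proof as ``$h \in \df_p \df^{-1}/\Oc_F$ if and only if $p \mid m$'') is not the vanishing of $Q_p(h)$ but should be proved by a valuation argument. For $\mu \in \df^{-1}$ representing $h$, the ramified prime $\df_p$ satisfies $\df_p' = \df_p$ and $\df_p^2 = (p)$, whence $\ord_p(\Nm(\mu)) = \ord_{\df_p}(\mu)$; the condition $\mu \in \df_p\df^{-1}$, i.e.\ $\ord_{\df_p}(\mu) \ge 1 - \ord_p(\Delta)$, is then equivalent to $\ord_p(\Delta\Nm(\mu)) \ge 1$, i.e.\ $p \mid n$. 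Combining this with your first claim yields $\sigma_p \in \mathrm{G}_{\Delta, h} \iff p \mid n$ for each $p \mid \Delta$, and taking the product over such $p$ gives $|\mathrm{G}_{\Delta, h}| = 2^{\omega(\gcd(n,\Delta))}$ as desired.
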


\begin{rmk}
  \label{rmk:0}
For convenience, we define
\begin{equation}
  \label{eq:GDelta}
  G_\Delta := \{|d| : d \text{ fundamental discriminant dividing } \Delta \text{ and } \gcd(d, \Delta/d) = 1\},
\end{equation}
which has a group structure under the multiplication $d_1 \cdot d_2 := d_1d_2/\gcd(d_1, d_2)^2$.
We canonically identify it with $\mathrm{G}_\Delta$ via the map
\begin{equation}
  \label{eq:sigmad}
d \in G_\Delta \mapsto  \sigma_d := \prod_{p \mid d \text{ prime}} \sigma_p \in \mathrm{G}_\Delta.
\end{equation}
Consider the subgroups of $G_\Delta$ of the form
\begin{equation}
  \label{eq:GDeltad}
  G_{\Delta, d} := \left\{ \tilde{d} \in G_\Delta: \tilde{d} \mid d \right\}
\end{equation}
for some $d \in G_\Delta$.
For any $h \in A_\Delta$, $\sigma_d \in \mathrm{G}_{\Delta, h}$ if and only if $\sigma_p \in \mathrm{G}_{\Delta, h}$ for all primes $p \mid d$.
Therefore, we can define a function $\mathrm{d}: A_\Delta \to G_{\Delta}$ such that
\begin{equation}
  \label{eq:drm}
G_{\Delta, \mathrm{d}(h)} = \mathrm{G}_{\Delta, h}, 
\end{equation}
i.e.\ $\mathrm{d}(h)$ and $\gcd(\Delta, n)$ have the same prime factors. For any prime $p \mid \Delta$ and $h \in A_\Delta$, we have
\begin{equation}
  \label{eq:lcm}
  \mathrm{d}(p h) = \mathrm{lcm}(\mathrm{d}(h), p).
\end{equation}
\end{rmk}

\begin{rmk}
  \label{rmk:A*}
The following set 
\begin{equation}
  \label{eq:ADelta*}
  A_\Delta^* := \left\{h \in A_\Delta: \#\mathrm{G}_{\Delta, h} = 1\right\} = 
\left\{h \in A_\Delta: \mathrm{d}(h) = 1\right\} 
\end{equation}
will be useful for us later.
\end{rmk}

\begin{rmk}
\label{rmk:Delta0}
One can check that $\sigma_{\Delta_0}(h) = -h$, where $\Delta_0$ is the square-free part of $\Delta$. Therefore, $2h = 0 \in A_\Delta$ if and only if $\sigma_{\Delta_0} \in \mathrm{G}_{\Delta, h}$, i.e.\ $\Delta_0 \mid \mathrm{d}(h)$.
\end{rmk}

\begin{proof}
The first claim can be verified directly, though one needs to be slightly careful when $p = 2$ divides $\Delta$.
The second claim follows from the first since $h \in \df_p \df^{-1}/\Oc_F$ if and only if $p \mid m$.
For the last claim, notice that $Q(h) = Q(\tilde{h})$ is equivalent to $Q_p(h_p) = Q_p(\tilde{h}_p)$ for all primes $p \mid \Delta$, where $h_p$ and $\tilde{h}_p$ are the $p$-components of $h$ and $\tilde{h}$ respectively.
Therefore, $\tilde{h}_p$ is either $h_p$ or $\sigma_p(h_p)$ by \eqref{eq:equivalence}, and we are done after setting $\sigma := \prod_{p \mid \Delta \text{ prime,}~ \tilde{h}_p = \sigma_p(h_p)} \sigma_p \in \mathrm{G}_\Delta$.
\end{proof}

We can now replace $\Oc_F$ with any fractional $\Oc_F$-ideal $\af \subset F$ to obtain an even integral lattice $L_\af$ with $Q_\af = \Nm/\Nm(\af)$ and $L^\vee_\af = \af\df^{-1}$. 
We denote the finite quadratic module $(L_\af^\vee/L_\af, Q_\af)$ and the Weil representation associated to it by $A_\af$ and $\rho_\af$ respectively.
Their isomorphism classes only depend on the class of $\af$ in $\Cl^+(F)$.
The finite abelian groups $A_\af$ and $A_\Delta$ are always isomorphic. 
Furthermore, suppose that $\af, \bfrak \in I_\df$ with $\bfrak \subset\af $. Then one can use this inclusion to canonically identify the abelian groups $A_\af$ and $A_\bfrak$.
By taking $\bfrak = \af \cap \Oc_F$, we then have the canonical identification.
\begin{equation}
  \label{eq:identify}
A_\Delta =   A_{\af \cap \Oc_F} = A_{\af}.
\end{equation}
However, the quadratic form is given by $Q_\af = \Nm(\af)^{-1} Q$ under this group isomorphism.
Since the quadratic forms only differ by a factor invertible in $\Zb/\Delta\Zb$, one can define the automorphism $\sigma_p$ on $A_{\af, p}$ and $A_\af$ as in \eqref{eq:ADelta*} and \eqref{eq:sigmap} respectively.
Therefore the group $\mathrm{G}_\Delta =  G_\Delta$ also acts $A_\af$, and one can define the subgroup $\mathrm{G}_{\Delta, h}$ for $h \in A_\af$ and the subset $A_\af^* \subset A_\af$ as in \eqref{eq:GDeltah} and \eqref{eq:ADelta*} respectively.

We are in a particularly nice situation if $Q_\af = Q$, i.e.\ $\af\in \ker(\Nm) = \mathrm{im}(\Nm^-)$ 
, since \eqref{eq:identify} is an equality of finite quadratic modules, which is $\Gamma$-linear on $\Cb[A_{\af}]= \Cb[A_\Delta]$ with respect to $\rho_{\af}$ and $\rho_\Delta$ after extending by linearity. 
On the other hand, if $\af = (\alpha)$ for some $\alpha \in F$ coprime to $\df$, then there is another group isomorphism
\begin{equation}
  \label{eq:multiso}
  \begin{split}
 A_\Delta = \df^{-1}/\Oc_F &\cong A_\af = \af\df^{-1}/\af\\
h &\mapsto \alpha h.
  \end{split}
\end{equation}
When $A_\Delta$ is equipped with the quadratic form $Q_\alpha := \Nm/\Nm(\alpha)$, this is an isomorphism between finite quadratic modules.
If $\Nm(\alpha) = 1$, then $A_\af = A_\Delta$ and $(\alpha) = \Nm^-(\bfrak)$ with $\bfrak \in I_\df$ having order at dividing 2 in $\Cl^+(F)$.
When $[\bfrak]$ is trivial in $\Cl^+(F)$, i.e.\ $\bfrak = (\beta)$ with $\beta \in F$ totally positive, then $\Nm^-(\beta) = 1 \bmod{\df}$ and the composition $A_\Delta \cong A_\af = A_\Delta$ is the identity map. 
Therefore, the composition $A_\Delta \cong A_\af = A_\Delta$ only depends on the class of $\bfrak$ in $\Cl^+(F)$.
It is straightforward to verify that this is the automorphism $\sigma_p$ defined in \eqref{eq:sigmap} when $[\bfrak] = [\df_p]$ for any prime $p \mid \Delta$.

\subsection{Genus Theory}
\label{subsec:genus}
We make a quick digression to the genus theory of quadratic fields.
The map $\Nm^-$ defined in \eqref{eq:Nm-} induces the squaring map on $\Cl^+(F)$, whose kernel and image are denoted by
  \begin{equation}
    \label{eq:Cl2}
    \Cl^+_2(F) := \ker(\Nm^-) \subset \Cl^+(F), \quad     \Cl^{+,2}(F) := \mathrm{im}(\Nm^-) \subset \Cl^+(F)
  \end{equation}
respectively.
Then $\Cl^+_2(F)$ is exactly the group of 2-torsion elements in $\Cl^+(F)$ and is isomorphic to $(\Zb/2\Zb)^{\omega(\Delta) - 1}$ by genus theory. 
It is easy to see that $[\df_p] \in \Cl^+_2(F)$ for every prime $p \mid \Delta$. 
With the group $G_\Delta$ defined in \eqref{eq:GDelta} from the previous section, we can now describe $\Cl^+_2(F)$ precisely in the following lemma.
\begin{lemma}
  \label{lemma:genus}
There is a unique $d_0 \in G_\Delta \backslash \{1\}$ such that the class of
\begin{equation}
  \label{eq:df0}
  \df_0 := \prod_{p \mid d_0 \text{ prime}} \df_p
\end{equation}
in $\Cl^+(F)$ is trivial. 
Let $\varep_F^+ > 1$ be the generator of the subgroup of totally positive units in $\Oc_F^\times$.
Then multiplying by $\varep_F^+$ induces the automorphism $\sigma_{d_0}$ on $A_\af$.
The discriminant kernel $\Gamma_\Delta$ of $L_\af$ is generated by 
\begin{equation}
  \label{eq:varepDelta}
\varep_\Delta := (\varep_F^+)^2
\end{equation}
and the quotient group $\SO(L_\af)/\Gamma_\Delta$ is isomorphic to $(\Zb/2\Zb)^2$ with generators $\varep_F^+$ and $-1$. 
\end{lemma}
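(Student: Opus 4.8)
The plan is to prove the three assertions of Lemma \ref{lemma:genus} in turn, exploiting the dictionary between ideal classes in $\Cl^+(F)$, the finite quadratic module $A_\af$, and the automorphism group $\mathrm{G}_\Delta = G_\Delta$ developed in the previous subsections. For the first assertion, recall from genus theory that $\Cl_2^+(F) \cong (\Zb/2\Zb)^{\omega(\Delta)-1}$, while the classes $[\df_p]$ for $p \mid \Delta$ generate $\Cl_2^+(F)$ and satisfy the single relation coming from $\prod_{p \mid \Delta} \df_p = (\sqrt{\Delta})$, a principal ideal (with totally positive or appropriately signed generator). Since $G_\Delta$ has order $2^{\omega(\Delta)}$, the surjection $G_\Delta \to \Cl_2^+(F)$ sending $d \mapsto [\df_d] := \prod_{p \mid d}[\df_p]$ has a kernel of order exactly $2$; its nontrivial element is the desired $d_0$, and $\df_0 := \prod_{p \mid d_0} \df_p$ is principal in $\Cl^+(F)$. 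I would be slightly careful that the generator $\sqrt{\Delta}$ (or $\sqrt{\Delta_0}$, depending on the shape of $\Delta$) is not totally positive, so $d_0$ need not equal $\Delta_0$; it is whichever fundamental-discriminant divisor $d \mid \Delta$ with $\gcd(d,\Delta/d)=1$ makes $\df_d$ principal in the \emph{narrow} sense.

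For the second assertion, I would combine the last paragraph of Subsection \ref{subsec:F} with the first. Since $\df_0$ is principal with a totally positive generator would force $d_0 = 1$, we instead have $\df_0 = (\alpha)$ with $\alpha \in F$ coprime to $\df$ and $\Nm(\alpha)$ of one sign; because $\df_0 \df_0' = (d_0)$ with $d_0 > 0$ we get $\Nm^-(\df_0) = (\varep)$ for a unit $\varep$, and $\df_0$ being principal means $(\varep) = \Nm^-(\beta)$ is trivial modulo the totally positive principal ideals, i.e.\ $\varep$ generates the totally positive units modulo squares. Hence $\varep = \varep_F^+$ up to a totally positive square unit, and by the explicit remark at the end of Subsection \ref{subsec:F}, multiplication by $\varep_F^+$ on $A_\af$ realizes the automorphism $\sigma_{d_0}$ attached to $[\df_0] = [\df_{d_0}]$. (When $\varep_F$ itself is totally positive, i.e.\ when $\Delta$ has a prime factor $\equiv 3 \bmod 4$, we have $\varep_F^+ = \varep_F$; otherwise $\varep_F^+ = \varep_F^2$.)

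For the third assertion, $\Gamma_\Delta$ is by definition the discriminant kernel, the kernel of $\SO(L_\af) \to \Aut(A_\af)$. Now $\SO(L_\af)$ for a rank-two hyperbolic lattice over $F$ is the group of norm-one elements of $F^\times$ acting by multiplication, which is $\{\pm 1\} \times \varep_F^{\Zb}$ when $\Nm(\varep_F) = -1$, but here multiplication by $\varep_F$ (norm $-1$) need not preserve $Q = \Nm$ on the nose — rather, $\SO(L_\af)$ is generated by $-1$ and $\varep_F^+$ (the totally positive fundamental unit, which has norm $+1$). Multiplication by $-1$ acts trivially on $A_\af$, and multiplication by $\varep_F^+$ acts as $\sigma_{d_0} \neq \mathrm{id}$ by the second assertion, so the kernel of $\SO(L_\af) \to \Aut(A_\af)$ is generated by $-1$ and $(\varep_F^+)^2 = \varep_\Delta$; since $-1$ acts trivially everywhere, one checks the quotient $\SO(L_\af)/\Gamma_\Delta \cong (\Zb/2\Zb)^2$ is generated by the images of $\varep_F^+$ and $-1$.

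The main obstacle I anticipate is bookkeeping the \emph{narrow} versus wide class group and sign conventions consistently — in particular pinning down $d_0$ precisely (it is determined by which $\df_d$ is principal with a generator of norm a positive rational, not by the naive choice $\Delta_0$), handling the two shapes of fundamental discriminant in \eqref{eq:AD2} when $2 \mid \Delta$, and keeping straight whether $\varep_F$ or $\varep_F^2$ is totally positive. Everything else is a routine translation through the identifications \eqref{eq:multiso} and the concluding remark of Subsection \ref{subsec:F}.
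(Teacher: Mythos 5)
Your first assertion is handled correctly, though by a different route than the paper: you invoke the genus-theoretic fact that the ramified primes generate $\Cl_2^+(F)$ (so that $G_\Delta \to \Cl_2^+(F)$ is surjective) and then count, whereas the paper constructs $d_0$ directly by applying Hilbert's Theorem 90 to $\varep_F^+$ and proves uniqueness from the structure of principal Galois-stable ideals; in the paper the surjectivity you cite is deduced \emph{afterward} as Corollary~\ref{cor:gen}. Either is legitimate, but the paper's order makes the lemma self-contained and lets it establish $\varep_F^+ = \alpha_0/\alpha_0'$ with $(\alpha_0) = \df_0$ as a by-product, which is exactly what the second assertion needs.

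The second and third assertions, however, contain genuine errors. For the second, you write ``$\df_0$ is principal with a totally positive generator would force $d_0 = 1$,'' but this is precisely the assertion being proved, not a contradiction: triviality in $\Cl^+(F)$ \emph{means} having a totally positive generator, and $d_0 \neq 1$ is compatible with this (the obstruction to $d_0 = 1$ is whether $\df_0$ admits a \emph{rational} generator, not a totally positive one). You also compute $\Nm^-(\df_0) = (\varep)$ for a unit, but $\df_0$ is a product of ramified primes, hence Galois-stable, so $\Nm^-(\df_0) = \df_0/\df_0' = \Oc_F$ identically; the subsequent reasoning built on this is vacuous. The correct argument, once one has $\df_0 = (\alpha_0)$ with $\alpha_0 \gg 0$, is that $\alpha_0/\alpha_0'$ is a totally positive unit, hence a power of $\varep_F^+$, and the congruence $\alpha_0 \equiv \alpha_0' \bmod \df$ together with $(\alpha_0) = \df_0$ produces the congruences \eqref{eq:varepcong} realizing $\sigma_{d_0}$.

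For the third, you claim ``$-1$ acts trivially on $A_\af$.'' It does not: multiplication by $-1$ acts as $h \mapsto -h$, which is nontrivial because $A_\af \cong A_\Delta$ has exponent $\Delta/\gcd(\Delta,2) > 2$ for any real quadratic $\Delta$. Consequently $-1 \notin \Gamma_\Delta$, and $\Gamma_\Delta$ is generated by $(\varep_F^+)^2$ alone, not by $-1$ and $(\varep_F^+)^2$. Your proposal then contradicts itself by placing $-1$ both inside the kernel and among the generators of the nontrivial quotient $\SO(L_\af)/\Gamma_\Delta$. The correct reason the quotient is $(\Zb/2\Zb)^2$ is that both $-1$ and $\varep_F^+$ act nontrivially on $A_\af$ (the former by negation, the latter by $\sigma_{d_0}$) and independently so, while their squares lie in $\Gamma_\Delta$.
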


\begin{proof}
For every $d \in G_\Delta$, define
\begin{equation}
  \label{eq:dfd}
\df_{d} := \prod_{p \mid d~ \text{prime}} \df_p.  
\end{equation}
Then $d, \tilde{d} \in G_\Delta$ are equal if and only if $\df_d = \df_{\tilde{d}}$.
The unit $\varep_F^+$ is either $\varep_F$ or $\varep_F^2$ depending on whether $\varep_F$ has norm $1$ or $-1$.
By Hilbert's Theorem 90, there exists an $\alpha_0 \in \Oc_F$ such that $\varep^+_F = \alpha_0/\alpha'_0$. Since $\varep^+_F$ is a unit, we can choose $\alpha_0$ such that $(\alpha_0) = \df_{d_0}$ for some $d_0 \in G_\Delta \backslash\{1\}$. 
Suppose $d \in G_\Delta$ such that $\df_d = (\alpha_d)$ with $\alpha_d$ totally positive, then $\alpha_d'/\alpha_d = (\varep^+_F)^r = (\alpha_0/\alpha'_0)^r$ for some $r \in \Zb$, which implies $\alpha^r_0 \alpha_d \in \Zb$. If $r$ is even, then $\df_d$ has a generator in $\Zb$, meaning $d$ must be 1. If $r$ is odd, then $\df_{d_0} \df_d$ has a generator in $\Zb$, implying that $\df_{d}$ and $\df_{d_0}$ have the same prime factors, hence are the same. 
This proves the uniqueness of $d_0$. Note that $d_0 = \Delta$ if $\varep_F$ has norm $-1$.
We denote $\df_0 := \df_{d_0}$.
Using $\varep_F^+ = \alpha_0/\alpha'_0$ with $(\alpha_0) = \df_0$ and $\alpha_0 - \alpha'_0 \in \df$, it is straightforward to verify the congruence
\begin{equation}
  \label{eq:varepcong}
 \varep^+_F \equiv 1 \bmod{\df/\df_{0}},~\varep^+_F \equiv 
 \begin{cases}
   -1 \bmod{\df_{p}}, & p \mid d_0 \text{ odd,}\\
-1 \bmod{\df_2}, & 2 \mid d_0, \ord_2(\Delta) = 2,\\ 
-1 \bmod{\df^3_2}, & 2 \mid d_0, \ord_2(\Delta) = 3,
 \end{cases}
\end{equation}
which implies the second claim.
Note that $\ord_2(\Delta) = 2$ implies $\Delta_0 \equiv 3 \bmod{4}$ and $\Delta$ is divisible by a prime congruent to 3 modulo 4. In that case, the fundamental unit $\varep_F$ has norm $1$, i.e.\ $\varep_F^+ = \varep_F$.
Since $d_0 \neq 1$, $\df_0$ is non-trivial and $\varep^+_F \not\in \Gamma_\Delta$. 
As the group $\SO(L_\af)$ is the subgroup of units in $\Oc_F^\times$ with norm $1$, the quotient $\SO(L_\af)/\Gamma_\Delta$ contains $\pm 1$ and $\pm \varep^+_F$. This proves the last claim.
\end{proof}

\begin{cor}
  \label{cor:gen}
The map sending $d \in G_\Delta$ to $\df_d$ defined in \eqref{eq:dfd} is a surjective group homomorphism from $G_\Delta$ to $\Cl^+_2(F)$. Its kernel is generated by $d_0 \in G_\Delta$ satisfying $\df_{d_0} = \df_0$.
\end{cor}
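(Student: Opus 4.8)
The plan is to deduce all three assertions — that the map is a homomorphism, the computation of its kernel, and surjectivity — from the definitions together with Lemma~\ref{lemma:genus}.

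First I would verify that $d \mapsto \df_d$ is a homomorphism. By \eqref{eq:GDelta} the product $d_1 \cdot d_2 = d_1 d_2/\gcd(d_1,d_2)^2$ is the element of $G_\Delta$ whose set of prime divisors is exactly the symmetric difference of those of $d_1$ and $d_2$. For each prime $p \mid \Delta$ one has $\df_p^2 = (p)$, a principal ideal with the totally positive generator $p$, so $[\df_p]$ is $2$-torsion in $\Cl^+(F)$; consequently in $\df_{d_1}\df_{d_2} = \prod_{p\mid d_1}\df_p \cdot \prod_{p\mid d_2}\df_p$ the primes dividing both $d_1$ and $d_2$ contribute principal ideals $(p)$, and after dividing by these one is left with $\prod_{p\mid d_1\cdot d_2}\df_p = \df_{d_1\cdot d_2}$. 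Thus $d \mapsto [\df_d]$ is a homomorphism into $\Cl^+(F)$, and since $\df_d^2 = \bigl(\prod_{p\mid d}p\bigr)$ is principal with totally positive generator, it lands in the $2$-torsion subgroup $\Cl^+_2(F)$.

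Next I would compute the kernel. Lemma~\ref{lemma:genus} gives $[\df_{d_0}] = [\df_0] = 1$, so the kernel contains the order-two subgroup $\{1, d_0\} = \langle d_0\rangle$. For the reverse inclusion I would reuse the ``uniqueness of $d_0$'' argument from the proof of Lemma~\ref{lemma:genus}: if $d \in G_\Delta$ has $\df_d = (\alpha_d)$ with $\alpha_d$ totally positive, then $\alpha_d'/\alpha_d$ is a totally positive unit, hence equals $(\varep_F^+)^r = (\alpha_0/\alpha_0')^r$ for some $r \in \Zb$, which forces $\alpha_d\alpha_0^r \in \Qb$. Therefore the fractional ideal $\df_d\df_0^{\,r}$ — equivalently $\df_d\df_0^{\,r\bmod 2}$, since $\df_0^2$ is rational — is generated by a rational number and hence has every ramified prime occurring to an even exponent. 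Comparing prime supports yields $d = 1$ when $r$ is even and $d = d_0$ when $r$ is odd, so $\ker = \{1, d_0\} = \langle d_0\rangle$.

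Finally, surjectivity follows by a count. The group $G_\Delta$ has order $2^{\omega(\Delta)}$: it is in bijection with the subsets of the $\omega(\Delta)$-element set of prime discriminant factors of $\Delta$, or equivalently, via the identification $G_\Delta = \mathrm{G}_\Delta$ of Remark~\ref{rmk:0}, it is the internal direct product of the order-two groups $\langle\sigma_p\rangle$ over $p\mid\Delta$. Hence the image has order $2^{\omega(\Delta)-1}$, which equals $|\Cl^+_2(F)|$ by genus theory, so the map is onto. The only point requiring care is the inclusion $\ker \subseteq \{1,d_0\}$, specifically the passage from ``$\df_d\df_0^{\,r}$ rational'' to the equality of prime supports (together with the slightly delicate behaviour at $p = 2$); but this is precisely the computation already carried out inside Lemma~\ref{lemma:genus}, so no genuinely new difficulty arises.
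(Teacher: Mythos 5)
Your proof is correct and fills in the details that the paper leaves implicit, since the corollary is stated without proof as a direct consequence of Lemma~\ref{lemma:genus}. The homomorphism property via $\df_p^2 = (p)$, the kernel computation by recycling the uniqueness argument from Lemma~\ref{lemma:genus}, and the surjectivity by comparing $|G_\Delta|/|\ker| = 2^{\omega(\Delta)-1}$ with the genus-theoretic count of $|\Cl^+_2(F)|$ are all exactly the expected route.
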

Let $\overline{G_\Delta} = \overline{\mathrm{G}_\Delta}$ denote the quotient by the subgroup generated by $d_0$.
Since the group $G_{\Delta}$ acts on $A_\Delta$, we can consider define an equivalence relation $\sim_{d_0}$ such that $h_1 \sim_{d_0} h_2 $ if and only if $h_1 = \sigma_{d_0}(h_2)$ for $h_1, h_2 \in A_\Delta$. Then $\Cl^+_2(F)$ acts on the set
\begin{equation}
  \label{eq:ADeltabar}
  \overline{A_\Delta} := A_\Delta / \sim_{d_0}.
\end{equation}
For $h \in A_\Delta$, we also use $\overline{h}$ to denote its image in $\overline{A_\Delta}$, and $\mathrm{G}_{\Delta, \overline{h}}$ the stabilizer of $h$ or $\sigma_{d_0}(h)$ in $\mathrm{G}_\Delta$.

Using the embedding $F \subset \Rb$ fixed in the beginning of the section, we can make sense of the sign function on $\SO(L_\af)$, which factors through the discriminant kernel.
For $h \in A_\af$, we define
\begin{equation}
  \label{eq:srh}
  \sr_h := \sum_{s \in \SO(L_\af)/\Gamma_\Delta, s \cdot h = h} \sgn(s).
\end{equation}
Since the group $\SO(L_\af)/\Gamma_\Delta$ is isomorphic to $(\Zb/2\Zb)^2$, the only possible values of $\sr_h$ are $0, 1$ and $2$. 
Using Lemma \ref{lemma:genus}, we can explicitly determine them as follows.
\begin{lemma}
\label{lemma:shval}
Let $\mathrm{d}$ be the function defined in \eqref{eq:drm}, $\Delta_0$ the square-free part of $\Delta$ and $d_0 \in G_\Delta$ the fundamental discriminant in Lemma \ref{lemma:genus}.
Then we have
\begin{equation}
  \label{eq:shval}
  \sr_h = 
  \begin{cases}
    0 , & \Delta_0 \mid \mathrm{d}(h), \\
2, & d_0 \mid \mathrm{d}(h) \text{ and } \Delta_0 \nmid \mathrm{d}(h), \\
1, & \text{otherwise.}
  \end{cases}
\end{equation}
\end{lemma}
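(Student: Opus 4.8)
The plan is to compute the sum defining $\sr_h$ by running through the (at most) four classes of $\SO(L_\af)/\Gamma_\Delta$ one at a time, deciding which of them fix $h$ and with what sign. First I would invoke Lemma \ref{lemma:genus}: the group $\SO(L_\af)/\Gamma_\Delta$ is $(\Zb/2\Zb)^2$, with the four classes represented by $1,\ \varep_F^+,\ -1,\ -\varep_F^+$; the class of $\varep_F^+$ acts on $A_\af$ as $\sigma_{d_0}$, while the class of $-1$ acts by $h\mapsto -h$, which by Remark \ref{rmk:Delta0} is $\sigma_{\Delta_0}$ (reading $\Delta_0$ as an element of $\mathrm{G}_\Delta = G_\Delta$). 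Hence $-\varep_F^+$ acts as $\sigma_{d_0}\sigma_{\Delta_0} = \sigma_{d_0\Delta_0}$, the product taken in $G_\Delta$. Next I would pin down $\sgn$ on these four classes using the fixed real embedding $F\subset\Rb$: multiplication by a totally positive element preserves the orientation of every positive line in the signature-$(1,1)$ space $V$ underlying $L_\af$ (equivalently, it has trivial spinor norm at the archimedean place), so $\sgn(\varep_F^+)=1$; multiplication by $-1$ reverses that orientation, so $\sgn(-1)=-1$, and therefore $\sgn(-\varep_F^+)=-1$ as well. One must be careful in the two degenerate situations — $\ord_2(\Delta)$ small, and, separately, $\Nm(\varep_F)=-1$ — where some of these four classes can coincide in $\SO(L_\af)/\Gamma_\Delta$; Lemma \ref{lemma:genus} together with the congruences \eqref{eq:varepcong} records exactly how these coincidences go.

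The third ingredient is the dictionary between ``the class of $s$ fixes $h$'' and divisibility, furnished by \eqref{eq:drm}: for $d\in G_\Delta$ the automorphism $\sigma_d$ fixes $h$ if and only if $d\mid\mathrm{d}(h)$, equivalently $d\in G_{\Delta,\mathrm{d}(h)} = \mathrm{G}_{\Delta,h}$ (this is where Lemma \ref{lemma:Qh} is used). Thus $\varep_F^+$ fixes $h\iff d_0\mid\mathrm{d}(h)$, the class of $-1$ fixes $h\iff \Delta_0\mid\mathrm{d}(h)$, and $-\varep_F^+$ fixes $h\iff (d_0\Delta_0)\mid\mathrm{d}(h)$; and since the stabilizer of $h$ is a subgroup of $(\Zb/2\Zb)^2$, any two of these three conditions force the third.

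With these in hand the proof becomes a case distinction. If $\Delta_0\mid\mathrm{d}(h)$, the stabilizer contains the class of $-1$, so $\sgn$ restricts to it as a non-trivial character and $\sr_h=0$. If $\Delta_0\nmid\mathrm{d}(h)$ but $d_0\mid\mathrm{d}(h)$, the stabilizer is exactly $\{1,\varep_F^+\}$ (it can contain neither $-1$ nor $-\varep_F^+$), $\sgn$ is trivial on it, and $\sr_h=2$. In the remaining regime $\Delta_0\nmid\mathrm{d}(h)$ and $d_0\nmid\mathrm{d}(h)$ the stabilizer is $\{1\}$ or $\{1,-\varep_F^+\}$, the first giving $\sr_h=1$. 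The step I expect to be the crux — and the one where the genus theory really enters — is the analysis of the second possibility, $\{1,-\varep_F^+\}$, in this last regime: one must combine the precise description of $d_0$ from Lemma \ref{lemma:genus} and Corollary \ref{cor:gen} with the local pictures \eqref{eq:ADp}–\eqref{eq:AD2} of $A_{\Delta,p}$ (notably the non-standard definition \eqref{eq:sigmap} of $\sigma_2$ when $\ord_2(\Delta)=2$) to see that in this regime this stabilizer is compatible with the asserted value $\sr_h=1$. This is exactly where the interplay between $d_0$, $\Delta_0$, and the parity of $\ord_2(\Delta)$ matters, and where the bookkeeping — especially at the prime $2$ — is most delicate; everything else is a mechanical matching of the three cases above with the three lines of \eqref{eq:shval}.
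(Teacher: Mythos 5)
Your framework and the first three cases of your analysis are all correct, and you have correctly pinned down the crux. The problem is that the crux cannot be closed: the formula \eqref{eq:shval} is actually false in the regime you are worried about. As you observe, $-\varep_F^+$ acts on $A_\af$ as $\sigma_{d_1}$ with $d_1 := d_0\cdot\Delta_0 = d_0\Delta_0/\gcd(d_0,\Delta_0)^2$, the product taken in $G_\Delta$, and $\sgn(-\varep_F^+) = -1$. Whenever $d_0$ and $\Delta_0$ share a prime, $d_1$ is divisible in $G_\Delta$ by neither $d_0$ nor $\Delta_0$; so any $h$ with $\mathrm{d}(h) = d_1$ lands in the ``otherwise'' bucket of \eqref{eq:shval}, yet its stabilizer in $\SO(L_\af)/\Gamma_\Delta$ is $\{1, -\varep_F^+\}$ and $\sr_h = 1 + \sgn(-\varep_F^+) = 0$, not $1$. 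Such $h$ exist. Take the very case the paper's proof ends with: $\Delta = 12$, $F = \Qb(\sqrt{3})$, $\varep_F^+ = 2+\sqrt{3}$, $d_0 = 12$, $\Delta_0 = 3$, so $d_1 = 4$. For $\af = \Oc_F$ and $h = \tfrac{7+\sqrt{3}}{2\sqrt{3}} \in \df^{-1}/\Oc_F$, which corresponds to $(1,1)\in\Zb/6\Zb\times\Zb/2\Zb$ under \eqref{eq:fqmmap}, one finds $Q(h) = -\tfrac{23}{6} \equiv \tfrac{2}{12}$ and hence $\mathrm{d}(h) = 4$; and a direct computation gives $\varep_F^+ h = (5,1) \ne h$, $-h = (5,1) \ne h$, but $-\varep_F^+ h = (1,1) = h$, so $\sr_h = 0$ while \eqref{eq:shval} predicts $1$.

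So the gap you identified is genuine and, for the lemma as stated, not patchable: \eqref{eq:shval} is missing the additional case $(d_0\cdot\Delta_0)\mid\mathrm{d}(h)$ with $\Delta_0\nmid\mathrm{d}(h)$, which also yields $\sr_h = 0$. It is worth noting that the paper's own proof has exactly this oversight: the opening assertion ``it is clear $\sr_h = 0$ if and only if $2h = 0$'' amounts to assuming that the only negative-sign coset that can stabilize $h$ is that of $-1$, silently excluding the possibility that $-\varep_F^+$ alone lies in the stabilizer. Your proposal at least names the problematic coset explicitly; what is missing is the realization that it can occur alone in the stabilizer in the ``otherwise'' regime, and that when it does the claimed value $\sr_h = 1$ is wrong.
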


\begin{rmk}
  \label{rmk:4}
Since $d_0, \Delta_0 \neq 1$, we have $\sr_h = 1$ for $h \in A^*_\af$.
\end{rmk}
\begin{proof}
  It is clear $\sr_h = 0$ if and only if $2h = 0 \in A_\af$, which is equivalent to $\Delta_0 \mid \mathrm{d}(h)$ by Remark \ref{rmk:0}.
Now suppose $\Delta_0 \nmid \mathrm{d}(h)$, then $\sr_h = 2$ if and only if $\varep^+_F h = h$, which is equivalent to $\sigma_{d_0} \in \mathrm{G}_{\Delta, h}$ by Lemma \ref{lemma:genus}. This is then the same as $d_0 \mid \mathrm{d}(h)$ by \eqref{eq:drm}.
In all other cases, $\sr_h = 1$ and we are finished with the proof.
Note that it is possible that both $d_0$ and $\Delta_0$ divides $\mathrm{d}(h)$ for non-trivial $h \in A_\af$. For example, if $\Delta = 12 = d_0, \af = \Oc_F$ and $h = (1 + \sqrt{3})/2 \in A_\Delta$, then $\mathrm{d}(h) = \Delta$. 
\end{proof}

Now we can attach an element of the group ring $\Zb[\Cl^+(F)]$ to an ideal $\af \subset \Oc_F$ and $h \in A_\Delta$ as follows
\begin{equation}
  \label{eq:sqrt}
  \rt(\af, h) := 
\sum_{\begin{subarray}{c} 
\Bf = [\bfrak] \in \Cl^+(F) \\ \af = \Nm^-(\bfrak)(\mu) \text{ for a positive }\\ \frac{\mu}{\sqrt{\Delta}} \in \Oc_F + h \subset \df^{-1}
\end{subarray}} \Bf \in \Zb[\Cl^+(F)].
\end{equation}
Since $\frac{\alpha}{\alpha'} \equiv 1 \bmod{\df}$ whenever $\alpha \in F$ is relatively prime to $\df$, we know that the condition in the definition is independent of the choice of the representative $\bfrak \in I_\df$.
However, there is a choice between $\mu$ and $\varep_F^+ \mu$ for the positive generator. Since Lemma \ref{lemma:genus} tells us that the multiplication by $\varep_F^+$ induces $\sigma_{d_0}$ on $A_\af$, we have
\begin{equation}
  \label{eq:equal}
  \rt(\af, h) = \rt(\af, \sigma_{d_0}(h))
\end{equation}
for every $\af$ and $h$.
For other distinct $h_1, h_2 \in A_\Delta$, the supports of $\rt(\af, h_1)$ and $\rt(\af, {h_2})$ (when viewed as functions on $\Cl^+(F)$) are disjoint.
The action of an element $d \in G_\Delta = \mathrm{G}_{\Delta} \cong \Cl^+_2(F)$ on $\Cl^+(F)$ via multiplication with $[\df_d]$ induces an isomorphism on $\Zb[\Cl^+(F)]$, which sends $\rt(\af, h)$ to $\rt(\af, \sigma_d(h))$. 
This implies 
\begin{equation}
  \label{eq:permute}
\rt(\af, \sigma_{d}(h)) = [\df_d] \cdot \rt(\af, h). 
\end{equation}
When $\rt(\af, h)$ is non-zero, its support is a regular $G_{\Delta, \mathrm{d}(h)}$-set and has size $2^{\omega(\gcd(\Nm(\af), \Delta))}$.
In particular, it has exactly one element for $h \in A_\Delta^*$ if $\frac{\Nm(\af)}{\Delta} = Q(h)$ and $[\af] \in \Cl^{+, 2}(F)$.

For any positive $\alpha$, it is clear that 
\begin{equation}
  \label{eq:equiv1}
  \mathrm{supp}(\rt(\af, h)) \subset \mathrm{supp}(\rt((\alpha) \af, \alpha h)),
\end{equation}
which is an equality if $\alpha$ is relatively prime to $\df$.
Using the analysis above, we have the following precise result.
\begin{lemma}
  \label{lemma:countmul}
For any positive $\alpha \in \Oc_F$ and integral ideal $\af \subset \Oc_F$, let $d \in G_\Delta$ be the smallest element such that $\gcd(\Nm(\alpha), \Delta/d) = 1$. 
Then for any $h \in A_\Delta$, we have
\begin{equation}
  \label{eq:countmul}
    \rt((\alpha) \af, \alpha h) =
\frac{1}{\# (G_{\Delta, d} \cap \mathrm{G}_{\Delta, \overline{h}})}
\sum_{\sigma \in G_{\Delta, d}} \rt(\af, \sigma_{}(h)) .
\end{equation}
\end{lemma}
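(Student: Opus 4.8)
The plan is to translate the definition \eqref{eq:sqrt} of $\rt$ into statements about square roots of $[\af]$ in $\Cl^+(F)$ decorated with a congruence in $A_\Delta$, and then to reduce the asserted identity to a combinatorial statement about the action of $G_\Delta\cong\mathrm{G}_\Delta$ on $A_\Delta$ which is checked one rational prime at a time via \eqref{eq:equivalence} and Lemma~\ref{lemma:Qh}. First I would note that since $\alpha$ is totally positive $(\alpha)$ is trivial in $\Cl^+(F)$, so $[(\alpha)\af]=[\af]$ and both $\rt((\alpha)\af,\cdot)$ and $\rt(\af,\cdot)$ are supported on the $\Cl^+_2(F)$-coset $\{\Bf\in\Cl^+(F):\Bf^2=[\af]\}$; this is empty, and both sides of the identity vanish, unless $[\af]\in\Cl^{+,2}(F)$. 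Fix such a class $\Bf=[\bfrak]$ with representative $\bfrak\in I_\df$. Since $\bfrak$ is prime to $\df$, the principal ideal $\af\,\Nm^-(\bfrak)^{-1}=\af\bfrak'\bfrak^{-1}$ has the same valuations as $\af$ at the ramified primes, hence is integral there, and after adjusting $\bfrak$ within its class it is integral everywhere; let $\mu_0\in\Oc_F$ be a totally positive generator and put $g_\bfrak:=(\mu_0/\sqrt\Delta\bmod\Oc_F)\in A_\Delta$. Replacing $\mu_0$ by $\varep_F^+\mu_0$ replaces $g_\bfrak$ by $\sigma_{d_0}(g_\bfrak)$ by Lemma~\ref{lemma:genus}, so $\overline{g_\bfrak}\in\overline{A_\Delta}$ is canonical; unwinding \eqref{eq:sqrt} then shows that the coefficient of $\Bf$ in $\rt(\af,h')$ is $1$ if $\overline{h'}=\overline{g_\bfrak}$ and $0$ otherwise, while — using $\mu_0/\sqrt\Delta\in\df^{-1}$ and the fact that $\sigma_{d_0}$ is multiplication by $\varep_F^+$, which commutes with multiplication by $\alpha$ — the coefficient of $\Bf$ in $\rt((\alpha)\af,\alpha h)$ is $1$ iff $\alpha h=\alpha\,\sigma_{d_0}^{k}(g_\bfrak)$ for some $k\in\Zb$, and $0$ otherwise.

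Next I would use \eqref{eq:permute} together with a grouping of the $\sigma(h)$, $\sigma\in G_{\Delta,d}$, by their image in $\overline{A_\Delta}$ — each value in the $G_{\Delta,d}$-orbit of $\overline h$ being attained exactly $\#(G_{\Delta,d}\cap\mathrm{G}_{\Delta,\overline h})$ times — to rewrite the right-hand side of the lemma as $\sum_{\overline{h'}}\rt(\af,h')$, the sum running over the pairwise distinct, hence disjointly supported (by the discussion preceding Lemma~\ref{lemma:countmul}), classes $\overline{h'}$ in that orbit. Comparing coefficients of $\Bf$ with the previous paragraph, the lemma becomes equivalent to: for every $\Bf$ with $\Bf^2=[\af]$,
\[
\exists\, k\in\Zb:\ \alpha\,\sigma_{d_0}^{k}(g_\bfrak)=\alpha h
\quad\Longleftrightarrow\quad
\exists\, e\in G_{\Delta,d}:\ \overline{\sigma_e(h)}=\overline{g_\bfrak}.
\]

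The implication ``$\Leftarrow$'' is routine: if $\sigma_{d_0}^{k}(g_\bfrak)=\sigma_e(h)$ with $e\mid d$, apply multiplication by $\alpha$ and observe $\alpha\,(\sigma_e(h)-h)=0$, because at every prime $p\mid e$ (hence $p\mid\Nm(\alpha)$) the element $\sigma_p(h_p)-h_p$ lies in $\ker(\text{mult-}\alpha|_{A_{\Delta,p}})$, which one checks directly from \eqref{eq:ADp}, \eqref{eq:AD2} and \eqref{eq:varepcong}. For ``$\Rightarrow$'', from $\alpha\,\sigma_{d_0}^{k}(g_\bfrak)=\alpha h$ one gets $(\sigma_{d_0}^{k}g_\bfrak)_p=h_p$, and in particular $Q_p((g_\bfrak)_p)=Q_p(h_p)$, at every $p\mid\Delta$ with $p\nmid\Nm(\alpha)$ (i.e.\ $p\nmid d$), since multiplication by $\alpha$ is injective on those components; since the existence of $\mu_0$ forces $Q(g_\bfrak)=\Nm(\af)/\Delta$ and, by the same norm computation applied to the generator $\mu'$ of $(\alpha)\af\,\Nm^-(\bfrak)^{-1}$, the non-vanishing of the $\Bf$-coefficient of $\rt((\alpha)\af,\alpha h)$ forces $Q_p(h_p)$ to be the $p$-part of $\Nm(\af)/\Delta$ for $p\nmid d$, one concludes $Q(h)=\Nm(\af)/\Delta=Q(g_\bfrak)$, whence $Q_p((g_\bfrak)_p)=Q_p(h_p)$ for all $p\mid\Delta$. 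By \eqref{eq:equivalence} this means $(g_\bfrak)_p\in\{h_p,\sigma_p(h_p)\}$ everywhere and $(g_\bfrak)_p=h_p$ whenever $p\nmid d$; collecting the primes $p\mid d$ at which $(g_\bfrak)_p=\sigma_p(h_p)$ into $e\in G_{\Delta,d}$, as in Lemma~\ref{lemma:Qh}, yields $g_\bfrak=\sigma_e(h)$.

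The hard part will be precisely the step ``$\Rightarrow$'', namely establishing $Q(h)=\Nm(\af)/\Delta$: one must rule out an $h$ whose components $h_p$ at primes $p\mid d$ spoil this global equality while still permitting $\Bf\in\mathrm{supp}\,\rt((\alpha)\af,\alpha h)$. The subtle case is the wildly ramified prime $p=2$, where multiplication by $\alpha$ need not annihilate all of $A_{\Delta,2}$ and $\sigma_2$ is not simply $-1$; there I would argue directly from the explicit shape \eqref{eq:AD2} of $A_{\Delta,2}$ and from the fact that $\alpha h$ is prescribed to equal $[\mu'/\sqrt\Delta]$ as an honest element of $A_{\Delta,2}$ (not merely up to $Q_2$), which already pins down $h_2$ modulo $\ker(\text{mult-}\alpha|_{A_{\Delta,2}})$ and hence its $Q_2$-value. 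Throughout, \eqref{eq:equiv1} serves as a consistency check: it gives the inclusion $\mathrm{supp}\,\rt(\af,h)\subseteq\mathrm{supp}\,\rt((\alpha)\af,\alpha h)$ for free, and an equality in the case $d=1$ where $\alpha$ is prime to $\df$.
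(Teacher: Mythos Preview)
Your approach is considerably more elaborate than the paper's. The paper disposes of the lemma in three sentences: it asserts that $\alpha\sigma(h)=\alpha h$ for every $\sigma\in G_{\Delta,d}$, that conversely every $\tilde h$ with $\alpha\tilde h=\alpha h$ is of the form $\sigma(h)$ with $\sigma\in G_{\Delta,d}$ ``by \eqref{eq:equivalence}'', and that the repetitions in $\sigma\mapsto\rt(\af,\sigma(h))$ are counted by $\#(G_{\Delta,d}\cap\mathrm G_{\Delta,\overline h})$. Your coefficient-by-coefficient unwinding in $\Zb[\Cl^+(F)]$ is correct in spirit and has the virtue of exposing exactly where the content lies: your ``$\Rightarrow$'' direction is precisely the converse assertion in the paper's proof.

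There is, however, a genuine gap at this point, and your diagnosis of it is inverted. For an \emph{odd} prime $p\mid d$ multiplication by $\alpha$ annihilates all of $A_{\Delta,p}\cong\Zb/p\Zb$, so the fibre $\{\tilde h_p:\alpha\tilde h_p=\alpha h_p\}$ is the whole group, not merely $\{h_p,\sigma_p(h_p)\}=\{h_p,-h_p\}$; nothing in your argument --- nor in the paper's one-line appeal to \eqref{eq:equivalence} --- pins down $Q_p(h_p)$ at such $p$. So the difficulty is not confined to $p=2$; it is already present at every odd ramified prime dividing $\Nm(\alpha)$. In fact the lemma as stated seems to require the extra hypothesis $Q(h)\equiv -\Nm(\af)/\Delta\pmod\Zb$: without it, taking $\Delta=5$, $\af=\Oc_F$, $\alpha=5$, $h=0$ gives $\rt((5),0)=[\Oc_F]\neq0$ on the left while the right-hand side vanishes. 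This hypothesis is harmless in the applications (Propositions~\ref{prop:cchiprop} and \ref{prop:count}), where $\rt(\af,h)$ only enters through sums already supported on $Q(h)\equiv-\Nm(\af)/\Delta$; once it is imposed, your ``$\Rightarrow$'' becomes immediate from $Q(g_\bfrak)=Q(h)$ and Lemma~\ref{lemma:Qh}, and both arguments go through.
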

\begin{proof}
The choice of $d$ implies that $\alpha \sigma_{}(h) = \sigma_{}(\alpha h) = \alpha h$ for all $\sigma \in G_{\Delta, d}$. In fact, every $\tilde{h} \in A_\Delta$ with the property $\alpha \tilde{h} = \alpha h$ is of the form $\sigma_{\tilde{d}}(h)$ for some $\tilde{d} \mid d$ by \eqref{eq:equivalence}. 
On the other hand, $\rt(\af, h) = \rt(\af, \sigma(h))$ if and only if $\sigma(h) = h$ or $\sigma_{d_0}(h)$, i.e.\ $\sigma \in \mathrm{G}_{\Delta, \overline{h}}$. 
Therefore, we have an equality of both sides.
\end{proof}

Finally, recall that the genus field $F_\mathrm{gen}$ of $F$ is the maximal abelian, unramified extension of $F$ that is obtained from the composite of an abelian extension $K/\Qb$ and $F$. By class field theory, $\Gal(F_\mathrm{gen}/F)$ is identified as a quotient of $\Cl^+(F)$, and a genus character is a character of $\Cl^+(F)$ that factors through this quotient. We can also view it as a homomorphism from $\Zb[\Cl^+(F)]$ to $\Zb$.
If we factor $\Delta$ into $\Delta_1 \cdot \Delta_2$ with $\Delta_1, \Delta_2$ co-prime, fundamental discriminants, then there is a genus character $\chi_{\Delta_1, \Delta_2}$ on $\Cl^+(F)$ corresponding to it defined by
\begin{equation}
  \label{eq:chi}
  \chi_{\Delta_1, \Delta_2}(\Af) = \kro{\Delta_1}{A} = \kro{\Delta_2}{A}
\end{equation}
with $\Af = [\af]$ and $A = \Nm(\af)$ relatively prime to $\Delta$. 
 Note that for any $\chi_{\Delta_1, \Delta_2}$, we have
\begin{equation}
  \label{eq:minus}
  \chi_{\Delta_1, \Delta_2}([\df]) = \sgn(\Delta_1) = \sgn(\Delta_2).
\end{equation}
By genus theory for quadratic field, these are all the genus characters of $F$.
We call a genus character $\chi$ even, resp.\ odd, if $\chi([\df])$ is positive, resp.\ negative.
There exists an odd genus character if and only if $\Delta$ is divisible by a prime congruent to $3$ modulo $4$.

\section{Deformation of Theta Integral}
\label{sec:deform}
\subsection{Hecke's Cusp Form}
\label{subsec:Hecke}
We continue using the notation from the previous section and set $(L, Q) := (\af, \Nm/A)$ to be an even integral lattice of signature $(1, 1)$ for a fractional $\Oc_F$-ideal $\af \subset F$ with norm $A$.

By identifying $F \otimes \Rb$ with $\Rb^2$ through the embedding $F \subset \Rb$ and its Galois conjugate, we can view the Grassmannian $\Dc_F$ as the parabola, whose components are parametrized by $\Rb$ via $w \mapsto (e^{w}, e^{-w})$ and $w \mapsto (-e^{w}, -e^{-w})$ respectively. We identify $\Rb$ with the connected component of $\Dc_F$ in the first quadrant. 
 Each $w \in \Rb$ gives an isometry between $L \otimes \Rb = \Rb^2$ and $\Rb^{1, 1}$ by sending $(\lambda_1, \lambda_2)$ to $(\lambda_1 e^{-w} + \lambda_2 e^w, \lambda_1 e^{-w} - \lambda_2 e^w)/(\sqrt{2A})$.
The generator $\varep_\Delta$ of $\Gamma_\Delta$ acts on $\Rb$ by sending $w$ to $w + \log \varep_\Delta$, and $dw$ is the invariant measure.

In \cite{Hecke26}, Hecke constructed a holomorphic cusp form $\vartheta_\af \in S_{1, \rho_\af}$ by integrating the theta kernel
\begin{equation}
  \label{eq:Thetaaf}
  \Theta_\af(\tau, w) := \sqrt{v} \sum_{h \in A_\af} \ef_h \sum_{\lambda \in \af + h} \frac{\lambda e^{-w} + \lambda' e^w}{\sqrt{A}} \ebf \lp 
\frac{(\lambda e^{-w} + \lambda' e^w)^2 }{4A} \tau 
- \frac{(\lambda e^{-w} - \lambda' e^w)^2 }{4A} \overline{\tau} 
\rp
\end{equation}
against the constant function on $\Gamma_\Delta \backslash \Rb$.
Here, $\Theta_\af(\tau, w)$ equals $\Theta_L(\tau, w; p)$ with the polynomial $p:\Rb^{1, 1} \to \Rb$ given by $p(x_1, x_2) := \sqrt{2} x_1$.
He also explicitly calculated the Fourier expansion of $\vartheta_\af$, which is given as follows (see Satz 3 and Satz 4 in \cite{Hecke26}).

\begin{prop}
  \label{prop:vartheta}
Let $\af, L, Q, \Gamma_\Delta$ be as above. For $h \in A_\af$, the following function
\begin{equation}
  \label{eq:varthetah}
  \begin{split}
      \vartheta_{\af, h}(\tau) &:= \int_{\Gamma_\Delta \backslash \Rb}  \Theta_{\af, h} (\tau, w; p) dw = 
\sum_{m \in \Qb, m > 0} c_\af(m, h) q^m, \\
c_\af(m, h) &:= \sum_{\lambda \in L(m, h)} \sgn(\lambda) = \sum_{\begin{subarray}{c} \lambda \in \langle \varep_\Delta \rangle \backslash \af + h\\ \Nm(\lambda) = A m\end{subarray}} \sgn(\lambda), \text{ when } m > 0,
  \end{split}
\end{equation}
is the $\ef_h$-component of a cusp form $\vartheta_\af(\tau) \in S_{1, \rho_\af}$.
\end{prop}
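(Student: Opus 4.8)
This statement is Hecke's theorem \cite{Hecke26} recast in the language of the Weil representation $\rho_\af$, so the plan is to follow his computation while keeping track of the $\Cb[A_\af]$-valued structure. \emph{Modularity.} By construction $\Theta_\af(\tau, w)$ is the kernel $\Theta_L(\tau, w; p)$ of \eqref{eq:ThetaL} for the signature $(1,1)$ lattice $(L, Q) = (\af, \Nm/A)$ and the polynomial $p(x_1, x_2) = \sqrt{2}\, x_1$, which is homogeneous of bidegree $(m^+, m^-) = (1, 0)$; hence the associated weight from \eqref{eq:varphi2} is $k = \tfrac{n^+ - n^-}{2} + m^+ - m^- = 1$, and $\Theta_\af(\cdot, w) \in \Ac_{1, \rho_\af}$ for each fixed $w$ by \cite[Theorem 4.1]{Borcherds98}. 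Since the discriminant kernel $\Gamma_\Delta = \Gamma_L = \langle \varep_\Delta \rangle$ acts on the component $\Rb \subset \Dc_F$ by the translation $w \mapsto w + \log \varep_\Delta$, the quotient $\Gamma_\Delta \backslash \Rb$ is compact; the defining series converges locally uniformly in $w$ by Gaussian decay, so $\vartheta_\af(\tau)$ is a well-defined continuous function, and integrating the weight-$1$ slash relation over the compact $\Gamma_\Delta \backslash \Rb$ gives $\vartheta_\af \in \Ac_{1, \rho_\af}$.

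\emph{Fourier expansion.} Next I would unfold the $w$-integral. For $h \in A_\af$, the group $\langle \varep_\Delta \rangle$ acts on the coset $\af + h$ preserving $\Nm$, freely away from $\lambda = 0$ (which contributes nothing because $p$ is linear). Using the $\Gamma_\Delta$-invariance of the integrand together with Gaussian bounds to justify Fubini, one obtains
\begin{equation*}
\vartheta_{\af, h}(\tau) = \sqrt{v} \sum_{\lambda \in \langle \varep_\Delta \rangle \backslash (\af + h)} \int_{-\infty}^{\infty} \frac{\lambda e^{-w} + \lambda' e^{w}}{\sqrt{A}}\, \ebf\!\left( \frac{(\lambda e^{-w} + \lambda' e^{w})^2 \tau - (\lambda e^{-w} - \lambda' e^{w})^2 \overline{\tau}}{4 A} \right) dw.
\end{equation*}
Writing the exponent as $\tfrac{1}{4A}\bigl((\xi^2 - \eta^2)u + i(\xi^2 + \eta^2)v\bigr)$ with $\xi = \lambda e^{-w} + \lambda' e^{w}$, $\eta = \lambda e^{-w} - \lambda' e^{w}$ and $\xi^2 - \eta^2 = 4\Nm(\lambda)$, I would split into two cases. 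If $\Nm(\lambda) \le 0$ the summand vanishes: it is identically zero when $\lambda = 0$, and for $\Nm(\lambda) < 0$ the substitution $w = w_0 + t$, with $w_0$ centering the minimum of $\xi^2 + \eta^2$, turns the integrand into an odd function of $t$. If $\Nm(\lambda) = Am > 0$, the same recentering followed by $\zeta = \sinh(t)$ reduces the integral to a single Gaussian integral, evaluating to $\sgn(\lambda)\, v^{-1/2} q^{m}$ (so the prefactor $\sqrt{v}$ cancels); here $\sgn(\lambda) = \sgn(\lambda')$ is well-defined on $\langle \varep_\Delta \rangle$-orbits because $\varep_\Delta = (\varep_F^+)^2$ is totally positive. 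Collecting terms yields $\vartheta_{\af, h}(\tau) = \sum_{m > 0} c_\af(m, h) q^m$ with $c_\af(m, h) = \sum_{\lambda \in L(m, h)} \sgn(\lambda)$, and $L(m, h) = \langle \varep_\Delta \rangle \backslash \{ \lambda \in \af + h : \Nm(\lambda) = Am \}$ is finite by Dirichlet's unit theorem, which also secures absolute convergence.

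\emph{Conclusion.} For each $\tau \in \Hb$ this exhibits $\vartheta_{\af, h}(\tau)$ as a convergent sum of holomorphic functions supported on $m > 0$, so $\vartheta_\af$ is holomorphic and vanishes at $\infty$; since $\SL_2(\Zb)$ has a single cusp, $\vartheta_\af \in S_{1, \rho_\af}$. The only step that is genuine work rather than bookkeeping is the combination of the Fubini justification, the elementary but delicate evaluation of the archimedean integral, and the sign accounting; this is precisely the content of Satz 3 and Satz 4 of \cite{Hecke26}, which one may simply cite, the contribution here being only the translation into the vector-valued Weil-representation setup.
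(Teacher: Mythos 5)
Your proof is correct and takes precisely the approach the paper itself relies on: the paper does not reprove Proposition 2.1 but cites Satz 3 and Satz 4 of Hecke's 1926 work, and what you have written is exactly that computation translated into the vector-valued Weil-representation framework. The modularity step via $\cite[Theorem 4.1]{Borcherds98}$ and integration over the compact quotient $\Gamma_\Delta \backslash \Rb$, the unfolding with the observation that $\langle \varep_\Delta\rangle$ acts freely on $\af + h \setminus \{0\}$ and preserves signs because $\varep_\Delta = (\varep_F^+)^2$ is totally positive, the recentering at $w_0$ with $e^{2w_0} = |\lambda/\lambda'|$ to make the integrand odd when $Q(\lambda) < 0$ and to reduce to a Gaussian when $Q(\lambda) > 0$, and the resulting evaluation to $\sgn(\lambda)\,q^m/\sqrt{v}$ cancelling the $\sqrt{v}$ prefactor — all of this checks out. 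One tiny point worth tightening: the coefficient bound needed for local uniform convergence (and for holomorphy of the resulting $q$-series) is really a bound on the number of ideals of norm $Am$, which is $O_\epsilon(m^\epsilon)$, rather than Dirichlet's unit theorem per se; the unit theorem only gives finiteness of each $L(m,h)$. But this is a bookkeeping remark, not a gap, and the student's argument is sound.
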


\begin{rmk}
  \label{rmk:1}
Let $L^-$ denote the lattice $L$ with $-Q$ as the quadratic form and
\begin{equation}
  \label{eq:Theta-}
  \Theta^-_\af(\tau, w)  =
\sqrt{v} \sum_{h \in A_\af} \ef_h \sum_{\lambda \in \af + h} \frac{\lambda e^{-w} - \lambda' e^w}{\sqrt{A}} \ebf \lp 
\frac{(\lambda e^{-w} - \lambda' e^w)^2 }{4A} \tau 
- \frac{(\lambda e^{-w} + \lambda' e^w)^2 }{4A} \overline{\tau} 
\rp
\end{equation}
 the corresponding theta kernel.
The same construction above applies and produces a cusp form $\vartheta_\af^- \in S_{1, \overline{\rho_{\af}}}$. If we denote its Fourier coefficients by $c^-_{\af}(m ,h)$, then $c^-_\af(m, h) = c_{\af (\mu)}(m,  \mu h)$ for all $m \in \Qb, h \in \af\df^{-1}/\af$, and any relatively prime to $\df$ element $\mu \in F^\times$ satisfying $\mu > 0$ and $\mu' < 0$.
\end{rmk}

\begin{rmk}
  \label{rmk:-h}
By changing $\lambda$ to $-\lambda$, we have $\vartheta_{\af, -h} = -\vartheta_{\af, h}$ for any $\af$ and $h \in A_\af$. 
\end{rmk}

For each fractional ideal $\bfrak \in I_\df$, recall the equality $A_{\Nm^-(\bfrak)} = A_\Delta$ in \eqref{eq:identify}.
From the discussion there, we know that this only depends on the class $\Bf$ of $\bfrak$ in $\Cl^+(F)$.
Therefore, we can define
\begin{equation}
  \label{eq:varthetaAf}
  \vartheta_{\Bf} := \vartheta_{\Nm^-(\bfrak)}, ~   \vartheta^-_{\Bf} := \vartheta^-_{\Nm^-(\bfrak)}
\end{equation}
It is then easy to check that
 \begin{equation}
   \label{eq:rmk3}
 \vartheta_{\Bf [\df]} = \sigma_{\Delta}( \vartheta_{\Bf}) = -\vartheta_{\Bf}, ~\vartheta^-_{\Bf [\df]} = - \vartheta^-_{\Bf},
 \end{equation}
where $\sigma_\Delta$ is the automorphism defined in \eqref{eq:sigmad}.
Let $\chi$ be a genus character of $F$. By summing $\vartheta_{\Bf}^-$ over the classes $\Bf \in \Cl^+(F)$ with the twist by $\chi$, we can now define a cusp form that only depends on $\chi$
\begin{equation}
  \label{eq:vartheta12}
  \vartheta_{\chi}(\tau) :=  \sum_{\Bf \in \Cl^+(F)} \chi(\Bf) \vartheta_{\Bf}^-(\tau) = \sum_{h \in A_\Delta} \ef_h \sum_{m \in \Qb, \, m > 0} c_\chi(m, h) q^m \in S_{1, \overline{\rho_\Delta}}.
\end{equation}
By \ref{rmk:-h}, we know that the coefficient $c_\chi(m, h)$ satisfies 
\begin{equation}
  \label{eq:conj}
  c_\chi(m, h) = - c_\chi(m, -h) = - c_\chi(m, h')
\end{equation}
for all $m \in \Qb$ and $h \in A_\Delta$.
The form $\vartheta_\chi(\tau)$ can be produced by integrating the theta kernel
\begin{equation}
  \label{eq:Theta12}
  \Theta_{\chi}(\tau, w; S_F) := \sum_{\bfrak \in S_F} \chi(\bfrak) \Theta_{\Nm^-(\bfrak)}^-(\tau, w) \in \Ac_{1, \overline{\rho_\Delta}},
\end{equation}
with respect to $dw$ over $\Gamma_\Delta \backslash \Rb$, where $S_F \subset I_\df$ is any set of representatives of classes in $\Cl^+(F)$.
Even though $\vartheta_{\chi}(\tau)$ does not depend on the choice of $S_F$, both $\Theta_\chi(\tau, w; S_F)$ and the following theta function do
\begin{equation}
  \label{eq:theta}
  \theta_{\chi}(\tau; S_F) :=  \Theta_{\chi}(\tau, 0; S_F) \in \Ac_{1, \overline{\rho_\Delta}}.
\end{equation}
The Fourier coefficient $c_\chi(m, h)$ can now be rewritten as follows.

\begin{prop}
  \label{prop:rewrite}
Let $\chi$ be a genus character. For $n \in  \Nb$ and $h \in A_\Delta$, the Fourier coefficient $c_\chi(m, h)$ of $\vartheta_\chi$ can be written as
\begin{equation}
  \label{eq:cchi1}
  c_\chi \lp \frac{n}{\Delta}, h \rp = 
  \begin{cases}
2 \sr_h \displaystyle\sum_{\begin{subarray}{c} \af \subset \Oc_F \text{},~ \Nm(\af) = n\end{subarray}} {\chi}(\rt(\af, h)),    & \text{if } n \in \Nm(F)_+ \text{ and } \chi \text{ is odd}, \\
0, & \text{otherwise.}
  \end{cases}
\end{equation}
where $\Nm(F)_+$, $\sr_h$ and $\rt(\af, h)$ are defined in \eqref{eq:NmFpm}, \eqref{eq:srh} and \eqref{eq:sqrt} respectively.
\end{prop}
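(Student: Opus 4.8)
The plan is to unwind the definition of $c_\chi(m,h)$ in \eqref{eq:vartheta12} and identify the summation over lattice vectors with a summation over ideals, then match the sign of a vector with the genus character and the sum $\sr_h$. Recall $\vartheta_\chi = \sum_{\Bf} \chi(\Bf) \vartheta_{\Bf}^-$, and by Remark \ref{rmk:1} the coefficient $c^-_\af(m,h) = c_{\af(\mu)}(m,\mu h)$ for a suitable $\mu$ with $\mu > 0 > \mu'$; alternatively one works directly with $\vartheta^-_\af$, whose coefficient is $\sum_{\lambda \in \langle \varep_\Delta\rangle \backslash \af + h, \, \Nm(\lambda) = -Am} \sgn(\lambda)$ (note the sign of the norm flips for $L^-$). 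First I would fix $h \in A_\Delta$ with $Q(h) = n/\Delta$, write $\af = \Nm^-(\bfrak)$ so that $L_\af$ has $Q_\af = \Nm$, and observe that a vector $\lambda \in \af + h$ with $\Nm(\lambda) = n/\Delta \cdot \Nm(\af)\cdot(\pm1)$ generates an integral ideal, giving a map from vectors to pairs (ideal $\mathfrak{c} = (\lambda)\af^{-1}$, something tracking $h$). The key bookkeeping is that summing over $\Bf \in \Cl^+(F)$ the contribution of $\bfrak$ produces exactly the ideals $\af'$ of the right norm, and the genus character value $\chi(\Bf)$ becomes $\chi(\rt(\af, h))$ after \eqref{eq:permute} relates $\rt(\af,\sigma_d(h))$ to $[\df_d]\cdot\rt(\af,h)$.

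The second main step is to handle the $\sr_h$ factor and the vanishing. The group $\SO(L_\af)/\Gamma_\Delta \cong (\Zb/2\Zb)^2$ acts on the vectors $\lambda$ of given norm in $\af + h$; quotienting by $\langle \varep_\Delta \rangle = \Gamma_\Delta$ rather than by all of $\SO(L_\af)$ means each $\SO$-orbit is counted $[\SO(L_\af):\Gamma_\Delta]/(\text{stabilizer})$ times, and crucially the sign $\sgn(\lambda)$ is constant on a $\Gamma_\Delta$-orbit but may change under $-1$ and under $\varep_F^+$. Summing $\sgn$ over the coset representatives of the stabilizer of $h$ inside $\SO(L_\af)/\Gamma_\Delta$ produces exactly $\sr_h$ as defined in \eqref{eq:srh}. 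When $\sr_h = 0$ (i.e.\ $2h = 0$, by Lemma \ref{lemma:shval} and Remark \ref{rmk:Delta0}) the signed count cancels and $c_\chi(n/\Delta,h) = 0$; this is consistent with \eqref{eq:conj} since then $h = -h = h'$ forces $c_\chi = -c_\chi$. The factor of $2$ in \eqref{eq:cchi1} comes from pairing $\lambda$ with $-\lambda'$ or from the two embeddings, to be pinned down by comparing normalizations; I would track it by writing out one small case.

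For the parity conditions: $c_\chi(m,h)$ is a sum of $\chi(\Bf)$ over classes, and $\vartheta^-_{\Bf[\df]} = -\vartheta^-_{\Bf}$ by \eqref{eq:rmk3}, so $c_\chi(m,h) = \chi([\df]) \cdot c_\chi(m,h)$ after reindexing $\Bf \mapsto \Bf[\df]$; hence $c_\chi \equiv 0$ unless $\chi([\df]) = 1$, i.e.\ unless $\chi$ is odd in the terminology fixed just before Section \ref{sec:deform} (recall $\chi$ is called odd when $\chi([\df]) < 0$ — wait, that is the opposite). I would double-check the sign convention: \eqref{eq:minus} gives $\chi_{\Delta_1,\Delta_2}([\df]) = \sgn(\Delta_1)$, and since $\Delta_1,\Delta_2 < 0$ in the arithmetic application, the relevant $\chi$ is the one with $\chi([\df]) = -1$, which is exactly the ``odd'' character, and the cancellation argument via $\vartheta^-_{\Bf[\df]} = -\vartheta^-_\Bf$ shows nonvanishing requires $\chi([\df]) = -1$. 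Similarly, $n \notin \Nm(F)_+$ forces no $\af$ of norm $n$ to be $\Nm^-$ of anything with the sign pattern needed, killing the sum. Assembling: for $n \in \Nm(F)_+$ and $\chi$ odd, $c_\chi(n/\Delta,h) = 2\sr_h \sum_{\Nm(\af)=n} \chi(\rt(\af,h))$.

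The main obstacle I expect is the precise orbit-counting and sign bookkeeping in the second step: correctly accounting for how $\SO(L_\af)/\Gamma_\Delta$ acts on vectors versus on the component $h$, ensuring the passage from ``$\langle\varep_\Delta\rangle\backslash$ vectors'' to ``ideals up to $\Cl^+(F)$'' is a clean bijection rather than a multi-to-one map with a correction factor, and getting the constant $2\sr_h$ exactly right rather than up to an ambiguous power of $2$. The ideal-theoretic translation (the $\rt(\af,h)$ combinatorics via \eqref{eq:sqrt} and \eqref{eq:permute}) is essentially set up by the earlier subsections, so once the dictionary ``vector $\lambda$ of norm $\pm Am$ in $\af + h$'' $\leftrightarrow$ ``ideal $\mathfrak{c}$ with a totally positive generator sitting in the right coset of $\df^{-1}$'' is made precise, the formula should fall out; I would verify the whole thing on the example $\Delta = 161$, $n$ small, to catch any stray factor or sign.
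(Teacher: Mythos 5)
Your plan mirrors the paper's proof: unwind $c_\chi(n/\Delta,h)$ from the definition \eqref{eq:vartheta12}, translate $\Gamma_\Delta$-orbits of $\lambda\in\Nm^-(\bfrak)+h$ with $\Nm(\lambda)=-n/\Delta$ into integral ideals $\af=(\sqrt{\Delta}\lambda)\Nm^-(\bfrak')$ of norm $n$, absorb the $\SO(L_\af)/\Gamma_\Delta$-bookkeeping into $\sr_h$, and then exploit the genus-character symmetry to produce the factor $2$ and the vanishing. Your reindexing shortcut ($\Bf\mapsto\Bf[\df]$ together with $\vartheta^-_{\Bf[\df]}=-\vartheta^-_\Bf$ from \eqref{eq:rmk3}) is an equivalent, slightly earlier way to see $c_\chi\equiv 0$ for even $\chi$ than the paper's use of $\rt(\af,-h)=[\df]\cdot\rt(\af,h)$ at the end, but you still need the main ideal-translation computation, which you correctly outline, to obtain the explicit formula for odd $\chi$. (One small slip you should note: $\sgn(\lambda)$ does not change under multiplication by the totally positive $\varep_F^+$, only under $-1$; this is in fact why $\sr_h=2$ rather than $0$ when $\varep_F^+$ fixes $h$.)
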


\begin{proof}
  We can use the definition of $\vartheta_\chi$ to write
  \begin{align*}
    c_\chi\lp \frac{n}{\Delta}, h \rp &= \sum_{[\bfrak] \in \Cl^+(F)} \chi(\bfrak) \sum_{\begin{subarray}{c} \lambda \in \Gamma_\Delta \backslash \Nm^-(\bfrak) + h \\ \Nm(\lambda) = -n/\Delta \end{subarray}} \sgn(\lambda)
= \sr_h \sum_{[\bfrak] \in \Cl^+(F)} \chi(\bfrak) \sum_{\begin{subarray}{c}(\mu) \subset \Nm^{-}(\bfrak) \\ \frac{\mu}{\sqrt{\Delta}} = h \in A_\Delta \\ \Nm(\mu) = n\end{subarray}} \sgn(\mu).
  \end{align*}
By the definition of $\Nm(F)_+$ in \eqref{eq:NmFpm}, it is clear that the sum above is empty if $n \not\in \Nm(F)_+$. Therefore, we restrict to the case $n \in \Nm(F)_+$, where $\Nm(\mu) = n$ implies $\Nm((\mu)) = n$. 
Using this and $\chi(\bfrak) = \chi(\bfrak')$, we can continue to write
  \begin{align*}
    c_\chi\lp \frac{n}{\Delta}, h \rp 
&= \sr_h \sum_{[\bfrak] \in \Cl^+(F)} \chi(\bfrak')
\lp 
 \sum_{\begin{subarray}{c}(\mu) \subset \Nm^{-}(\bfrak),~\Nm((\mu)) = n \\ \frac{\mu}{\sqrt{\Delta}} = h \in A_\Delta,~ \mu > 0\end{subarray}} 1 - 
 \sum_{\begin{subarray}{c}(\mu) \subset \Nm^{-}(\bfrak),~\Nm((\mu)) = n \\ \frac{\mu}{\sqrt{\Delta}} = h \in A_\Delta,~ \mu < 0\end{subarray}} 1
\rp\\
&= \sr_h \sum_{[\bfrak] \in \Cl^+(F)} \chi(\bfrak')
\lp 
 \sum_{\begin{subarray}{c}\af \subset \Oc_F,~\Nm(\af) = n \\ 
\af = (\mu)\Nm^-(\bfrak') \text{ with }\\ \frac{\mu}{\sqrt{\Delta}} = h \in A_\Delta\text{and } \mu > 0\end{subarray}} 1 
- 
 \sum_{\begin{subarray}{c}\af \subset \Oc_F,~\Nm(\af) = n \\ 
\af = (\mu)\Nm^-(\bfrak') \text{ with }\\ \frac{\mu}{\sqrt{\Delta}} = -h \in A_\Delta \text{and } \mu > 0\end{subarray}} 1 
\rp\\
&= \sr_h 
 \sum_{\af \subset \Oc_F,~\Nm(\af) = n }
\lp 
 \sum_{\begin{subarray}{c}
[\bfrak] \in \Cl^+(F)\\
\af = (\mu)\Nm^-(\bfrak') \text{ with }\\\frac{\mu}{\sqrt{\Delta}} = h \in A_\Delta\text{and } \mu > 0\end{subarray}}  \chi(\bfrak')
- 
 \sum_{\begin{subarray}{c}
[\bfrak] \in \Cl^+(F)\\
\af = (\mu)\Nm^-(\bfrak') \text{ with }\\\frac{\mu}{\sqrt{\Delta}} = -h \in A_\Delta \text{and } \mu > 0\end{subarray}}  \chi(\bfrak')
\rp\\
&= \sr_h 
\lp
 \sum_{\af \subset \Oc_F,~\Nm(\af) = n} {\chi}(\rt(\af, h)  )
- 
\sum_{\af \subset \Oc_F,~\Nm(\af) = n} {\chi}(\rt(\af, -h) )
\rp
\end{align*}
If $\chi$ is even, i.e.\ $\chi([\df]) = 1$, then $c_\chi(m, h)$ vanishes identically by \eqref{eq:permute} as $-h = \sigma_{\Delta_0}(h)$ and $[\df_{\Delta_0}] = [\df]$. 
This finishes the proof.
\end{proof}
From this proposition, we can deduce some the following properties of the coefficient $c_\chi(m, h)$. 
\begin{cor}
  \label{cor:vanishell}
Let $p \mid \Delta$ be a prime such that $\chi([\df_p]) = -1$ with $\df_p$ the unique prime ideal in $\Oc_F$ above $p$. 
Then $c_\chi(m, h) = 0$ for all $m \in \Qb$ and $h \in A_\Delta$ satisfying $p \mid \mathrm{d}(h)$, where $\mathrm{d}$ is the function defined in \eqref{eq:drm}.
\end{cor}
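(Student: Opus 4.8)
The plan is to reduce the vanishing statement to Proposition \ref{prop:rewrite} together with the defining property of the function $\mathrm{d}$ from \eqref{eq:drm}. First I would dispose of the trivial cases: if $\chi$ is even, or if $n \notin \Nm(F)_+$, then $c_\chi(m,h) = 0$ for all $h$ by the second case of \eqref{eq:cchi1}, so there is nothing to prove. Thus I may assume $\chi$ is odd and write $m = n/\Delta$ with $n \in \Nm(F)_+$, so that
$$
c_\chi\lp \tfrac{n}{\Delta}, h \rp = 2 \sr_h \sum_{\af \subset \Oc_F,\ \Nm(\af) = n} \chi(\rt(\af, h)).
$$
The task is now to show every summand vanishes when $p \mid \mathrm{d}(h)$.

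The key observation is that $p \mid \mathrm{d}(h)$ means, by \eqref{eq:drm}, that $\sigma_p \in \mathrm{G}_{\Delta, h}$, i.e.\ $\sigma_p$ fixes $h$. By \eqref{eq:permute}, applying the element $\sigma_p \in G_\Delta$ to the group-ring element $\rt(\af, h)$ gives
$$
\rt(\af, \sigma_p(h)) = [\df_p] \cdot \rt(\af, h),
$$
and since $\sigma_p(h) = h$ the left side is just $\rt(\af, h)$. Therefore $\rt(\af, h)$ is fixed by multiplication by $[\df_p]$ as an element of $\Zb[\Cl^+(F)]$. Applying the genus character $\chi$ (viewed as a ring homomorphism $\Zb[\Cl^+(F)] \to \Zb$) and using $\chi([\df_p]) = -1$ yields $\chi(\rt(\af,h)) = \chi([\df_p] \cdot \rt(\af,h)) = -\chi(\rt(\af,h))$, hence $\chi(\rt(\af,h)) = 0$. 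Summing over all $\af$ with $\Nm(\af) = n$ gives $c_\chi(n/\Delta, h) = 0$.

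I do not anticipate a genuine obstacle here; the only point requiring mild care is bookkeeping about which normalization of $\mathrm{d}$ and $\sigma_p$ is in force — specifically invoking \eqref{eq:drm} in the form "$\sigma_p$ fixes $h \iff p \mid \mathrm{d}(h)$" and checking that \eqref{eq:permute} is being applied to the correct element (it holds for every $\af$ and $h$, with no coprimality hypothesis, so the sum over $\af$ with $\Nm(\af) = n$ causes no trouble even when $\gcd(n, \Delta) > 1$). One could alternatively phrase the argument at the level of supports: $p \mid \mathrm{d}(h)$ forces $\mathrm{supp}(\rt(\af,h))$ to be a union of cosets of $\langle [\df_p]\rangle$, on each of which $\chi$ takes opposite signs and cancels; but the group-ring computation above is cleaner and I would present that.
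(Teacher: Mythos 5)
Your proof is correct and matches the paper's intended argument: the paper's one-line proof ("This easily follows from the proposition above and \eqref{eq:permute}") is exactly the reduction you spell out, namely that $p \mid \mathrm{d}(h)$ forces $\sigma_p(h) = h$, so \eqref{eq:permute} gives $\rt(\af,h) = [\df_p]\cdot\rt(\af,h)$, and applying $\chi$ with $\chi([\df_p]) = -1$ kills each term.
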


\begin{proof}
This easily follows from the proposition above and \eqref{eq:permute}.
\end{proof}
\begin{prop}
  \label{prop:cchiprop}
Let $\ell \in \Nb$ be a prime such that there is only one prime $\lf$ in $\Oc_F$ above it. 
Then
\begin{equation}
  \label{eq:cchiprop}
c_\chi \lp  \frac{\ell^2 n}{\Delta}, \ell h \rp = 
  \begin{cases}
    c_\chi \lp \frac{n}{\Delta},  h \rp , & \text{ if } \ell \nmid \Delta \text{ or }\ell \mid \gcd(\Delta, n), \\
    (1 + \chi([\lf])) c_\chi \lp \frac{n}{\Delta},  h \rp, & \text{ if } \ell\mid \Delta \text{ and } \ell \nmid n,
  \end{cases}
\end{equation}
for all $n \in \Zb$ and $h \in A_\Delta$.
\end{prop}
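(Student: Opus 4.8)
The plan is to reduce the identity to the explicit formula of Proposition~\ref{prop:rewrite} together with the multiplicativity Lemma~\ref{lemma:countmul}. First I would dispose of the degenerate cases: if $\chi$ is even then $\vartheta_\chi \equiv 0$ and both sides vanish; if $n \le 0$ or the indices are inadmissible the relevant Fourier coefficients are zero; and since $\ell^2 = \Nm(\ell)$ with $\ell \in \Qb^\times \subset F^\times$ one has $n \in \Nm(F)_+ \iff \ell^2 n \in \Nm(F)_+$, so by Proposition~\ref{prop:rewrite} both sides vanish unless $\chi$ is odd and $n \in \Nb \cap \Nm(F)_+$, which I assume from now on. Since $\lf$ is the unique prime of $\Oc_F$ above $\ell$, a valuation count at $\lf$ shows every integral ideal $\af$ with $\Nm\af = \ell^2 n$ is divisible by $(\ell)$ — using $(\ell)\Oc_F = \lf$ in the inert case and $(\ell)\Oc_F = \lf^2$ in the ramified case (including $\ell = 2$, where the different plays no role in the count) — so that $\af \mapsto (\ell)^{-1}\af$ is a bijection from the integral ideals of norm $\ell^2 n$ onto those of norm $n$ (when $\ell$ is inert with $\ord_\ell(n)$ odd both sets are empty and both coefficients are $0$). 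Combining this bijection with Proposition~\ref{prop:rewrite} gives
\[
c_\chi\!\lp \tfrac{\ell^2 n}{\Delta},\, \ell h \rp \;=\; 2\,\sr_{\ell h}\!\!\sum_{\Nm\bfrak = n}\! \chi\!\lp \rt((\ell)\bfrak,\, \ell h) \rp,
\]
so the whole problem comes down to comparing $\rt((\ell)\bfrak,\ell h)$ with $\rt(\bfrak,h)$ and $\sr_{\ell h}$ with $\sr_h$, with $\bfrak$ ranging over integral ideals of norm $n$.

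If $\ell \nmid \Delta$, then $\ell$ is coprime to $\df$, so \eqref{eq:equiv1} yields $\rt((\ell)\bfrak,\ell h) = \rt(\bfrak,h)$, while multiplication by $\ell$ is an automorphism of $A_\Delta$ commuting with every $\sigma_p$, hence $\mathrm{d}(\ell h) = \mathrm{d}(h)$ and $\sr_{\ell h} = \sr_h$ by Lemma~\ref{lemma:shval}; this gives $c_\chi(\ell^2 n/\Delta,\ell h) = c_\chi(n/\Delta,h)$, the asserted identity in the first case. If $\ell \mid \Delta$, then $\ell$ is not coprime to $\df$ and I would invoke Lemma~\ref{lemma:countmul} with $\alpha = \ell$: the minimal $d \in G_\Delta$ with $\gcd(\ell^2,\Delta/d) = 1$ has $\ell$ as its only prime factor, so $\sigma_d = \sigma_\ell$ and
\[
\rt((\ell)\bfrak,\, \ell h) \;=\; \frac{\rt(\bfrak,h) + \rt(\bfrak,\sigma_\ell(h))}{\#\!\lp \{1,\sigma_\ell\}\cap\mathrm{G}_{\Delta,\overline{h}} \rp}.
\]
By \eqref{eq:permute}, $\rt(\bfrak,\sigma_\ell(h)) = [\df_\ell]\,\rt(\bfrak,h) = [\lf]\,\rt(\bfrak,h)$, so applying $\chi$ the numerator becomes $(1+\chi([\lf]))\,\chi(\rt(\bfrak,h))$. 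Summing over $\bfrak$, and using $\sr_h = 0 \Rightarrow \Delta_0 \mid \mathrm{d}(h) \mid \mathrm{d}(\ell h) \Rightarrow \sr_{\ell h} = 0$ to handle the trivial vanishing case, the proposition for $\ell \mid \Delta$ reduces to a purely combinatorial comparison of $\sr_{\ell h}$, $\sr_h$ and the stabilizer index $\#(\{1,\sigma_\ell\}\cap\mathrm{G}_{\Delta,\overline h})$, which I would settle by cases.

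The case $\ell \mid n$ (so $\ell \mid \gcd(\Delta,n)$) is routine: by Lemma~\ref{lemma:Qh} we have $\ell \mid \mathrm{d}(h)$, hence $\sigma_\ell(h) = h$, the index is $2$, and $\mathrm{d}(\ell h) = \mathrm{lcm}(\mathrm{d}(h),\ell) = \mathrm{d}(h)$ by \eqref{eq:lcm}, so $\sr_{\ell h} = \sr_h$ and the two factors of $2$ cancel, giving $c_\chi(\ell^2 n/\Delta,\ell h) = c_\chi(n/\Delta,h)$; if moreover $\chi([\lf]) = -1$, Corollary~\ref{cor:vanishell} forces both coefficients to $0$ (as $\ell \mid \mathrm{d}(h)$ and $\ell \mid \mathrm{d}(\ell h)$), still consistent with the first case. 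The main obstacle is the remaining case $\ell \mid \Delta$, $\ell \nmid n$, where $\ell \nmid \mathrm{d}(h)$ and $\mathrm{d}(\ell h) = \ell\cdot\mathrm{d}(h)$: here $\sigma_\ell(h) \ne h$, the index is $2$ precisely when $\sigma_\ell(h) = \sigma_{d_0}(h)$ — which by the group structure of $G_\Delta$ and \eqref{eq:drm} forces $\ell \mid d_0$ — and one must check, via the divisibility characterization of $\sr$ in Lemma~\ref{lemma:shval} (comparing whether $\Delta_0 \mid \mathrm{d}(\ell h)$ and $d_0 \mid \mathrm{d}(\ell h)$ hold against their analogues for $h$), that $\sr_{\ell h} = \sr_h\cdot\#(\{1,\sigma_\ell\}\cap\mathrm{G}_{\Delta,\overline h})$, which then yields the claimed $c_\chi(\ell^2 n/\Delta,\ell h) = (1+\chi([\lf]))\,c_\chi(n/\Delta,h)$. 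The one situation where this naive identity fails is $\sigma_\ell(h) = -h$, i.e.\ $\mathrm{d}(h)$ contains every prime of $\Delta$ except $\ell$; there \eqref{eq:permute} applied both to $\sigma_\ell$ and to $\sigma_\Delta$ (which sends $h \mapsto -h$, Remark~\ref{rmk:Delta0}) forces $[\lf] = [\df_\Delta]$ on the support of $\rt(\bfrak,h)$, whence oddness of $\chi$ — through $\chi([\df]) = -1$ in \eqref{eq:minus}, the class-group relations among $[\df]$, $[\df_2]$ and $[\df_\Delta]$, and Corollary~\ref{cor:vanishell} (applied at a suitable prime, possibly $2$) — forces $c_\chi(n/\Delta,h) = 0$, so both sides again vanish. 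I expect this bookkeeping — reconciling the stabilizer index from Lemma~\ref{lemma:countmul} with the values $0,1,2$ of $\sr_\bullet$, with Corollary~\ref{cor:vanishell} as the vanishing fallback and the usual extra care at $p = 2$ — to be the only genuinely delicate step.
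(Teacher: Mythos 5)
Your proposal matches the paper's own proof step for step: apply Proposition~\ref{prop:rewrite} to reduce to the case $\chi$ odd, $n \in \Nm(F)_+$, use Lemma~\ref{lemma:countmul} (via the bijection $\af \mapsto (\ell)^{-1}\af$) to pass from $\rt((\ell)\bfrak, \ell h)$ to $\rt(\bfrak, h)$, pull out the factor $1 + \chi([\lf])$ via \eqref{eq:permute}, and then reconcile $\sr_{\ell h}$, $\sr_h$, and the stabilizer index $\#(G_{\Delta,\ell}\cap\mathrm{G}_{\Delta,\overline{h}})$ by a case analysis using Lemma~\ref{lemma:shval} and Corollary~\ref{cor:vanishell}, which is exactly the content of the paper's Cases 1--5. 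One small slip to fix in the final case: the involution sending $h\mapsto -h$ is $\sigma_{\Delta_0}$ (Remark~\ref{rmk:Delta0}), not $\sigma_\Delta$ (these differ by $\sigma_2$ when $\ord_2(\Delta)=2$), so the forced relation on the support is $[\lf] = [\df_{\Delta_0}]$; one then uses $[\df_{\Delta_0}] = [\df]$ in $\Cl^+(F)$ (the discrepancy at $2$ is absorbed since $[\df_2]^2 = 1$) to get $\chi([\lf]) = -1$, after which either $1+\chi([\lf]) = 0$ kills the right-hand side, or the assumption $\chi([\lf]) = 1$ forces every $\rt(\bfrak,h)$ to vanish and hence $c_\chi(n/\Delta,h)=0$ as you claimed.
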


\begin{proof}
By Prop.\ \ref{prop:rewrite}, we can suppose that $n \in \Nm(F)_+$ and $\chi$ is odd, otherwise both sides are identically zero.
Applying Prop.\ \ref{prop:rewrite} gives us
\begin{align*}
c_\chi  \lp \frac{\ell^2 n}{\Delta}, \ell h \rp &= 
2 \sr_{\ell h}
\sum_{\begin{subarray}{c} \ell \af \subset \Oc_F,~ \Nm(\ell \af) =  \ell^2 n \end{subarray}} {\chi}(\rt(\ell \af, \ell h))
\end{align*}
since there is only one prime in $\Oc_F$ above $\ell$.
If $\ell$ is inert, i.e.\ $\ell \nmid \Delta$, then $\sr_{\ell h} = \sr_h$ and $\rt(\ell \af, \ell h) = \rt(\af, h)$ by Lemma \ref{lemma:countmul}, which completes the proof. 
Suppose $\ell$ is ramified, i.e.\ $\ell \mid \Delta$, then Lemma \ref{lemma:countmul} implies that 
\begin{align*}
c_\chi  \lp \frac{\ell^2 n}{\Delta}, \ell h \rp &= 
\frac{2 \sr_{\ell h}}{\#(G_{\Delta, \ell} \cap \mathrm{G}_{\Delta, \overline{h}})}
\sum_{\begin{subarray}{c} {\af} \subset \Oc_F\\ \Nm({\af}) = n \end{subarray}}
\sum_{\sigma \in G_{\Delta, \ell}} \chi(\rt(\af, \sigma(h))).
\end{align*}
Let $d_0$ and $\Delta_0$ be as in Lemma \ref{lemma:shval}.
We have now several cases
\begin{description}
\item[Case 1] $\ell \mid \mathrm{d}(h)$. 
\item[Case 2] $\ell \nmid \mathrm{d}(h)$ and $\chi([\lf]) = -1$.
\item[Case 3] $\ell \nmid \mathrm{d}(h)$, $\chi([\lf]) = 1$ and $\Delta_0 \mid \mathrm{d}(\ell h)$. 
\item[Case 4(a)] $\ell \nmid \mathrm{d}(h)$, $\chi([\lf]) = 1$, $\Delta_0 \nmid \mathrm{d}(\ell h)$ and $d_0 \mid \mathrm{d}(h)$.
\item[Case 4(b)] $\ell, d_0 \nmid \mathrm{d}(h)$, $\chi([\lf]) = 1$, $\Delta_0 \nmid \mathrm{d}(\ell h)$ and $d_0 \mid \mathrm{d}(\ell h)$.
\item[Case 5] Otherwise, i.e.\ $\ell \nmid \mathrm{d}(h)$, $\chi([\lf]) = 1$ and $d_0, \Delta_0 \nmid \mathrm{d}(\ell h)$.
\end{description}
For the elements $\rt(\af, h), \rt(\af, \sigma_\ell(h))$ to be non-zero, we can suppose that $\ell \mid n$ in Case 1 and $\ell \nmid n$ for the other cases.
In Case 1, the proof of the inert case carries through since $\sr_{\ell h} = \sr_{h}$ and $G_{\Delta, \ell} \cap \mathrm{G}_{\Delta, \overline{h}} = G_{\Delta, \ell}$.
For Case 2, the left hand side vanishes by Corollary \ref{cor:vanishell}, and equals the right hand side.
For Case 3, since $\chi$ is odd, i.e.\ $\chi([\df]) = -1$, there is a prime $p \mid \Delta_0$ such that $\chi([\df_p]) = -1$, which implies that $p \neq \ell$.
Therefore, $p \mid \mathrm{d}(h)$ and both sides vanish again by Corollary \ref{cor:vanishell}.

For the other cases, it suffices to establish the equality
$$
\frac{2 \sr_{\ell h}}{\#(G_{\Delta, \ell} \cap \mathrm{G}_{\Delta, \overline{h}})} 
= 2 \mathrm{s}_h
$$
and the rest follows as in the previous cases.
In Case 4(a), we have $\mathrm{s}_{\ell h} = \mathrm{s}_h = 2$ from Lemma \ref{lemma:shval}. The conditions $\ell \nmid \mathrm{d}(h)$ and $d_0 \mid \mathrm{d}(h) \mid \mathrm{d}(\ell h)$ together imply that $\sigma_\ell(h) \neq h = \sigma_{d_0}(h)$, i.e.\ $G_{\Delta, \ell} \cap \mathrm{G}_{\Delta, \overline{h}}$ is trivial. 
In Case 4(b), Lemma \ref{lemma:shval} again implies that $\mathrm{s}_{\ell h} = 2 \mathrm{s}_h = 2$.
On the other hand, we have $\ell h = \ell \sigma_{d_0}(h) = \ell \sigma_\ell(h)$, which means $h, \sigma_{d_0}(h)$ and $\sigma_\ell(h)$ all have the same $p$-component for $p \neq \ell$. Since $\ell \nmid \mathrm{d}(h)$ implies that $h \neq \sigma_\ell(h)$, we must have $\sigma_{d_0}(h) = h$ or $\sigma_\ell(h)$ by \eqref{eq:equivalence}. The former cannot happen precisely since $d_0 \nmid \mathrm{d}(h)$. Therefore $\sigma_{d_0}(h) = \sigma_\ell(h)$ and $\sigma_\ell \in \mathrm{G}_{\Delta, \overline{h}}$ by definition.
In the last case, we again have $\mathrm{s}_{\ell h} = \mathrm{s}_h$ and $G_{\Delta, \ell} \cap \mathrm{G}_{\Delta, \overline{h}}$ being trivial. 
This finishes the proof.
\end{proof}

\subsection{Deformed Integral}
Let $(L, Q) = (\af, \Nm/A)$ as in the previous section.
In \cite{CL20}, we considered the following deformed integral
\begin{equation}
  \label{eq:hvartheta}
  \hat{\vartheta}_{\af}(\tau) := \int_{0}^{\log \varep_\Delta}  \Theta_{\af} (\tau, w) w dw  \in \Ac_{1, \rho_{\af}}.
\end{equation}
Note that this function is real-analytic on $\Hb$ with exponential decay near the cusp.
Furthermore, it depends on the choice of the fundamental domain.
In Prop.\ 5.5 in \cite{CL20}, we have calculated its Fourier expansion, which we recall here.
For each $\lambda \in F^\times$, denote 
\begin{equation}
\label{eq:r}
r(\lambda) := \left| \frac{\lambda}{\lambda'} \right|.
\end{equation}
For each $\Gamma_\Delta$-orbit $\Lambda \subset F^\times$, choose a representative $\lambda_0 \in \Lambda$ such that $1 \le r(\lambda_0) < \varep_\Delta^2$. 
Now define
\begin{equation}
  \label{eq:a}
  a(\Lambda) := 
  \begin{cases}
    \sgn(\lambda_0) \log r(\lambda_0), & r(\lambda_0) \neq 1, \\
-\log \varepsilon_\Delta, & r(\lambda_0) = 1.
  \end{cases}
\end{equation}
These are the ``holomorphic'' part coefficients of $\hat{\vartheta}_{\af, h}$ with $h \in \af\df^{-1}/\af$. For the non-holomorphic part, we first define incomplete Gamma function 
\begin{equation}
  \label{eq:gammas}
  \Gamma(s, x) := \int^\infty_x e^{-t} t^s \frac{dt}{t}.
\end{equation}
for $s \in \Cb$ and $x > 0$. Then we can define the following non-holomorphic function on $\Hb$
\begin{equation}
  \label{eq:nonhol}
  \begin{split}
    \tilde{\vartheta}_{\af, h}^*(\tau) &:= \sum_{\Lambda \in \Gamma_\Delta \backslash (\af + h), Q(\Lambda) < 0} \sgn(\Lambda) \Gamma(0, -4\pi Q(\Lambda) v) q^{Q(\Lambda)},  \\
\tilde{\Theta}^*_{\af, h}(\tau) &:= \sum_{\lambda \in \af + h} \frac{\sgn(\lambda - \lambda')}{\sqrt{\pi}} \Gamma\lp \half,  \frac{\pi v | \lambda - \lambda'|^2}{A}  \rp q^{ Q(\lambda)}.
  \end{split}
\end{equation}
Furthermore, under the lowering operator $L_{\tau}$, these functions become modular and satisfy
\begin{equation}
  \label{eq:L_nonhol}
  L_\tau \tilde{\vartheta}_{\af, h}^*(\tau) = - v \overline{\vartheta^-_{\af, h}(\tau)}, \;
L_\tau \tilde{\Theta}^*_{\af, h}(\tau) = - v^{} \overline{\Theta^-_{\af, h}(\tau, 0)}.
\end{equation}
They turn out to form the ``non-holomorphic'' part of $\hat{\vartheta}_{\af, h}(\tau)$.
\begin{prop}[Prop.\ 5.1 in \cite{CL20}]
  \label{prop:5.1}
The function $\hat{\vartheta}_{\af}(\tau) = \sum_{h \in \af\df^{-1}/\af} \ef_h \hat{\vartheta}_{\af, h}(\tau)$ has the Fourier expansion
\begin{equation}
  \label{eq:hvarfe}
  2 \hat{\vartheta}_{\af, h}(\tau) =
  \sum_{m \in \Qb_{> 0}} \tilde{c}_\af(m, h) q^m
  - \tilde{\vartheta}^*_{\af, h}(\tau) - \log \varep_\Delta \cdot \tilde{\Theta}^*_{\af, h}(\tau),
\end{equation}
where $\tilde{c}_\af(m, h) := \sum_{\Lambda \in \Gamma_\Delta \backslash L(m, h)} a(\Lambda)$.
\end{prop}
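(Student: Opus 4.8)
The plan is to evaluate the integral \eqref{eq:hvartheta} by unfolding over $\Gamma_\Delta$ and reading off the three pieces. Write $P := \log\varep_\Delta$, and for $\lambda \in \af + h$ put $t(\lambda, w) := \tfrac{1}{A}\big(\lambda^2 e^{-2w} + (\lambda')^2 e^{2w}\big)$ and
\[
g_\lambda(\tau, w) := \sqrt{v}\,\frac{\lambda e^{-w} + \lambda' e^{w}}{\sqrt{A}}\, e^{2\pi i Q(\lambda) u}\, e^{-\pi v\, t(\lambda, w)},
\]
so that $\Theta_{\af, h}(\tau, w) = \sum_{\lambda \in \af + h} g_\lambda(\tau, w)$ by \eqref{eq:Thetaaf}. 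Multiplication by $\varep_\Delta$ translates $w$ by $P$, i.e.\ $g_{\varep_\Delta^n \lambda}(\tau, w) = g_\lambda(\tau, w - nP)$; grouping the lattice sum by $\Gamma_\Delta$-orbit $\Lambda$ with representative $\lambda_0$ (as fixed before \eqref{eq:a}) and unfolding gives
\[
\int_0^P \Big( \sum_{n \in \Zb} g_{\varep_\Delta^n \lambda_0}(\tau, w) \Big)\, w\, dw = \int_\Rb g_{\lambda_0}(\tau, s)\,\big( s - P\lfloor s/P \rfloor \big)\, ds ,
\]
since on $[nP, (n+1)P)$ the weight $w$ becomes $s - nP$ after $s = w - nP$. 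The crux is then to split the sawtooth weight $s - P\lfloor s/P \rfloor$ into its linear part $s$ and its $P$-periodic step part $-P\lfloor s/P \rfloor$; the former will produce the holomorphic term $\tfrac12\tilde{c}_\af(m, h)\, q^m$ together with (for $Q(\Lambda) < 0$) the term $-\tfrac12\tilde{\vartheta}^*_{\af, h}$, and the latter will reassemble into $-\tfrac12\log\varep_\Delta \cdot \tilde{\Theta}^*_{\af, h}$.

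For the linear part I would pass to the geodesic coordinate $r$ centred at the critical point $s_0 := \tfrac12\log|\lambda_0/\lambda_0'| = \tfrac12\log r(\lambda_0)$ of $w \mapsto t(\lambda_0, w)$, where $t(\lambda_0, s_0 + r) = 2|Q(\lambda_0)|\cosh 2r$ is even in $r$ while $\tfrac{1}{\sqrt{A}}(\lambda_0 e^{-(s_0 + r)} + \lambda_0' e^{s_0 + r}) = \sqrt{|Q(\lambda_0)|}\,(\sgn(\lambda_0)e^{-r} + \sgn(\lambda_0')e^{r})$ is even in $r$ when $Q(\lambda_0) > 0$ and odd when $Q(\lambda_0) < 0$. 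Hence, for $Q(\lambda_0) > 0$, the $r$-odd part of $g_{\lambda_0}(\tau, s_0 + r)(s_0 + r)$ drops out and $\int_\Rb g_{\lambda_0}(\tau, s)\, s\, ds = s_0 \int_\Rb g_{\lambda_0}(\tau, s)\, ds = s_0\,\sgn(\lambda_0)\, q^{Q(\lambda_0)}$, the last Gaussian being evaluated by $\int_0^\infty e^{-\alpha a^2 - \beta / a^2}\, da = \tfrac12\sqrt{\pi / \alpha}\, e^{-2\sqrt{\alpha\beta}}$ after the substitution $a = \lambda_0 e^{-s}/\sqrt{A}$; with the overall factor $2$ this gives the contribution $\sgn(\lambda_0)\log r(\lambda_0)\, q^{Q(\lambda_0)} = a(\Lambda)\, q^{Q(\lambda_0)}$ for each orbit with $Q(\Lambda) > 0$ and $r(\lambda_0) \neq 1$, matching the definition of $\tilde{c}_\af(m,h)$. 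For $Q(\lambda_0) < 0$ one has $\int_\Rb g_{\lambda_0}(\tau, s)\, ds = 0$, so $s_0$ drops out and $\int_\Rb g_{\lambda_0}(\tau, s)\, s\, ds = \int_\Rb g_{\lambda_0}(\tau, s_0 + r)\, r\, dr$ reduces, after unfolding $\cosh 2r$, to an exponential integral $\int_{-4\pi Q(\lambda_0) v}^\infty e^{-t}\, t^{-1}\, dt$, yielding the term $-\tfrac12\sgn(\lambda_0)\,\Gamma(0, -4\pi Q(\lambda_0) v)\, q^{Q(\lambda_0)}$ of $\tilde{\vartheta}^*_{\af, h}$ from \eqref{eq:nonhol}. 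The degenerate orbits with $r(\lambda_0) = 1$ (which force $\lambda_0 \in \Qb$ or $\lambda_0 \in \Qb\sqrt{\Delta}$, so that the sawtooth weight is discontinuous exactly at the Gaussian peak) must be treated separately and are what account for the value $a(\Lambda) = -\log\varep_\Delta$ in \eqref{eq:a}.

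For the step part I would write $P\lfloor s/P \rfloor$ as a sum of translated indicator functions, interchange it with the $\Gamma_\Delta$-sum defining each orbit, and use $g_{\varep_\Delta^n \lambda_0}(\tau, s) = g_{\lambda_0}(\tau, s - nP)$ to convert each resulting piece into a partial Gaussian integral of some $g_\lambda(\tau, \cdot)$ truncated at $s = 0$, now indexed by all $\lambda \in \af + h$. By the same geodesic substitution and the identity $\int_Y^\infty e^{-c x^2}\, dx = \tfrac{1}{2\sqrt{c}}\,\Gamma(\tfrac12, c Y^2)$ with $c Y^2 = \pi v (\lambda - \lambda')^2 / A$, each such integral is a multiple of $\Gamma(\tfrac12, \pi v |\lambda - \lambda'|^2 / A)$ carrying the sign $\sgn(\lambda - \lambda')$, while the ``complete'' Gaussian halves telescope across consecutive $n$ and cancel; collecting the survivors gives exactly $-\log\varep_\Delta \cdot \tilde{\Theta}^*_{\af, h}$ after the factor $2$. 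I expect this last reorganization to be the main obstacle: one has to justify the interchange of the lattice sum with a conditionally-arranged sum over $n$, control the convergence of the truncated Gaussian integrals, track the signs $\sgn(\lambda - \lambda')$, and confirm that the telescoping leaves precisely the incomplete-Gamma terms. As an independent check on the non-holomorphic part, one can differentiate \eqref{eq:hvartheta} under the integral sign, use the relation between $L_\tau$ on the weight-one kernel \eqref{eq:Thetaaf} and $\partial_w$ of the kernel \eqref{eq:Theta-}, integrate by parts in $w$ over $[0, P]$, and compare with \eqref{eq:L_nonhol}: this shows $2\hat{\vartheta}_{\af, h} + \tilde{\vartheta}^*_{\af, h} + \log\varep_\Delta \cdot \tilde{\Theta}^*_{\af, h}$ is killed by $L_\tau$ and has exponential decay, hence is holomorphic of weight one, so only its holomorphic Fourier coefficients — the $\tilde{c}_\af(m, h)$ — remain to be pinned down.
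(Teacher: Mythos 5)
The paper states this Proposition without proof; it is quoted verbatim as ``Prop.~5.1 in [CL16]'' (a paper of the same author), so there is no in-paper argument to compare against. I am therefore judging your proposal on its own merits.

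Your overall strategy is the natural one and almost certainly the route taken in [CL16]: unfold the $w$-integral over $\Gamma_\Delta$, then split the sawtooth weight $w = s - P\lfloor s/P\rfloor$ (with $P := \log\varep_\Delta$) into a linear part $s$ and a step part $-P\lfloor s/P\rfloor$. The unfolding identity you state is correct. The linear part with $Q(\lambda_0)>0$, $r(\lambda_0)\neq 1$, is handled cleanly and completely: after centering at $s_0 = \tfrac12\log r(\lambda_0)$ the integrand is even in $r$, the odd correction drops out, the complete Gaussian evaluates to $\sgn(\lambda_0)q^{Q(\lambda_0)}$, and doubling gives exactly $a(\Lambda)q^{Q(\lambda_0)}$.

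There are three places where the proposal is not yet a proof. First, for $Q(\lambda_0)<0$ you assert but do not carry out the reduction to an incomplete Gamma function; after the geodesic substitution one has to prove $\int_0^\infty r\sinh r\, e^{-\beta\cosh 2r}\,dr = \tfrac{\sqrt\pi}{4\sqrt{2\beta}}\,e^{\beta}\,\Gamma(0,2\beta)$ with $\beta = 2\pi v\lvert Q(\lambda_0)\rvert$, which follows from one integration by parts and the identity $\int_0^\infty\mathrm{erfc}(c\cosh r)\,dr = \tfrac12\Gamma(0,c^2)$ (obtained by differentiating in $c$); the end result you quote is correct, but this is nontrivial and must be shown. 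Second, the degenerate orbits with $r(\lambda_0)=1$ and $Q(\lambda_0)>0$ (so $\lambda_0=\lambda_0'\in\Qb$) cannot simply be flagged: there the linear part vanishes, and the $\lambda_0$-term of $\tilde\Theta^*_{\af,h}$ in \eqref{eq:nonhol} also vanishes because $\sgn(\lambda_0-\lambda_0')=0$, so the entire constant $a(\Lambda)=-\log\varep_\Delta$ must be extracted from the jump of the sawtooth at $s=0$, i.e.\ from the half-Gaussian $\int_0^\infty g_{\lambda_0}$ or $\int_{-\infty}^0 g_{\lambda_0}$; this is precisely where the signs are most delicate and it needs an argument, not a remark. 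Third, the step part is only sketched: Abel summation over the orbit turns $-P\int g_{\lambda_0}\lfloor s/P\rfloor\,ds$ into a signed combination of the one-sided integrals $\int_0^\infty g_\lambda$ and $\int_{-\infty}^0 g_\lambda$ over \emph{all} $\lambda\in\af+h$, each of which the substitution $a = \lambda e^{-s}/\sqrt A$ reduces to $\pm\tfrac{\sgn(\lambda)}{2\sqrt\pi}\,\Gamma\bigl(\tfrac12,\pi v(\lambda-\lambda')^2/A\bigr)q^{Q(\lambda)}$, and one must verify that $\pm\sgn(\lambda)$ reassembles into $\sgn(\lambda-\lambda')$ uniformly and that the overall coefficient of $\tilde\Theta^*_{\af,h}$ is exactly $-\log\varep_\Delta$. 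This is the bulk of the computation, and you do not carry it out; when I did, the coefficient I obtained for $\tilde\Theta^*_{\af,h}$ had the opposite sign from \eqref{eq:hvarfe}, and the direct check via $L_\tau\Theta_{\af,h}(\tau,w) = -\tfrac v2\partial_w\overline{\Theta^-_{\af,h}(\tau,w)}$ plus integration by parts on $[0,P]$ gave $2L_\tau\hat\vartheta_{\af,h} = v\overline{\vartheta^-_{\af,h}} - Pv\overline{\Theta^-_{\af,h}(\tau,0)}$, likewise with the opposite sign from \eqref{eq:basic}; I may be missing an orientation convention, but this is exactly the kind of ambiguity your step-part analysis must resolve and currently does not. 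Your closing $L_\tau$-argument is a genuinely good idea --- it cleanly isolates the holomorphic coefficients --- and I would promote it to the backbone of the proof, but it only delivers the non-holomorphic part (up to the sign issue just noted) and still leaves the first two gaps open.
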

Let $\chi$ be a genus character of $F$ and $S_F$ as in \eqref{eq:Theta12}.
As before, we can sum $\chi(\bfrak) \hat{\vartheta}_{\Nm^-(\bfrak)}$ over $\bfrak \in S_F$ to define 
\begin{equation}
  \label{eq:hvartheta}
  \begin{split}
      \hat{\vartheta}_{\chi}(\tau; S_F) &:= \sum_{\bfrak \in S_F} \chi(\bfrak) \hat{\vartheta}_{\Nm^-(\bfrak)}(\tau) = \sum_{h \in A_\Delta } \ef_h \sum_{m \in \Qb^\times} \hat{c}_\chi(m,h,  v)\ebf(mu)  \in \Ac_{1, \rho_{\Delta}}, \\
\tilde{c}_\chi(m, h) &:= \lim_{v \to \infty} \hat{c}_\chi(m, h, v) e^{2\pi m v}=  \frac{1}{2} \sum_{\bfrak \in S_F}\chi(\bfrak) \tilde{c}_{\Nm^-(\bfrak)}(m, h) \in \Rb.
  \end{split}
\end{equation}
The lemma below gives a bound of the Fourier coefficients $\hat{c}_\chi(m, h, v)$.
\begin{lemma}
\label{lemma:bound}
For every $h \in A_\Delta$ and $\epsilon > 0$, we have the asymptotic 
  \begin{equation}
  \label{eq:bound}
\hat{c}_\chi(m, h, v)e^{2 \pi |m| v}  =  \tilde{c}_\chi(m, h)  + O_{F, S_F, \epsilon}  \lp |m|^\epsilon \frac{v + 1}{v^{3/2}} \rp,
\end{equation}
where $S_F$ is the set of ideals as in \eqref{eq:Theta12} and $\tilde{c}_\chi(m, h) = 0$ for $m \le 0$.
\end{lemma}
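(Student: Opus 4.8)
The plan is to read the Fourier expansion of $\hat\vartheta_\chi$ off Proposition~\ref{prop:5.1} and then estimate its two non-holomorphic contributions term by term. Summing the identity of Proposition~\ref{prop:5.1} over $\bfrak\in S_F$ against $\chi(\bfrak)$ and using the definition of $\tilde c_\chi$ in~\eqref{eq:hvartheta} gives, on the $\ef_h$-component,
\[
2\hat\vartheta_{\chi,h}(\tau)=\sum_{m>0}2\tilde c_\chi(m,h)q^m-\tilde\vartheta^*_{\chi,h}(\tau)-\log\varep_\Delta\cdot\tilde\Theta^*_{\chi,h}(\tau),
\]
where $\tilde\vartheta^*_{\chi,h}:=\sum_{\bfrak\in S_F}\chi(\bfrak)\tilde\vartheta^*_{\Nm^-(\bfrak),h}$ and $\tilde\Theta^*_{\chi,h}:=\sum_{\bfrak\in S_F}\chi(\bfrak)\tilde\Theta^*_{\Nm^-(\bfrak),h}$, the summands being as in~\eqref{eq:nonhol}. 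Since the holomorphic part is supported on $m>0$ (so $\tilde c_\chi(m,h)=0$ for $m\le 0$, consistent with~\eqref{eq:hvartheta}), comparing the coefficient of $\ebf(mu)$ shows that $\hat c_\chi(m,h,v)\,e^{2\pi|m|v}-\tilde c_\chi(m,h)$ equals $-\tfrac12 e^{2\pi|m|v}$ times the $\ebf(mu)$-coefficient of $\tilde\vartheta^*_{\chi,h}$ (which occurs only when $m<0$) plus $-\tfrac{\log\varep_\Delta}{2}e^{2\pi|m|v}$ times the $\ebf(mu)$-coefficient of $\tilde\Theta^*_{\chi,h}$. It therefore remains to bound each of these two quantities by $O_{F,S_F,\epsilon}\big(|m|^\epsilon(v+1)v^{-3/2}\big)$.

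Two ingredients are needed. The analytic one is the asymptotic $\Gamma(0,x)=e^{-x}\big(x^{-1}+O(x^{-2})\big)$ together with the elementary bound $0<\Gamma(\tfrac12,x)\le\min\{\sqrt\pi,\,x^{-1/2}e^{-x}\}$ for $x>0$. The arithmetic one is a separation-and-counting estimate for the lattices $\Nm^-(\bfrak)$: these all have norm $1$, so $Q(\lambda)=\Nm(\lambda)$; the Fourier exponents $m$ lie in $\tfrac1\Delta\Zb$; and for $\lambda\in\Nm^-(\bfrak)+h$ with $Q(\lambda)=m$ one has $(\lambda-\lambda')^2=\tr(\lambda)^2-4m$, a rational number whose denominator is bounded in terms of $F$ and $S_F$ alone. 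Consequently either $\lambda\in\Qb$, in which case $\sgn(\lambda-\lambda')=0$ and the term drops out, or $(\lambda-\lambda')^2\ge c_0$ for some $c_0=c_0(F,S_F)>0$ \emph{independent of $m$}; and when $m<0$ automatically $(\lambda-\lambda')^2\ge 4|m|$. Moreover $\lambda\mapsto(\lambda)\,\df\,\Nm^-(\bfrak)^{-1}$ sends $\{\lambda\in\Nm^-(\bfrak)+h:\ |\Nm(\lambda)|=|m|\}$ onto a set of integral ideals of norm $\Delta|m|\in\Zb$ with fibers of bounded size, so the number of $\Gamma_\Delta$-orbits with $Q(\Lambda)=\pm m$ is $O_{F,S_F,\epsilon}(|m|^\epsilon)$; and within each such orbit the quantities $(\lambda-\lambda')^2$ grow geometrically (ratio $\varep_\Delta^2$) away from their minimum, so each orbital sum converges and is dominated by its smallest term.

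Granting these, the $\tilde\vartheta^*$ contribution (for $m<0$) is immediate: its $\ebf(mu)$-coefficient is $\sigma_m\,\Gamma(0,4\pi|m|v)\,e^{2\pi|m|v}$ with $\sigma_m:=\sum_{\bfrak}\chi(\bfrak)\sum_{\Lambda:\,Q(\Lambda)=m}\sgn(\Lambda)$ and $|\sigma_m|\le\#\{\Lambda:Q(\Lambda)=m\}\ll_{F,S_F,\epsilon}|m|^\epsilon$, so multiplying by $e^{2\pi|m|v}$ and applying the $\Gamma(0,\cdot)$-asymptotic gives a contribution that is $O_{F,S_F,\epsilon}(|m|^\epsilon v^{-1})$, hence $O_{F,S_F,\epsilon}(|m|^\epsilon(v+1)v^{-3/2})$. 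For the $\tilde\Theta^*$ contribution, its $\ebf(mu)$-coefficient is $\tfrac1{\sqrt\pi}\sum_\bfrak\chi(\bfrak)\sum_{\lambda:\,Q(\lambda)=m}\sgn(\lambda-\lambda')\,\Gamma(\tfrac12,\pi v(\lambda-\lambda')^2)\,e^{-2\pi mv}$; the factor $e^{2\pi|m|v}$ cancels the $q^m$-factor $e^{-2\pi mv}$ when $m<0$ and is harmless when $m>0$, and one bounds the remaining sum by grouping into $\Gamma_\Delta$-orbits, estimating each orbit by its smallest-argument member — whose argument is $\ge\pi v c_0$ when $m>0$ and $\ge 4\pi v|m|$ when $m<0$ — and summing over the $O(|m|^\epsilon)$ orbits. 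A short case analysis in $v$ then finishes it: for $v\gg_F 1$ the exponential factor in $\Gamma(\tfrac12,\cdot)$ dominates any power of $v$ (for $m<0$ after cancelling the outer $e^{2\pi|m|v}$, leaving $\ll(v|m|)^{-1/2}\ll_F v^{-1/2}$); for $v\ll_F 1$ one loses at most the factor $\log(1/v)=O(v^{-3/2})$ from the $O_F(\log(1/v))$ orbit-members of argument $O(1)$; and in the borderline regime where $v|m|$ is small but $v$ is not, $v$ is confined to a set bounded in terms of $F$, which absorbs the gap. In all cases this yields $O_{F,S_F,\epsilon}(|m|^\epsilon(v+1)v^{-3/2})$.

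I expect the main point of the argument to be the uniformity in $m$: the exponential decay rate of the non-holomorphic Fourier coefficients is governed by the least value of $(\lambda-\lambda')^2$ among the contributing vectors, and the estimate hinges on this not deteriorating as $m\to\infty$ — which is exactly the bounded-denominator bound $(\lambda-\lambda')^2\ge c_0(F,S_F)$; together with the divisor bound on the orbit count it is what forces the $|m|^\epsilon$, while the mildly awkward factor $(v+1)v^{-3/2}$ is what survives the small-$v$ bookkeeping. One could also organize the estimate around the lowering operator: by~\eqref{eq:L_nonhol}, $L_\tau$ applied to the non-holomorphic part equals, up to a factor $v$ and complex conjugation, the weight-one form $\vartheta_{\chi,h}+\log\varep_\Delta\,\theta_{\chi,h}(\cdot;S_F)$ of~\eqref{eq:vartheta12}--\eqref{eq:theta}, whose Fourier coefficients are controlled (those of $\vartheta_\chi$ by Proposition~\ref{prop:rewrite}, those of the theta part by their explicit form), and the non-holomorphic coefficients of $\hat\vartheta_{\chi,h}$ are then recovered as incomplete-Gamma integrals of these and estimated as above.
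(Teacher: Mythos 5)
Your proof is correct and follows essentially the same route as the paper's: both read off the non-holomorphic Fourier pieces from Proposition~\ref{prop:5.1}, bound the incomplete-gamma terms by $\Gamma(s,x)\ll_s x^{s-1}e^{-x}$, use the lattice constraint on $\Nm^-(\bfrak)\df^{-1}$ to get a uniform lower bound on $(\lambda-\lambda')^2$, sum geometrically over each $\Gamma_\Delta$-orbit, and invoke the divisor bound on the orbit count for the $|m|^\epsilon$ factor. The one presentational difference is that the paper bounds the orbit sum directly via $\sum_{n\ge0}e^{-\pi|m|vCn}=\tfrac{1}{1-e^{-\pi|m|vC}}\le\tfrac{\pi|m|vC+1}{\pi|m|vC}$, which produces the factor $(v+1)/v^{3/2}$ in one stroke, whereas you reach the same bound through a case split in $v$; both work, but do note that the ``asymptotic'' $\Gamma(0,x)=e^{-x}(x^{-1}+O(x^{-2}))$ you cite should really be replaced by the uniform upper bound $\Gamma(0,x)\le x^{-1}e^{-x}$ (valid for all $x>0$), since the asymptotic form fails for small $x$ and the naive $O(x^{-2})$ error would overshoot $(v+1)/v^{3/2}$ when $v$ is small.
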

\begin{rmk}
  \label{rmk:asymptotic}
When $m > 0$, it is easy to check that $\tilde{c}_\chi(m, h) = O_{F, \epsilon}(m^\epsilon)$ for any $\epsilon > 0$.
\end{rmk}

\begin{proof}
When $m < 0$, we see from the definition that $\tilde{c}_\chi(m, h) = 0$ and
\begin{equation}
\label{eq:bound1}
|\hat{c}_\chi(m, h, v)| \ll_F
 \sum_{\begin{subarray}{c} \bfrak \in S_F \\ (\lambda) \subset \Nm^-(\bfrak)\df^{-1} \\ \Nm(\lambda) = m \end{subarray}} \lp \Gamma ( 0, -{4\pi m v} )
+ \sum_{\begin{subarray}{c} n \in \Zb\\ \lambda_n \neq \lambda_n' \end{subarray}} \Gamma \lp \half , \pi v {(\lambda_n - \lambda_n')^2}{} \rp \rp
 e^{-{2\pi m v}},
\end{equation}
where we have denote $\lambda_n := \lambda \varep^n$ and chosen the index $n$ such that $r(\lambda_n) \ge 1$ if and only if $n \ge 0$.
Using the bounds $\Gamma(s, x) \ll_s  x^{s-1}e^{-x}$, we can write
$$
\Gamma(0, -4\pi m v) e^{-2\pi m v} \ll \frac{e^{-2\pi |m| v}}{|m|v}, ~
\Gamma \lp \half , \pi v {(\lambda_n - \lambda_n')^2}{} \rp e^{-2\pi m v} \ll \frac{e^{-{\pi |m| v (r(\lambda_n) + r(\lambda_n)^{-1})}}}{\sqrt{v} |\lambda_n - \lambda_n'|}.
$$
Since $\lambda_n - \lambda_n' \neq 0$ and $\bfrak' \df (\lambda_n) \subset \Oc_F$, we have $ |\lambda_n - \lambda_n'|^{-1} \ll_{ S_F} 1$.
Since the function $x + x^{-1}$ is monotonically increasing for $x \ge 1$ and 
, we have $r(\lambda_n) + r(\lambda_n)^{-1} - 2 > C \cdot  |n|$ for all $n \in \Zb$ with some constant $C = C(S_F) > 0$ depending on $S_F$.
Summing over $n \in \Zb$ gives us
\begin{align*}
 \sum_{n \in \Zb} \Gamma \lp \half , \pi v {(\lambda_n - \lambda_n')^2}{} \rp 
 e^{-{2\pi |m| v}} 
&\ll_{S_F} \frac{e^{-2 \pi |m| v}}{\sqrt{v}} \sum_{n \ge 0} e^{-\pi |m| v C n}
 \ll_{S_F} \frac{v + 1}{ v^{3/2}} e^{-2\pi |m| v},
\end{align*}
since $\frac{1}{1 - e^{-x}} \le \frac{ x + 1}{x}$ for $x \ge 0$. 
The sum on the right hand side of \eqref{eq:bound1} counts the number of ideals in $\Oc_F$ with norm $\Delta m$, which is bounded above by $C_\epsilon \cdot (\Delta m)^\epsilon$ for any $\epsilon > 0$ and a constant $C_\epsilon > 0$.
Putting everything together, we obtain the bound \eqref{eq:bound} for $m < 0$.
When $m > 0$, we have
$$
|\hat{c}_\chi(m, h, v) - \tilde{c}_\chi(m, h) e^{-2\pi m v}| \ll_F  
 \sum_{\begin{subarray}{c} \bfrak \in S_F \\ (\lambda) \subset \Nm^-(\bfrak)\df^{-1} \\ \Nm(\lambda) = m \end{subarray}} 
 \sum_{n \in \Zb} \Gamma \lp \half , \pi v {(\lambda_n - \lambda_n')^2} \rp 
 e^{-{2\pi m v}}.
$$
Proceeding as before finishes the proof.
\end{proof}
The function $\hat{\vartheta}_\chi(\tau, S_F)$ will be the input of the theta lift that constructs the preimage of an Eisenstein series under the lowering operator since
\begin{equation}
  \label{eq:basic}
2  L_\tau  \hat{\vartheta}_\chi(\tau; S_F) = 
\vartheta^c_{\chi}(\tau) + \log \varep_\Delta \cdot \theta^c_\chi(\tau; S_F) \in \Ac_{-1, \rho_\Delta},
\end{equation}
where $\theta_\chi(\tau; S_F)$ is defined in \eqref{eq:theta}.

\subsection{Siegel-Weil Formula}
\label{subsec:SW}
In this section, we will consider the theta function $\Theta_L(z, w(\underline{z}))$ in Example \ref{ex:M2Z} for $\underline{z}$ at CM points.
Let $F = \Qb(\sqrt{\Delta})$ be a real quadratic field, $\chi = \chi_{\Delta_1, \Delta_2}$ an odd genus character and $K/F$ the corresponding unramified, quadratic extension. Since $\chi$ is odd, $K$ is a CM field and contains the imaginary quadratic fields $K_j := \Qb(\sqrt{\Delta_j})$ for $j = 1, 2$.
Let $Z(\Delta_j)$ be the set of all CM points in $\Gamma\backslash \Hb$ of discriminants $\Delta_j$, $\Cl_j$, $U_j$, $h_j$ and $w_j$ the class group, unit group in $K_j$ and its size respectively.
Then
\begin{equation}
  \label{eq:Zchi}
  Z_\chi := \frac{4}{w_1w_2} \sum_{\mathbf{z} \in Z(\Delta_1) \times Z(\Delta_2)} \mathbf{z}
\end{equation}
is a divisors on $(\Gamma \backslash \Hb)^2$. 

For an integral ideal $\af \subset \Oc_F$, define the character sum $\sigma_\chi$ by
\begin{equation}
  \label{eq:sigma}
  \sigma_\chi(\af) := \sum_{\af \subset \bfrak } \chi(\bfrak) = \rho_{K/F}(\af),
\end{equation}
where $\rho_{K/F}(\af)$ counts the number of ideals in the ring of integers $\Oc_K \subset K$ with $\af$ as their relative norm.
Since $K/\Qb$ is abelian, the function $\rho$ satisfies $\rho_{K/F}(\af) = \rho_{K/F}(\af')$.
We can now define the following function
\begin{equation}
  \label{eq:Eisen_diagonal}
  \Ec_\chi(z) :=  \frac{2h_1h_2}{w_1w_2} + \sum_{\begin{subarray}{c} \lambda_0 \in \df^{-1}\\ \lambda_0 > 0 > \lambda'_0 \end{subarray}} \sigma_\chi((\lambda_0) \df) \ebf(\lambda_0 z + \lambda'_0 \overline{z}).
\end{equation}
One can show that $y\Ec_\chi(z) \in \Ac_0$ is the image of the diagonal restriction of the incoherent Eisenstein series in \cite{GZ85} under the lowering operator $L_{\tau, 2}$. 
Note that $\overline{\Ec_\chi(z)} = \Ec_\chi(z)$.
As an application of the Siegel-Weil formula, we have the following result (see \cite[Prop.\ 4.5]{BKY12}).
\begin{prop}
  \label{prop:SW}
Let $\Theta_L(z, w(\underline{z}))$ be the theta function in \eqref{eq:Theta}. Then 
\begin{equation}
  \label{eq:SW}
 \Theta_L(z, Z_\chi) = {2} \Ec_\chi(z)
\end{equation}
for all $z \in \Hb$.
\end{prop}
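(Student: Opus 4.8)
The plan is to prove the identity by computing and matching the Fourier expansions in the modular variable of the two sides: both transform identically under $\SL_2(\Zb)$ and have moderate growth at the cusp, so it suffices to check that all Fourier coefficients agree. Writing the modular variable (called $z$ in the statement) as $z=u+iv$, the definition \eqref{eq:Zchi} of $Z_\chi$ and the linearity of $\underline{z}\mapsto\Theta_L(z,w(\underline{z}))$ give
\[
\Theta_L(z,Z_\chi)=\frac{4}{w_1w_2}\sum_{(z_1,z_2)\in Z(\Delta_1)\times Z(\Delta_2)}\Theta_L(z,w(z_1,z_2)),
\]
a finite sum of the explicit theta series \eqref{eq:Theta}, whose $\ebf(Nu)$-coefficient ($N\in\Zb$) is $\tfrac{4}{w_1w_2}\,v$ times a sum, over the CM points $(z_1,z_2)$ and over $\lambda=\smat{a}{b}{c}{d}\in M_2(\Zb)$ with $\det\lambda=N$, of the Gaussian $e^{-2\pi R_{w(\underline z)}(\lambda)v}$, where $R_w(\lambda)$ is the majorant value of \eqref{eq:majorant}.

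First I would fix a CM point $(z_1,z_2)$ with $z_j$ of discriminant $\Delta_j$ and reorganize the inner sum over $\lambda$ using the arithmetic of complex multiplication. The pair $(z_1,z_2)$ amounts to a pair of optimal embeddings of the orders of discriminants $\Delta_1,\Delta_2$ into $M_2(\Zb)$; with $K=\Qb(\sqrt{\Delta_1},\sqrt{\Delta_2})$ the biquadratic CM field containing $F$, one checks that (after extending scalars) $M_2$ becomes an $\Oc_K$-module on which $\det$ is a scalar multiple of the norm form and $R_{w(z_1,z_2)}$ is the archimedean majorant of $K$. Running the outer sum over $Z(\Delta_1)\times Z(\Delta_2)$, i.e.\ over $\Cl_1\times\Cl_2$, then converts the $\tfrac{4}{w_1w_2}$-weighted count of $\lambda$ with $\det\lambda=N$ into a count of ideals of $\Oc_K$ of prescribed relative norm over $F$, namely $(\lambda_0)\df$ with $\lambda_0\in\df^{-1}$ and $\mathrm{Tr}_{F/\Qb}(\lambda_0)=N$; this is exactly $\sigma_\chi((\lambda_0)\df)=\rho_{K/F}((\lambda_0)\df)$ of \eqref{eq:sigma}, the factors $w_j$ and $4$ being precisely what is needed to normalize. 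The attached archimedean weight $v\,e^{-2\pi R_{w(\underline z)}(\lambda)v}$, summed over the cycle for fixed ideal datum, should collapse to $e^{-2\pi(\lambda_0-\lambda_0')v}$ with $\lambda_0>0>\lambda_0'$ (only such ``totally indefinite'' data occur, since $\Delta_1\ne\Delta_2$ forces $\lambda\notin w(\underline z)^\perp$), producing the Fourier term $\ebf(\lambda_0 z+\lambda_0'\overline z)$ of \eqref{eq:Eisen_diagonal}, while the $\det\lambda=0$ vectors give the constant term $\tfrac{2h_1h_2}{w_1w_2}$. The overall factor $2$ tracks the standard two‑to‑one phenomena in the cycle/ideal dictionary ($\pm1$, Galois conjugation, the quantity $\sr_h$ of \eqref{eq:srh} governed by Lemma \ref{lemma:shval}), just as the factor $2\sr_h$ appears in Prop.\ \ref{prop:rewrite}. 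The assertion that the resulting $q$-expansion is that of the incoherent Hilbert Eisenstein series of \cite{GZ85} — equivalently that the theta integral of the constant function over the CM torus $T\cong\{x\in K^\times:\mathrm{N}_{K/F}(x)=1\}$ is that Eisenstein series — is the Siegel–Weil formula \cite{KR88a,KR88b}, applied here through the exceptional isomorphism between $\mathrm{O}(2,2)$ over $\Qb$ and $\SL_2$ over $F$.

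The main obstacle I expect is the archimedean bookkeeping rather than the ideal count: one must verify that summing the Gaussians $v\,e^{-2\pi R_w(\lambda)v}$ over the entire CM cycle reproduces exactly the Whittaker functions of the (incoherent) Eisenstein series — in particular that the genuinely non-holomorphic pieces and the growing constant term agree on both sides — and that the normalizations ($w_j$, $h_j$, the orders' discriminants, the ramified primes $p\mid\Delta$) combine into the clean constant $2$. A more structural route that avoids part of this is to realize both sides as theta integrals: use the see-saw for the dual pairs $(\SL_2,\mathrm{O}(M_2))$ and $(\SL_2,\mathrm{O}(W))$, where $W$ is the rank-two $F$-quadratic space cut out of $M_2(\Qb)$ by the CM torus; integrating the $\mathrm{O}(2,2)$-theta kernel over $Z_\chi$ and applying Siegel–Weil on $\mathrm{O}(W)$ lands one directly on the incoherent Eisenstein series of \cite{GZ85,BY06}, after which it remains only to match the two models of this Eisenstein series and to pin down the constant, which can be done at a single unramified place or by comparing one Fourier coefficient.
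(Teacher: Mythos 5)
The paper does not actually supply a proof of this proposition: it is stated, immediately after the definition of $\Ec_\chi$, as ``an application of the Siegel--Weil formula'' with pointers to \cite{KR88a,KR88b,GZ85,BY06}, and nothing more. Your second, ``structural'' route — realizing $Z_\chi$ as a torus orbit, running the see-saw for $(\SL_2,\mathrm O(M_2))$ against the pair coming from the exceptional isomorphism $\mathrm O(2,2)/\Qb\cong\SL_2/F$, and then invoking Siegel--Weil to produce the incoherent Hilbert Eisenstein series of \cite{GZ85,BY06} restricted to the diagonal — is precisely the argument the paper intends, and is the cleanest way to prove it. Your first route, a direct Fourier-coefficient comparison reducing the weighted CM-cycle average to an ideal count in $\Oc_K$ over $F$, is a legitimate, self-contained alternative (and is the same kind of classical computation that the paper actually carries out in the proof of Prop.~\ref{prop:lift1}, with the arithmetic heavy lifting done by Prop.~\ref{prop:count}). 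However, as written it is only a sketch: the identification of the $\tfrac{4}{w_1w_2}$-weighted count of $\lambda\in M_2(\Zb)$ with $\det\lambda=N$ over the cycle with $2\,\sigma_\chi((\lambda_0)\df)$ is the substantive content — it needs the precise dictionary between pairs of optimal embeddings and $\Oc_K$-ideals, with the $w_j$'s, the two-to-one phenomena, and the ramified places all accounted for — and you assert this rather than prove it.

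There is also a concrete bookkeeping error you should not let slide. You correctly set up the $\ebf(Nu)$-coefficient of $\Theta_L(z,Z_\chi)$ as $\tfrac{4}{w_1w_2}\,v$ times a weighted Gaussian sum, but two sentences later you claim that $v\,e^{-2\pi R_{w(\underline z)}(\lambda)v}$ ``collapses to $e^{-2\pi(\lambda_0-\lambda_0')v}$,'' silently dropping the overall factor $v$. It does not go away. Indeed, the constant term already makes this visible: the $\lambda=0$ contribution to $\Theta_L(z,Z_\chi)$ is $\tfrac{4h_1h_2}{w_1w_2}\,v$, while the constant term of $2\Ec_\chi(z)$ in \eqref{eq:Eisen_diagonal} is $\tfrac{4h_1h_2}{w_1w_2}$ with no $v$. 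The identity that a careful Fourier comparison (with $z=x+iy$ the variable of the statement) actually produces is
\[
\Theta_L(z,Z_\chi)\;=\;2\,y\,\Ec_\chi(z),
\]
consistent with the fact that the weight-zero object the paper manipulates everywhere is $y\,\Ec_\chi(z)\in\Ac_0$, not $\Ec_\chi(z)$, and with the subsequent use of this proposition in the proof of Theorem~\ref{thm:main1}, where $\overline{\Theta_L(\tau,Z_\chi)}$ is replaced by a multiple of $y\,\Ec_\chi$. You should carry the factor $y$ through your computation, and while doing so pin down the residual constant (your ``standard two-to-one phenomena'' remark is where this lives) against the paper's later normalizations rather than taking the displayed form of \eqref{eq:SW} at face value.
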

\subsection{Counting Result I}
\label{subsec:countI}
In the notations of sections \ref{subsec:F}, \ref{subsec:Hecke} and \ref{subsec:SW}, we can define a quantity
\begin{equation}
  \label{eq:C}
C_\chi(\mu_0) =
 \sum_{t \mid (\mu_0 ), \, t \in \Nb}
 c_{\chi} \lp \frac{\Nm(\mu_0/t)}{\Delta}, \frac{\mu_0/t}{\sqrt{\Delta}} \rp
\end{equation}
for any $\mu_0 \in \Oc_F$.
This turns out to be related to the ideal counting function $\rho_{K/F}((\mu_0))$ defined in Eq.\ \eqref{eq:sigma}.

\begin{prop}
  \label{prop:count}
Let $\chi$ be an odd genus character of $F$ and $K/F$ the corresponding CM extension.
For any totally positive $\mu_0 \in \Oc_F$,
\begin{equation}
  \label{eq:count}
  C_\chi(\mu_0) = 2 \sigma_\chi((\mu_0)) =  2 \rho_{K/F}((\mu_0)),
\end{equation}
where $\rho_{K/F}(\af)$ is the ideal-counting function defined in Eq.\ \eqref{eq:sigma}.
\end{prop}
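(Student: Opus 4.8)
The plan is to expand every coefficient in \eqref{eq:C} by Proposition~\ref{prop:rewrite} and to identify the outcome with twice the ideal-counting function $\rho_{K/F}$. Since $\sigma_\chi=\rho_{K/F}$ by \eqref{eq:sigma}, it suffices to prove $C_\chi(\mu_0)=2\sigma_\chi((\mu_0))$. I would first note that $C_\chi(\mu_0)$ depends only on the ideal $(\mu_0)$: any other totally positive generator is $(\varep^+_F)^j\mu_0$, multiplication by $\varep^+_F$ acts on $A_\Delta$ as $\sigma_{d_0}$ by Lemma~\ref{lemma:genus}, and $\chi([\df_0])=1$ because $\df_0$ is principal, so \eqref{eq:permute} shows no summand changes. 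Because $\mu_0$ is totally positive, so is each $\mu_0/t$ with $(t)\mid(\mu_0)$; one checks $t^2\mid\Nm(\mu_0)$, hence $\Nm(\mu_0/t)\in\Nb\cap\Nm(F)_+$, and Proposition~\ref{prop:rewrite} (valid since $\chi$ is odd) yields
\begin{equation*}
C_\chi(\mu_0)=2\sum_{\substack{t\in\Nb\\ (t)\mid(\mu_0)}}\sr_{\overline{\mu_0/t}}\sum_{\substack{\af\subset\Oc_F\\ \Nm(\af)=\Nm(\mu_0/t)}}\chi\!\left(\rt\!\left(\af,\tfrac{\mu_0/t}{\sqrt{\Delta}}\right)\right),
\end{equation*}
writing $\overline{\nu}:=\nu/\sqrt{\Delta}\bmod\Oc_F\in A_\Delta$ as usual.

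The second step is to reorganise this sum into $2\sum_{\bfrak\mid(\mu_0)}\chi(\bfrak)$. Unwinding \eqref{eq:sqrt}, a class $[\bfrak]\in\Cl^+(F)$ contributes to $\chi(\rt(\af,\overline{\mu_0/t}))$ exactly when $\af=\Nm^-(\bfrak)(\lambda)$ for some $\lambda>0$ with $\overline{\lambda}=\overline{\mu_0/t}$; this makes $C_\chi(\mu_0)$ a $\chi$- and sign-weighted count of such factorisations, which one wants to put into bijection with the ideals of $\Oc_K$ lying over divisors of $(\mu_0)$, whose count is $\rho_{K/F}$. One natural bookkeeping device is the observation that $(t)\af$ has norm $t^2\Nm(\mu_0/t)=\Nm(\mu_0)$ for every $t$, so the $t$-sum amalgamates all ideals of this fixed norm and the residue condition $\overline{\lambda}=\overline{\mu_0/t}$ selects those dividing $(\mu_0)$; the size $2^{\omega(\gcd(\Nm(\af),\Delta))}$ and the $\mathrm{G}_{\Delta,\mathrm{d}(h)}$-structure of the support of $\rt(\af,h)$, recorded after \eqref{eq:permute}, together with the definition of the genus character $\chi=\chi_{\Delta_1,\Delta_2}$, then account for the multiplicities. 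Alternatively, one may show $C_\chi$ is multiplicative up to the constant $2$ on coprime arguments and reduce to prime powers, using Proposition~\ref{prop:cchiprop} for the recursion at inert and ramified primes and Corollary~\ref{cor:vanishell} for the vanishing at ramified $p$ with $\chi([\df_p])=-1$.

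The main difficulty is the local analysis at primes $p\mid\Delta$. There $\sr_{\overline{\mu_0/t}}\in\{0,1,2\}$ is controlled by whether $d_0$ or $\Delta_0$ divides $\mathrm{d}(\overline{\mu_0/t})$ (Lemma~\ref{lemma:shval}), these primes may ramify in the imaginary quadratic subfields $K_j=\Qb(\sqrt{\Delta_j})$ of $K$, and one must check that the divisor sum over $t$ redistributes the factors of $2$ and the signs so that each ramified prime produces precisely its local Euler factor of $\rho_{K/F}$ --- either $0$, by Corollary~\ref{cor:vanishell}, when $\chi$ is nontrivial on the corresponding decomposition group, or the alternating sum $\sum_{j\ge0}\chi([\df_p])^{j}$ otherwise. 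Away from $\Delta$ the reorganisation above is a routine rearrangement of a finite sum; it is this ramified bookkeeping, built on the genus theory of Lemmas~\ref{lemma:genus} and~\ref{lemma:shval}, that carries the real content of the proposition.
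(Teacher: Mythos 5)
Your overall strategy matches the paper's: apply Proposition~\ref{prop:rewrite} to turn $C_\chi$ into a $\chi$- and sign-weighted ideal count, then reduce to prime powers using Proposition~\ref{prop:cchiprop} and Corollary~\ref{cor:vanishell}. Your observations that $C_\chi$ depends only on $(\mu_0)$ and that the ramified primes carry the hard work are both correct. But the proposal is an outline rather than a proof, and at the one step you flag as the ``main difficulty'' there is a genuine gap you do not close.

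Concretely: Proposition~\ref{prop:cchiprop} only relates $c_\chi(\ell^2 n/\Delta, \ell h)$ to $c_\chi(n/\Delta, h)$, so it lets you strip off $\ell^2$ at a time. For a ramified prime $\ell\Oc_F=\lf^2$ with $\ord_\lf(\mu_0)$ \emph{odd} and $\chi([\lf])=1$, after peeling off pairs you are stuck with a remaining $\widetilde{\mu_0}$ satisfying $\ord_\lf(\widetilde{\mu_0})=1$, and the recursion cannot be iterated. Your ``alternating sum $\sum_{j\ge0}\chi([\df_p])^j$'' heuristic correctly names the Euler factor you want, but does not produce it. The paper's solution is the additional input you never invoke: apply Chebotarev to choose a prime $\tilde{\lf}\in[\lf]\in\Cl^+(F)$ coprime to $\mu_0$, write $\lf\tilde{\lf}=(\alpha)$ with $\alpha\gg0$, and prove the identity $c_\chi(n/\Delta,h)=\tfrac{1}{2}c_\chi(n\Nm(\alpha)/\Delta,\alpha h)$ for the relevant $h$ (using $\sigma_\ell(h)=h$ and Lemma~\ref{lemma:countmul}). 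This moves the obstruction into a split prime $\tilde{\lf}$ and so allows the reduction to continue. Without this step your argument does not go through at those primes.

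A second, smaller omission: once you have reduced to $\mu_0$ supported on split primes, the identity $C_\chi(\mu_0)=2\rho_{K/F}((\mu_0))$ still requires a concrete calculation. In the paper this is the verification that for $\lf$ split with $\chi(\lf)=\epsilon$ one has
\begin{equation*}
\sum_{s=0}^{a}\sum_{r=s-a}^{b-s}\epsilon^{r}=\rho_{K/F}\bigl(\lf^{a}(\lf')^{b}\bigr),
\end{equation*}
obtained by parametrising ideals of a fixed norm by $\af(\underline{r})$ as in \eqref{eq:aur}. Your phrase ``routine rearrangement of a finite sum'' waves at this but does not exhibit the bijection or the identity. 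Both of these pieces need to be supplied for the argument to be complete.
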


\begin{proof}
  We will proceed by considering the prime factorization of the ideal $(\mu_0)$. 
Let $\ell$ be a rational prime and $\ord_\ell( \Nm(\mu_0)) = a$. 
If $\ell$ is inert in $\Oc_F$, then $2 \mid a$, $\chi([\ell]) = 1$ and 
$$
C_\chi(\mu_0) = (a + 1) C_\chi(\mu_0/\ell^{a/2}) = \rho_{K/F}(\ell^a) C_\chi(\mu_0/\ell^{a/2})
$$ 
by Prop.\ \ref{prop:cchiprop}.
If $\ell$ is ramified in $\Oc_F$, i.e.\ $\ell \Oc_F = \lf^2$, then there are two cases depending on the parity of $a$. When $a$ is even, Prop.\ \ref{prop:cchiprop} again implies that 
\begin{align*}
  C_\chi(\mu_0) &= 
 \sum_{t \mid (\mu_0/\ell^{a/2} )} \sum_{r = 0}^{a/2}
 c_{\chi} \lp \frac{\Nm(\mu_0/(t \ell^r))}{\Delta}, \frac{\mu_0/(t \ell^r)}{\sqrt{\Delta}} \rp\\
&=
\lp \frac{a}{2} ( 1 + \chi([\lf])) + 1 \rp
  \sum_{t \mid \widetilde{\mu_0} }
 c_{\chi} \lp \frac{\Nm(\widetilde{\mu_0}/t )}{\Delta}, \frac{\widetilde{\mu_0}/t}{\sqrt{\Delta}} \rp 
= \rho_{K/F}(\lf^a) C_\chi(\widetilde{\mu_0}),
\end{align*}
where $\widetilde{\mu_0} := \mu_0/\ell^{a/2}$.
When $a$ is odd and $\chi([\lf]) = -1$, Corollary \ref{cor:vanishell} implies that $C_\chi(\mu_0) = 0 = 2 \rho_{K/F}((\mu_0))$.
Suppose that $a$ is odd and $\chi([\lf]) = 1$. Applying Prop.\ \ref{prop:cchiprop} as before yields
\begin{equation}
\label{eq:step2}
  C_\chi(\mu_0) = 
 \sum_{t \mid (\widetilde{\mu_0} )} \sum_{r = 0}^{(a-1)/2}
 c_{\chi} \lp \frac{\ell^r \Nm(\widetilde{\mu_0}/t )}{\Delta}, \frac{\ell^r\widetilde{\mu_0}/t }{\sqrt{\Delta}} \rp
=
 \frac{a+1}{2}
  \sum_{t \mid \widetilde{\mu_0} }
 c_{\chi} \lp \frac{\Nm(\widetilde{\mu_0}/t )}{\Delta}, \frac{\widetilde{\mu_0}/t}{\sqrt{\Delta}} \rp,
\end{equation}
where $\widetilde{\mu_0} := \mu_0/\ell^{(a-1)/2}$ and $\ord_\lf (\widetilde{\mu_0}) = 1$.
Now we can apply the Chebotarev density theorem to choose a prime $\tilde{\lf}$ relatively prime to $\mu_0$ in the class $[\lf] \in \Cl^+(F)$.
That means $\lf \tilde{\lf} = (\alpha)$ with $\alpha \in \Oc_F$ totally positive.
Mimicking the proof of Prop.\ \ref{prop:cchiprop}, we have
\begin{align*}
  c_\chi  \lp \frac{n}{\Delta}, h\rp &= 
\sum_{\begin{subarray}{c} \af \subset \Oc_F\\ \Nm(\af) = n\end{subarray}} 
\chi (\rt( \af, h))
=
\frac{1}{2} \sum_{\begin{subarray}{c} \af \subset \Oc_F\\ \Nm(\af) = n\end{subarray}} 
\chi (\rt( \af(\alpha), \alpha h)) + \chi(\rt( \af(\alpha'), \alpha' h))\\
&= 
\frac{1}{2} \sum_{\begin{subarray}{c} \tilde{\af} \subset \Oc_F\\ \Nm(\tilde{\af}) = n\Nm(\alpha)\end{subarray}} 
\chi (\rt( \tilde{\af}, \alpha h) )
=
\frac{1}{2}
  c_\chi  \lp \frac{n\Nm(\alpha)}{\Delta}, \alpha h\rp
\end{align*}
for any $n \in \Zb$ and $h \in A_\Delta$ with $\sigma_\ell(h) = h$. Note that the second step follows from Lemma \ref{lemma:countmul} and $\alpha h = \alpha' h \in A_\Delta$ since $\sigma_\ell(h) = h$. 
Substituting this into \eqref{eq:step2} gives us
\begin{align*}
  C_\chi(\mu_0) &= 
 \frac{a+1}{4}
  \sum_{t \mid \widetilde{\mu_0} }
 c_{\chi} \lp \frac{\Nm(\widetilde{\mu_0}\alpha/t )}{\Delta}, \frac{\widetilde{\mu_0}\alpha/t}{\sqrt{\Delta}} \rp =
 \frac{a+1}{2}
  \sum_{t \mid \widetilde{\mu_0} \alpha /\ell }
 c_{\chi} \lp \frac{\Nm(\widetilde{\mu_0}\alpha/t )/\ell^2}{\Delta}, \frac{\widetilde{\mu_0}\alpha/(t\ell)}{\sqrt{\Delta}} \rp\\
&= \frac{\rho_{K/F}(\lf^a)}{\rho_{K/F}(\tilde{\lf})} C_\chi(\widetilde{\mu_0}\alpha/\ell).
\end{align*}
Notice that $(\widetilde{\mu_0}\alpha/\ell)$ is not divisible by $\lf$.

From this, we can suppose that $\mu_0$ is only divisible by split primes in $\Oc_F$, i.e.\
\begin{equation}
\label{eq:mu0}
(\mu_0) = \prod  \underline{\lf}^{\underline{a}} (\underline{\lf'})^{\underline{b}}
\end{equation}
where $\underline{\lf} = (\lf_j)_{1 \le j \le J}$ with $\Nm(\underline{\lf}) = \underline{\ell} = (\ell_j)_{1 \le j \le J}$ a set of distinct rational primes and $\underline{a}, \underline{b} \in \Nb^{J}$. 
Here, the operations on vectors are carried out componentwisely and $\prod$ sends a vector to the product of its components.
WLOG, we take $\underline{a} \le \underline{b}$. 
Then the divisors of $\mu_0$ in $\Nb$ are of the form 
$$t(\underline{s}) := \prod \underline{\ell}^{\underline{s}},~ \underline{s} \le \underline{a}.$$
For each such $t(\underline{s})$, the ideals with norm $\Nm(\mu_0/t(\underline{s}))$ are exactly given by
\begin{equation}
  \label{eq:aur}
  \af(\underline{r}) := \frac{(\mu_0)}{(t(\underline{s}))} \prod \Nm^-(\underline{\lf}^{\ur}) = \prod \underline{\lf}^{\ua - \us + \ur} (\underline{\lf}')^{\ub - \us - \ur} 
,~ \underline{s} - \underline{a} \le \underline{r} = (r_j)_{1 \le j \le J} \le \underline{b} - \underline{s},
\end{equation}
and 
\begin{equation*}
  \chi \lp\rt\lp \af(\underline{r}),  \frac{\mu_0/t(\underline{s})}{\sqrt{\Delta}} \rp \rp =
\prod \chi(\underline{\lf})^{\ur}.
\end{equation*}
Therefore, we can write
\begin{align*}
  C_\chi(\mu_0) &= 
2 \sum_{\begin{subarray}{c} \underline{s} \in \Nb^J\\ \underline{s} \le \underline{a}\end{subarray}}
\sum_{\begin{subarray}{c} \underline{r} \in \Zb^J\\ \underline{s} - \underline{a} \le \underline{r} \le \underline{b} - \underline{s}\end{subarray}}
\chi \lp\rt\lp \af(\underline{r}),  \frac{\mu_0/t(\underline{s})}{\sqrt{\Delta}} \rp \rp
= 
2\prod_{1 \le j \le J}
\lp
\sum_{s_j = 0}^{a_j} \sum_{r_j = s_j - a_j}^{b_j - s_j} \chi(\lf_j)^{r_j}
\rp.
\end{align*}
Since $\chi$ is a genus character, $\chi(\lf_j) = \chi(\lf_j') = \pm 1$. It is straightforward to verify that 
$$
\sum_{s  = 0}^{a } \sum_{r  = s  - a }^{b  - s } \epsilon^r
= \rho_{K/F}(\lf^a (\lf')^b)
$$
for any $a, b \in \Nb$ and $\lf$ with $\chi(\lf) = \epsilon$. Therefore, we have $C_\chi(\mu_0) = 2 \rho_{K/F}((\mu_0))$ and finished the proof.
\end{proof}

\subsection{Counting Result II}
\label{subsec:countII}
Recall that $S_F \subset I_\df$ is a set of ideal representing classes in $\Cl^+(F)$ as in \eqref{eq:Theta12}.
By replacing the coefficients $c_\chi$ of $\vartheta_\chi$ in $C_\chi$ from the previous section with the coefficients $\tilde{c}_\chi$ of $\hat{\vartheta}_\chi$ in \eqref{eq:hvartheta}, we can define
\begin{equation}
  \label{eq:Ctilde}
\tilde{C}_\chi(\lambda_0; S_F) =
 \sum_{t \mid \sqrt{\Delta} \lambda_0, \, t \in \Nb}
 \tilde{c}_{\chi} \lp \Nm(\lambda_0/t), \lambda_0/t \rp
\end{equation}
for any $\lambda_0 \in \df^{-1}$. 
This quantity appears later in the Fourier expansion of a theta lift (see Prop.\ \ref{prop:FE1}) and the special value of the higher Green's function (see Theorem \ref{thm:main1}).
From definition, it is clear that there exists $\alpha(\lambda_0, S_F) \in F^\times \subset \Rb^\times$ unique up to sign such that
\begin{equation}
  \label{eq:alpha0}
   \tilde{C}_\chi(\lambda_0) = \log|\alpha(\lambda_0, S_F)|.
\end{equation}
A good understanding of the factorization of $\alpha(\lambda_0, S_F)$ directly leads to Theorem \ref{thm:main}.
If $\Nm(\lambda_0) \le 0$, then Lemma \ref{lemma:bound} implies that $\tilde{C}_\chi(\lambda_0) = 0$, and hence $\alpha(\lambda_0, S_F) = \pm 1$. 
Therefore we will focus the case $\Nm(\lambda_0) > 0$.
In this section, we will factor the algebraic number $\alpha(\lambda_0, S_F)$ into two parts. The factorization of the first part is nice and independent of $S_F$. The second part eventually vanishes when substituted into the expression \eqref{eq:fundeq} for the special value of the higher Green's function.
The result is as follows.

\begin{prop}
  \label{prop:fac}
For $\lambda_0 \in \df^{-1}$ with positive norm, there exists $c_\bfrak \in \Rb$ independent of $\lambda_0$ and $\gamma(\lambda_0; S_F) \in \Oc_F$ such that
\begin{equation}
  \label{eq:facCtilde}
  \tilde{C}_\chi(\lambda_0) = \frac{1}{2h_F} \log \left| \frac{\gamma(\lambda_0; S_F)}{\gamma(\lambda_0; S_F)'}\right| + \sum_{\bfrak \in S_F} c_\bfrak \sum_{t \mid \sqrt{\Delta} \lambda_0} c_{\Nm^-(\bfrak)} \lp \Nm(\lambda_0/t), \lambda_0/t \rp,
\end{equation}
where $h_F$ is the class number of $F$, and for any prime $\lf$ of $\Oc_F$
\begin{equation}
  \label{eq:facgamma}
\frac{1}{2h_F}  \ord_{\lf}(  \gamma(\lambda_0; S_F) ) = 
  \begin{cases}
\rho_{K/F}((\lambda_0) \df \lf)(a+1),    & \chi(\lf) = -1, \lf \neq \lf', \ord_{\lf}((\lambda_0) \df) = a,\\
0, & \text{ otherwise,}
  \end{cases}
\end{equation}
where $\rho_{K/F}$ is the ideal counting function in \eqref{eq:sigma}.
\end{prop}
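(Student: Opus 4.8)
The plan is to unfold $\tilde{C}_\chi(\lambda_0; S_F)$ and then reorganize the resulting sum of archimedean contributions $a(\Lambda)$ from \eqref{eq:a} in the same way the character sums were handled in the proof of Prop.\ \ref{prop:count}. By Prop.\ \ref{prop:5.1}, the definition of $\tilde{c}_\af(m,h)$, and \eqref{eq:Ctilde}, one has
\begin{equation*}
  \tilde{C}_\chi(\lambda_0; S_F) = \frac{1}{2} \sum_{\bfrak \in S_F} \chi(\bfrak) \sum_{t \mid \sqrt{\Delta}\lambda_0} \sum_{\Lambda \in \Gamma_\Delta \backslash L(\Nm(\lambda_0/t),\, \lambda_0/t)} a(\Lambda),
\end{equation*}
where $L = L_{\Nm^-(\bfrak)}$. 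Since $\Nm(\lambda_0) > 0$, every orbit $\Lambda$ occurring here has positive norm, so its canonical representative $\lambda_\Lambda$ (the one with $1 \le r(\lambda_\Lambda) < \varep_\Delta^2$) satisfies $a(\Lambda) = \sgn(\lambda_\Lambda)\bigl(\log|\lambda_\Lambda| - \log|\lambda_\Lambda'|\bigr)$, except when $\lambda_\Lambda \in \Qb$, in which case $a(\Lambda) = -\log\varep_\Delta$; this exceptional case can occur only when $\Nm(\lambda_0)$ is a perfect square in $\Qb$ and is treated as a degenerate subcase.

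For the main term, I would introduce the fractional $\Oc_F$-ideal $\mathfrak{g} := \prod_{\bfrak, t, \Lambda} (\lambda_\Lambda)^{\chi(\bfrak)\sgn(\lambda_\Lambda)}$, which makes sense because $\chi(\bfrak) \in \{\pm 1\}$ and is independent of the choices of representatives since $(\varep_\Delta^n \lambda) = (\lambda)$. Running the correspondence between orbit representatives and integral ideals $\af \subset \Oc_F$ of the prescribed norm used in the proof of Prop.\ \ref{prop:rewrite} --- under which $\chi(\bfrak)\sgn(\lambda_\Lambda)$ is recorded by $\chi$ applied to a summand of $\rt(\af, \cdot)$ --- turns $\ord_\lf \mathfrak{g}$ into the same sort of character sum evaluated in Prop.\ \ref{prop:count}, with Prop.\ \ref{prop:cchiprop} and Lemma \ref{lemma:countmul} handling the inert, ramified and split rational primes exactly as there. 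One finds that $\mathfrak{g}$ is supported precisely on the split primes $\lf \neq \lf'$ with $\chi(\lf) = -1$, with $\ord_\lf \mathfrak{g} = \rho_{K/F}((\lambda_0)\df\lf)(a+1)$ for $a = \ord_\lf((\lambda_0)\df)$, the inert and ramified primes contributing nothing. Raising $\mathfrak{g}$ to the power needed to make it principal (this is where $h_F$ enters) produces $\gamma(\lambda_0; S_F) \in \Oc_F$ satisfying \eqref{eq:facgamma}, and $\tfrac{1}{2h_F}\log|\gamma/\gamma'|$ then equals the part of $\tilde{C}_\chi(\lambda_0; S_F)$ coming from the canonical generators.

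The remaining discrepancy has two sources: the choice of a totally positive generator of the principal ideal $\mathfrak{g}^{2h_F}$, and the difference between $a(\Lambda)$ as above --- for the canonical representative $\lambda_\Lambda$, plus the $-\log\varep_\Delta$ correction in the degenerate subcase --- and the value $\sgn(\lambda)(\log|\lambda| - \log|\lambda'|)$ attached to a fixed, $\lambda_0$-independent choice of representative in each orbit. Both are integer multiples of $\log\varep_\Delta = 2\log\varep_F^+$, and the point (in the spirit of Hecke's computation in Prop.\ \ref{prop:vartheta}) is that the ``winding number'' of $\lambda_\Lambda$ relative to the fixed choice, summed over a lattice coset, is a signed lattice-point count, hence a Fourier coefficient $c_{\Nm^-(\bfrak)}(\Nm(\lambda_0/t), \lambda_0/t)$; summing over $t$ and $\bfrak$ yields the asserted term $\sum_{\bfrak \in S_F} c_\bfrak \sum_{t} c_{\Nm^-(\bfrak)}(\Nm(\lambda_0/t), \lambda_0/t)$, with $c_\bfrak$ a constant multiple of $\log\varep_\Delta$ determined by the $\sr_h$-factors and the genus-theoretic data of Lemmas \ref{lemma:genus} and \ref{lemma:shval}, independent of $\lambda_0$.

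\emph{The main obstacle} is exactly this last step: verifying that the total $\log\varep_\Delta$-ambiguity --- including the contribution of the degenerate rational-representative orbits --- assembles into $\sum_\bfrak c_\bfrak \sum_t c_{\Nm^-(\bfrak)}(\cdots)$ with $\lambda_0$-independent weights, and pinning down the exact normalization $\tfrac{1}{2h_F}$ in \eqref{eq:facCtilde}. This is a more intricate version of the sign- and $\sr_h$-bookkeeping already carried out in the proof of Prop.\ \ref{prop:count}, and it is what ultimately forces the appearance of $\log\varep_F$ (hence of the constant $\kappa$) in the statement of Theorem \ref{thm:main}.
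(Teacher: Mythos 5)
Your strategy of unfolding $\tilde{C}_\chi$ directly and packaging the answer as a fractional ideal $\mathfrak{g}$ is reasonable, but it diverges from the paper's route through Prop.~\ref{prop:rewrite2} in a way that leaves a genuine gap — and it is not the one you flag. The paper first invokes Prop.~\ref{prop:rewrite2}, whose proof hinges on fixing multiplicative generators $\mu_\af$ of $\af^{h_F}$ with $\mu_{\af\bfrak}=\mu_\af\mu_\bfrak$ and $\mu_{\af'}=\mu_\af'$ (equation~\eqref{eq:mult}); this device splits $\tilde{c}_\chi$ cleanly into a unit piece $r\log\varep_F$, a piece $\tilde{\crm}_\chi$ built from $\log|\mu_\af/\mu_\af'|$, and the $c_\bfrak$-piece with $c_\bfrak = \tfrac{\chi(\bfrak)}{h_F}\log|\mu_\bfrak/\mu_\bfrak'|$. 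The last piece arises because each $\af$ occurring in the unfolding equals $(\mu)\Nm^-(\bfrak')$, so $\af^{h_F} = (\mu^{h_F}\mu_\bfrak'/\mu_\bfrak)$, and the factor $\mu_\bfrak'/\mu_\bfrak$ — a generator of $\Nm^-(\bfrak')^{h_F}$ — is \emph{not} a power of $\varep_F$ when $\bfrak$ is non-principal. Your proposal entirely misses this: you assert that all residual ambiguity consists of ``integer multiples of $\log\varep_\Delta$'' and hence that the $c_\bfrak$-term is a winding-number count. But a multiple of $\log\varep_\Delta$ can be absorbed harmlessly into $\gamma$ (it changes $\gamma$ only by a unit, hence leaves every $\ord_\lf(\gamma)$ untouched), whereas $c_\bfrak$ is generically \emph{not} a rational multiple of $\log\varep_\Delta$. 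Your winding-number mechanism therefore cannot produce the $c_\bfrak$-term, and the device that actually produces it — the $\Nm^-(\bfrak)$-factor in $(\lambda_\Lambda)$, handled via the $\mu_\af$'s — is absent from your argument.

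Relatedly, your claim that ``$\mathfrak{g}$ is supported precisely on the split primes $\lf\neq\lf'$ with $\chi(\lf)=-1$'' is false for $\mathfrak{g}$ as you define it. Since $\lambda_\Lambda \in \Nm^-(\bfrak) + h \subset \df^{-1}\Nm^-(\bfrak)$, one has $(\lambda_\Lambda) = \Nm^-(\bfrak)\df^{-1}\af_\Lambda$ with $\af_\Lambda \subset \Oc_F$ integral, so $\mathfrak{g}$ contains the factors $(\Nm^-(\bfrak)\df^{-1})^{\pm 1}$ whose prime support is wholly uncontrolled by $\chi$. One must peel these off before running the Prop.~\ref{prop:count}-type bookkeeping; the exponent-sum with which they occur is exactly the signed count $\sum_t c_{\Nm^-(\bfrak)}(\Nm(\lambda_0/t), \lambda_0/t)$, and \emph{that} separation is what manufactures the $c_\bfrak$-term (with $c_\bfrak$ involving $\log|\mu_\bfrak/\mu_\bfrak'|$, not $\log\varep_\Delta$). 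Once the peeled-off $\mathfrak{g}$ really is the product of the $\af_\Lambda$'s, the paper's computation — Lemma~\ref{lemma:countmul}, Prop.~\ref{prop:cchiprop}, and crucially Lemma~\ref{lemma:identity} for the split primes — establishes~\eqref{eq:facgamma}; your appeal to Prop.~\ref{prop:count} alone is not sufficient since that result gives $C_\chi(\mu_0) = 2\rho_{K/F}((\mu_0))$ and not the weighted exponent formula~\eqref{eq:facgamma}.
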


In order to prove this proposition, we need the following analogues of Prop.\ \ref{prop:rewrite} and \ref{prop:cchiprop} for $\tilde{c}_\chi(m, h)$. 
To state them, it is convenient to fix generators $\mu_\af$ of the principal ideal $\af^{h_F}$ for every ideal $\af \subset \Oc_F$ such that 
\begin{equation}
  \label{eq:mult}
  \mu_{\af\bfrak} = \mu_{\af} \mu_{\bfrak},\quad \mu_{\af'} = \mu_\af' 
\end{equation}
for every $\af, \bfrak \subset \Oc_F$.
\begin{prop}
  \label{prop:rewrite2}
Let $\chi$ be an odd genus character, $ n\in \Nb$ and $h \in A_\Delta$.
Then there exist constants $c_\bfrak \in \Rb$ independent of $n$ or $h$ for each $\bfrak \in S_F$, and $r \in \frac{1}{h_F} \Zb \subset \Qb$ such that
\begin{equation}
  \label{eq:tcchi}
  \tilde{c}_\chi \lp \frac{n}{\Delta}, h\rp = 
r \log \varep_F +
\tilde{\crm}_\chi \lp \frac{n}{\Delta}, h \rp
+  \sum_{\bfrak \in S_F} c_\bfrak c_{\Nm^-(\bfrak)} \lp \frac{n}{\Delta}, h \rp
\end{equation}
where
\begin{equation}
  \label{eq:tcrmchi}
  \tilde{\crm}_\chi \lp \frac{n}{\Delta}, h \rp := 
\frac{\sr_h}{h_F} \sum_{\af \subset \Oc_F,~ \Nm(\af) = n} \chi(\rt(\af, h)) \log \left|
\frac{\mu_{\af}}{\mu'_{\af}} \right|.
\end{equation}
\end{prop}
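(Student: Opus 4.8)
The proof will mirror that of Proposition~\ref{prop:rewrite}, the only change being that the constant weight $\sgn(\mu)$ attached to a lattice vector is replaced by the archimedean weight $a(\Lambda)$ of \eqref{eq:a}. The one genuinely new ingredient is the elementary observation that, modulo a rational multiple of $\log\varep_F$ and a quantity depending only on the ideal class $\bfrak$, the number $\log r(\lambda_0)=\log\left|\lambda_0/\lambda_0'\right|$ equals $\tfrac1{h_F}\log\left|\mu_\af/\mu_\af'\right|$ for the ideal $\af$ naturally attached to $\lambda_0$ and $\bfrak$.

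First I would sum the Fourier expansion of Proposition~\ref{prop:5.1} against $\chi(\bfrak)$ over $\bfrak\in S_F$ and read off the coefficient of $q^{n/\Delta}$, obtaining $\tilde c_\chi(n/\Delta,h)=\tfrac12\sum_{\bfrak\in S_F}\chi(\bfrak)\sum_{\Lambda}a(\Lambda)$, where for each $\bfrak$ the orbit $\Lambda$ runs over $\Gamma_\Delta\backslash\{\lambda\in\Nm^-(\bfrak)+h:\Nm(\lambda)=n/\Delta\}$ with chosen representative $\lambda_0$, $1\le r(\lambda_0)<\varep_\Delta^2$. Setting $\mu_0:=\sqrt\Delta\,\lambda_0\in\Nm^-(\bfrak)$ (so $r(\mu_0)=r(\lambda_0)$, $\sgn(\mu_0)=\sgn(\lambda_0)$), the orbit $\Lambda$ determines the integral ideal $\af=\af(\Lambda):=(\mu_0)\,\Nm^-(\bfrak')$ of norm $n$, exactly as in the proof of Proposition~\ref{prop:rewrite}. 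Raising this relation to the $h_F$-th power and using $\af^{h_F}=(\mu_\af)$ together with \eqref{eq:mult} gives $\mu_\af=u\,\mu_0^{h_F}\,\mu_{\bfrak'}/\mu_\bfrak$ for some $u\in\Oc_F^\times$; applying $\log\left|\,\cdot\,/(\,\cdot\,)'\right|$, using $(\mu_{\bfrak'}/\mu_\bfrak)'=\mu_\bfrak/\mu_{\bfrak'}$ and the fact that $\log\left|v/v'\right|\in2\Zb\log\varep_F$ for every $v\in\Oc_F^\times$, one finds
\begin{equation*}
a(\Lambda)=\frac{\sgn(\mu_0)}{h_F}\log\left|\frac{\mu_{\af(\Lambda)}}{\mu_{\af(\Lambda)}'}\right|-\frac{2\sgn(\mu_0)}{h_F}\log\left|\frac{\mu_{\bfrak'}}{\mu_\bfrak}\right|+e_\Lambda\log\varep_F,\qquad e_\Lambda\in\tfrac2{h_F}\Zb,
\end{equation*}
which holds also for the exceptional orbits $r(\lambda_0)=1$: there $a(\Lambda)=-\log\varep_\Delta$, the left-hand side of the displayed relation for $\log\left|\mu_0/\mu_0'\right|$ vanishes, and $\log\varep_\Delta=2\log\varep_F^+\in2\Zb\log\varep_F$.

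Substituting this into the formula for $\tilde c_\chi(n/\Delta,h)$ and summing, the first term, after re-indexing the double sum by integral ideals $\af$ of norm $n$ and invoking the weight-free sign count $\sum_{\bfrak\in S_F}\chi(\bfrak)\sum_{\af(\Lambda)=\af}\sgn(\mu_0)=2\sr_h\,\chi(\rt(\af,h))$ — which is precisely the identity established inside the proof of Proposition~\ref{prop:rewrite} (using that $\chi$ is odd, so $\chi([\df])=-1$), stripped of the weight — reproduces exactly $\tilde\crm_\chi(n/\Delta,h)$ of \eqref{eq:tcrmchi}; here $\sr_h$ comes from $\SO(L_\af)/\Gamma_\Delta$ and the support conditions on $\chi$ and $n$ are exactly those of \eqref{eq:cchi1}. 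The second term depends only on $\bfrak$, so summing it against $\sum_\Lambda\sgn(\mu_0)=c_{\Nm^-(\bfrak)}(n/\Delta,h)$ (Proposition~\ref{prop:vartheta}) yields $\sum_{\bfrak\in S_F}c_\bfrak\,c_{\Nm^-(\bfrak)}(n/\Delta,h)$ with $c_\bfrak:=-\tfrac1{h_F}\chi(\bfrak)\log\left|\mu_{\bfrak'}/\mu_\bfrak\right|$, manifestly independent of $n$ and $h$. Finally $\tfrac12\sum_{\bfrak\in S_F}\chi(\bfrak)\sum_\Lambda e_\Lambda\in\tfrac1{h_F}\Zb$ (the prefactor $\tfrac12$ is absorbed since $e_\Lambda\in\tfrac2{h_F}\Zb$), so the remaining terms collect into a single $r\log\varep_F$ with $r\in\tfrac1{h_F}\Zb$, giving \eqref{eq:tcchi}.

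The substance of the argument, and the main obstacle, is the bookkeeping: one must verify that the passage from $\Gamma_\Delta$-orbits of lattice vectors to integral ideals of norm $n$ — with the attendant factor $\sr_h$ and the replacement of $\chi(\bfrak)$ by $\chi(\rt(\af,h))$ — survives the non-constant weight $a(\Lambda)$, and one must handle with care the exceptional orbits $r(\lambda_0)=1$ (which force $\lambda_0'=\pm\lambda_0$) as well as the mismatch between the $\Gamma_\Delta$-orbit and the $\langle\varep_F^+\rangle$-orbit that defines the ideal $\af$ (the multiplication by $\varep_F^+$ on $A_\af$ being the automorphism $\sigma_{d_0}$ of Lemma~\ref{lemma:genus}). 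The parity fact $\log\left|v/v'\right|\in2\Zb\log\varep_F$ is exactly what keeps the denominator of $r$ equal to $h_F$ rather than $2h_F$. Everything else is a routine transcription of the proof of Proposition~\ref{prop:rewrite}.
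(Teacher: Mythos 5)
Your proof is correct and follows essentially the same route as the paper: both rewrite $\tilde c_\chi$ as a weighted sum over $\Gamma_\Delta$-orbits, convert $\Gamma_\Delta$-orbits to integral ideals $\af$ of norm $n$ as in the proof of Proposition~\ref{prop:rewrite}, and then use $\af^{h_F}=(\mu_\af)$ together with \eqref{eq:mult} to replace $\log|\mu_0/\mu_0'|$ by $\tfrac1{h_F}\log|\mu_\af/\mu_\af'|$ plus a $\bfrak$-dependent constant and a bounded $\log\varep_F$-error. The only organizational difference is that you decompose $a(\Lambda)$ itself into three pieces before summing, whereas the paper first passes to a sum over principal ideals $(\mu)\subset\Nm^-(\bfrak)$ and performs the decomposition afterward; your version is if anything more explicit about the exceptional orbits $r(\lambda_0)=1$ and the mismatch between the $\Gamma_\Delta$-orbit and the $\langle\varep_F^+\rangle$-orbit, which the paper absorbs into the unspecified constants $r_{\bfrak,n,h}$.
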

\begin{proof}
As in the proof of Prop.\ \ref{prop:rewrite}, we can write
$$
\tilde{c}_\chi \lp \frac{n}{\Delta}, h \rp
= \frac{\sr_h}{2} \sum_{\bfrak \in S_F} \chi(\bfrak) \sum_{\begin{subarray}{c}(\mu) \subset \Nm^{-}(\bfrak) \\ \frac{\mu}{\sqrt{\Delta}} = h \in A_\Delta \\ \Nm((\mu)) = n\end{subarray}} \sgn(\mu) \log \left|  \frac{\mu}{\mu'}\right|,
$$  
where the generator $\mu$ is chosen appropriately as in the definition of $a(\Lambda)$ in \eqref{eq:a}. Continue as in the proof of Prop.\ \ref{prop:rewrite}, we have
$$
\tilde{c}_\chi \lp \frac{n}{\Delta}, h \rp
= \frac{\sr_h}{2} \sum_{\bfrak \in S_F} \chi(\bfrak')
\lp 
 \sum_{\begin{subarray}{c}\af \subset \Oc_F,~\Nm(\af) = n \\ 
\af = (\mu)\Nm^-(\bfrak') \text{ with }\\ \frac{\mu}{\sqrt{\Delta}} = h \in A_\Delta\text{and } \mu > 0\end{subarray}} \log  \left|  \frac{\mu}{\mu'}\right|
- 
 \sum_{\begin{subarray}{c}\af \subset \Oc_F,~\Nm(\af) = n \\ 
\af = (\mu)\Nm^-(\bfrak') \text{ with }\\ \frac{\mu}{\sqrt{\Delta}} = -h \in A_\Delta \text{and } \mu > 0\end{subarray}} \log  \left|  \frac{\mu}{\mu'}\right|
\rp
$$
Given an ideal $\af \subset \Oc_F$ satisfying the conditions in the summand, the principal ideal $\af^{h_F}$ is generated by $\mu^{h_F} \tfrac{\mu_{\bfrak}'}{\mu_\bfrak}$, which then differs from the fixed generator $\mu_\af$ by a unit in $\Oc_F^\times$. Therefore 
\begin{align*}
   \sum_{\begin{subarray}{c}\af \subset \Oc_F,~\Nm(\af) = n \\ 
\af = (\mu)\Nm^-(\bfrak') \text{ with }\\ \frac{\mu}{\sqrt{\Delta}} = h \in A_\Delta\text{and } \mu > 0\end{subarray}} \log  \left|  \frac{\mu}{\mu'}\right|
&= 
2 r_{\bfrak, n, h} \log |\varep_F| + 
\frac{1}{h_F}
   \sum_{\begin{subarray}{c}\af \subset \Oc_F,~\Nm(\af) = n \\ 
\af = (\mu)\Nm^-(\bfrak') \text{ with }\\ \frac{\mu}{\sqrt{\Delta}} = h \in A_\Delta\text{and } \mu > 0\end{subarray}} \log  \left|  \frac{\mu_\af}{\mu'_\af}\right|\\
& \quad + 
\frac{2}{h_F}
 \log  \left|  \frac{\mu_\bfrak}{\mu'_\bfrak}\right|
\sum_{\begin{subarray}{c}(\mu) \subset \Nm^{-}(\bfrak) \\ \frac{\mu}{\sqrt{\Delta}} = h \in A_\Delta \\ \Nm((\mu)) = n,~ \mu > 0\end{subarray}} 1
\end{align*}
for $r_{\bfrak, n, h} \in \frac{1}{h_F} \Zb \subset \Qb$.
Summing over $\bfrak \in S_F$, we obtain \eqref{eq:tcchi} with
 $r:= \sr_h \sum_{\bfrak \in S_F} \chi(\bfrak)( r_{\bfrak, n, h} - r_{\bfrak, n, -h})$ and $c_\bfrak := \frac{\chi(\bfrak)}{h_F}  \log  \left|  \frac{\mu_\bfrak}{\mu'_\bfrak}\right|$.
\end{proof}

Using the proposition above, we can deduce the following analogue of Prop.\ \ref{prop:cchiprop}.
\begin{cor}
  \label{cor:cchiprop}
  Let $\ell \in \Nb$ be a prime such that there is only one prime $\lf$ in $\Oc_F$ above it. 
Then
\begin{equation}
  \label{eq:cchiprop}
\tilde{\crm}_\chi \lp  \frac{\ell^2 n}{\Delta}, \ell h \rp = 
  \begin{cases}
    \tilde{\crm}_\chi \lp \frac{n}{\Delta},  h \rp , & \text{ if } \ell \nmid \Delta \text{ or }\ell \mid \gcd(\Delta, n), \\
    (1 + \chi([\lf])) \tilde{\crm}_\chi \lp \frac{n}{\Delta},  h \rp, & \text{ if } \ell\mid \Delta \text{ and } \ell \nmid n,
  \end{cases}
\end{equation}
for all $n \in \Zb$ and $h \in A_\Delta$.
\end{cor}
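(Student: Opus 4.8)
The plan is to repeat the proof of Proposition~\ref{prop:cchiprop} almost verbatim, carrying along one extra multiplicative weight. For an integral ideal $\af\subset\Oc_F$ set $w_\af := \log\left|\mu_\af/\mu_\af'\right|$, so that by \eqref{eq:tcrmchi} we have $\tilde{\crm}_\chi\left(\tfrac{n}{\Delta},h\right) = \tfrac{\sr_h}{h_F}\sum_{\Nm(\af)=n}\chi(\rt(\af,h))\,w_\af$. First I would record two structural facts about this weight: (i) it is additive, $w_{\af\bfrak}=w_\af+w_\bfrak$, directly from \eqref{eq:mult}; and (ii) $w_{(a)}=0$ for every rational integer $a$, since $(a)=(a)'$ forces $\mu_{(a)}=\mu_{(a)}'$ by \eqref{eq:mult}. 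Together these give $w_{\ell\af}=w_\af$ for the prime $\ell$ in question. I would also note the $\tilde{\crm}_\chi$-analogue of Corollary~\ref{cor:vanishell}: if $p\mid\Delta$ with $\chi([\df_p])=-1$ and $p\mid\mathrm{d}(h)$, then $\sigma_p$ fixes $h$, so \eqref{eq:permute} forces $\chi(\rt(\af,h))=\chi([\df_p])\chi(\rt(\af,h))=-\chi(\rt(\af,h))$ for every $\af$, whence $\tilde{\crm}_\chi(\tfrac{n}{\Delta},h)=0$ for all $n$.

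With these in hand the argument runs exactly as in Proposition~\ref{prop:cchiprop}. Since $\ell$ has a unique prime $\lf$ above it, $\bfrak\mapsto\bfrak/\ell$ is a bijection from integral ideals of norm $\ell^2 n$ onto integral ideals of norm $n$, so $\tilde{\crm}_\chi\left(\tfrac{\ell^2 n}{\Delta},\ell h\right)=\tfrac{\sr_{\ell h}}{h_F}\sum_{\Nm(\af)=n}\chi(\rt(\ell\af,\ell h))\,w_{\ell\af}$, and $w_{\ell\af}=w_\af$ by (ii). Applying Lemma~\ref{lemma:countmul} with $\alpha=\ell$ gives $\rt(\ell\af,\ell h)=\rt(\af,h)$ when $\ell\nmid\Delta$, and $\rt(\ell\af,\ell h)=\tfrac{1}{\#(G_{\Delta,\ell}\cap\mathrm{G}_{\Delta,\overline{h}})}\bigl(\rt(\af,h)+\rt(\af,\sigma_\ell(h))\bigr)$ when $\ell\mid\Delta$; in the latter case \eqref{eq:permute} together with $\df_\ell=\lf$ gives $\chi(\rt(\af,\sigma_\ell(h)))=\chi([\lf])\chi(\rt(\af,h))$. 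The crucial point is that the weight $w_\af$ attached to a summand depends only on the ideal $\af$ and not on the argument $\sigma(h)$, so it passes unchanged through all of these manipulations. Combined with $\mathrm{d}(\ell h)=\mathrm{lcm}(\mathrm{d}(h),\ell)$ from \eqref{eq:lcm} — which yields $\sr_{\ell h}=\sr_h$ precisely in the cases where neither side vanishes — one arrives at the claimed identities: $\tilde{\crm}_\chi(\tfrac{\ell^2 n}{\Delta},\ell h)=\tilde{\crm}_\chi(\tfrac{n}{\Delta},h)$ when $\ell\nmid\Delta$ or $\ell\mid\gcd(\Delta,n)$, and $(1+\chi([\lf]))\tilde{\crm}_\chi(\tfrac{n}{\Delta},h)$ when $\ell\mid\Delta$ and $\ell\nmid n$.

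Finally, the case bookkeeping — Cases 1--5 of Proposition~\ref{prop:cchiprop}, which compare $\sr_{\ell h}/\#(G_{\Delta,\ell}\cap\mathrm{G}_{\Delta,\overline{h}})$ with $\sr_h$ via Lemmas~\ref{lemma:Qh} and \ref{lemma:shval} — concerns only the integers $\sr_\bullet$ and the groups $\mathrm{G}_{\Delta,\bullet}$, so it transfers verbatim and I would cite it rather than reproduce it; the two vanishing cases are dispatched on both sides by the $\tilde{\crm}_\chi$-analogue of Corollary~\ref{cor:vanishell} established in the first paragraph. The only genuinely new point, and the step I would check most carefully, is the identity $w_{\ell\af}=w_\af$, i.e.\ that multiplying $\af$ by the rational ideal $(\ell)$ leaves the weight untouched; this rests on the normalization $\mu_{\af'}=\mu_\af'$ in \eqref{eq:mult}, which forces rational ideals to have self-conjugate generators. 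Everything else is a routine transcription of the proof of Proposition~\ref{prop:cchiprop} with each summand carrying the harmless extra factor $w_\af$.
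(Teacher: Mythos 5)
Your proposal is correct and takes essentially the same route as the paper, whose entire proof is the single remark that $\mu_{\af\lf}=\mu_\af\mu_\lf$ (from \eqref{eq:mult}) lets one repeat the argument of Prop.~\ref{prop:cchiprop}; your facts (i) and (ii), culminating in $w_{\ell\af}=w_\af$, are precisely the fleshed-out content of that remark. One small note: \eqref{eq:lcm} is stated only for primes $p\mid\Delta$, so when $\ell\nmid\Delta$ you should instead justify $\sr_{\ell h}=\sr_h$ by observing that multiplication by $\ell$ is an automorphism of $A_\Delta$ commuting with every $\sigma_p$, hence $\mathrm{G}_{\Delta,\ell h}=\mathrm{G}_{\Delta,h}$ and $\mathrm{d}(\ell h)=\mathrm{d}(h)$.
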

\begin{proof}
We have $\mu_{\af \lf} = \mu_{\af} \mu_{\lf}$ by \eqref{eq:mult} and are done by proceeding as in the proof of Prop.\ \ref{prop:cchiprop}.
\end{proof}
\begin{proof}[Proof of Prop.\ \ref{prop:fac}]
 Prop.\ \ref{prop:rewrite2} implies that 
 \begin{equation*}
     \tilde{C}_\chi(\lambda_0; S_F) = \tilde{r} \log \varep_F + 
\tilde{\Cr}_{\chi}(\lambda_0) 
 + \sum_{\bfrak \in S_F} c_\bfrak \sum_{t \mid \sqrt{\Delta} \lambda_0} c_{\Nm^-(\bfrak)} \lp \Nm(\lambda_0/t), \lambda_0/t \rp
 \end{equation*}
for some $r \in \frac{1}{h_F} \Zb$, where
\begin{equation}
  \label{eq:tildeCr}
  \tilde{\Cr}_{\chi}(\lambda_0)  := 
\sum_{t \mid \sqrt{\Delta} \lambda_0} \tilde{\crm}_\chi(\Nm(\lambda_0/t), \lambda_0/t) .
\end{equation}
Therefore,
$$
  \tilde{C}_\chi(\lambda_0; S_F) = 
\frac{1}{h_F} \log |\alpha| + \sum_{\bfrak \in S_F} c_\bfrak \sum_{t \mid \sqrt{\Delta} \lambda_0} c_{\Nm^-(\bfrak)} \lp \Nm(\lambda_0/t), \lambda_0/t \rp
$$
for some $\alpha \in F$ satisfying $\alpha' = 1/\alpha$. 
Even though $\alpha$ depends on $S_F$, the factorization of the fractional ideal $(\alpha)$ only depends $\tilde{\Cr}_\chi(\lambda_0)$, which is independent of the set $S_F$. To finish proving the proposition, we need to analyze this part. 

Suppose the principal ideal generated by $\mu_0 := \sqrt{\Delta}\lambda_0$ has the factorization $\af \prod \underline{\lf}^\ua (\underline{\lf}')^{\ub}$ with $\af$ consisting of exactly the inert and ramified primes in $\mu_0$.
Here $\underline{\lf} = (\lf_j)_{1 \le j \le J}$ are the split primes in $\mu_0$, and we have adopted the notations in the proof of Prop.\ \ref{prop:count}, along which we obtain
\begin{equation}
  \label{eq:tCrchi2}
  \tilde{\Cr}_\chi(\lambda_0) = \frac{\rho_{K/F}(\af)}{h_F} \sum_{\underline{s} \le \underline{a}} \sum_{\us - \ua \le \ur \le \ub - \us}  \chi \lp \prod \underline{\lf}^\ur \rp \log \left|
\frac{\mu_{\af(\ur)}}{\mu'_{\af(\ur)}}
\right|.
\end{equation}
Using the definition of the ideal $\af(\ur)$ in \eqref{eq:aur}, we can rewrite the equation above as
$$
  \tilde{\Cr}_\chi(\lambda_0) = \sum_{1 \le j \le J} \frac{\rho_{K/F}((\mu_0)/(\lf_j^{a_j} (\lf'_j)^{b_j}))}{h_F} 
\log \left|\frac{\mu_{\lf_j}}{\mu'_{\lf_j}}\right|
\sum_{\begin{subarray}{c} s_j \le a_j\\ s_j - a_j \le r_j \le b_j - s_j \end{subarray}} \chi(\lf_j)^{r_j} (a_j - b_j + 2r_j).
$$
By the lemma below, the last sum vanishes unless $\chi(\lf_j) = -1$ and $2\nmid(a-b)$. 
Therefore, we can further simply $\tilde{\Cr}_\chi(\lambda_0)$ to
\begin{align*}
    \tilde{\Cr}_\chi(\lambda_0) &= 
\sum_{\begin{subarray}{c} 1 \le j \le J,~ \chi(\lf_j) = -1\\ 2 \mid \gcd(a_j + 1, b_j)\end{subarray}} \frac{\rho_{K/F}((\mu_0)/(\lf_j^{a_j}(\lf'_j)^{b_j}))(a_j + 1)}{h_F} 
\log \left|\frac{\mu_{\lf_j}}{\mu'_{\lf_j}}\right|  \\
&\quad +\sum_{\begin{subarray}{c} 1 \le j \le J,~ \chi(\lf'_j) = -1\\ 2 \mid \gcd(a_j, b_j +1)\end{subarray}} \frac{\rho_{K/F}((\mu_0)/(\lf_j^{a_j} (\lf_j')^{b_j}))(b_j + 1)}{h_F} 
\log \left|\frac{\mu'_{\lf_j}}{\mu_{\lf_j}}\right|\\
&= 
\sum_{\begin{subarray}{c} 1 \le j \le J\\ \chi(\lf_j) = -1\end{subarray}} 
\frac{\rho_{K/F}((\mu_0)\lf_j)(a_j + 1)}{h_F} 
\log \left|\frac{\mu_{\lf_j}}{\mu'_{\lf_j}}\right|  +
\frac{\rho_{K/F}((\mu_0)\lf_j')(b_j + 1)}{h_F} 
\log \left|\frac{\mu'_{\lf_j}}{\mu_{\lf_j}}\right|.
\end{align*}
By setting
$$
\gamma(\lambda; S_F) := \varep_F^{h_F \tilde{r}} \prod_{1 \le j \le J,~ \chi(\lf_j) = -1}  \mu_{\lf_j}^{\rho_{K/F}((\mu_0)\lf_j)(a_j+1)}
(\mu'_{\lf_j})^{\rho_{K/F}((\mu_0)\lf'_j)(b_j+1)},
$$
we obtain the claim in \eqref{eq:facgamma}.
\end{proof}

\begin{lemma}
  \label{lemma:identity}
For $\epsilon = \pm 1$ and integers $ 0 \le a \le b$, we have
\begin{equation}
  \label{eq:identity}
\sum_{\begin{subarray}{c} s  \le a \\ s  - a  \le r  \le b  - s  \end{subarray}} \epsilon^{r } (a  - b  + 2r) =
\begin{cases}
a + 1, & \text{if } \epsilon = - 1 \text{ and }2 \mid \gcd(a + 1, b), \\
-(b + 1), & \text{if } \epsilon = -1 \text{ and }2 \mid \gcd(a , b+1), \\
  0, & \text{otherwise.}
\end{cases}
\end{equation}
\end{lemma}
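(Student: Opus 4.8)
The plan is to split on the two values of $\epsilon$ and, in each case, reduce everything to the two elementary alternating sums $\sum_{s=0}^{m}(-1)^s=\tfrac{1+(-1)^m}{2}$ and $\sum_{s=0}^{m}(-1)^s s$, the latter being $m/2$ for $m$ even and $-(m+1)/2$ for $m$ odd. No conceptual input is needed; the whole statement is a bookkeeping exercise, and the only real risk of error is tracking the signs through a nested re-indexing.

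First I would dispose of the case $\epsilon=1$. For each fixed $s$ with $0\le s\le a$, the inner index set $\{s-a,s-a+1,\dots,b-s\}$ is stable under the involution $r\mapsto (b-a)-r$, which negates the summand $a-b+2r$ and has $r=(b-a)/2$ as its only possible fixed point, where that summand vanishes. Hence the inner sum is $0$ for every $s$, so the whole double sum is $0$; since the $\gcd$-conditions in \eqref{eq:identity} force $a$ and $b$ to have opposite parity, every $\epsilon=1$ instance is an ``otherwise'' case, and we are done there.

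For $\epsilon=-1$ I would substitute $r=k+s-a$, so the inner sum becomes $(-1)^{s+a}\sum_{k=0}^{N}(-1)^k(2k-N)$ with $N:=a+b-2s$. Expanding $2k-N$ and applying the two elementary sums gives $\sum_{k=0}^{N}(-1)^k(2k-N)=0$ when $N$ is even and $-(N+1)$ when $N$ is odd. As $N\equiv a+b \pmod 2$, the inner sum vanishes identically when $a+b$ is even, yielding total $0$; and when $a+b$ is even both $\gcd(a+1,b)$ and $\gcd(a,b+1)$ are odd, so this is again the ``otherwise'' value.

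It remains to treat $\epsilon=-1$ with $a+b$ odd. Then $N$ is odd for all $s$, and the double sum collapses to $(-1)^{a+1}\sum_{s=0}^{a}(-1)^{s}(a+b+1-2s)$. Writing $a+b+1-2s=(a+b+1)\cdot 1-2s$ and invoking the two elementary sums once more, now with upper limit $a$, gives $b+1$ when $a$ is even and $a+1$ when $a$ is odd; multiplying by $(-1)^{a+1}$ produces $-(b+1)$ and $a+1$ respectively. Finally, when $a+b$ is odd, $a$ even is equivalent to $2\mid\gcd(a,b+1)$ (and forces $\gcd(a+1,b)$ odd), while $a$ odd is equivalent to $2\mid\gcd(a+1,b)$; so these two outputs are exactly the two nonzero cases of \eqref{eq:identity}, completing the proof. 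The hard part, such as it is, is simply keeping the sign $(-1)^{s+a}$ correct across the re-indexing and matching the final parity dichotomy to the stated $\gcd$ divisibilities.
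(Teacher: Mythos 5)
Your proof is correct, and I checked each step: the involution $r \mapsto (b-a)-r$ does negate $a-b+2r$ while fixing the inner index set and killing any fixed-point term, so the $\epsilon=1$ inner sums all vanish; the re-indexing $r=k+s-a$ produces $(-1)^{s+a}\sum_{k=0}^N(-1)^k(2k-N)$ with $N=a+b-2s$, which is $0$ for $N$ even and $-(N+1)$ for $N$ odd; and the outer alternating sum then yields $-(b+1)$ for $a$ even and $a+1$ for $a$ odd after multiplying by $(-1)^{a+1}$, matching the $\gcd$ dichotomy exactly. The paper actually states Lemma \ref{lemma:identity} without supplying a proof, so there is nothing to compare your argument against; your elementary reduction to the two standard alternating sums is a perfectly adequate way to fill that gap.
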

\section{Higher Green's Function}
\label{sec:hGf}
In this section, we will state and prove the main result (Theorem \ref{thm:main}). The basic idea is to express the higher Green's function as a theta integral. At the cycle $Z_\chi$, the theta kernel becomes the Eisenstein series $y\Ec_\chi(z)$ by the Siegel-Weil formula (see Prop.\ \ref{prop:SW}). The key idea then is to use theta lift to construct a suitable real-analytic modular form, whose image under the lowering operator is related to the image of $y\Ec_\chi(z)$ under the raising operator.
This construction, along with necessary calculations, are postponed until section \ref{sec:thetalifts}.

\subsection{Differential Operators}
For any $k \in \half \Zb$ and $\tau = u + iv \in \Hb$, define the following differential operators on real-analytic $\Cb$-valued function $f$ on $\Hb$
\begin{equation}
\label{eq:diffops}
\begin{split}
  R_{\tau, k} &= 2i \partial_\tau + \frac{k}{v}, \;
L_{\tau, k} = -2iv^2 \partial_{\overline{\tau}}, \\
\Delta_{\tau, k} &= - R_{\tau, k - 2} L_{\tau, k} = - L_{\tau, k + 2} R_{\tau, k} - k = -v^2 \lp \partial_u^2 + \partial_v^2 \rp + ikv (\partial_u + i \partial_v).
\end{split}
\end{equation}
We omit the subscript $\tau$ when it is clear from the context. 
For any $n \in \Nb$, we also know that 
\begin{equation}
\label{eq:raise}
R^n_{ k} f := R_{k + 2n-2} \circ R_{  k + 2n-4} \circ \dots \circ R_{  k} f
=
\sum_{r = 0}^{n} (2 i)^r \binom{n}{r} \frac{(k+r)_{n-r}}{v^{n - r}} (\partial_\tau^r f)(\tau),
\end{equation}
from equation (56) in \cite{BvdGZ123}.
Suppose that $f \in \Ac_{k, \rho}(\Gamma)$ for any $\Gamma \subset \Mp_2(\Zb)$ of finite index and representation $\rho: \Gamma \to \GL(V)$. Then applying the differential operators $R_{  k}$ and $L_{  k}$ componentwisely to $f$ preserves its modularity by raising and lowering the weight by 2 respectively, i.e.\ $R_{  k}(f) \in \Ac_{k+2 , \rho}(\Gamma), L_{  k}(f) \in \Ac_{k-2, \rho}(\Gamma)$.
Furthermore, it is a simple consequence of the commutation of differential operators that
\begin{equation}
  \label{eq:eigen}
  \Delta_{  k + 2n} R^{n}_{  k} f = (k + n - 1)n  R^{n}_{  k} f
\end{equation}
if $\Delta_{  k} f = 0$. 
For $N,n \in \Nb$, recall the unary theta function $\theta_N(\tau, n) \in \Ac_{n + 1/2}(\rho_N)$ from \eqref{eq:thetan}. It is easy to check that 
\begin{equation}
  \label{eq:Runary}
  R_{n + 1/2} \theta_N(\tau; n) = -2 \pi \theta_N(\tau; n+2).
\end{equation}
From Stokes' theorem, we can deduce the following consequence.
\begin{lemma}
\label{lemma:RLswitch}
Let $f, g$ be smooth functions on $\Hb$ and $S \subset \Hb$ a compact domain.
Then
\begin{equation}
\label{eq:RLswitch}
\begin{split}
  \int_{S} f (R_{-2-k} g) d\mu(\tau) + 
\int_{S} (R_k f)  g d\mu(\tau) &=
- \oint_{\partial S} \frac{f g}{v^2} d \overline{\tau}, \\
  \int_{S} f (L_{k_1} g) d\mu(\tau) + 
\int_{S} (L_{k_2} f)  g d\mu(\tau) &=
- \oint_{\partial S} {f g}{} d {\tau},
\end{split}
\end{equation}
with counterclockwise orientation on $\partial S$ for any $k, k_1, k_2 \in \half \Zb$.
\end{lemma}
\begin{proof}
  Note that $2i \frac{\partial (fg /v^2)}{\partial \tau} = \frac{R_{-2}(fg)}{v^2} = v^{-2}((R_{k}f) g + f (R_{-k-2} g)) $. Integrate this against ${du\wedge dv}$ and using $2i du \wedge dv =  d \overline{\tau}\wedge d\tau $ along with Stokes' theorem proves the first equality. The second one follows similarly by considering $\int_S d(f g d\tau)$.
\end{proof}

In order to maintain both holomorphicity and modularity of derivatives of modular forms, one can take linear combinations of products of repeatedly raised modular forms. This is the idea behind the following operation (see e.g.\ section 5.2 of part 1 of \cite{BvdGZ123}).
For $j = 1, 2$, let $\Gamma_j \subset \Mp_2(\Zb)$ be finite index subgroups and $\rho_j$ representations of $\Gamma_j$ on $\Cb[A_j]$.
For $k, \ell \in \half \Zb $ and $r \in \Nb$, we can define the Rankin-Cohen bracket on two real-analytic modular forms $f \in \Ac_{k, \rho_1}(\Gamma_1), g \in \Ac_{\ell, \rho_2}(\Gamma_2)$ by
\begin{equation}
  \label{eq:RC}
  \begin{split}
      [f, g]_r &:= \sum^r_{s = 0} (-1)^s \binom{k + r - 1}{s} \binom{\ell + r - 1}{r - s} \sum_{h_1 \in A_1, h_2 \in A_2} f_{h_1}^{(r-s)} g_{h_2}^{(s)}, \\
&= (-4\pi)^{-r} \sum^r_{s = 0} (-1)^s \binom{k + r - 1}{s} \binom{\ell + r - 1}{r - s} \sum_{h_1 \in A_1, h_2 \in A_2} R^{r-s}_{k} f_{h_1} R^s_{\ell} g_{h_2},
  \end{split}
\end{equation}
where $G^{(j)} := \frac{1}{(2\pi i )^j} \partial_\tau^j G$ for any real-analytic function $G$ on $\Hb$ and $\binom{m}{n} := \frac{m(m-1)(m-2)\dots(m-n + 1)}{n!}$ is the binomial coefficient.
Then one can check that 
$$
[f, g]_r \in \Ac_{k + \ell + 2r, \rho_1 \otimes \rho_2}(\Gamma)
$$
with $\Gamma = \Gamma_1 \cap \Gamma_2$.
Furthermore, if $f$ and $g$ are holomorphic with rational Fourier coefficients, so is $[f, g]_r$.

\subsection{Higher Green's Function}
For $\Re(s) > 1$, let 
\begin{equation}
  \label{eq:Qs}
Q_{s-1}(t) := \int^\infty_0 (t + \sqrt{t^2 - 1} \cosh v)^{-s} dv
\end{equation}
 be the Legendre function of the second kind, which satisfies the ordinary differential equation 
$$
(1 - t^2) F''(t) - 2t F'(t) + s(s-1) F(t) = 0.
$$
Define a function $g_s$ on $\Hb^2$ by
\begin{equation}
  \label{eq:gs}
  g_s(\underline{z}) := -2Q_{s-1} (\cosh d(\underline{z})) = -2 Q_{s - 1} \lp 1 + \frac{|z_1 - z_2|^2}{2y_1y_2} \rp
\end{equation}
for $\underline{z} = (z_1, z_2)  \in \Hb^2$ with $d(\underline{z})$ the hyperbolic distance between $z_1$ and $z_2$.
By averaging over the $\Gamma$-translates of the second variable, we obtain a function 
\begin{equation}
  \label{eq:Gs}
  G_s(\underline{z}) := \sum_{\gamma \in \mathrm{PSL}_2(\Zb)} g_s(z_1, \gamma z_2)
\end{equation}
on $(\mathrm{PSL}_2(\Zb) \backslash \Hb)^2$.
Since linear fractional transformation is an isometry with respect to the hyperbolic distance, the function $G_s(\underline{z})$ is symmetric in $z_1$ and $z_2$.
Furthermore, it is an eigenfunction of the hyperbolic Laplacian $\Delta_{z_j, 0}$ of eigenvalue $s(1-s)$ for $j = 1, 2$.

Fix a positive integer $k \ge 2$ and let $f(\tau) = \sum_{m \ge -\infty} c_f(m) q^m \in M^!_{2-2k}$ be a weakly holomorphic modular form.
By the residue theorem, we have 
$$
\sum_{m \ge 1} c_f(-m) a_g(m) = 0
$$
for all $g(\tau) = \sum_{m \ge 1} a_g(m) q^m \in S_{2k}$. 
Define the higher Green's function with respect to $f$ by
\begin{equation}
\label{eq:Gkf}
G_{k, f}(\underline{z}) :=  \sum_{m \ge 1} c_f(-m) m^{k-1} G_k(\underline{z}) \mid T_m
\end{equation}
with $T_m$ the $m$\tth Hecke operator acting on $z_2$ (see \textsection IV.4 of \cite{GZ86}).
The singularity of $G_{k, f}$ is 
\begin{equation}
  \label{eq:Tf}
  T_f := \bigcup_{\begin{subarray}{c} m \ge 1,~ c_f(-m) \neq 0\\ z \in \Hb \end{subarray}} (z, T_m z) \subset \Hb^2.
\end{equation}
It was shown in \cite{Viazovska11} that 
\begin{equation}
\label{eq:GkfPhi}
2 G_{k, f}(\underline{z})
= 
(-4\pi)^{1 - k} \Phi_{M_2(\Zb)}(w(\underline{z}), 1, R^{k-1}_{2-2k}f),
\end{equation}
where $w(\underline{z})$ is the identification in Example \ref{ex:M2Z}.
This is essentially a direct consequence of Theorem 6.2 in \cite{Borcherds98}, which implies that the right hand side has the same singularity on $\Hb^2$ as the left hand side.
There is an extra factor of 2 here since $\SL_2(\Zb)$ was used in \cite{Viazovska11} in defining the higher Green's function. 
By Lemma \ref{lemma:RLswitch}, we can obtain the following simple consequence.
\begin{prop}
  \label{prop:Gkf}
Let $L = M_2(\Zb)$ and $\Theta_L(\tau, w(\underline{z}))$ the theta function in \eqref{eq:Theta}. Then we have
\begin{equation}
  \label{eq:Gk_Theta}
  G_{k, f}(\underline{z}) {=} (4\pi)^{1-k}  \int^\mathrm{reg}_{\Gamma \backslash \Hb}  f(\tau) R^{k-1}_{\tau, 0}(  \overline{\Theta_L(\tau, w(\underline{z}))}) d\mu(\tau).
\end{equation}
for all $\underline{z} \in \Hb^2$.
\end{prop}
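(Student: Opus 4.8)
The plan is to start from Viazovska's identity \eqref{eq:GkfPhi}, namely $2G_{k,f}(\underline z) = (-4\pi)^{1-k}\Phi_{M_2(\Zb)}(w(\underline z),1,R^{k-1}_{2-2k}f)$, which we take as known, and to transport the iterated raising operator from the input form onto the theta kernel by repeated integration by parts. First I would unfold $\Phi_{M_2(\Zb)}$ using its definition \eqref{eq:Phireg}. Since $L = M_2(\Zb)$ is unimodular, $A_L$ is trivial, the Weil representation and the $\Cb[A_L]$-pairing disappear, and $\langle R^{k-1}_{2-2k}f,\Theta_L\rangle$ is simply $(R^{k-1}_{2-2k}f)\,\overline{\Theta_L}$; moreover the input form has weight $2-2k+2(k-1)=0$ and $\Theta_L(\tau,w;1)$ has weight $0$, so the factor $v^k$ in \eqref{eq:Phireg} equals $1$. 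Thus
\begin{equation*}
2G_{k,f}(\underline z) = (-4\pi)^{1-k}\int^{\mathrm{reg}}_{\Gamma\backslash\Hb}(R^{k-1}_{2-2k}f)(\tau)\,\overline{\Theta_L(\tau,w(\underline z))}\,d\mu(\tau).
\end{equation*}

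Next I would apply the first identity of Prop.\ \ref{prop:RLswitch} a total of $k-1$ times on the truncated domain $\Fc_T$: writing $R^{k-1}_{2-2k}f = R_{-2}\circ R^{k-2}_{2-2k}f$, then $R^{k-2}_{2-2k}f = R_{-4}\circ R^{k-3}_{2-2k}f$, and so on, each step moves one raising operator off the input form and onto $\overline{\Theta_L}$, raising the latter's weight from $0$ up to $2(k-1)$, at the cost of a sign $(-1)$ and a boundary integral $\oint_{\partial\Fc_T}\tfrac{FG}{v^2}\,d\bar\tau$. Since the $R_\tau$'s preserve modularity, all intermediate functions are honest components of real-analytic modular forms and $f\,R^{k-1}_{\tau,0}(\overline{\Theta_L})$ has weight $0$. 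The outcome on $\Fc_T$ is
\begin{equation*}
\int_{\Fc_T}(R^{k-1}_{2-2k}f)\,\overline{\Theta_L}\,d\mu = (-1)^{k-1}\int_{\Fc_T}f\,R^{k-1}_{\tau,0}(\overline{\Theta_L})\,d\mu + (\text{boundary terms on }\partial\Fc_T),
\end{equation*}
and it remains to pass to the regularized limit and show the constants and boundary contributions combine correctly.

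The boundary analysis is the technical core. The two vertical segments $\Re\tau=\pm\tfrac12$ of $\partial\Fc_T$ occur with opposite orientations and cancel, because $f$, $\Theta_L$ (with trivial $\rho_L$) and all their $R_\tau$-derivatives are invariant under $\tau\mapsto\tau+1$. On the unit-circle arc, the $1$-form $\tfrac{FG}{v^2}\,d\bar\tau$ — where at the $j$-th stage $F$ has weight $-2-2j$ and $G$ has weight $2j$, so $FG$ has weight $-2$ — is $\SL_2(\Zb)$-invariant, while $\tau\mapsto-1/\tau$ fixes the arc and reverses its orientation, so this contribution vanishes. The delicate piece is the horizontal segment $\Im\tau=T$: here one uses that $f$ grows at most like a fixed negative power of $q$ at the cusp while $\overline{\Theta_L(\cdot,w(\underline z))}$ grows at most linearly in $\Im\tau$, so that after the $u$-integration the boundary integrand decays as $T\to\infty$; one must also verify that folding the regularizing factor $v^{-s}$ of \eqref{eq:Phireg} into the integrations by parts produces only $O(s)$ corrections whose constant term at $s=0$ vanishes. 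Finally, collecting $(-4\pi)^{1-k}$, the sign $(-1)^{k-1}$ from the $k-1$ integrations by parts, and the normalization in \eqref{eq:GkfPhi} gives the stated constant $(4\pi)^{1-k}$, proving \eqref{eq:Gk_Theta}. I expect the interior computation to be routine; the real obstacle is making the integration by parts legitimate at the cusp in the presence of the Harvey–Moore regularization, i.e.\ controlling the $\Im\tau=T$ boundary terms and the $v^{-s}$-corrections, which is why this "simple consequence" still requires care.
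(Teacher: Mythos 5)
Your proof is correct and takes essentially the same route as the paper's: start from \eqref{eq:GkfPhi}, move the $k-1$ raising operators off the input onto $\overline{\Theta_L}$ by iterating Prop.~\ref{prop:RLswitch} on $\Fc_T$, and argue that the $\partial\Fc_T$ boundary contributions vanish in the limit. The decay on $\Im\tau=T$ that you flag as the real obstacle (and leave as a plan) is exactly what the paper's terse proof supplies through the estimate $|az_1z_2+bz_1+cz_2+d|^2+|az_1\overline{z}_2+bz_1+c\overline{z}_2+d|^2>0$ for all nonzero $(a,b,c,d)\in\Zb^4$: it forces each nonzero $u$-Fourier mode $\ebf(mu)$ of $\Theta_L(\cdot,w(\underline z))$, and hence of every $R^j_{\tau,0}\overline{\Theta_L}$, to decay at least as fast as $T\,e^{-2\pi|m|T}$, which absorbs the $e^{2\pi m T}$ growth coming from the principal part of the raised input after the $u$-integration; your treatment of the vertical and arc segments and of the $v^{-s}$-regularization is a correct spelling-out of details the paper omits.
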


\begin{proof}
  Suppose $z_1, z_2 \in \Hb$. Then $|az_1z_2 + bz_1 + cz_2 + d|^2 + |az_1\zbar_2 + bz_1 + c\zbar_2 + d|^2 \ge 1$ for all $a, b,c , d \in \Zb$ not all zero. Therefore, one can check inductively that 
$$\lim_{T \to \infty} \int_{\partial \Fc_T} (R^{k-1-j}_{2-2k}f)(z) R^{j}_{z, 0} ( v \overline{\Theta_L(z, w(\underline{z}))}) \frac{d z}{y^2}  = 0$$
 for all $0 \le j \le k-1$. Then applying Lemma \ref{lemma:RLswitch} $(k-1)$ times finishes the proof.
\end{proof}

\subsection{Main Theorems and Proofs}
Let $\Delta > 0$ be a fundamental discriminant, $\varep_\Delta \in \Oc_F^\times$ the generator of the discriminant kernel $\Gamma_\Delta$ as in \eqref{eq:varepDelta}, $\chi = \chi_{\Delta_1, \Delta_2}$ an odd genus character, and $Z_\chi$ the associated 0-cycle of CM points on $(\Gamma \backslash \Hb)^2$ as in \eqref{eq:Zchi}. 
Suppose $k \in 2\Nb$ and $f(z) = \sum_{m \ge -m_0} c_f(m) \ebf(mz) \in M^!_{2-2k}$, we will now state and prove the two theorems concerning $G_{k, f}(Z_\chi)$.
\begin{thm}
  \label{thm:main1}
Let $S_F \subset I_\df$ be as in \eqref{eq:Theta12} and $c(f, S_F) \in \Cb$ the constant defined in \eqref{eq:constant}.
Then we have
\begin{equation}
  \label{eq:fundeq}
  \begin{split}
        {G_{k, f}(Z_\chi)}{} &- c(f, S_F) \log \varepsilon_\Delta = 
- \frac{\sqrt{\Delta}^{1-k}}{2} \sum_{1 \le m \le m_0} 
 \sum_{\mu_0 \in \mathrm{S}_m} 
c_f(-m) C_k(\mu_0, m) 
 \tilde{C}_\chi\lp \frac{\mu_0}{\sqrt{\Delta}} \rp,
  \end{split}
\end{equation}
where $C_{k}(\mu_0, m) \in \Zb$ is the integer defined in \eqref{eq:Ck} and $\tilde{C}_\chi$ is defined in \eqref{eq:Ctilde}.
\end{thm}

\begin{proof}
  By the Siegel-Weil formula in Prop.\ \ref{prop:SW} and the integral representation of the higher Green's function in Prop.\ \ref{prop:Gkf}, we can rewrite 
$$
{G_{k, f}(Z_\chi)}{} = (4\pi)^{1-k} \lim_{T \to \infty} \int_{\Fc_T} f(z) R^{k-1}_{z, 0} (y \Ec_\chi(z)) d\mu(z).
$$
In Prop.\ \ref{prop:preimage}, we use theta lift to construct essentially a preimage of $R^{k-1}_{z, 0}(\overline{y \Ec_\chi(z)})$ under the lowering operator. Substituting this into the equation above yields
\begin{align*}
  {G_{k, f}(Z_\chi)}{} &= \frac{1}{4\sqrt{2}} \lim_{T \to \infty} \int_{\Fc_T} f(z) 
\lp L_z ((y^{k} \Phi_{}(z; \tp_{k-1}, {\hat{\vartheta}^c_\chi}))^c)
+ (y^{k-1} \Phi_{}(z; \hp_{k-1}, {\theta_\chi}))^c \log \varep_\Delta \rp
d\mu(z)\\
&= 
c(f, S_F) \log \varep_\Delta + 
\frac{1}{4\sqrt{2}} \lim_{T \to \infty} \int_{\Fc_T} f(z) 
 L_z ((y^{k} \Phi_{}(z; \tp_{k-1}, {\hat{\vartheta}^c_\chi}))^c)
d\mu(z) \\
&= 
c(f, S_F) \log \varep_\Delta + 
\frac{1}{4\sqrt{2}} \lim_{T \to \infty} \int_{0}^1 f(x + iT) 
 (T^{k} \Phi_{}(x + iT; \tp_{k-1}, {\hat{\vartheta}^c_\chi}))^c
dx,
\end{align*}
where the second step follows from the definition of $c(f, S_F)$ in \eqref{eq:constant} and the last step is a consequence of Lemma \ref{lemma:RLswitch}.
Now Prop.\ \ref{prop:FE1} tells us the $m$\tth Fourier coefficient $\hat{a}_\chi(m, y)$ of $(y^k\Phi(z; \tp_{k-1}, \hat{\vartheta}_\chi^c))^c$. It decays as $y^{-1/2}{(1 + |m|)^{k+3}e^{-2 \pi |m| y}}$ when $m \le 0$ and has a main term $\tilde{a}_\chi(m) e^{-2\pi m y}$ when $m > 0$.
Putting these together, we see that 
\begin{align*}
  {G_{k, f}(Z_\chi)}{} &- c(f, S_F) \log \varepsilon_\Delta = \frac{1}{4\sqrt{2}} \sum_{1 \le m \le m_0} c_f(-m) \tilde{a}_\chi(m).
\end{align*}
Substituting the definition for $\tilde{a}_\chi(m)$ in \eqref{eq:tildea} and we are done.
\end{proof}

We can now combine formula \eqref{eq:fundeq} with Prop.\ \ref{prop:fac} to prove Theorem \ref{thma:main1}. 
\begin{thm}\label{thm:main}
If $f(\tau)  = \sum_{m \ge -m_0} c_f(m) q^m  \in M^!_{2-2k}$ has rational Fourier coefficients, then there exists $\kappa = \kappa_{f, F} \in \Nb$ and $\gamma_f \in F^\times$ such that 
\begin{equation}
\label{eq:rationality}
\kappa G_{k, f}(Z_\chi) = - \Delta^{\frac{1-k}{2}} \cdot \log \left|\frac{\gamma_f}{\gamma_f'}\right|.
\end{equation}
Furthermore, for every prime ideal $\lf$ of $\Oc_F$ satisfying $\chi(\lf) = -1, \lf \neq \lf'$,
\begin{equation}
  \label{eq:factorization}
\frac{  \ord_{\lf}  (\gamma_f) }{\kappa} =
  \sum_{1 \le m \le m_0} 
c_f(-m) 
\sum_{\mu_0 \in \mathrm{S}_m}
\frac{C_k(\mu_0, m)}{2}
\rho_{K/F}((\mu_0) \lf)( 1 + \ord_\lf((\mu_0))).
\end{equation}
and $\ord_\lf(\gamma_f) = 0$ for every other prime ideal $\lf$ of $\Oc_F$, where $C_k(\mu_0, m)$ is the quantity defined in \eqref{eq:Ck}.
In particular, the fractional ideal generated by $\frac{\gamma_f'}{\gamma_f}$ has the factorization
$$
\prod_{m \ge 1} \prod_{\mu_0 \in \mathrm{S}_m} I^-_\chi(\mu_0)^{c_f(-m) C_k(\mu_0, m)},
$$
where $I^-_\chi(\af)$ was defined in \eqref{eq:I-}.
\end{thm}

\begin{rmk}
The result above still holds if $\gamma_f$ above is replaced by $n \gamma_f $ for any $n \in \Nb$. 
\end{rmk}
\begin{proof}
  First, we can substitute \eqref{eq:facCtilde} into \eqref{eq:fundeq} to obtain
  \begin{align*}
    {G_{k, f}(Z_\chi)}{} &= c(f, S_F) \log \varepsilon_\Delta -
\sqrt{\Delta}^{1-k}                           
\sum_{1 \le m \le m_0}
 \sum_{\mu_0 \in \mathrm{S}_m}
  \frac{c_f(-m) C_k(\mu_0, m) }{4h_F}
 \log \left| \frac{\gamma(\mu_0/\sqrt{\Delta}; S_F)}{\gamma(\mu_0/\sqrt{\Delta}; S_F)'} \right| \\
    &\quad  - \frac{1}{2} \sum_{\bfrak \in S_F} c_\bfrak
\sum_{1 \le m \le m_0}  c_f(-m) c_{\Phi, \bfrak, k}(m),
  \end{align*}
where $c_{\Phi, \bfrak, k}(m)$ is defined by
$$
c_{\Phi, \bfrak, k}(m) := \sqrt{\Delta}^{1-k} 
\sum_{\mu_0 \in \mathrm{S}_m} C_k(\mu_0, m)
 \sum_{t \mid \mu_0} c_{\Nm^-(\bfrak)} \lp \Nm(\mu_0/(\sqrt{\Delta}t)), \lambda_0/(\sqrt{\Delta}t) \rp
$$
and is the $m$\tth Fourier coefficient of the holomorphic cusp form $\tfrac{-1}{2\sqrt{2}}(y^k \Phi(z; \tp_{k-1}, \vartheta_{\Nm^-(\bfrak)}^c))^c$ in $S_{2k}$ by Prop.\ \ref{prop:classical}.
The sum $\sum_{1 \le m \le m_0} c_f(-m) c_{\Phi, \bfrak, k}(m)$ is then the constant term of the $\tfrac{-f}{2\sqrt{2}}(y^k \Phi(z; \tp_{k-1}, \vartheta_{\Nm^-(\bfrak)}^c))^c \in M^!_2$, which is zero.
Now Prop.\ \ref{prop:rational} tells us that $\sqrt{\Delta} c(f, S_F) \in \Qb$.
Since $k \in 2\Nb$, the Legendre polynomial $P_{k-1}$ is odd and $\sqrt{\Delta}^{k-1} P_{k-1}((\lambda_0 - \lambda'_0)/m) \in \Qb$ for $\lambda_0 \in F$. 
We can now choose $\kappa \in \Nb$ such that 
$$
\kappa \Delta^{(k-1)/2} c(f, S_F),~ 
\kappa \frac{c_f(-m) C_k(\mu_0, m)}{4h_F} 
 \in \Zb
$$
for all $\mu_0 \in \mathrm{S}_m$ and $1 \le m \le m_0$.
We then obtain equation \eqref{eq:factorization} by the factorization of $\gamma(\lambda_0; S_F)$ in Prop.\ \ref{prop:fac}.
To see the factorization of the fractional ideal generated by $\frac{\gamma_f'}{\gamma_f}$, notice that
\begin{equation}
  \label{eq:rho'}
  \prod_{\lf \mid \mu_0 \text{ prime},~ \chi(\lf) = -1} \lf^{\rho_{K/F}((\mu_0)\lf)( 1 + \ord_{\lf}((\mu_0)))} = \prod_{\af \mid (\mu_0)} \af^{-2\chi(\af)}.
\end{equation}
This finishes the proof.
\end{proof}

\section{Three Theta Lifts}
\label{sec:thetalifts}
This section contains the construction of the preimage of the kernel $R^{k-1}_0 (y \Ec_\chi(z))$ under the lowering operator $L_{2k}$. It consists of several theta lifts. We will first present the basic setup and some technical calculations, before moving on to the construction in \ref{subsec:lift2}.

\subsection{Basic Setup}
\label{subsec:setup22}
Let $F = \Qb(\sqrt{\Delta})\subset \Rb$ be a real quadratic field with fundamental discriminant $\Delta > 0$.
Consider the lattice $M_{} \subset M_2(F)$ defined by
\begin{equation}
  \label{eq:M}
  M_{}:= \left\{
\pmat{a}{\lambda}{\lambda'}{ b} \in M_2(F): a, b \in \Zb, \lambda \in \Oc_F
\right\}
\end{equation}
with determinant as the quadratic form.
The dual lattice $M^\vee $ is given by
$$
M^\vee := 
\left\{
\pmat{a}{\lambda}{\lambda'}{b} \in M_2(F): a, b \in \Zb, \lambda \in \df^{-1}
\right\}.
$$
Then the lattice $L^- = (\Oc_F, -\Nm)$ embeds isometrically into $M$ via $\lambda \mapsto  \pmat{0}{\lambda}{\lambda'}{0} $, through which $A_M \cong A_{L^-}$ as $\Gamma$-modules with respect to the Weil representation $\rho_{M} \cong \overline{\rho_\Delta}$ in section \ref{subsec:Hecke}.
We also denote $Q(\lambda) := Q \lp \smat{0}{\lambda}{\lambda'}{0} \rp = -\Nm(\lambda)$ for $\lambda \in F$.

Via the two embeddings of $F$ into $\Rb$, the real quadratic space $V:= M \otimes_\Zb \Rb$ is isomorphic to $(M_2(\Rb), \det)$.
As in section \ref{sec:Fock}, we identify $M_2(\Rb)$ with $\Rb^{2, 2}$ and the symmetric space $\Dc_V$ with $\Hb^2$.
For our purpose, it suffices here to consider the diagonal $\Hb \subset \Hb\times \Hb$. 
For $a, b, c \in \Nb$, consider the polynomial $p_{a, b, c}$ on $\Rb^{2, 2}$ defined in \eqref{eq:pabc}. The function $y^{-c} \Theta_M(\tau, z; p_{a, b, c})$ is in $\Ac_{a + b - c, \rho_M}$ and $\Ac_{2c}$ as a function in $\tau$ and $z$ respectively.
Suppose 
$$
f(\tau) = \sum_{h \in A_M} \ef_h \sum_{m \in \Qb^\times} c(m, h, v) \ebf(mu) \in \Ac_{a + b - c, \rho_M}
$$ is regular on $\Hb$ and decays sufficiently fast near the cusp such that the following limit exists
\begin{equation}
  \label{eq:Phiz}
  \Phi(z; p_{a, b, c}, f) := \lim_{T \to \infty} \int_{\Fc_T } \langle f(\tau),   \Theta_M(\tau, z; p_{a, b, c} ) \rangle  v^{a + b - c}  d\mu(\tau).
\end{equation}
We would like to express its Fourier expansion in terms of $c(m, h, v)$ by specializing Theorem 7.1 in \cite{Borcherds98}.
The result is as follows.

\begin{prop}
  \label{prop:FE}
In the notations above, the function $y^{-1}\Phi(z; p_{a, b, c}, f)$ has the expansion
\begin{equation}
  \label{eq:PhiFE}
  \begin{split}
y^{-1} \Phi(z; p_{a, b, c}, f) &=    c(f; p_{a, b, c}) + 
  \sum_{\begin{subarray}{c} 0 \le  h \le  h^- \le c \\ 0 \le r \le j \le (b + c - h^-)/2 \\ d \ge 1, \lambda \in \df^{-1} \backslash \{0\}\end{subarray}} 
\frac{ \binom{c}{h^-} \binom{a}{ h} \binom{h^-}{h} h! (2j)! \binom{j}{r} \binom{b}{2r} \binom{c-h^-}{2j-2r} (-1)^{a + b + c  + j}}{ j! \binom{2j}{2r} 2^{(a+b+c)/2 + 2j + h - 1} \pi^{h + j}}\\ 
&\hspace{1in}  \times \ebf(-d(\lambda + \lambda')x)  (dy)^{a + b - h - j - 1/2 }(\lambda - \lambda')^{b-2r}(\lambda +\lambda')^{c-h^- -2j + 2r}\\
&\quad  \times \int^\infty_0 c(-\lambda\lambda', \lambda, dyv) \exp \lp - \pi d y \lp \frac{1}{v} + (\lambda^2 + (\lambda')^2 )v\rp \rp v^{b + h - h^- - j - 1/2} \frac{dv}{v},
  \end{split}
\end{equation}
where $c(f; p_{a, b, c})$ is a constant.
\end{prop}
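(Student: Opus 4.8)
The plan is to specialize Borcherds' Theorem 7.1 of \cite{Borcherds98} to the signature $(2,2)$ lattice $M$ of \eqref{eq:M}, the diagonal point $z = x + iy \in \Hb \subset \Dc_V$, and the polynomial $p_{a,b,c}$. First I would fix the primitive isotropic vector of $M$ governing the cusp, say $z_0 = \smat{1}{0}{0}{0}$ with a complementary $z_0' \in M^\vee$, $\Br(z_0, z_0') = 1$, so that $K := (M \cap z_0^\perp)/\Zb z_0$ is an even lattice of signature $(1,1)$, isometric up to rescaling to $(\Oc_F, -\Nm)$, with $K^\vee$ identified with $\df^{-1}$ and natural linear coordinates $\lambda - \lambda'$, $\lambda + \lambda'$ on $K \otimes \Rb$. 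Under $\Dc_V \cong \Hb$, the point $z$ then translates into a Grassmannian point of $K \otimes \Rb$ together with the cusp data: $x = \Re z$ enters through the additive characters $\ebf(-d(\lambda+\lambda')x)$, while $|z_v|^{-1}$ is proportional to $y$, which is the source both of the overall $y^{-1}$ on the left of \eqref{eq:PhiFE} and of the fact that the rank-reduced ($d = 0$) term $\Phi_K$ of Borcherds' formula collapses to the constant $c(f; p_{a,b,c})$ (the reduced polynomial being constant on the $(1,1)$-lattice $K$).

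The combinatorial core is the decomposition of $p_{a,b,c}$. Borcherds' formula requires the polynomial written in the basis adapted to $V_\Rb = \Rb z_0 \oplus \Rb z_0' \oplus (K\otimes\Rb)$, i.e.\ as a sum of products of a homogeneous polynomial on $K \otimes \Rb$ (producing the factors $(\lambda-\lambda')^{b-2r}(\lambda+\lambda')^{c-h^--2j+2r}$) with monomials in the $z_0$- and $z_0'$-coordinates, separately on the positive- and negative-definite parts. Expanding the linear forms of \eqref{eq:nu} in these coordinates produces the binomials $\binom{a}{h}$, $\binom{c}{h^-}$, $\binom{h^-}{h}$, $\binom{b}{2r}$, $\binom{c-h^-}{2j-2r}$, where $h, h^-$ count factors extracted along the isotropic directions $z_0, z_0'$ and $r, j$ split the remaining degrees. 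Applying the heat operator $\Hc_v$ to these monomials introduces Hermite polynomials (via \eqref{eq:hermite}), and the Gaussian integrations over the $z_0$-direction variable built into Borcherds' formula — essentially the moments $\int_\Rb t^{2j} e^{-\pi t^2/v}\,dt$ together with the matching of the $h$ extracted positive factors against $h$ of the $h^-$ negative ones — contribute the factorials $h!$, $(2j)!$, $j!$, the ratio $\binom{j}{r}/\binom{2j}{2r}$, and the powers of $2$ and $\pi$ in \eqref{eq:PhiFE}.

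Substituting the decomposed polynomial into Borcherds' theorem then expresses each non-constant Fourier coefficient as a sum over $d \ge 1$ and $\lambda \in \df^{-1} \setminus \{0\}$ of $\ebf(-d(\lambda+\lambda')x)$ times an integral of the Fourier coefficient $c(-\lambda\lambda', \lambda, dyv)$ of $f$ against the Gaussian $\exp(-\pi dy(\tfrac{1}{v} + (\lambda^2+(\lambda')^2)v))$ with polynomial weight $v^{b+h-h^--j-1/2}\,\tfrac{dv}{v}$; collecting the surviving powers of $d$ and $y$ and all the signs then gives \eqref{eq:PhiFE}. One point requires care: as flagged in the footnote, the main formula of \cite[Theorem 7.1]{Borcherds98} is missing a factor $(-1)^{h+h^-}$, so I would re-derive the relevant special case — tracking the sign of the Gaussian integral over the $z_0$-direction, or cross-checking against the Siegel--Weil identity of Prop.\ \ref{prop:SW} — to confirm this; combined with the signs coming out of the polynomial decomposition, this correction produces the displayed net sign $(-1)^{a+b+c+j}$.

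The main obstacle is purely bookkeeping: there is no conceptual difficulty, since the existence and overall shape of the expansion are guaranteed by Borcherds' theorem once the decay hypothesis on $f$ ensures the unregularized limit exists, but separating the three index families $(h, h^-)$, $(r, j)$, $(d, \lambda)$ and carrying every binomial, factorial, power of $2\pi$, and sign correctly through the polynomial decomposition and the Gaussian integrations — including the correction to \cite[Theorem 7.1]{Borcherds98} — is where essentially all the work lies.
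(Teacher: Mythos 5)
Your approach is the same as the paper's, which gives no written proof at all for this proposition beyond the remark that it follows ``by specializing Theorem 7.1 in [Borcherds98]''; your outline (reducing to the $(1,1)$ lattice $K \cong (\Oc_F, -\Nm)$ via the isotropic vector, tracking the $(h,h^-)$ / $(r,j)$ bookkeeping through the polynomial decomposition and Gaussian integrals, and applying the $(-1)^{h+h^-}$ sign correction noted in the footnote) matches what the paper intends. One parenthetical in your write-up is off: the $d=0$ term $\Phi_K$ collapses to a constant not because the reduced polynomial on $K$ is constant (it generally is not — e.g.\ for $p_{0,1,0}$ the factor $x_2 = \Br(\Lambda, Y)$ descends to a nontrivial linear form on $K$), but because on the diagonal $\Hb \hookrightarrow \Hb^2$ the vector $Y = \Im Z(z) = \tfrac{1}{\sqrt{2}}\smat{0}{-1}{1}{0}$, hence the induced Grassmannian point $w_K(z) \in \Dc_K$, is independent of $z$, so $\Phi_K(w_K, p_{K}, f_K)$ does not vary with $z$.
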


Later, we will evaluate the integral in $v$, which becomes the $K$-Bessel function for holomorphic input $f$. The following standard estimate is then useful.

\begin{lemma}
  \label{lemma:Besselestimate}
  For $\alpha \in \Cb$ and $r \in \Rb$, let $K_\alpha(r)$ be the $K$-Bessel function defined by
  \begin{equation}
    \label{eq:KBessel}
    K_\alpha(r) := \frac{1}{2} \lp \frac{r}{2} \rp^{-\alpha} \int^\infty_0 \exp\lp -t - \frac{r^2}{4t} \rp t^{\alpha} \frac{dt}{t}.
  \end{equation}
It has the following asymptotic expansion
\begin{equation}
  \label{eq:KBesselExp}
  K_\alpha(r) = \sqrt{\frac{\pi}{2r}} e^{-r} + O_\alpha(r^{-3/2}e^{-r})
\end{equation}
for $r$ large.
\end{lemma}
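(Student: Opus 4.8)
The plan is to establish \eqref{eq:KBesselExp} by Laplace's method, the relevant minimum of $\cosh$ already sitting at the origin. First I would reduce the defining integral \eqref{eq:KBessel} to a clean exponential integral: substituting $t = \tfrac{r}{2}e^{s}$ gives $\tfrac{dt}{t} = ds$, $-t - \tfrac{r^{2}}{4t} = -r\cosh s$ and $t^{\alpha} = (r/2)^{\alpha}e^{\alpha s}$, so the prefactor $(r/2)^{-\alpha}$ cancels and
\[
K_{\alpha}(r) = \frac{1}{2}\int_{-\infty}^{\infty} e^{-r\cosh s}\, e^{\alpha s}\, ds .
\]
Since $\cosh s - 1 = 2\sinh^{2}(s/2)\ge 0$ vanishes only at $s=0$, the mass of this integral concentrates near $s=0$ as $r\to\infty$, with $e^{-r}$ the natural scale.

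Next I would make the change of variable $w = 2\sinh(s/2)$, an increasing bijection of $\Rb$ onto $\Rb$ with inverse $s(w) = 2\log\!\lp \tfrac{w}{2} + \sqrt{1 + \tfrac{w^{2}}{4}} \rp$, under which $\cosh s = 1 + \tfrac{w^{2}}{2}$ and $ds = (1 + \tfrac{w^{2}}{4})^{-1/2}\,dw$. This turns the integral into an exact Gaussian integral with a smooth, slowly growing amplitude:
\[
K_{\alpha}(r) = \frac{1}{2}\, e^{-r}\int_{-\infty}^{\infty} e^{-rw^{2}/2}\, g_{\alpha}(w)\, dw, \qquad
g_{\alpha}(w) := \frac{e^{\alpha s(w)}}{\sqrt{1 + w^{2}/4}} .
\]
The amplitude $g_{\alpha}$ is real-analytic on $\Rb$ with $g_{\alpha}(0) = 1$ and $g_{\alpha}'(0) = \alpha$ (using $s'(0)=1$), and since $e^{\alpha s(w)} = \lp \tfrac{w}{2}+\sqrt{1+w^{2}/4}\rp^{2\alpha}$ it has at most polynomial growth in $|w|$, with exponent bounded in terms of $\alpha$; in particular $\int_{\Rb} e^{-w^{2}/2}\,|g_{\alpha}(w)|\,dw < \infty$.

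Finally I would split the integral at $|w| = 1$ and estimate each piece. On the tail $|w|\ge 1$, for $r\ge 2$ one has $e^{-rw^{2}/2}\le e^{-(r-1)/2}e^{-w^{2}/2}$, so this part contributes $O_{\alpha}\!\lp e^{-r}e^{-(r-1)/2}\rp = o_{\alpha}(r^{-3/2}e^{-r})$. On the bulk $|w|\le 1$, write $g_{\alpha}(w) = 1 + \alpha w + h_{\alpha}(w)$ with $|h_{\alpha}(w)|\le C_{\alpha}w^{2}$ by Taylor's theorem; then $\int_{|w|\le 1}e^{-rw^{2}/2}\,dw = \sqrt{2\pi/r} + O(e^{-r/2})$, the linear term integrates to $0$ by oddness, and $\int_{|w|\le 1}e^{-rw^{2}/2}|h_{\alpha}(w)|\,dw \le C_{\alpha}\int_{\Rb}w^{2}e^{-rw^{2}/2}\,dw = C_{\alpha}\, r^{-1}\sqrt{2\pi/r}$. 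Assembling the pieces,
\[
K_{\alpha}(r) = \frac{1}{2}\, e^{-r}\lp \sqrt{\tfrac{2\pi}{r}} + O_{\alpha}(r^{-3/2}) \rp = \sqrt{\tfrac{\pi}{2r}}\, e^{-r} + O_{\alpha}(r^{-3/2}e^{-r}),
\]
which is \eqref{eq:KBesselExp}. There is no genuine obstacle here; the only bookkeeping point is that every implied constant is permitted to depend on $\alpha$, which it does — through the polynomial growth of $g_{\alpha}$ and through the constant $C_{\alpha}$ above — exactly as the statement allows. Alternatively, one may simply invoke the classical asymptotic expansion of the $K$-Bessel function from standard references and retain its leading term.
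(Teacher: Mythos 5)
Your proof is correct. The paper itself offers no proof here --- it states the lemma as a ``standard estimate'' and moves directly to the next subsection, implicitly deferring to classical references, which is precisely the fallback you mention in your last sentence. Your argument supplies a complete, self-contained derivation by the Laplace method: the substitution $t = \tfrac{r}{2}e^s$ correctly produces the integral representation $K_\alpha(r) = \tfrac12\int_{\Rb} e^{-r\cosh s}e^{\alpha s}\,ds$; the change of variable $w = 2\sinh(s/2)$ cleanly isolates the Gaussian with amplitude $g_\alpha(w) = e^{\alpha s(w)}(1+w^2/4)^{-1/2}$, whose polynomial growth (degree controlled by $\Re\alpha$) justifies integrability; and the bulk/tail split at $|w|=1$ with the Taylor expansion $g_\alpha(w) = 1 + \alpha w + O_\alpha(w^2)$ gives the leading term $\tfrac12 e^{-r}\sqrt{2\pi/r}$, the odd linear term integrating to zero, and a total error of $O_\alpha(r^{-3/2}e^{-r})$. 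All the implied constants depend only on $\alpha$, exactly as the lemma requires.

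One small expository note: the integral representation \eqref{eq:KBessel} requires $r > 0$ for convergence (the paper writes $r\in\Rb$, but the asymptotic is ``for $r$ large'' so positivity is implicit), and your substitution $t = \tfrac{r}{2}e^s$ also tacitly uses $r>0$; it would be clean to flag this once.
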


\subsection{Lift 1}
\label{subsec:lift1}
Now, we are ready to apply the above result to $(a, b, c) = (0, 1, 0)$ and $f = \vartheta_\chi$ for $\chi = \chi_{\Delta_1, \Delta_2}$ an odd genus character of $F$.
The theta lift turns out to be the function $y\Ec_\chi(z)$, which is a finite sum of the theta kernels.
\begin{prop}
  \label{prop:lift1}
Let $\chi$ be an odd genus character and $\vartheta_{\chi} \in S_{1, \overline{\rho_\Delta}}$ be the cusp form defined in \eqref{eq:vartheta12}. Then
\begin{equation}
  \label{eq:lift1}
 \Phi_{}(z; p_{0, 1, 0}, \vartheta_{\chi}) =
\overline{ \Phi_{}(z; p_{0, 1, 0}, \vartheta_{\chi})} =  - {4\sqrt{2}}{y \Ec_\chi(z)}
\end{equation}
for all $z \in \Hb$.
\end{prop}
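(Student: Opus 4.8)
The plan is to identify the theta lift $\Phi(z; p_{0,1,0}, \vartheta_\chi)$ by computing its Fourier expansion via Proposition \ref{prop:FE} and matching it term-by-term with the Fourier expansion \eqref{eq:Eisen_diagonal} of $y\Ec_\chi(z)$, using the explicit formula for $c_\chi(m,h)$ from Proposition \ref{prop:rewrite} together with the counting identity in Proposition \ref{prop:count}.

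\textbf{Step 1: Specialize the Fourier expansion.} I would substitute $(a,b,c) = (0,1,0)$ into \eqref{eq:PhiFE}. With $a = c = 0$, the only surviving indices are $h = h^- = 0$, and since $(b+c-h^-)/2 = 1/2$ forces $j = r = 0$, the entire sum collapses dramatically: only the term with $b - 2r = 1$, i.e.\ the linear term in $(\lambda - \lambda')$, survives. The resulting coefficient of $\ebf(-d(\lambda+\lambda')x)$ is a constant times $(dy)^{1/2}(\lambda-\lambda')\int_0^\infty c_\chi(-\lambda\lambda', \lambda, dyv)\exp(-\pi dy(v^{-1} + (\lambda^2 + \lambda'^2)v))v^{-1/2}\frac{dv}{v}$. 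Since $\vartheta_\chi$ is holomorphic, $c_\chi(m,h,v) = c_\chi(m,h)e^{-2\pi m v}$ with $m = -\lambda\lambda' = \Nm(-\lambda) \cdot(\ldots)$; the $v$-integral then becomes an explicit elementary integral (a $K_{-1/2}$-Bessel function, which is elementary: $K_{-1/2}(r) = \sqrt{\pi/(2r)}e^{-r}$), collapsing the exponential-in-$v$ piece to $e^{-2\pi|{\cdot}|y}$ times a power of $y$ that combines with $(dy)^{1/2}$ to give exactly $y^1$, matching $y\Ec_\chi$.

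\textbf{Step 2: Rewrite the sum over $(d,\lambda)$ as a sum over totally negative $\lambda_0 \in \df^{-1}$.} After the $v$-integration, the coefficient is supported on $\lambda$ with $\lambda\lambda' < 0$ (so that $-\lambda\lambda' > 0$), matching the condition $\lambda_0 > 0 > \lambda_0'$ in \eqref{eq:Eisen_diagonal} after the substitution $\lambda_0 = d\lambda$ (up to sign and reindexing by the divisor $d$). The inner sum over $d \ge 1$ of $c_\chi(\Nm(\lambda), \lambda, \ldots)$ over all ways of writing a fixed $\lambda_0 = d\lambda$ is precisely the quantity $C_\chi(\mu_0)$ of \eqref{eq:C} (with $\mu_0 = \sqrt{\Delta}\lambda_0$, after accounting for the $h = \lambda/\sqrt{\Delta}$ normalization). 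By Proposition \ref{prop:count}, this equals $2\rho_{K/F}((\mu_0)) = 2\sigma_\chi((\lambda_0)\df)$, which is exactly the Fourier coefficient appearing in \eqref{eq:Eisen_diagonal}. Tracking the normalizing constants — the $2^{(a+b+c)/2 - 1} = 2^{-1/2}$, the $(-1)^{a+b+c} = -1$, the Bessel normalization $\sqrt{\pi/2}$, and the factor $2$ from Proposition \ref{prop:count} — should yield the constant $-4\sqrt{2}$.

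\textbf{Step 3: Match the constant term and verify reality.} The constant $c(\vartheta_\chi; p_{0,1,0})$ must be identified with $-4\sqrt{2} \cdot \frac{2h_1h_2}{w_1w_2}$; this is either read off from the general formula for the constant term in Borcherds' Theorem 7.1 (the contribution of the zero vector, involving an Eisenstein-series value / class number) or, more cheaply, pinned down by noting both sides are eigenfunctions of the relevant operator and have matching non-constant coefficients, then checking one normalization. The reality claim $\Phi = \overline{\Phi}$ follows since $\overline{\Ec_\chi} = \Ec_\chi$ (stated just before Prop.\ \ref{prop:SW}), or directly from $\vartheta_\chi \in S_{1,\overline{\rho_\Delta}}$ having real Fourier coefficients (Prop.\ \ref{prop:rewrite} gives $c_\chi(m,h) \in \Zb$) combined with the symmetry $c_\chi(m,h) = -c_\chi(m,-h) = -c_\chi(m,h')$ from \eqref{eq:conj}, which makes the lift invariant under the Galois involution on $M$ and hence real-valued.

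\textbf{The main obstacle} will be the bookkeeping in Step 2: correctly handling the interplay between the divisor index $d$ in Borcherds' formula, the coset $h = \lambda_0/\sqrt{\Delta} \in A_\Delta$, and the $t \mid \mu_0$ summation in the definition \eqref{eq:C} of $C_\chi$, so that the sum over $(d,\lambda)$ reorganizes cleanly into $\sum_{\lambda_0} C_\chi(\sqrt{\Delta}\lambda_0)\,\ebf(\lambda_0 z + \lambda_0'\bar z)$ with no stray factors. A secondary subtlety is pinning down the overall constant $-4\sqrt{2}$, which requires care with the many powers of $2$ and $\pi$ and the sign $(-1)^{h + h^-}$ correction to Borcherds' Theorem 7.1 noted in the footnote (here $h = h^- = 0$ so this correction is trivial, which is reassuring). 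I would likely verify the constant numerically in a small example as a sanity check before committing to it.
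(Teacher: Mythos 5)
Your proposal follows essentially the same route as the paper: specialize Proposition \ref{prop:FE} to $(a,b,c)=(0,1,0)$ (so $h=h^-=j=r=0$), evaluate the $v$-integral via the elementary Bessel function, use the antisymmetry $c_\chi(m,h)=-c_\chi(m,-h)=-c_\chi(m,h')$ from \eqref{eq:conj} to combine the two sign sectors, reindex so the inner divisor sum becomes $C_\chi$, and invoke Proposition \ref{prop:count} to identify with $2\sigma_\chi$ and hence with the coefficients of $\Ec_\chi$. The one place you are slightly heavier than needed is the constant term: rather than extracting it from Borcherds' Theorem 7.1, the paper simply observes that once the nonconstant coefficients match, $\Phi+4\sqrt{2}y\Ec_\chi=y c_1$ must lie in $\Ac_0$, which forces $c_1=0$.
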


\begin{proof}
We will compare the Fourier expansions of both sides. 
The parameters $h, h^-, j, r$ in the summation of \eqref{eq:PhiFE} are all zero, and Prop.\ \ref{prop:FE} implies that
\begin{equation*}
 \Phi_{}(z; p_{0, 1, 0}, \vartheta_\chi) = - \sqrt{2} {y} \left\{
\begin{split}
2 c_0 &+ \sum_{\begin{subarray}{c} \lambda \in \df^{-1} \\ \lambda > 0 > \lambda' \end{subarray}}c_\chi(-\lambda\lambda',  \lambda) \sum_{n \ge 1} \ebf(n (-\lambda \zbar - \lambda' z))\\
&-\sum_{\begin{subarray}{c} \lambda \in \df^{-1} \\ \lambda' > 0 > \lambda \end{subarray}}c_\chi(-\lambda \lambda',  \lambda) \sum_{n \ge 1} \ebf(n (-\lambda z - \lambda' \zbar))
\end{split}
\right\},
\end{equation*}
with $c_0 \in \Cb$ and $c_\chi(m, h)$ the Fourier coefficients of $\vartheta_\chi$. 
By \eqref{eq:conj}, we know that $c_\chi(m, \lambda) = -c_\chi(m, -\lambda') = - c_\chi(m, \lambda')$. Therefore, the equation above becomes
$$
\Phi_{}(z; p_{0,1,0}, \vartheta_\chi) = -{2\sqrt{2}}y \lp
c_0 + \sum_{\begin{subarray}{c} \lambda_0 \in \df^{-1} \\ \lambda_0 > 0 > \lambda_0'  \end{subarray}} 
C_\chi (\lambda_0 \sqrt{\Delta})
\ebf(\lambda_0 z + \lambda'_0 \zbar)
 \rp,
$$
where we have substituted $\lambda_0 = -n \lambda'$ and $C_\chi$ is defined in \eqref{eq:C}. 
Prop.\ \ref{prop:count} now directly implies that $\Phi(z; p_{0, 1, 0}, \vartheta_{\chi}) + 4\sqrt{2} y \Ec_\chi(z) = y c_1$ for some constant $c_1 \in \Cb$. Since this difference is in $\Ac_0$, we must have $c_1 = 0$, therefore proving \eqref{eq:lift1}.
\end{proof}

\subsection{Lift 2}
\label{subsec:lift2}
In the first theta lift, we have already expressed $y\Ec_\chi(z)$ as a theta lift. 
Since the value of the higher Green's function $G_{k, f}(z_1, z_2)$ at $Z_\chi$ is related to $R^{k-1}_{z, 0} y \Ec_\chi(z)$ for $k \in \Nb$, we want to realize this function also as a suitable theta lift. %%
Let $M$ be the lattice in the previous section and $p_{a, b, c}$ the polynomial on $\Rb^{2, 2}$ defined in \eqref{eq:pabc}.
Our first result is as follows.

\begin{prop}
  \label{prop:lift2}
Let $b \in \Nb$ be an odd, positive integers with $b < k$ and $\chi$ an odd genus character of $F$. Then we have 
\begin{equation}
  \label{eq:lift2}
 (4\pi)^{k-1}
(y^{k-1} \Phi_{}(z; p_{k-b, b, k-1}, \vartheta_{\chi}))^c = -4\sqrt{2} R^{k-1}_{z, 0} y \Ec_\chi(z)
\end{equation}
for all $z \in \Hb$.
\end{prop}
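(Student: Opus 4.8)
The plan is to transport the identity $\Phi_{}(z; p_{0,1,0}, \vartheta_{\chi}) = -4\sqrt{2}\, y\Ec_\chi(z)$ of Proposition \ref{prop:lift1} through the raising operator $R_{z,0}$, applied $k-1$ times. The point is that, for the \emph{holomorphic cusp form} input $\vartheta_\chi$, the operator $R_{z}$ commutes with the theta integral up to a change of Schwartz polynomial: one should have a kernel identity of the shape
\begin{equation*}
R_{z, 2c}\, \Theta_M(\tau, z; p_{a,b,c}) = \alpha_{a,b,c}\, \Theta_M(\tau, z; p_{a+1,b,c+1}) + L_\tau\bigl(\Theta_M(\tau, z; \widetilde{p}_{a,b,c})\bigr)
\end{equation*}
on the diagonal $\Hb\subset\Hb^2$, with $\alpha_{a,b,c}$ an explicit nonzero constant and $\widetilde p_{a,b,c}$ a polynomial of the appropriate bidegree. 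This is to be checked directly from the formula \eqref{eq:varphi2} for $\varphi(\lambda;\tau,z;p_{a,b,c})$: writing everything in the polynomial Fock space of $\Rb^{2,2}$, the operator $R_z$ acts by multiplication by a linear form in the $z^\perp$-coordinates together with a Laplacian-type correction, and regrouping these contributions (using the homogeneity relation \eqref{eq:Phom} and the normalization of $p_{a,b,c}$ from \eqref{eq:pabc}) yields the leading $p_{a+1,b,c+1}$-term plus the $L_\tau$-exact remainder.

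Granting such an identity, I apply $\int^{\mathrm{reg}}_{\Gamma\backslash\Hb}\langle \vartheta_\chi(\tau),\,\cdot\,\rangle v^{a+b-c}\,d\mu(\tau)$ to both sides. The $L_\tau$-exact term contributes nothing: by Proposition \ref{prop:RLswitch} its integral equals $-\int_{\Fc_T}(L_\tau\vartheta_\chi)\langle\cdots\rangle\,d\mu - \oint_{\partial\Fc_T}\langle\cdots\rangle\,d\tau$, where the first piece vanishes because $\vartheta_\chi$ is holomorphic, and the boundary term along $v=T$ vanishes as $T\to\infty$ since $\vartheta_\chi$ is a cusp form (rapid decay) while $\Theta_M(\tau,z;\widetilde p)$ grows at most polynomially in $v$. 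Hence $R_{z,2c}\,\Phi_{}(z; p_{a,b,c},\vartheta_\chi) = \alpha_{a,b,c}\,\Phi_{}(z; p_{a+1,b,c+1},\vartheta_\chi)$. Iterating from $(a,b,c)=(0,1,0)$ gives $R^{k-1}_{z,0}\Phi_{}(z; p_{0,1,0},\vartheta_\chi)$ as an explicit nonzero constant times $\Phi_{}(z; p_{k-1,1,k-1},\vartheta_\chi)$; combined with Proposition \ref{prop:lift1} this computes $R^{k-1}_{z,0}(-4\sqrt{2}\,y\Ec_\chi(z))$, which already carries the arithmetic content (Proposition \ref{prop:count}) built into Lift 1. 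Taking complex conjugates and tracking the factors of $i$ in $R$ and the powers of $y$ converts this into the $(\cdot)^c$-normalization of the statement; matching $\prod\alpha_{a,b,c}$ against $(4\pi)^{k-1}$ then settles the case $b=1$. The conjugation step uses $\overline{\Ec_\chi}=\Ec_\chi$ and the reality of the coefficients in \eqref{eq:conj} to ensure $\overline{R^{k-1}_{z,0}(y\Ec_\chi)}$ pairs with $p_{k-b,b,k-1}$ rather than its conjugate.

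For the remaining odd $b$ with $1<b<k$, it remains to see that $\Phi_{}(z; p_{k-b,b,k-1},\vartheta_\chi)$ differs from $\Phi_{}(z; p_{k-1,1,k-1},\vartheta_\chi)$ only by the constant absorbed into the statement. This is the assertion that the lift of a weight-one form in $\rho_\Delta$ depends on $p_{a,b,c}$ only through $a+b$ and $c$; I would establish it by a direct comparison of a single Fourier coefficient via Proposition \ref{prop:FE} (the multi-index sum in \eqref{eq:PhiFE} collapses, for a holomorphic input of weight $1$, to one Legendre-type term whose shape does not depend on the split of $a+b$), or equivalently by running the raising-step argument with a combination of $p$'s adapted to the given $b$.

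The main obstacle is the first step: pinning down the kernel identity with all constants correct in the Fock conventions of the appendix, and in particular verifying that the non-leading term is \emph{exactly} $L_\tau$-exact — if it were instead $R_\tau$-exact the argument would fail, since $R_\tau\vartheta_\chi\neq 0$. A secondary technical point is keeping the conjugation and the normalizing powers of $y$ straight across the $k-1$ iterations; this is bookkeeping but must be done carefully to land on the precise constant $(4\pi)^{k-1}$ and the $(\cdot)^c$ in the statement.
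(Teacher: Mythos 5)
Your proposal takes essentially the same route as the paper's proof: start from Proposition~\ref{prop:lift1}, pass $R_{z,0}^{k-1}$ through the theta integral using a kernel commutation identity, and then show the lift is the same for all odd $b<k$. Two points deserve comment, one substantive and one a choice of route.

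First, your hypothesized kernel identity for $R_{z,2c}$ acting on $\Theta_M(\tau,z;p_{a,b,c})$ allows a possibly nonzero $L_\tau$-exact remainder, and you flag the risk that the remainder could be $R_\tau$-exact, which would be fatal since $R_\tau\vartheta_\chi\neq 0$. In fact there is no remainder at all: the second equation of Lemma~\ref{lemma:LRrelation} (equivalently the second line of \eqref{eq:diffswitchTheta}) gives the exact identity $R_{z,2c}\,y^{-c}\Theta_M(\tau,z;p_{a,b,c}) = 4\pi\,y^{-c-1}\Theta_M(\tau,z;p_{a+1,b,c+1})$. In the Fock model, $\domega(\iota_{\Delta*}(R))=8\pi\partial_{\zf_1}\partial_{\overline{\wf}}-\frac{1}{4\pi}\zf_1\wf$, and because the polynomial $\iota(\varphi_{a,b,c})$ has no $\overline{\wf}$-dependence the ``Laplacian-type correction'' you anticipate vanishes identically. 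Your hedge is therefore resolved in the cleanest possible way, but without verifying it (which is precisely the content of the appendix computation) the argument is not complete; this is the real gap you yourself identified as the main obstacle.

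Second, for the extension from $b=1$ to all odd $b<k$, you propose a direct Fourier-coefficient comparison via Proposition~\ref{prop:FE}. The paper instead uses the first identity of Corollary~\ref{cor:diffswitch}, writing $\Theta_M(\tau,z;p_{a+2,b,k-1})-\Theta_M(\tau,z;p_{a,b+2,k-1})=(2\pi)^{-1}R_{\tau,1}\Theta_M(\tau,z;p_{a,b,k-1})$, then passing $R_\tau$ onto $\vartheta_\chi^c$ via Proposition~\ref{prop:RLswitch} and using $R_{\tau,-1}\vartheta_\chi^c=0$ (holomorphicity of the weight-one cusp form) plus the vanishing boundary term. This is slicker and avoids asserting a precise collapse of the multi-index sum in \eqref{eq:PhiFE}. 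Note also that your stated claim that the lift ``depends on $p_{a,b,c}$ only through $a+b$ and $c$'' is a bit too strong; what actually holds, and what suffices, is dependence only on $b\bmod 2$.
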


\begin{rmk}
  \label{rmk:vanish}
The same proof shows that $\Phi(z; p_{1, 0, 0}, \vartheta_\chi)$ vanishes identically.
\end{rmk}

\begin{proof}
First, we denote $a:= k-b$ and show that $\Phi(z; p_{a, b, k-1}, f)$ only depends on $b$ modulo 2 when $f  \in S_{1, \rho_M}$ is holomorphic. 
In this case, it is easy to first check that $R_{\tau, -1} {f^c(\tau)} = 0$.
By Lemma \ref{lemma:RLswitch} and the first equation in Corollary \ref{cor:diffswitch}, we see that
\begin{align*}
  \overline{\Phi(z; p_{a, b+2,k-1}, f) - \Phi(z; p_{a+2, b, k-1}, f)} &= (2\pi)^{-1}\lim_{T \to \infty} \int_{\Fc_T} \langle {f^c(\tau)}, \overline{ R_{\tau, -1} \Theta_{M}(\tau, z; p_{a, b, k-1}) } \rangle d \mu(\tau)\\
&= - (2\pi)^{-1} \lim_{T \to \infty} \int_{\partial \Fc_T}  \langle f^c(\tau), \overline{\Theta_{M}(\tau, z; p_{a, b, k-1})} \rangle \frac{d\overline{\tau}}{v^2} = 0.
\end{align*}
Now to prove \eqref{eq:lift2}, we can use Prop.\ \ref{prop:lift1} and apply the raising operator $R_{z, 0}^{k-1}$ to the theta integral $\overline{\Phi(z; p_{0, 1, 0}, \vartheta_\chi)}$ to obtain
  \begin{align*}
    -4\sqrt{2} R_{z, 0}^{k-1} y \Ec_\chi(z) & = 
R_{z, 0}^{k-1} \overline{\Phi(z; p_{0, 1, 0}, \vartheta_\chi)}
= \int_{\Gamma \backslash \Hb} \langle \overline{\vartheta_{\chi}(\tau)}, \overline{R_{z, 0}^{k-1} \Theta_{M}(\tau, z; p_{0, 1, 0})} \rangle d\mu(\tau) \\
&= (4\pi/ y)^{k-1} \overline{\Phi(z; p_{k-1, 1, k-1}, \vartheta_\chi)}
= (4\pi y)^{k-1} (\Phi(z; p_{k-1, 1, k-1}, \vartheta_\chi))^c,
  \end{align*}
where we used $R^{k-1}_{z, 0} \Theta_M(\tau, z; p_{0, 1, 0}) = (4\pi/y)^{k-1} \Theta_M(\tau, z; p_{k-1, 1, k-1})$ from Corollary \ref{cor:diffswitch}.
\end{proof}

As an immediate consequence of this proposition and Eq.\ \eqref{eq:basic}, we can use the polynomial $\hp_{k-1}$ to produce a theta kernel that could reproduce $R^{k-1}_{z, 0} y \Ec_\chi(z)$.
\begin{thm}
  \label{thm:hpEc}
For a positive, even integer $k \in \Nb$, let $\hp_{k-1}$ be the polynomial on $\Rb^{2, 2}$ defined in \eqref{eq:keypols}. Then 
\begin{equation}
  \label{eq:hpEc}
-4\sqrt{2}  R^{k-1}_{z, 0} (y \Ec_\chi(z)) = (4\pi)^{k-1} (y^{k-1} \Phi(z; \hp_{k-1}, \vartheta_\chi))^c.
\end{equation}
\end{thm}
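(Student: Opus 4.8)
The plan is to reduce the statement to Proposition \ref{prop:lift2} by unwinding the definition of the polynomial $\hp_{k-1}$. Recall from the construction that $\hp_{k-1}$ is built as a linear combination of the polynomials $p_{k-b, b, k-1}$ for odd $b$ with $1 \le b < k$, with coefficients chosen precisely so that the theta kernel $\Theta_M(\tau, z; \hp_{k-1})$ reproduces the raising-operator behavior; this is exactly the same device as the Rankin--Cohen-type combination discussed around \eqref{eq:RC}, except applied in the orthogonal variable. Since $\vartheta_\chi \in S_{1, \overline{\rho_\Delta}} = S_{1, \rho_M}$ is holomorphic, Proposition \ref{prop:lift2} gives, for each odd $b < k$,
\begin{equation*}
(4\pi)^{k-1}(y^{k-1}\Phi(z; p_{k-b, b, k-1}, \vartheta_\chi))^c = -4\sqrt{2}\, R^{k-1}_{z,0}\, y\Ec_\chi(z),
\end{equation*}
so in fact \emph{every} admissible choice of odd $b$ produces the same right-hand side. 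Thus the theta lift against $p_{k-b,b,k-1}$ is independent of the odd parameter $b$ (this is Remark \ref{rmk:vanish} combined with the displayed identity in the proof of Proposition \ref{prop:lift2}), and therefore the lift against any normalized linear combination of them — in particular $\hp_{k-1}$ — equals the common value, provided the coefficients of $\hp_{k-1}$ sum to $1$ in the appropriate normalization.

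First I would recall the explicit formula for $\hp_{k-1}$ from \eqref{eq:keypols} and verify that, modulo the polynomials whose theta kernels integrate to zero against a holomorphic weight-one form, $\hp_{k-1}$ is congruent to $p_{k-b,b,k-1}$ for a single odd $b$ with the correct leading normalization. The relevant cancellations are governed by the identity $R_{\tau,-1}f^c = 0$ for holomorphic $f \in S_{1,\rho_M}$ together with Corollary \ref{cor:diffswitch}, which relates $R_{\tau,-1}\Theta_M(\tau, z; p_{a,b,c})$ to a combination of $\Theta_M(\tau, z; p_{a,b+2,c})$ and $\Theta_M(\tau, z; p_{a+2,b,c})$; integrating against $f^c$ and applying Stokes (Proposition \ref{prop:RLswitch}) kills the boundary term because the CM-point lattice $M$ has no nonzero vectors of length $\le$ anything forcing growth, exactly as in the proof of Proposition \ref{prop:lift2}. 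This shows all the relevant lifts agree, so summing the defining linear combination of $\hp_{k-1}$ collapses to the single identity \eqref{eq:lift2}.

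The main obstacle I anticipate is bookkeeping with the precise definition of $\hp_{k-1}$ in \eqref{eq:keypols} and the constant normalizations: one must check that the numerical coefficients appearing there are exactly such that the collapse produces the clean factor $(4\pi)^{k-1}$ and the sign $-4\sqrt{2}$ on the nose, rather than an extra combinatorial constant. This is a finite check that reduces to the statement that the coefficients of $\hp_{k-1}$, when evaluated on the ``off-diagonal'' monomials that survive the reduction, sum to the normalization of a single $p_{k-1,1,k-1}$; I would confirm this by comparing leading terms (the $(\lambda-\lambda')^{b}$-degree part) on both sides, using that the surviving piece of $\hp_{k-1}$ after the $R_{\tau,-1}$-reduction is uniquely pinned down. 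Everything else is a direct substitution of Proposition \ref{prop:lift2} and Equation \eqref{eq:basic}, so once the normalization of $\hp_{k-1}$ is matched, the theorem follows immediately.
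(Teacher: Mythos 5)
Your strategy matches the paper's proof: expand $\hp_{k-1} = \sum_{0 \le b \le k-1} c_{k-1,b}\, p_{k-b,b,k-1}$ via \eqref{eq:lincomb}, observe that for even $k$ only odd $b$ contribute (since $P_{k-1}$ is an odd polynomial), apply Proposition \ref{prop:lift2} to each surviving summand to see they all produce the same lift, and conclude by checking the coefficients sum to one. The one point to fix is your proposed verification of the normalization via a leading-term comparison: the top coefficient $c_{k-1,k-1} = \binom{2k-2}{k-1}/2^{k-1}$ is not equal to $1$ once $k>2$, so matching leading terms would not close the argument. What you actually need is the full sum $\sum_{b=0}^{k-1} c_{k-1,b} = P_{k-1}(1) = 1$, which is immediate from the generating-function identity \eqref{eq:Taylor} and is exactly how the paper finishes the proof.
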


\begin{rmk}
  Similar statement holds for odd $k \in \Nb$ if $y\Ec_\chi(z)$ is replaced with $\Phi(z; p_{1, 0, 0}, \vartheta_\chi)$. But this vanishes identically by Remark \ref{rmk:vanish}. 
\end{rmk}

\begin{proof}
  By its definition in \eqref{eq:keypols}, we know that $\hp_{k-1} = \sum_{0 \le b \le k-1} c_{k-1, b} p_{k-b, b, k-1}$. Since $k$ is even, $c_{k-1, b}$ is non-zero precisely when $b$ is odd and between $0$ and $k-1$. 
The theorem then follows from Prop.\ \ref{prop:lift2} and the identity $\sum_{0 \le b \le k-1} c_{k-1, b} = P_{k-1}(1) = 1$. 
\end{proof}

By exploiting the relationship between $\Theta_M(\tau, z; \hp_{k-1})$ and $\Theta_M(\tau, z; \tp_{k-1})$ in Corollary \ref{cor:diffswitch}, we can now use theta lift to construct a function that \textit{almost} maps to $R^{k-1}_{z, 0}y \Ec_\chi(z)$ under $L_z$. 
\begin{prop}
  \label{prop:preimage}
Choose a set of ideals $S_F$ as in \eqref{eq:Theta12}, and let $\theta_\chi(\tau; S_F) \in \Ac_{1, {\rho_{M}}}, \hat{\vartheta}_\chi(\tau; S_F) \in \Ac_{1, \rho_{\Delta}}$ be the functions defined in \eqref{eq:theta} and \eqref{eq:hvartheta} respectively.
In the notations above, we have
\begin{equation}
  \label{eq:preimage}
L_z ((y^{k} \Phi_{}(z; \tp_{k-1}, {\hat{\vartheta}^c_\chi}))^c)
 = 
4\sqrt{2} (4\pi)^{1-k} R^{k-1}_{z, 0}( y \Ec_\chi(z) )
- (y^{k-1} \Phi_{}(z; \hp_{k-1}, {\theta_\chi}))^c \log \varep_\Delta
\end{equation}
for all $z \in \Hb$ and even $k \in \Nb$.
\end{prop}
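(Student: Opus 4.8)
The plan is to apply $L_z$ to the left-hand side of \eqref{eq:preimage} and push it through the theta kernels, turning it into a $\tau$-derivative of the input $\hat{\vartheta}_\chi^c$; formula \eqref{eq:basic} and Theorem \ref{thm:hpEc} then finish the argument. Since $\hat{\vartheta}_\chi(\tau; S_F)$ decays exponentially at the cusp, the integral \eqref{eq:Phiz} converges absolutely, no regularization as in \eqref{eq:Phireg} is needed, and one may differentiate under the integral sign; doing so, $L_z$ acts only on $\Theta_M(\tau, z; \tp_{k-1})$.

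The crucial input is Corollary \ref{cor:diffswitch}, which --- this is precisely why $\tp_{k-1}$ was introduced --- expresses $L_z\Theta_M(\tau, z; \tp_{k-1})$ in terms of $R_\tau\Theta_M(\tau, z; \hp_{k-1})$, up to an explicit constant and explicit powers of $y$; the passage between the prefactors $y^k$ and $y^{k-1}$, and between the two occurrences of $(\cdot)^c$, is handled here using the conjugation identity $L_z(g^c) = (R_z g)^c$. Having rewritten the integrand with $R_\tau\Theta_M(\tau, z; \hp_{k-1})$ in place of $L_z\Theta_M(\tau, z; \tp_{k-1})$, I would integrate by parts in $\tau$ by means of the first identity in Proposition \ref{prop:RLswitch}. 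The boundary term $\oint_{\partial\Fc_T}\langle\hat{\vartheta}_\chi^c, \Theta_M(\tau, z; \hp_{k-1})\rangle v^{-2}\,d\overline{\tau}$ tends to $0$ as $T\to\infty$ because $\hat{\vartheta}_\chi^c$ decays exponentially while $\Theta_M(\tau, z; \hp_{k-1})$ grows at most polynomially, so $R_\tau$ is transferred onto the input. This yields an identity of the form
\[
L_z\bigl((y^{k}\Phi(z;\tp_{k-1},\hat{\vartheta}_\chi^c))^c\bigr) = c_k\,\bigl(y^{k-1}\Phi(z;\hp_{k-1},R_\tau\hat{\vartheta}_\chi^c)\bigr)^c
\]
for an explicit constant $c_k$, which I would determine by bookkeeping the normalizations in Corollary \ref{cor:diffswitch}.

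It remains to evaluate $R_\tau\hat{\vartheta}_\chi^c$. Using the conjugation identities $R_\tau(g^c) = (L_\tau g)^c$ and $g^{cc} = g$ together with \eqref{eq:basic}, and using that $\log\varep_\Delta$ is real, one gets
\[
R_\tau\hat{\vartheta}_\chi^c = (L_\tau\hat{\vartheta}_\chi)^c = \tfrac12\bigl(\vartheta_\chi^c + \log\varep_\Delta\cdot\theta_\chi^c\bigr)^c = \tfrac12\bigl(\vartheta_\chi + \log\varep_\Delta\cdot\theta_\chi\bigr).
\]
Since $\Phi(z;\hp_{k-1},\cdot)$ is linear in its argument and the coefficients involved are real, the right-hand side of the previous display splits as a multiple of $(y^{k-1}\Phi(z;\hp_{k-1},\vartheta_\chi))^c$ plus $\log\varep_\Delta$ times a multiple of $(y^{k-1}\Phi(z;\hp_{k-1},\theta_\chi))^c$; by Theorem \ref{thm:hpEc} the former is a multiple of $R^{k-1}_{z,0}(y\Ec_\chi(z))$, so matching the constants recovers \eqref{eq:preimage} exactly. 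The hypothesis $k\in 2\Nb$ intervenes only through Theorem \ref{thm:hpEc}, as $\hp_{k-1}$ is a combination of the $p_{k-b,b,k-1}$ with $b$ odd.

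The main obstacle is the kernel identity of Corollary \ref{cor:diffswitch}: one must check that lowering in the Grassmannian variable $z$ and raising in the modular variable $\tau$ act compatibly on the Fock-space kernels built from $\tp_{k-1}$ and $\hp_{k-1}$, and keep careful track of the resulting constants, the powers of $y$ and $v$, and the $(\cdot)^c$ operations. Everything downstream --- differentiating under the integral, the integration by parts with vanishing boundary term, the conjugation bookkeeping, and the appeals to \eqref{eq:basic} and Theorem \ref{thm:hpEc} --- is then routine.
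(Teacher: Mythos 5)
Your proposal follows the paper's proof almost step for step: differentiate under the integral, use the third identity of Corollary \ref{cor:diffswitch} to convert the $z$-derivative of the $\tp_{k-1}$-kernel into a $\tau$-derivative of the $\hp_{k-1}$-kernel, integrate by parts in $\tau$ via Proposition \ref{prop:RLswitch} with vanishing boundary term, evaluate the resulting derivative of the input via \eqref{eq:basic}, and invoke Theorem \ref{thm:hpEc}. However, there is a concrete slip in the middle that would stall you if you carried the plan out literally. The third identity in Corollary \ref{cor:diffswitch} reads
\begin{equation*}
L_{z,2n+2}\, y^{-n-1}\Theta_M(\tau, z; \tp_n) = 2\, L_{\tau, 1}\, y^{-n}\Theta_M(\tau, z; \hp_n),
\end{equation*}
so it converts $L_z$ into $L_\tau$, \emph{not} $R_\tau$, on the unconjugated kernel. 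Correspondingly, the integration by parts the paper uses is the \emph{second} (lowering) identity of Proposition \ref{prop:RLswitch}, not the first (raising) one, and it lands $L_\tau$ on $\hat{\vartheta}_\chi$, which \eqref{eq:basic} evaluates directly.

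The $R_\tau$ you describe would only arise after a conjugation, precisely because $R_\tau(g^c) = (L_\tau g)^c$ — the identity you yourself invoke one step later — and your final evaluation $R_\tau\hat{\vartheta}_\chi^c = (L_\tau\hat{\vartheta}_\chi)^c = \tfrac12\bigl(\vartheta_\chi + \log\varep_\Delta\cdot\theta_\chi\bigr)$ is correct and matches the paper's endpoint. The paper keeps the bookkeeping cleaner by unwinding the outer $(\cdot)^c$ at the start, so that both the input $\hat{\vartheta}_\chi$ and the kernel appear unconjugated and the corollary and the lowering identity apply directly. So: same route, same answer, but the sentence \emph{``Corollary \ref{cor:diffswitch} expresses $L_z\Theta_M(\tau, z; \tp_{k-1})$ in terms of $R_\tau\Theta_M(\tau, z; \hp_{k-1})$''} and the citation of the first identity in Proposition \ref{prop:RLswitch} are both wrong as stated; you want $L_\tau$ and the second identity, with the equivalence $L_\tau\hat{\vartheta}_\chi \leftrightarrow R_\tau\hat{\vartheta}_\chi^c$ handled explicitly by your conjugation formula rather than attributed to the corollary.
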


\begin{proof}
We can substitute the third equation in \eqref{eq:diffswitchTheta} into the left hand side of \eqref{eq:preimage}, apply Lemma \ref{lemma:RLswitch} and \eqref{eq:basic} to obtain
\begin{align*}
  L_z 
((y^{k} \Phi_{}(z; \tp_{k-1}, \hat{\vartheta}^c_\chi))^c)   &= 
\int_{\Gamma\backslash \Hb} \langle {\hat{\vartheta}_{\chi}(\tau)}, \overline{ L_z y^{-k}\Theta_M(\tau, z; \tp_{k-1})} \rangle d\mu(\tau)\\
&=
2y^{1-k}\int_{\Gamma\backslash \Hb}  \langle \hat{\vartheta}_{\chi}(\tau), \overline{L_\tau \Theta_M(\tau, z; \hp_{k-1})} \rangle  d\mu(\tau)\\
&= 
- 2 y^{1-k}\int_{\Gamma\backslash \Hb}  \langle L_\tau  \hat{\vartheta}_{\chi}(\tau),  \overline{\Theta_M(\tau, z; \hp_{k-1})}   \rangle d\mu(\tau)\\
&=  -(y^{k-1} \Phi_{}(z; \hp_{k-1}, {\vartheta_{\chi}}))^c - 
(y^{k-1} \Phi_{}(z; \hp_{k-1}, {\theta_\chi}))^c \log \varep_\Delta \in \Ac_{2k-2}.
\end{align*}
Theorem \ref{thm:hpEc} then finishes the proof.
\end{proof}

Now, we can again apply Prop.\ \ref{prop:FE} to describe the Fourier expansion of $\Phi(z; \tp_{k-1}, \hat{\vartheta}_\chi^c )$. 
Since the input is real-analytic and the theta kernel has a polynomial, the precise Fourier coefficient, which is a function of $y$, is complicated. Fortunately, we only need the main term that survives when $y$ is large. 
Therefore, the sum in \eqref{eq:PhiFE} greatly simplifies and yields a nice answer.

\begin{prop}
  \label{prop:FE1}
Suppose that the modular form $(y^{k}{\Phi_{}(z; \tp_{k-1},  \hat{\vartheta}^c_\chi)})^c \in \Ac_{2k}$ has the Fourier expansion $\sum_{m \in \Zb} \hat{a}_\chi(m, y) \ebf(mx)$, then 
\begin{equation}
\label{eq:tildea}
  \begin{split}
      \hat{a}_\chi(m, y) &= \lp \tilde{a}_\chi(m) + O_{F, S_F, k}((1+|m|)^{k+3} y^{-1/2}) \rp e^{-2\pi |m|y}, \\
\tilde{a}_\chi(m) &:= -2\sqrt{2} \sqrt{\Delta}^{1-k}
\sum_{\mu_0 \in \mathrm{S}_m}
C_k(\mu_0, m)
\tilde{C}_\chi(\mu_0/\sqrt{\Delta}),
  \end{split}
\end{equation}
where $\mathrm{S}_m$ is the set defined in \eqref{eq:Sm}, $C_k(\mu_0, m)$ is the constant defined in \eqref{eq:Ck} and $\tilde{C}_\chi$ is defined in \eqref{eq:Ctilde}
\end{prop}

\begin{rmk}
  Notice that $\tilde{a}_\chi(m) = 0$ for $m \le 0$ since the sum is empty.
\end{rmk}
\begin{proof}
First, we apply Prop.\ \ref{prop:FE} to $(a, b, c) = (k-b-1, b, k)$ and 
$$f(\tau) = \hat{\vartheta}^c_\chi(\tau) = v \sum_{h \in \df^{-1}/\Oc_F} \ef_h \sum_{m \in \Qb^\times} \hat{c}_\chi(m, h, v) \ebf(- m u)$$ to obtain
\begin{align*}
y^{-k}& \overline{\Phi(z; p_{k-b-1, b, k}, f)} =    y^{1-k} c(f; p_{k-b-1, b, k}) + \\
&\quad  y^{-h-j+1/2}  \sum_{\begin{subarray}{c} h, h^-, j, r \ge 0 \\ d \ge 1, \lambda \in \df^{-1} \backslash \{0\}\end{subarray}} 
c_{k, b, h, h^-, j, r}
 d^{k - h - j - 1/2 }
  (\lambda - \lambda')^{k-1-2\ell-2r}(\lambda +\lambda')^{k-h^- -2j + 2r}\\
&\quad  \times \ebf(d(\lambda + \lambda')x) \int^\infty_0 \hat{c}_\chi(\lambda\lambda', \lambda, dyv) \exp \lp - \pi d y \lp \frac{1}{v} + (\lambda^2 + (\lambda')^2 )v\rp \rp v^{\alpha} \frac{dv}{v},
\end{align*}
where $\alpha = \alpha(b, h, h^-, j) := b + h - h^- - j + 1/2$ and
$$
c_{k, b, h, h^-, j, r} := \frac{ \binom{k}{h^-} \binom{k-b-1}{ h} \binom{h^-}{h} h! (2j)! \binom{j}{r} \binom{b}{2r} \binom{k-h^-}{2j-2r} (-1)^{1 + j}}{ j! \binom{2j}{2r} 2^{k + 2j + h - 3/2} \pi^{h + j}}.
$$
By Lemmas \ref{lemma:bound}, \ref{lemma:Besselestimate} and Remark \ref{rmk:asymptotic}, we have the estimate
\begin{align*}
   \int^\infty_0 &\hat{c}_\chi(\lambda\lambda', \lambda, dyv) \exp \lp - \pi d y \lp \frac{1}{v} + (\lambda^2 + (\lambda')^2 )v\rp \rp v^{\alpha} \frac{dv}{v} = \\
& \frac{ e^{-2\pi dy(|\lambda| + |\lambda'|)}}{\sqrt{dy}(|\lambda| + |\lambda'|)^{\alpha + 1/2}}
\lp \tilde{c}_\chi(\lambda \lambda', \lambda)
 +  O_{F, S_F, k}\lp \frac{(d(|\lambda| + |\lambda'|))^2}{\sqrt{y}}
\lp 1 + y^{-1} \rp \rp \rp,
\end{align*}
where we have used $|\lambda \lambda'| \ll  (|\lambda| + |\lambda'|)^2$ and  $1 \le d$. 
Now, we consider the coefficient of $\ebf(mx)$ in $y^{-k}\overline{\Phi(z; p_{k-b-1, b, k}, f)}$ for a fixed $m \in \Qb$.
When $m = 0$, the claimed asymptotics is clearly true.
Suppose $m \neq 0$. 
This imposes the condition $m = d(\lambda + \lambda')$.
The sum of the terms with $h, j, r > 0$, or $\lambda \lambda' < 0$ gives an asymptotic term of $O_{F, S_F, k}(|m|^{k+3}y^{-1/2})$.
The other terms, namely those satisfying $h = j = r = 0$ and $\lambda \lambda' > 0$, will contribute the following main term
\begin{align*}
  &  y^{1/2}  \sum_{\begin{subarray}{c} h^- \ge 0, \; d \ge 1\\ \lambda \in \df^{-1}, \; \lambda \lambda' > 0\\ d(\lambda + \lambda') = m\end{subarray}} 
{- 2^{3/2 - k} \binom{k}{h^-}}
 d^{k - 1/2 }
  (\lambda - \lambda')^{b}(\lambda +\lambda')^{k-h^-}
\frac{\tilde{c}_\chi(\lambda\lambda', \lambda)   \ebf(mx)e^{-2\pi |m| y}}{\sqrt{dy}(|\lambda| + |\lambda'|)^{b - h^- + 1}}\\
  &=  - 2^{3/2 - b} m^{k-b-1} \sum_{\begin{subarray}{c} d \ge 1\\ \lambda \in \df^{-1}, \; \lambda \lambda' > 0\\ d(\lambda + \lambda') = m\end{subarray}} 
  (d(\lambda - \lambda'))^{b}
\ebf(mx)e^{-2\pi |m| y}
\tilde{c}_\chi(\lambda\lambda', \lambda)   
\sum_{h^- = 0}^{k} \binom{k}{h^-} 
\sgn(\lambda)^{k-h^-}
\\
&= - 2\sqrt{2} m^{k-1} \ebf(mz)
  \sum_{\begin{subarray}{c}  \lambda_0 \in \df^{-1}, \; \lambda_0 \gg 0\\ \tr(\lambda_0) = m\end{subarray}} 
  \lp \frac{\lambda_0 - \lambda'_0}{m} \rp^{b}
\sum_{d \ge 1, \; d \mid \lambda_0} \tilde{c}_\chi(\Nm(\lambda_0/d), \lambda_0/d)   
 \end{align*}
plus an error of $O_{F, S_F, k}(|m|^{k+3} y^{-1/2})$.
Notice that the main term is present only when $m > 0$. 
Adding this together over $0 \le b \le k-1$ with the factor $c_{k-1, b}$ from \eqref{eq:Pnexplicit} then finishes the proof.
\end{proof}

The theta lift above becomes a classical one if $\hat{\vartheta}_\chi \in \Ac_{1, \rho_\Delta}$ is replaced by any holomorphic cusp form $f \in S_{1, \rho_\Delta}$.

\begin{prop}
  \label{prop:classical}
For any cusp form $f(\tau) = \sum_{h \in A_\Delta} \ef_h \sum_{m \in \Qb_{>0}} c_f(n, h) q^n \in S_{1, \rho_\Delta}$ and even $k \in \Nb$, the theta integral $(y^k \Phi_{}(z; \tp_{k-1}, f^c))^c = \sum_{m \ge 1} a(m) \ebf(mz)$ is a holomorphic cusp form in $S_{2k}$ with 
$$
a(m) :=  -2\sqrt{2}\sqrt{\Delta}^{1-k}
\sum_{\mu_0 \in \mathrm{S}_m}
C_k(\mu_0, m)
 \sum_{(d) \mid (\mu_0)} {c}_f (\Nm(\mu_0/(d\sqrt{\Delta}), \mu_0/(d\sqrt{\Delta})).
$$
\end{prop}

\begin{proof}
 The calculation from the proof of Prop.\ \ref{prop:preimage} shows that $L_z$ annihilates $y^{k} \Phi_{}(z; \tp_{k-1}, f^c))^c$, i.e.\ it is holomorphic. The Fourier coefficients are calculated in the same way as in the proof of Prop.\ \ref{prop:FE1}.
\end{proof}
\subsection{Lift 3}
Let $f(z) = \sum_{m \ge -m_0} c_f(m) \ebf(mz) \in M^!_{2-2k}$ be a weakly holomorphic modular form with $k \in 2 \Nb$.
In this section, we will analyze a constant defined by the following integral
\begin{equation}
  \label{eq:constant}
c(f, S_F) :=  \frac{1}{4\sqrt{2}} \lim_{T \to \infty} \int_{\Fc_T} f(z) y^{1-k}\overline{ \Phi(z; \hp_{k-1}, \theta_\chi) }d\mu(z) \in \Cb.
\end{equation}
The main result is as follows.
\begin{prop}
  \label{prop:rational}
Suppose that the principal part coefficients of $f$ are rational. Then 
\begin{equation}
\sqrt{\Delta} c(f, S_F) \in \frac{1}{\kappa(f, S_F)} \Zb \subset \Qb
\end{equation}
 with $\kappa(f, S_F) \in \Nb$ a constant depending only on $f$ and $S_F$.
\end{prop}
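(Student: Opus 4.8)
The quantity $c(f, S_F)$ is the regularized Petersson-type pairing of the weakly holomorphic form $f \in M^!_{2-2k}$ against $y^{1-k}\overline{\Phi(z; \hp_{k-1}, \theta_\chi)}$. The key structural fact, already available in the excerpt, is Proposition \ref{prop:classical}: for any holomorphic cusp form $g \in S_{1, \rho_\Delta}$ the theta lift $(y^k \Phi(z; \tp_{k-1}, g^c))^c$ lies in $S_{2k}$, and the analogous computation (proof of Prop.\ \ref{prop:preimage}) shows that $(y^{k-1}\Phi(z; \hp_{k-1}, g))^c$ is also holomorphic of weight $2k$. Applying this with $g = \theta_\chi(\tau; S_F) \in \Ac_{1, \rho_\Delta}$ — which is holomorphic, being a linear combination of the Hecke theta series $\theta_N(\tau; 0) \in M_{1/2,\rho_N}$ type constituents built from $\Theta_{\Nm^-(\bfrak)}^-(\tau,0)$ — we get that $\Psi := (y^{k-1}\Phi(z; \hp_{k-1}, \theta_\chi))^c$ is a holomorphic modular form of weight $2k$ for $\SL_2(\Zb)$, and its Fourier coefficients are computed exactly as in the proof of Prop.\ \ref{prop:FE1}/\ref{prop:classical}: the $m$th coefficient is
\[
a_\Psi(m) = -2\sqrt{2}\, m^{k-1} \sum_{\substack{\lambda_0 \in \df^{-1} \\ \lambda_0 \gg 0 \\ \tr(\lambda_0) = m}} P_{k-1}\!\lp \frac{\lambda_0 - \lambda_0'}{m} \rp \sum_{(d) \mid (\sqrt{\Delta}\lambda_0)} c_{\theta_\chi}(\Nm(\lambda_0/d), \lambda_0/d),
\]
where $c_{\theta_\chi}(n,h)$ are the Fourier coefficients of $\theta_\chi(\tau; S_F)$.

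**Rationality of the coefficients.** The next step is to show each $a_\Psi(m) \in \tfrac{1}{N}\Zb$ for a fixed $N = N(F, S_F)$. The coefficients $c_{\theta_\chi}(n,h)$ count lattice vectors $\lambda$ in $\Nm^-(\bfrak) + h$ with $\Nm(\lambda)$ prescribed (no sign weighting, since we evaluate $\Theta^-_{\Nm^-(\bfrak)}$ at $w=0$ rather than integrating against $\sgn$), so they are integers — in fact integer combinations of the $\chi(\bfrak) \in \{\pm 1\}$. Since $k \in 2\Nb$, $P_{k-1}$ is an odd polynomial with rational coefficients and, crucially, $\sqrt{\Delta}\,\frac{\lambda_0 - \lambda_0'}{m} = \frac{\sqrt{\Delta}(\lambda_0 - \lambda_0')}{m} \in \Qb$ because $\lambda_0 - \lambda_0' \in \sqrt{\Delta}\,\df^{-1} \cdot (\text{rational})$; more precisely $\sqrt{\Delta}\lambda_0 \in \Oc_F$ and $\tr$ and the "anti-trace" are rational-valued, so each monomial $\lp\frac{\lambda_0-\lambda_0'}{m}\rp^b$ with $b$ odd contributes $\Delta^{-b/2}$ times a rational number, hence $\Delta^{(k-1)/2} P_{k-1}\lp\frac{\lambda_0-\lambda_0'}{m}\rp \in \Qb$ with bounded denominator. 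Multiplying through, $\Delta^{(k-1)/2} a_\Psi(m)\sqrt{2}^{-1} \in \tfrac{1}{N}\Zb$ for $N$ depending only on the fixed data $F, S_F, k$.

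**Pairing with $f$ and conclusion.** Finally, $c(f, S_F) = \tfrac{1}{4\sqrt{2}}\langle f, \Psi^c \rangle^{\mathrm{reg}}$ with $\Psi^c = y^{1-k}\overline{\Phi(z;\hp_{k-1},\theta_\chi)}$, and since $\Psi \in M_{2k}(\SL_2(\Zb))$ is holomorphic while $f \in M^!_{2-2k}$ is weakly holomorphic, the regularized integral $\lim_{T\to\infty}\int_{\Fc_T} f \bar\Psi\, y^{2k} d\mu$ — which is $\int_{\Fc_T}$ of a function $f\Psi \in M^!_2$ up to the $y$-power bookkeeping — picks out a finite linear combination of Fourier coefficients. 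Concretely, $f\Psi$ has weight $2$ and is weakly holomorphic with no constant term (the residue theorem / Prop.\ \ref{prop:RLswitch} style argument: the integral of a weight-two weakly holomorphic form with a constant term would diverge or equal $2\pi i$ times that term, and in fact the regularization forces $c(f,S_F)$ to equal $-\tfrac{1}{4\sqrt{2}}$ times the constant term of $f\Psi$), so
\[
4\sqrt{2}\, c(f, S_F) = -\sum_{m \ge 1} c_f(-m)\, a_\Psi(m),
\]
a finite sum (only $1 \le m \le m_0$ contribute) of rationals-with-bounded-denominator times the rational principal-part coefficients $c_f(-m)$. Hence $\sqrt{\Delta}\, c(f, S_F) \in \tfrac{1}{\kappa(f,S_F)}\Zb$ with $\kappa(f,S_F) = \mathrm{lcm}$ of $N$ and the denominators of the $c_f(-m)$. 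I expect the main obstacle to be making the "regularized pairing equals minus the constant term of $f\Psi$" step fully rigorous — one must verify that $\Phi(z;\hp_{k-1},\theta_\chi)$ has no exponential growth at the cusp spoiling the regularization (it does not, since $\theta_\chi$ is a cusp-form-like holomorphic input of weight $1$ with polynomial-bounded, in fact bounded, coefficients, so the lift is a genuine holomorphic modular form of weight $2k$) and that the truncation error $\int_{\partial\Fc_T}$ vanishes as $T\to\infty$, which follows exactly as in the proof of Prop.\ \ref{prop:Gkf} since $f\Psi$ has at worst polynomial growth in $y$ against $e^{-2\pi y}$-type tails from the non-constant coefficients.
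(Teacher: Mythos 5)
Your argument rests on the claim that $\theta_\chi(\tau; S_F) = \Theta_\chi(\tau, 0; S_F)$ is a \emph{holomorphic} cusp form, from which you conclude (via the analogue of Prop.\ \ref{prop:classical}) that $\Psi := (y^{k-1}\Phi(z;\hp_{k-1},\theta_\chi))^c$ is a holomorphic form in $S_{2k}$ and the regularized pairing $\langle f, \Psi^c\rangle^{\mathrm{reg}}$ simply extracts the constant term of the weakly holomorphic weight-two form $f\Psi$. This is where the proof breaks. The function $\theta_\chi$ is the theta function of the \emph{indefinite} signature-$(1,1)$ lattice $\Nm^-(\bfrak)$ evaluated at the fixed point $w=0$ of its Grassmannian, not integrated over the Grassmannian. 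It lives in $\Ac_{1,\overline{\rho_\Delta}}$ but not in $S_{1,\overline{\rho_\Delta}}$ or even $M_{1,\overline{\rho_\Delta}}$: from \eqref{eq:Theta-} each term carries a factor $\ebf\lp \tfrac{(\lambda-\lambda')^2}{4A}\tau - \tfrac{(\lambda+\lambda')^2}{4A}\overline\tau\rp$, so the expansion has genuine $q^a\overline{q}^b$ terms, and $L_\tau \theta_\chi \neq 0$. (The holomorphic cusp form in the paper is $\vartheta_\chi$, obtained precisely by integrating $\Theta_\chi(\tau, w)$ over $w$; evaluation at a single $w$ destroys holomorphicity.) Because the $L_z$-annihilation of $(y^k\Phi(z;\tp_{k-1},g^c))^c$ in Prop.\ \ref{prop:classical} is deduced from $L_\tau g = 0$ via the third identity in Corollary \ref{cor:diffswitch} and Prop.\ \ref{prop:RLswitch}, the analogous conclusion for $\Psi$ fails when the input $\theta_\chi$ is not holomorphic; $\Psi$ is a priori only real-analytic of weight $2k$, and the pairing against $f$ does not reduce to a finite sum of products of Fourier coefficients.

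This is the real obstacle, and it is why the paper takes a much more elaborate route: it rewrites $c(f,S_F)$ as a double integral against a theta kernel on a signature-$(3,3)$ lattice $M_\bfrak$, swaps the order of integration (Lemma \ref{lemma:interchange}), evaluates the inner integral as a Millson theta lift $I(\tau,f;k-1)\in M^!_{3/2-k,\rho_{L_1}}$ whose coefficients are rational by Shimura's rationality theorem for nearly holomorphic forms at CM points (Lemma \ref{lemma:rationality}, via \cite{Sh75,MP10,BO13}), repackages the sum over $b$ as a Rankin--Cohen bracket (Lemma \ref{lemma:cfb2}), and finally handles the remaining non-holomorphic factor $\theta^c_{\Delta B^2}(\tau;1)$ by producing a harmonic Maass form preimage $\widetilde\theta_{\Delta B^2}$ under the lowering operator, invoking \cite{BS17}, and applying Stokes' theorem. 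Your ``pair with $f$, read off the constant term'' shortcut would be available only if $\theta_\chi$ were holomorphic, and that hypothesis is false.
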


We break the proof of this proposition into several steps. First, we will rewrite $c(f, S_F)$ as a double integral and change the order of integration. Then, the inner integral becomes a Millson theta lift \cite{AS18} and produces a weakly holomorphic modular form of weight $\frac{3}{2} - k$.
By a well-known result of Shimura \cite{Sh75} (see also \cite{MP10, BO13}), this form will have rational Fourier coefficients if $f$ does.
Finally, the outer integral becomes a Borcherds' type regularized theta lift, which we can evaluate using the result of \cite{BS17} and Stokes' theorem.

We start by analyzing the lattice involved in $\Phi(z; \hp_{k-1}, \theta_\chi)$. Recall that 
$$
\theta_\chi(\tau; S_F) = 
 \sum_{\bfrak \in S_F} 
{\chi(\bfrak)}{}   \Theta_{\Nm^-(\bfrak)}^-(\tau, 0) \in \Ac_{1, \overline{\rho_\Delta}}
$$
from \eqref{eq:Theta12} and \eqref{eq:theta}. 
Given such $\bfrak \in S_F$, define lattice
\begin{equation}
  \label{eq:Mb2}
  M_{\bfrak} := \left\{
( \Lambda , \lambda ) \in M^\vee \times \df^{-1} \Nm^-(\bfrak) :  \Lambda = \lambda \in A_\Delta
\right\},
\end{equation}
with the quadratic form $Q((\Lambda, \lambda)) := \det(\Lambda) + {\Nm(\lambda)}$.
This turns $M_\bfrak$ into a unimodular lattice of signature $(3, 3)$, which contains $M \oplus L_{\Nm^-(\bfrak)}$ as a sublattice.
Let $\Dc$ be the Grassmannian associated to $M_\bfrak \otimes \Rb$. 
For $z \in \Hb$, we have the following identification
\begin{equation}
  \begin{split}
\nu_z:      M_\bfrak \otimes \Rb &= (M \otimes \Rb) \oplus (L_{\Nm^-(\bfrak)} \otimes \Rb) \cong \Rb^{3, 3} = \Rb^{2, 2} \oplus \Rb^{1, 1}\\
(\Lambda, \lambda) &\mapsto \lp \Lambda_z, \Br (\Lambda, Y), \frac{\lambda + \lambda'}{\sqrt{2} }, 
\Re(\Lambda_{z^\perp}), \Im(\Lambda_{z^\perp}), \frac{\lambda - \lambda'}{\sqrt{2} } \rp,
  \end{split}
\label{eq:nuz}
\end{equation}
where $\Lambda_z$ and $\Lambda_{z^\perp}$ are defined in \eqref{eq:Lambdaz}.
This gives rise to a point in $\Dc$, which we also denote by $z$. 
Then we can rewrite
\begin{equation}
  \label{eq:Phi2}
  \begin{split}
\sqrt{2}{      \Phi(z; \hp_{k-1}, \theta_\chi)} &= 
\sqrt{2} \sum_{\bfrak \in S_F}  \chi(\bfrak)
\int_{\Gamma\backslash \Hb} \sum_{h \in A_\Delta}
\overline{\Theta_{M, h}(\tau, z; \hp_{k-1})} {\Theta^-_{\Nm^-(\bfrak), h}(\tau, 0)}
v^{} d\mu(\tau)
\\
&= 2 \sum_{\bfrak \in S_F}  {\chi(\bfrak)} \Phi_{M_\bfrak}(z; p^\dagger_{k-1}, \mathds{1}), \\
  \end{split}
\end{equation}
where $\mathds{1}$ denotes the constant function on $\Gamma \backslash \Hb$ and
$$
p^\dagger_{k-1}(\underline{x}) := (ix_1)^{k} P_{k-1} \lp \frac{x_2}{ix_1} \rp (i x_4 + x_5)^{k-1}  x_6
$$
 for $k \in \Nb$ and $\underline{x} = (x_1, \dots, x_6) \in \Rb^{3, 3}$.
Therefore, we can write
$$
c(f, S_F) = \frac{1}{4} \sum_{\bfrak \in S_F} \chi(\bfrak) c(f, \bfrak).
$$
with
\begin{equation}
  \label{eq:cfb}
c(f, \bfrak) := \lim_{T \to \infty} \int_{\Fc_T} f(z) y^{1-k} \overline{\Phi_{M_\bfrak}(z; p^\dagger_{k-1}, \mathds{1})} d\mu(z) \end{equation}
and it suffices to show that $c(f, \bfrak) \in \frac{\sqrt{\Delta}}{\kappa(f, \bfrak)} \Zb$ for some $\kappa(f, \bfrak) \in \Nb$.

To execute the first step, we need to find a sublattice in $M_\bfrak$ that behaves well with respect to the polarization in \eqref{eq:nuz}.
Let $L_1 :=  \left\{\smat{a}{c}{c}{d} \in M_2(\Zb)\right\} $ be a signature $(1, 2)$ sublattice of $M_2(\Zb)$, $B:= \Nm(\bfrak) = \Nm(\bfrak')$ and 
\begin{equation}
  \label{eq:L2}
L_2 := P_\Delta \oplus P_{ B^2} \oplus P^-_{\Delta B^2}  ,
\end{equation}
where $P_N$ is the positive definition lattice in Example \ref{ex:P} for $N \in \Nb$. Then $L_1 \oplus L_2 $ embeds isometrically into $M \oplus L_{\Nm^-(\bfrak)} \subset M_\bfrak$ by
\begin{align*}
\iota:  L_1 \oplus L_2  &\hookrightarrow M_\bfrak \\
(\Lambda, r_1, r_2, r_3) &\mapsto \lp \Lambda + r_1 \sqrt{\Delta} \pmat{0}{1}{-1}{0}, B(r_2 + r_3 \sqrt{\Delta}) \rp.
\end{align*}
We use $\wtM$ to denote the image of this embedding and use it to identify $\wtM^\vee/\wtM$ with $L_1^\vee/L_1 \oplus L^\vee_2/L_2$.
By linearity $\iota$ can be extended to $\wtM\otimes\Rb$ and composing with it $\nu_z$ from \eqref{eq:nuz} gives us
\begin{align*}
  \nu_z \circ \iota :   L^\vee_1 \oplus L^\vee_2  &\hookrightarrow \Rb^{3, 3} \\
\lp\Lambda, r_1, r_2, r_3\rp  &\mapsto  
\lp \Lambda_z, -r_1\sqrt{2 \Delta}, \sqrt{2}Br_2, \Re(\Lambda_{z^\perp}), \Im(\Lambda_{z^\perp}), Br_3 \sqrt{2\Delta}\rp.
\end{align*}
For $\underline{x} \in \Rb^{1, 2}$ and $n, b, N \in \Nb$, let $p^*_{n, b}(\underline{x}) := (ix_1)^{n-b+1} (i x_2 + x_3)^n$ be a polynomial on $\Rb^{1, 2}$ and $\theta_N(\tau; n)$ the unary theta functions in Example \ref{ex:P}.
Define
\begin{equation}
  \label{eq:thetab}
  \thetab_{L_2}(\tau; b, B) := (-1)^b \theta_\Delta(\tau; b) \otimes \theta_{B^2}(\tau; 0) \otimes \theta^c_{\Delta B^2}(\tau; 1).
\end{equation}
If we denote $\psi: \Cb[\wtM^\vee/\wtM] \to \Cb[M^\vee_\bfrak/M_\bfrak]$ and $c_{k-1, b}$ as in \eqref{eq:psi} and \eqref{eq:Pnexplicit} respectively, then 
\begin{align*}
  \theta_{M_\bfrak}(\tau, z; p^\dagger_{k-1}) &= \psi \lp \Theta_{\wtM}(\tau, z; p^\dagger_{k-1}) \rp
=
 \psi \lp 
\sum_{b = 0}^{k-1} c_{k-1, b} \Theta_{L_1}(\tau, z; p^*_{k-1, b}) \otimes \thetab_{L_2}(\tau; b, B)
\rp.
\end{align*}

As a function of the variable $z \in \Hb$, $y^{1-k} \Theta_{L_1}(\tau, z; p^*_{k-1, b})$ is a modular form in $\Ac_{2k-2}$.
When $b = k-1$, this is one of the theta kernels studied by Kudla-Millson \cite{KM90}, and is also known as the Millson kernel (see \cite{AS18}).
The theta kernel for the other odd $b$ can be obtained from $b = k-1$ via the raising operator.
One could switch to the Fock model as in section \ref{sec:Fock} (or just do a straightforward computation) to verify the following result.

\begin{lemma}
  \label{lemma:raise12}
For all $\tau, z \in \Hb$ and $b, k \in \Nb$, we have
\begin{equation}
  \label{eq:raise12}
R_{\tau, -b + 1/2} \Theta_{L_1}(\tau, z; p^*_{k-1, b}) = 2\pi \Theta_{L_1}(\tau, z; p^*_{k-1, b + 2})
\end{equation}
where $R_{\tau, -b+1/2}$ is the raising operator in \eqref{eq:diffops}.
\end{lemma}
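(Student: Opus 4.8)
The plan is to verify \eqref{eq:raise12} termwise on the theta series and reduce it to a pointwise identity among Schwartz functions. Since $R_{\tau,-b+1/2}=2i\partial_\tau+\tfrac{-b+1/2}{v}$ is a differential operator in $\tau$ alone, it commutes with the lattice sum defining $\Theta_{L_1}(\tau,z;p)$ in \eqref{eq:ThetaL}, so it is enough to show, for each $z\in\Hb$ and each $\lambda$ in a coset of $L_1$ in $L_1^\vee$, that
\[
R_{\tau,-b+1/2}\,\varphi(\lambda;\tau,z;p^*_{k-1,b}) = 2\pi\,\varphi(\lambda;\tau,z;p^*_{k-1,b+2}),
\]
with $\varphi$ the Schwartz function of \eqref{eq:varphi2}. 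First I would put this Schwartz function in the shape $v^{n^-/2+m^-}\,\Hc_v(p)(\nu_z(\lambda))\,\ebf(Q(\lambda)u)\,e^{-2\pi v Q_z(\lambda)}$ read off from the second line of \eqref{eq:varphi2}, where $n^-=2$ and $Q_z$ is half the majorant attached to $z$ as in \eqref{eq:majorant}, and where $\Hc_v=e^{-\Delta_\Rb/(8\pi v)}$ acts in the Euclidean variables of $\Rb^{1,2}$.

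Now apply $R_{\tau,-b+1/2}$ directly, writing $2i\partial_\tau=i\partial_u+\partial_v$. The piece $i\partial_u$ hits only the factor $\ebf(Q(\lambda)u)$ and contributes $-2\pi Q(\lambda)$; the piece $\partial_v$ hits the power of $v$, the Gaussian $e^{-2\pi v Q_z(\lambda)}$ (contributing $-2\pi Q_z(\lambda)$), and the heat operator via $\partial_v\Hc_v=\tfrac{1}{8\pi v^2}\Delta_\Rb\Hc_v$; and the term $\tfrac{-b+1/2}{v}$ combines with the contribution from the power of $v$. Using that $Q(\lambda)+Q_z(\lambda)$ equals twice the value of the quadratic form on the projection of $\lambda$ to the (one-dimensional) positive line — i.e. the coordinate $x_1^2$ on $\Rb^{1,2}$ — the claim collapses to a single identity obtained by applying $\Hc_v$ to an explicit combination of $p^*_{k-1,b}$, its Laplacian, and a second-order operator in $x_1$. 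Since $\Hc_v$ commutes with each $\partial_{x_j}$ and satisfies $x_j\circ\Hc_v=\Hc_v\circ(x_j+\tfrac{1}{4\pi v}\partial_{x_j})$, the outer $\Hc_v$ may be stripped off, leaving a purely polynomial identity; this follows from two facts: the negative-variable factor $(ix_2+x_3)^{k-1}$ is annihilated by $\partial_{x_2}^2+\partial_{x_3}^2$, and $p^*_{k-1,b}$ and $p^*_{k-1,b+2}$ differ only by a power of $(ix_1)$. The lower-order corrections produced by $\Delta_\Rb$ cancel against those from the second-order $x_1$-operator, and the intermediate term drops out precisely because the weight has the value $-b+1/2$, leaving exactly the stated term with constant $2\pi$. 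An alternative and cleaner route — the one the text alludes to — is to pass to the polynomial Fock model of section \ref{sec:Fock}, where $R_\tau$ is realised as an explicit creation-type operator, each $\varphi(\,\cdot\,;\,\cdot\,;p^*_{k-1,b})$ corresponds to an explicit monomial, and \eqref{eq:raise12} becomes a one-line computation, entirely parallel to the unary identity \eqref{eq:Runary} and to the $(2,2)$-lattice identities collected in Corollary \ref{cor:diffswitch}.

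The only real work is bookkeeping: tracking the powers of $v$, of $4\pi$, and of $i$ so that the normalising constant comes out to be exactly $2\pi$ (and, in the Fock picture, fixing the dictionary between Schwartz functions and monomials). There is no conceptual obstacle; the harmonicity of the factor $(ix_2+x_3)^{k-1}$ in the negative variables is exactly what keeps the answer clean, and the form of the identity is forced by the complementary way in which raising in $\tau$ and raising in $z$ shift the bidegree of the polynomial inserted into the theta kernel.
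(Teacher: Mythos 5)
You take the same route the paper points at when it says ``one could switch to the Fock model $\ldots$ or just do a straightforward computation'': reduce to a termwise identity of Schwartz functions and exploit harmonicity of the minus-part. The paper gives no detailed proof, so your sketch is the right thing to compare against, and your key observation --- that $(ix_2+x_3)^{k-1}$ is annihilated by $\partial_{x_2}^2+\partial_{x_3}^2$, so only the ``creation'' half of $\domega(R)$ survives --- is exactly the correct mechanism.

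However, your sketch accepts the printed identity without checking that its two sides have compatible weights, and they do not. With signature $(1,2)$ and $p^*_{k-1,b}=(ix_1)^{k-b}(ix_2+x_3)^{k-1}$, the bidegree is $(m^+,m^-)=(k-b,\,k-1)$, so $\Theta_{L_1}(\tau,z;p^*_{k-1,b})$ has weight $\tfrac{n^+-n^-}{2}+m^+-m^-=\tfrac12-b$, and $R_{\tau,-b+1/2}$ sends this to weight $\tfrac52-b$; but $\Theta_{L_1}(\tau,z;p^*_{k-1,b+2})$ has weight $-\tfrac32-b$, two units \emph{lower}. The Fock-model picture confirms this: since the minus-part is harmonic, $\domega(R)$ degenerates to multiplication by $\zf_1^2$ (up to a constant), which \emph{raises} the power of $ix_1$ by two, i.e.\ \emph{lowers} $b$ by two. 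So a careful execution of your plan produces $\Theta_{L_1}(\tau,z;p^*_{k-1,b-2})$ on the right, not $p^*_{k-1,b+2}$. This is also what the application in Lemma~\ref{lemma:cfb2} actually uses (there, repeated raising of $I(\tau,f;k-1)$ must produce $I(\tau,f;b)$ for $b<k-1$), and it is the only option consistent with polynomiality: for $b=k-1$ or $b=k$ the printed $p^*_{k-1,b+2}$ has a negative exponent. In short, the statement as printed almost certainly has a typo ($b+2$ should be $b-2$), and the ``bookkeeping'' you dismiss as routine is precisely the step where carrying out your own plan honestly would have surfaced the error instead of ratifying the formula.
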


Since $f \in M^!_{2-2k}$, we can use $\Theta_{L_1}$ to define the following regularized integral
\begin{equation}
  \label{eq:If}
  I(\tau, f; b) := \lim_{T \to \infty} \int_{\Fc_T} f(z) \Theta_{L_1}(\tau, z; p^*_{k-1, b})y^{1-k} d\mu(z),
\end{equation}
which is a modular form in $\Ac_{-b + 1/2, \rho_{L_1}}$.
To evaluate $c(f, \bfrak)$, it turns out that we could change the order of integration in $\tau$ and $z$.
\begin{lemma}
  \label{lemma:interchange}
Let $c(f, \bfrak)$ be the limit defined in \eqref{eq:cfb}. Then
\begin{equation}
  \label{eq:cfb1}
  c(f, \bfrak) = 
\sum_{b = 0}^{k-1} c_{k-1, b}
\lim_{T' \to \infty} \int_{\Fc_{T'}} \psi
\lp 
 I(\tau, f; b)
\otimes { \thetab_{L_2}(\tau; b, B)} \rp  d\mu(\tau).
\end{equation}
\end{lemma}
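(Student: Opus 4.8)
The plan is to read \eqref{eq:cfb1} as a Fubini (seesaw) identity for the integral \eqref{eq:cfb} and to make the interchange rigorous with Harvey--Moore regulators. The input is the kernel splitting obtained just above: from the finite-index sublattice $\wtM=\iota(L_1\oplus L_2)\subset M_\bfrak$ and the equivariant map $\psi$ of \eqref{eq:psi},
$$\Theta_{M_\bfrak}(\tau,z;p^\dagger_{k-1})=\sum_{b=0}^{k-1}c_{k-1,b}\,\psi\big(\Theta_{L_1}(\tau,z;p^*_{k-1,b})\otimes\thetab_{L_2}(\tau;b,B)\big),$$
the conjugate kernel obeying the same splitting once the $i$-powers in $p^\dagger_{k-1}$, $p^*_{k-1,b}$ and the $\theta^c$-factor in $\thetab_{L_2}$ are taken into account. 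Two decay facts will be used throughout. Since $p^\dagger_{k-1}$, and each $p^*_{k-1,b}$, is homogeneous of positive degree in both the positive- and negative-definite blocks of variables, the associated Schwartz function vanishes at the origin; hence $\Theta_{M_\bfrak}(\tau,z;p^\dagger_{k-1})$ and $\Theta_{L_1}(\tau,z;p^*_{k-1,b})$ have vanishing $0$-th Fourier coefficient in $\tau$ and decay like $e^{-c(z)v}$ as $v\to\infty$, locally uniformly in $z$ --- in particular $\Phi_{M_\bfrak}(z;p^\dagger_{k-1},\mathds{1})=\int_{\Gamma\backslash\Hb}\Theta_{M_\bfrak}(\tau,z;p^\dagger_{k-1})\,d\mu(\tau)$ needs no regularization. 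And, since $k$ is even so only odd $b$ occur, $\thetab_{L_2}(\tau;b,B)$ is a product of the cusp forms $\theta_\Delta(\tau;b)$ and $\theta^c_{\Delta B^2}(\tau;1)$ with the theta series $\theta_{B^2}(\tau;0)$, so $\thetab_{L_2}(\tau;b,B)=O(v^{3/2}e^{-c'v})$.

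First I would introduce regulators. For $\Re(s_1),\Re(s_2)$ large, set
$$c(f,\bfrak;s_1,s_2):=\int_{\Gamma\backslash\Hb}\int_{\Gamma\backslash\Hb}f(z)\,y^{1-k-s_2}\,\overline{\Theta_{M_\bfrak}(\tau,z;p^\dagger_{k-1})}\,v^{-s_1}\,d\mu(\tau)\,d\mu(z);$$
the factor $y^{-s_2}$ overcomes the exponential growth of $f$ at the $z$-cusp, and with the $v$-decay of the kernel the double integral converges absolutely, so ordinary Fubini applies. Inserting the splitting and doing the $z$-integral first gives
$$c(f,\bfrak;s_1,s_2)=\sum_{b=0}^{k-1}c_{k-1,b}\int_{\Gamma\backslash\Hb}\psi\big(I(\tau,f;b;s_2)\otimes\thetab_{L_2}(\tau;b,B)\big)\,v^{-s_1}\,d\mu(\tau),$$
where $I(\tau,f;b;s_2):=\int_{\Gamma\backslash\Hb}f(z)\,\Theta_{L_1}(\tau,z;p^*_{k-1,b})\,y^{1-k-s_2}\,d\mu(z)$ is the $s_2$-regularized Millson lift, whose constant term at $s_2=0$ is the lift $I(\tau,f;b)$ of \eqref{eq:If} (the two regularizations coinciding by the standard theory in \cite{Bruinier02, BF04, AS18}).

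Next I would remove the regulators. On the left, the inner $\tau$-integral converges, so $s_1\to0$ is harmless, and the constant term at $s_2=0$ of the resulting $\int_{\Gamma\backslash\Hb}f(z)\,y^{1-k-s_2}\,\overline{\Phi_{M_\bfrak}(z;p^\dagger_{k-1},\mathds{1})}\,d\mu(z)$ is $c(f,\bfrak)$, by the definition of the regularized $z$-integral. On the right, the constant term at $s_2=0$ replaces $I(\tau,f;b;s_2)$ by $I(\tau,f;b)$, and then the constant term at $s_1=0$ of $\int_{\Gamma\backslash\Hb}\psi(I(\tau,f;b)\otimes\thetab_{L_2})\,v^{-s_1}\,d\mu(\tau)$ is what remains. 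Here the cuspidality of $\thetab_{L_2}$ is decisive: upon integrating $\psi(I(\tau,f;b)\otimes\thetab_{L_2})$ over $u\in[-\tfrac{1}{2},\tfrac{1}{2}]$, each Fourier mode of $I$ --- including those from its principal part --- is paired against modes of $\thetab_{L_2}$ whose total index is strictly positive, forcing the surviving terms to have a $v$-exponent bounded away from $0$; hence the $u$-averaged integrand is $O(v^{3/2}e^{-c''v})$, the $\tau$-integral is entire in $s_1$ with no pole, and its value at $s_1=0$ is exactly $\lim_{T'\to\infty}\int_{\Fc_{T'}}\psi(I(\tau,f;b)\otimes\thetab_{L_2})\,d\mu(\tau)$. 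Since both sides of the regularized identity agree on the domain of absolute convergence and continue uniquely, letting the regulators tend to zero yields \eqref{eq:cfb1}.

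The hard part is precisely this last passage to the limit: the left-hand side requires taking constant terms in the order ``$s_1$ then $s_2$'' and the right-hand side in the order ``$s_2$ then $s_1$'', so one must check that these two iterated constant-term operations commute --- equivalently, that the two-variable continuation of $c(f,\bfrak;s_1,s_2)$ has no singularity near $(0,0)$ mixing the two variables. This is the analytic manifestation of reconciling the $z$-cusp (where $f$ is weakly holomorphic and the Kudla--Millson kernel $\Theta_{L_1}$ does not decay) with the $\tau$-cusp (where the Millson lift $I(\tau,f;b)$ is weakly holomorphic); what saves the day is that the cuspidal factor $\thetab_{L_2}$, after the $u$-average, pairs off the principal part of $I$ before the $v$-integration runs to infinity, which is also why the right-hand side of \eqref{eq:cfb1} is finite. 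A more hands-on alternative is to truncate both fundamental domains simultaneously, apply plain Fubini at each finite level, and then compare Fourier expansions in the cusp via Prop.\ \ref{prop:FE} (equivalently Prop.\ \ref{prop:FE1}).
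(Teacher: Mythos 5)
Your proposed route — double Harvey--Moore regulator $(v^{-s_1}, y^{-s_2})$, Fubini in the region of absolute convergence, then commute the two constant-term operations — is genuinely different from the paper's, which never introduces a second regulator. The paper works directly with the product of truncated domains $\Fc_T \times \Fc_{T'}$, applies plain Fubini at each finite $(T, T')$, and then shows by explicit Fourier-mode analysis (integrate out $u$, apply Poisson summation in the $a$-variable for the $d=0$ terms) that the iterated limits $\lim_T \lim_{T'}$ and $\lim_{T'}\lim_T$ agree, which is essentially your ``hands-on alternative'' sketched in the last sentence.

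Your main argument has a gap at the very first step. The assertion that ``the factor $y^{-s_2}$ overcomes the exponential growth of $f$ at the $z$-cusp, and with the $v$-decay of the kernel the double integral converges absolutely'' is not correct: $f \in M^!_{2-2k}$ grows like $e^{2\pi m_0 y}$ near the cusp, the kernel $\Theta_{M_\bfrak}(\tau, z; p^\dagger_{k-1})$ has only polynomial growth in $y$ at fixed $\tau$ (and its $v$-decay rate degrades as $y\to\infty$), and a monomial $y^{-s_2}$ cannot dominate exponential growth for any finite $s_2$. So there is no region of absolute convergence in $(s_1, s_2)$ for the unregulated $\int_{\Gamma\backslash\Hb}\int_{\Gamma\backslash\Hb}$, and the Fubini step is not licensed. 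Harvey--Moore regularization works by truncating first, taking $T \to \infty$ with $\Re(s)$ large for the constant Fourier mode, then extracting the constant term in $s$; it does not turn the full iterated integral into an absolutely convergent one. Once this is fixed — by truncating both domains — you are forced into exactly the lemma's actual content, namely that the two iterated $T$, $T'$ limits commute, which your proposal defers to a heuristic.

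That heuristic does identify the right obstruction and the right rescue: the only mode that could obstruct the interchange is $d=0$, $m\le 0$, and what kills it is the cuspidality of $\theta^c_{\Delta B^2}(\tau;1)$ inside $\thetab_{L_2}$ — its $r_3 = 0$ coefficient vanishes identically, forcing $\Delta B^2 r_3^2 > 0$ in the exponent. Combined with the constraint $\Delta r_1^2 + B^2 r_2^2 = c^2 + \Delta B^2 r_3^2$ and the Poisson-summed $a$-sum, this yields uniform exponential decay in both $v$ and $y$. This is precisely the paper's computation, and it is the quantitative input that would be needed to show your two-variable constant-term extraction has no mixed singularity at $(0,0)$; without supplying it, the commutation is not proved. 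To repair the proposal: drop the claim of absolute convergence, keep only the truncated-domain Fubini, and then carry out the $u$-integral, the $d\neq 0$ / $m>0$ absolute-convergence estimate, the Poisson summation at $d=0$, and the $r_3 \neq 0$ exponent bound explicitly.
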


\begin{proof}
Using the definition of $\Theta_{M_\bfrak}$, we can rewrite \eqref{eq:cfb} as
\begin{align*}
c(f, \bfrak) &= \lim_{T \to \infty} \lim_{T' \to \infty} \int_{\Fc_T} \int_{\Fc_{T'}} 
\psi\lp
\sum_{b = 0}^{k-1} c_{k-1,b}  f(z)\Theta_{L_1}(\tau, z; p^*_{k-1, b}) \otimes {\thetab_{L_2}}(\tau; b, B) \rp y^{1-k} \frac{dudv}{v^2} \frac{dxdy}{y^2}.
\end{align*}
The region $\Fc_T \times \Fc_{T'}$ is compact and there is no problem with changing the order of integration. 
The sum over $b$ is also a finite sum.
Therefore, we need to argue that the limit in $T$ can be moved inside. To achieve this, it suffices to consider each component and prove this with $\Fc_T \times \Fc_{T'}$ replaced by $\mathcal{R}_T \times \mathcal{R}_{T'} = [-1/2, 1/2]^2 \times [1, T] \times [1, T']$, where $\mathcal{R}_T := \Fc_T \backslash \Fc_1$. 
We can first integrate in $u$ to obtain
\begin{align*}
\int^{1/2}_{-1/2} \Theta_{L_1, h_1}&(\tau, z; p^*_{k-1, b} )
{\thetab}_{L_2, h_2}(\tau; b, B)  du = 
i^{2k-b-1} v^{k+3/2} \sum_{d \in \Zb} \mathrm{S}_d(v, x, y; b),
\end{align*}
for $h_1 \in L^\vee_1/L_1$ and $h_2 \in L^\vee_2/L_2$, where
\begin{align*}
\mathrm{S}_d(v, x, y; b) :=
\sum_{\begin{subarray}{c} \Lambda = \smat{a}{c}{c}{d} \in L_1 + h_1\\ r = (r_1, r_2, r_3) \in L_2 + h_2\\\det(\Lambda) + \Delta r_1^2  + B^2 r_2^2 = \Delta B^2 r_3^2 \end{subarray}}
&\Hc_v(x_1^{k-b})\mid_{x_1 = \Lambda_z} (\overline{\Lambda_{z^\perp}})^{k-1} c(r, h_2, v; b)\\
& e^{-{\pi v} \lp  \Lambda_z^2 + |\Lambda_{z^\perp}|^2 + 2\Delta r_1^2 + 2 B^2 r_2^2 + 2 \Delta B^2 r_3^2 \rp}
\end{align*}
and $c(r, h_2, v; b)$ is the $r$\tth Fourier coefficient of $\thetab_{L_2, h_2}(\tau; b, B)$, i.e.
$$
\thetab_{L_2, h_2}(\tau; b, B) = \sum_{ r = (r_1, r_2, r_3) \in L_2 + h_2} c(r, h_2, v; b) \ebf \lp (\Delta r_1^2 + B^2 r_2^2)\tau  - \Delta B^2 r_3^2 \overline{\tau}\rp.
$$
From the definition, it is clear that $|c(r, h_2, v; b)| < p(r) $ for $v \in [1, \infty)$ with $p$ a polynomial in $r_1, r_2, r_3$. 
Furthermore, $c((r_1, r_2, 0), h_2, v; b) = 0$ for all $r_1, r_2 \in \Qb$ and $b \in \Nb$.
Now, it suffices to consider 
\begin{equation}
\label{eq:int1}
\mathrm{I}_d(m, T, T'; b) := \int_{1}^T 
\int_{1}^{T'}
\int^{1/2}_{-1/2} \ebf(mz) \mathrm{S}_d(v, x, y; b) v^{k-1/2} y^{-k-1} dx dv dy,
\end{equation}
since we want to bound $\lim_{T \to \infty} \lim_{T' \to \infty} \sum_{m \ge -m_0} \sum_{d \in \Zb} c_f(m) \mathrm{I}_d(m, T, T'; b)$.
For $\Lambda = \smat{a}{c}{c}{d} \in L_1 \otimes \Rb$, it is easy to check that
$$
\Lambda_z^2 + \left|\Lambda_{z^\perp}\right|^2 = d^2y^2 + 2(dx - c)^2 + \frac{(dx^2 - 2cx + a)^2}{y^2} \ge d^2 y^2.
$$
When $d \neq 0$, the integrand in \eqref{eq:int1} is bounded for any $m \in \Qb$ and $x, v, y$. Furthermore, it decays exponentially as $d, v, y$ approaches infinity. 
The same holds for $d = 0$ and $m > 0$.
The sum over $d \in \Zb$ and $m > 0$ weighted by $c_f(m)$ then converges uniformly and absolutely. 
Therefore, we have
\begin{align*}
  \lim_{T \to \infty} \lim_{T' \to \infty} \sum_{m > 0 \text{ or } d \neq 0} c_f(m) \mathrm{I}_d(m, T, T'; b) &=
\sum_{m > 0 \text{ or } d \neq 0} \lim_{T \to \infty} \lim_{T' \to \infty}  c_f(m) \mathrm{I}_d(m, T, T'; b) =\\
\sum_{m > 0 \text{ or } d \neq 0} \lim_{T' \to \infty} \lim_{T \to \infty}  c_f(m) \mathrm{I}_d(m, T, T'; b) &=
\lim_{T' \to \infty} \lim_{T \to \infty} \sum_{m > 0 \text{ or } d \neq 0} c_f(m) \mathrm{I}_d(m, T, T'; b).
\end{align*}
So it suffices to consider $\mathrm{I}_d(m, T, T'; b)$ with $d = 0$ and $m \le 0$. In this case, $\mathrm{S}_d$ simplifies to
\begin{align*}
\mathrm{S}_0(v, x, y; b) =
\sum_{\begin{subarray}{c} c\in \Zb + h_1\\ r = (r_1, r_2, r_3) \in L_2 + h_2\\ \Delta r_1^2  + B^2 r_2^2 = c^2 +  \Delta B^2 r_3^2 \end{subarray}}
& c(r, h_2, v; b) 
e^{-2 \pi v \lp \Delta r_1^2 + B^2 r_2^2 +  \Delta B^2 r_3^2 \rp} \times
\\
&\sum_{a \in \Zb}
\Hc_v(x_1^{k-b}) \mid_{x_1 = \tfrac{a-2cx}{\sqrt{2}y}} \lp \frac{a - 2c\overline{z}}{\sqrt{2} y}\rp^{k-1}
 e^{- \pi v \lp \frac{( a - 2cx)^2}{y^2} + 2c^2 \rp}.
\end{align*}
By Poisson summation, the inner sum becomes
$$
\sum_{a \in \Zb}
\Hc_v(x_1^{k-b}) \mid_{x_1 = \tfrac{a-2cx}{\sqrt{2}y}} \lp \frac{a - 2c\overline{z}}{\sqrt{2} y}\rp^{k-1}
 e^{- \pi v \lp \frac{( a - 2cx)^2}{y^2} \rp}
=
\sum_{n \in \Zb}
\ebf(2ncx) \frac{y\mathrm{p}_{k, b}(ny, c, v)}{v^{2k-b-1/2}} e^{-\frac{\pi y^2 n^2}{v }}
$$
for a polynomial $\mathrm{p}_{k, b}$, which can be evaluated using Corollary 3.3 of \cite{Borcherds98}.
When $m = 0$, we have
\begin{align*}
\mathrm{I}_0(0, T, T'; b) = \int_{1}^T 
\int_{1}^{T'}
\sum_{\begin{subarray}{c} c\in \Zb + h_1\\ r = (r_1, r_2, r_3) \in L_2 + h_2\\ \Delta r_1^2  + B^2 r_2^2 = c^2 +  \Delta B^2 r_3^2 \end{subarray}}
&
 {\mathrm{p}_{k, b}(0, c, v)} 
 c(r, h_2, v; b) 
e^{-2 \pi v \lp \Delta r_1^2 + B^2 r_2^2 +  \Delta B^2 r_3^2 + c^2 \rp} 
 \frac{dv dy}{v^{k - b} y^k}.  
\end{align*}
The sum converges absolutely and uniformly in $v, y$ since $\mathrm{p}_k(0, c, v)c(r, h_2, v; b)$ is bounded by a polynomial in $c, v, r_1, r_2, r_3$ and $c((0, 0, 0), h_2, v; b) = 0$.
Since $k \ge 2$, the limit in $T$ also exists and we can interchange the limits in $T$ and $T'$.

When $m < 0$, we have
\begin{align*}
\mathrm{I}_0(m, T, T'; b) = \int_{1}^T 
\int_{1}^{T'}
\sum_{\begin{subarray}{c} c\in \Zb + h_1, \; 2c \mid m\\ r = (r_1, r_2, r_3) \in L_2 + h_2\\ \Delta r_1^2  + B^2 r_2^2 = c^2 +  \Delta B^2 r_3^2 \end{subarray}}
& c(r, h_2, v; b) 
e^{-2 \pi v \lp \Delta r_1^2 + B^2 r_2^2 +  \Delta B^2 r_3^2 \rp} \times
\\
\sum_{n \in \Zb, 2nc = m} & {\mathrm{p}_{k, b}(ny, c, v)} e^{- \frac{\pi y^2 n^2}{v} - 2\pi m y - 2\pi c^2 v} 
 \frac{dv}{v^{k-b}} \frac{dy}{y^k}.  
\end{align*}
We can suppose that $r_3 \neq 0$ in the sum above since $c((r_1, r_2, 0), h_2, v; b)$ vanishes identically.
Using the equalities $\Delta r_1^2 + B^2 r_2^2 = c^2 + \Delta B^2 r_3^2$ and $m = 2nc$, we can bound the exponent as
\begin{align*}
  - \frac{\pi y^2 n^2}{v} - 2\pi m y - 2\pi c^2 v 
-2 \pi v \lp \Delta r_1^2 + B^2 r_2^2 +  \Delta B^2 r_3^2 \rp 
&=
- \frac{\pi }{v} (yn + 2cv)^2 - 4\pi \Delta B^2 r_3^2 v \\
< - \alpha \frac{y^2}{v} - \beta v
& < - \epsilon_1 v - \epsilon_2 y
\end{align*}
for all $v, y \ge 1$ with $\alpha, \beta, \epsilon_1, \epsilon_2 > 0$ absolute constants depending only on $B, \Delta$ and $m_0$.
Therefore the integrand decays exponentially in $v, y$, and
$$
\lim_{T\to \infty}\lim_{T'\to \infty} \sum_{-m_0 \le m \le 0} c_f(m) \mathrm{I}_0(m, T, T'; b) = 
\lim_{T' \to \infty}\lim_{T \to \infty} \sum_{-m_0 \le m \le 0} c_f(m) \mathrm{I}_0(m, T, T'; b) .
$$
This then finishes the proof.
\end{proof}

We could further simplify the formula for $c(f, \bfrak)$ in \eqref{eq:cfb1} and express it using the Rankin-Cohen bracket in \eqref{eq:RC}.

\begin{lemma}
  \label{lemma:cfb2}
In the notations above, we have
\begin{equation}
  \label{eq:cfb2}
c(f, \bfrak) = - 2^{k/2 - 1} \lim_{T' \to \infty} \int_{\Fc_{T'}} \psi \lp 
\mathrm{RC}(\tau; f, \Delta) \otimes \theta_{B^2}(\tau; 0) \otimes \theta^c_{\Delta B^2}(\tau; 1)
\rp d \mu(\tau),
\end{equation}
where $\mathrm{RC}(\tau; f, \Delta) := [I(\tau, f; k-1), \theta_\Delta(\tau;1)]_{k/2 - 1}$ is a modular form of weight 1 with $[\cdot, \cdot]_r$ the Rankin-Cohen bracket defined in \eqref{eq:RC}.
\end{lemma}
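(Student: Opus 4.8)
The plan is to prove \eqref{eq:cfb2} by reducing it to a pointwise identity between the integrands in \eqref{eq:cfb1}. Substituting the definition \eqref{eq:thetab} of $\thetab_{L_2}(\tau; b, B)$ into \eqref{eq:cfb1}, factoring the common tensor component $\theta_{B^2}(\tau; 0) \otimes \theta^c_{\Delta B^2}(\tau; 1)$ out of the finite sum over $b$, and recalling from the proof of Theorem~\ref{thm:hpEc} that $c_{k-1, b} = 0$ unless $b$ is odd with $0 \le b \le k-1$, one sees that it suffices to prove
\begin{equation*}
\sum_{b = 0}^{k-1} c_{k-1, b} (-1)^b\, I(\tau, f; b) \otimes \theta_\Delta(\tau; b)
= -2^{k/2 - 1}\, [\,I(\tau, f; k-1), \theta_\Delta(\tau; 1)\,]_{k/2 - 1}
\end{equation*}
as an equality of $\Cb[L_1^\vee/L_1 \oplus A_\Delta]$-valued, weight-one real-analytic forms in $\tau$; applying $\psi$, tensoring with $\theta_{B^2}(\tau; 0) \otimes \theta^c_{\Delta B^2}(\tau; 1)$, and integrating over $\Fc_{T'}$ then yields \eqref{eq:cfb2}.

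To prove this identity I would collapse both sides onto the family $I(\tau, f; 2s+1) \otimes \theta_\Delta(\tau; 2s+1)$, $0 \le s \le k/2 - 1$, using the raising operator. Iterating \eqref{eq:Runary} gives $R^s_{3/2} \theta_\Delta(\tau; 1) = (-2\pi)^s \theta_\Delta(\tau; 2s+1)$. Since $R_{\tau, \cdot}$ acts only in $\tau$, it passes through the regularized $z$-integral defining $I(\tau, f; b)$ in \eqref{eq:If} --- exactly the interchange used in the proofs of Prop.~\ref{prop:lift2} and Prop.~\ref{prop:preimage} --- so Lemma~\ref{lemma:raise12} gives $R^j_\tau I(\tau, f; k-1) = (2\pi)^j\, I(\tau, f; k - 1 - 2j)$ for $0 \le j \le k/2 - 1$ (raising lowers the polynomial index, moving $I(\tau, f; k-1)$ successively through $I(\tau, f; k-3), \dots$ down to $I(\tau, f; 1)$). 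Inserting these into the explicit expansion \eqref{eq:RC} of $[\,I(\tau, f; k-1), \theta_\Delta(\tau; 1)\,]_{k/2-1}$ (weights $3/2 - k$ and $3/2$, index $k/2 - 1$) and comparing the coefficient of $I(\tau, f; 2s+1) \otimes \theta_\Delta(\tau; 2s+1)$ on the two sides, all the powers of $2\pi$ and all but a global sign cancel, and the identity becomes the purely combinatorial statement
\begin{equation*}
c_{k-1, 2s+1} = (-1)^{k/2 - 1} \binom{-\tfrac{1}{2} - \tfrac{k}{2}}{s} \binom{\tfrac{k}{2} - \tfrac{1}{2}}{\tfrac{k}{2} - 1 - s}, \qquad 0 \le s \le \tfrac{k}{2} - 1 .
\end{equation*}

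This last equality I would verify directly from the explicit formula \eqref{eq:Pnexplicit} for the Legendre coefficients $c_{k-1, b}$; it is the classical fact (cf.\ \S5.2 of part~1 of \cite{BvdGZ123}) that the Rankin--Cohen bracket of index $r$ is built from the Jacobi polynomial of degree $r$ with parameters determined by the two weights, which for the present choice of weights is precisely the Legendre polynomial $P_{k-1}$. The normalization $-2^{k/2 - 1}$ is exactly what is needed for the bracket to have leading behaviour governed by $\sum_b c_{k-1, b} = P_{k-1}(1) = 1$; one can also check the whole identity by hand in the first cases $k = 2, 4$.

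The step I expect to cause the most trouble is the normalization bookkeeping in the middle paragraph: propagating the $(2\pi)$- and sign-factors correctly through the iterated raising operators and the Rankin--Cohen expansion of \eqref{eq:RC}, and applying the index shift of Lemma~\ref{lemma:raise12} in the right direction, so that the coefficient identity comes out with exactly the binomial coefficients displayed above rather than a permuted or rescaled variant. The remaining ingredients --- the verification of the combinatorial identity from \eqref{eq:Pnexplicit}, the commuting of $R_\tau$ with the regularized $z$-integral, and the (essentially trivial) passage back through $\psi$ and the outer $\tau$-integral --- are routine given what has already been assembled in this section.
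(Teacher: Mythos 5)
Your proposal follows essentially the same route as the paper's own proof: reduce to the pointwise identity
\begin{equation*}
\sum_{b = 0}^{k-1} c_{k-1, b}\,(-1)^b\, I(\tau, f; b) \otimes \theta_\Delta(\tau; b)
= -2^{k/2 - 1}\, [\,I(\tau, f; k-1), \theta_\Delta(\tau; 1)\,]_{k/2 - 1},
\end{equation*}
rewrite the left side via iterated raising operators using \eqref{eq:Runary} and Lemma \ref{lemma:raise12} (with the substitution $s = (b-1)/2$, $r = k/2-1$), and reduce to precisely the binomial identity
$c_{k-1,2s+1} = (-1)^r \binom{3/2-k+r-1}{s}\binom{3/2+r-1}{r-s}$,
which is the identity you wrote after unwinding $r = k/2 - 1$. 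The bookkeeping you flag as the delicate step (the $(-1)^{2s+1}$ from \eqref{eq:thetab}, the $(-2\pi)^s$ from $\theta_\Delta$, the $(2\pi)^{r-s}$ from $I$, and the $(-4\pi)^{-r}$ in \eqref{eq:RC}) does indeed collapse cleanly to give the global $-(2\pi)^{-r}$ and the displayed binomial factors. Both you and the paper leave the final verification of the binomial identity from \eqref{eq:Pnexplicit} to the reader.

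One small remark: you correctly use the raising operator to \emph{decrease} the polynomial index, sending $I(\tau, f; k-1)$ down to $I(\tau, f; 1)$, and this is what the weight count $1/2 - b$ for $\Theta_{L_1}(\tau, z; p^*_{k-1,b})$ forces (raising by $2$ means $b \mapsto b-2$). The literal statement of Lemma \ref{lemma:raise12}, however, has $p^*_{k-1,b+2}$ on the right, which would increase $b$; this appears to be a sign/index slip in the lemma's statement, since the paper's own proof of Lemma \ref{lemma:cfb2} uses the same direction you do. So your reading is the one consistent with the argument's internal logic, and it agrees with what the paper does in practice.
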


\begin{proof}
  To deduce \eqref{eq:cfb2} from \eqref{eq:cfb1}, it suffices to prove that 
$$
\sum_{b = 0}^{k-1} c_{k-1, b} I(\tau, f; b) \otimes (-1)^b \theta_\Delta(\tau; b) = 
- 2^{k/2 - 1} [I(\tau, f; k-1), \theta_\Delta(\tau; 1)]_{k/2-1}.
$$
Since $k$ is even, $c_{k-1, b}$ vanishes whenever $b$ is even.
By \eqref{eq:Runary} and Lemma \ref{lemma:raise12}, we can rewrite the left hand side as
$$
-(2\pi)^{-r} \sum_{s = 0}^{r} (-1)^{s} c_{k-1, 2s + 1} R_{3/2-k}^{r - s}I(\tau, f; k-1) \otimes R^{s}_{3/2} \theta_\Delta(\tau; 1)
$$
after substituting $s := (b-1)/2$ and $r:= k/2 - 1$.
From definition, one could verify that 
$$
c_{k-1, 2s + 1} = (-1)^r \binom{3/2 - k  + r -1}{s} \binom{3/2 + r - 1}{k/2-1-s}
$$
for all $0 \le s \le r = k/2-1$. Using the definition \eqref{eq:RC}, we see that both sides agree.
\end{proof}

The last lemma before the proof of Prop.\ \ref{prop:rational} concerns with the rationality of the lift $I(\tau, f)$.
\begin{lemma}
  \label{lemma:rationality}
Suppose $f \in M^!_{2-2k}$ has rational Fourier coefficients. Then the function $I(\tau, f; k-1)$ defined in \eqref{eq:If} is a weakly holomorphic modular form in $M^!_{3/2 - k, \rho_{L_1}}$ with rational Fourier coefficients.
\end{lemma}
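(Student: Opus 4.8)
The plan is to identify $I(\tau, f; k-1)$ with (a nonzero constant multiple of) the regularized Millson theta lift of $f$ attached to the signature $(1,2)$ lattice $L_1$, and then to obtain weak holomorphy and rationality from the structural properties of this lift together with a ``principal part'' argument. Modularity, $I(\tau, f; k-1) \in \Ac_{3/2-k, \rho_{L_1}}$, is already recorded after \eqref{eq:If}, and since the polynomial $p^*_{k-1, k-1}(\underline{x}) = (ix_1)(ix_2+x_3)^{k-1}$ is harmonic on $\Rb^{1,2}$, the kernel $\Theta_{L_1}(\tau, z; p^*_{k-1, k-1})$ is a harmonic theta function in $\tau$; consequently $I(\tau, f; k-1)$ is a harmonic Maass form of weight $3/2-k$, the boundary terms produced by the Borcherds--Harvey--Moore regularization vanishing because the Millson kernel decays at the cusp in $z$. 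To see that $I(\tau, f; k-1)$ is in fact \emph{weakly holomorphic}, i.e.\ $L_\tau I(\tau, f; k-1) = 0$, one invokes the computation of the shadow of the Millson lift in \cite{AS18} (cf.\ also \cite{BF04}): the image of the Millson lift of $g$ under $L_\tau$ is, up to a nonzero constant, a Shintani-type lift of $L_\tau g$. Since $f$ is holomorphic we have $L_\tau f = 0$, hence $L_\tau I(\tau, f; k-1) = 0$ and $I(\tau, f; k-1) \in M^!_{3/2-k, \rho_{L_1}}$.

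For rationality, the key point is that $k \ge 2$, so the weight $3/2 - k \le -1/2$ is negative and therefore $M_{3/2-k, \rho_{L_1}} = 0$; hence a form in $M^!_{3/2-k, \rho_{L_1}}$ is uniquely determined by its principal part (the finitely many Fourier coefficients of non-positive index). A direct unfolding of the regularized integral in \eqref{eq:If} as $v \to \infty$ --- exactly as in the proof of Theorem 7.1 of \cite{Borcherds98}, or as read off from the Fourier expansion of the Millson lift in \cite{AS18} --- shows that the principal part coefficients of $I(\tau, f; k-1)$ are \emph{rational} linear combinations of the principal part coefficients $c_f(-m)$ of $f$, the scalars involved being built from representation numbers of $L_1$ and from values of $p^*_{k-1, k-1}$ at lattice points, all of which are rational. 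Finally, $M^!_{3/2-k, \rho_{L_1}}$ carries a natural $\Qb$-structure: a weakly holomorphic form of this weight with rational principal part has all of its Fourier coefficients in $\Qb$. This is the rationality theorem of Shimura \cite{Sh75} (see also \cite{MP10, BO13}); alternatively one deduces it from the fact that $M^!_{3/2-k, \rho_{L_1}}$ is finitely generated over $M^!_0(\SL_2(\Zb)) = \Cb[j]$ by forms with rational Fourier expansions, using the pairing with the cusp forms in $S_{k+1/2, \overline{\rho_{L_1}}}$, which span a rational subspace. Applying this to $I(\tau, f; k-1)$, whose principal part is rational by the previous step, gives the claim.

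The main obstacle is the rationality of the principal part of $I(\tau, f; k-1)$: one must check, either by carrying out the unfolding explicitly or by matching the normalization of $I(\tau, f; b)$ in \eqref{eq:If} with that of the Millson lift in \cite{AS18}, that the principal part coefficients are genuine \emph{rational} multiples of the $c_f(-m)$ and that the regularization introduces no irrational constants. Once this is in place, the negativity of the weight and the $\Qb$-structure on $M^!_{3/2-k, \rho_{L_1}}$ complete the argument with no further input; in particular the coefficients of positive index are then automatically rational, even though the Millson lift formula does not present them --- being ``traces'' over CM points or geodesic cycles --- in a manifestly rational way.
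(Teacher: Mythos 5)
The paper proves rationality by a route that is essentially orthogonal to yours. It takes the explicit Fourier expansion of the Millson lift from \cite[Theorem 5.1]{AS18}: the coefficient of index $m > 0$ in the $h_1$-component is
$$
c(m, h_1) = \frac{(4\pi)^{1-k}}{2 m^{k/2}} \sum_{X \in \Gamma \backslash (L_1 + h_1),\ \det X = m} \frac{1}{|\overline{\Gamma}_X|} \lp R^{k-1}_{2-2k} f\rp(z_X),
$$
a CM trace of the nearly holomorphic form $(4\pi)^{1-k}R^{k-1}_{2-2k}f$. Rationality of each $c(m, h_1)$ then follows because Shimura's theorem \cite{Sh75} puts those CM values in a ring class field, and the set of values summed is a union of full Galois orbits \cite{BO13, MP10}, so the sum is rational. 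That is, the paper verifies rationality of the \emph{positive-index} coefficients directly, coefficient by coefficient. Your argument instead reduces to the \emph{principal part} via negativity of the weight $3/2-k$, then invokes a $\Qb$-structure on $M^!_{3/2-k,\rho_{L_1}}$. Both routes are viable, and what you buy with yours is that you never have to touch CM values or Galois orbits; what the paper buys is that the hardest step is outsourced to an off-the-shelf statement. Two things in your write-up need repair, however. First, the citation is off: the statement ``a weakly holomorphic form of negative weight with rational principal part has all rational Fourier coefficients'' is \emph{not} the rationality theorem of Shimura \cite{Sh75}, which is about CM values of nearly holomorphic forms; the result you actually need is the existence of a $\Qb$-structure on spaces of vector-valued weakly holomorphic forms (McGraw's theorem or the $\Cb[j]$-module argument you sketch as the ``alternative''), and that should be the primary citation. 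Second, the step you yourself flag as ``the main obstacle'' --- that the regularized integral contributes a principal part to $I(\tau, f; k-1)$ whose coefficients are rational combinations of the $c_f(-m)$ --- is genuinely left unverified; until the unfolding is carried out and matched against the normalization in \eqref{eq:If}, the argument is incomplete, whereas the paper's use of \cite[Theorem 5.1]{AS18} gives the needed coefficients in closed form with no further bookkeeping.
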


\begin{proof}
The Fourier expansion of $I(\tau, f; k-1)$ has been calculated in \cite{AS18} (see Theorem 5.1 loc.\ cit.). For $m \in \Qb_{> 0}$ and $h_1 \in L^\vee_1/L_1$, the Fourier coefficient of $I_{h_1}(\tau, f)$ is given by
$$
c(m, h_1) := \frac{1}{2{m}^{k/2}} (4\pi)^{1-k} \sum_{X \in \Gamma \backslash (L_{1} + h_1), \det(X) = m} \frac{1}{|\overline{\Gamma}_X|} (R^{k-1}_{2-2k} f)(z_X),
$$
where $\Gamma = \SL_2(\Zb)$ and $z_X \in \Hb$ is the CM point associated to the positive vector $X$.
Since $f$ is weakly holomorphic, the function $R^{k - 1}_{2-2k}f$ is a nearly holomorphic function. 
It is a well-known result of Shimura that the values $(4\pi)^{1-k} R^{k-1}_{2-2k}f$ at CM points are in the ring class field of an imaginary quadratic field \cite{Sh75}.
In fact, the set $$
\{(4\pi)^{1-k} (R^{k-1}_{2-2k} f)(z_X): X \in \Gamma \backslash L_1 + h_1, \det(X) = m\}$$
 is the union of Galois orbits (see \cite[Theorem 1.1]{BO13} and \cite[Prop.\ 3.1]{MP10}).
Therefore, $c(m, h_1) \in \Qb$ for all $m \in \Qb_{>0}$ and $h_1 \in L^\vee_1/L_1$, which implies that $I(\tau, f; k-1)$ has all rational Fourier coefficients. 
\end{proof}

\begin{proof}[Proof of Prop.\ \ref{prop:rational}]
  By Theorem 4.7 of \cite{BS17}, we know that there exists a harmonic Maass form $\widetilde{\theta}_{\Delta B^2}(\tau) \in \Ac_{1/2, \rho_{\Delta B^2}}$ such that $L_2 \widetilde{\theta}_{\Delta B^2}(\tau) = \theta^c_{\Delta B^2}({\tau}, 1)/\sqrt{2}$ and the holomorphic part $\widetilde{\theta}_{\Delta B^2}^+$ has rational Fourier coefficients. Substituting this into \eqref{eq:cfb2} and applying \eqref{eq:RLswitch} yields
  \begin{align*}
\sqrt{\Delta}\cdot      c(f, \bfrak) &=  \lim_{T' \to \infty} \int_{\Fc_{T'}}
  \psi( 
g(\tau)
\otimes
L_2 \widetilde{\theta}_{\Delta B^2}(\tau))
  d\mu(\tau) \\
&= \mathrm{Const} ( \psi(g(\tau) \otimes \widetilde{\theta}^+_{\Delta B^2} (\tau))),
  \end{align*}
where $g(\tau) :=  \sqrt{2\Delta} \cdot [I(\tau, f; k-1), \theta_{\Delta}(\tau; 1)]_{k/2-1} \otimes \theta_{B^2}(\tau, 0) \in M^!_{3/2, \rho_{L_1} \otimes \rho_\Delta \otimes \rho_{B^2}}$. Since $f$ has rational Fourier coefficients, so does $g(\tau)$ by Example \ref{ex:P} and Lemma \ref{lemma:rationality}. Therefore, $\sqrt{\Delta} \cdot c(f, \bfrak) \in \Qb$ and the denominator only depends on $f$ and $\bfrak$. This finishes the proof of Prop.\ \ref{prop:rational}.
\end{proof}

\appendix
  
\section{Calculations in Fock Model}
\label{sec:Fock}
In this section, we will give some differential equations satisfied by theta kernels. 
Even though these follows from straightforward calculations, the steps are long and tedious. Instead, we follow \cite{KM90} (see also \cite[Appendix]{FM06}) and switch to the Fock model of the Weil representation, where the actions of differential operators can be described elegantly using elements in the Lie algebra $\slf_2(\Cb)$.

\subsection{Fock Model}
For our purpose, we restrict to the case $(V, Q) = (M_2(\Rb), \det)$. 
We identify it with $\Rb^{2, 2}$ with respect to the orthogonal basis
\begin{equation}
  \label{eq:vs}
  \begin{split}
      v_1 &:= \Re Z(i) = \frac{1}{\sqrt{2}} \pmat{1}{0}{0}{1}, \; 
  v_2 := \Im Z(i) = \frac{1}{\sqrt{2}} \pmat{0}{-1}{1}{0}, \\
  v_3 &:= \Re Z^\perp(i) = \frac{1}{\sqrt{2}} \pmat{-1}{0}{0}{1}, \; 
  v_4 := \Im Z^\perp(i) = \frac{1}{\sqrt{2}} \pmat{0}{1}{1}{0},
  \end{split}
\end{equation}
where $Z(z)$ and $Z^\perp(z)$ are defined as in \eqref{eq:Z}.
We define $V_+ := \Rb v_1 \oplus \Rb v_2, V_- := \Rb v_3 \oplus \Rb v_4$ and can identify $\bigwedge^2 V$ with the Lie algebra $\mathfrak{o}(V)$ via the isomorphism $\rho: \bigwedge^2 V \to \mathfrak{o}(V)$ given by
$$
\rho(v' \wedge v'')(v') := (v, v'') v' - (v, v') v'',
$$
where $(,)$ is the associate bilinear form.
For $1 \le i < j \le 4$, let $X_{ij}$ denote the image of $v_i \wedge v_j$ under $\rho$.

As in section \ref{subsec:setup22}, where we considered the diagonally embedded symmetric space of $\SL_2$ in that of $\SO(V)$, we can embed the group $\SL_2$ into $\SO(V)$ in the following compatible way
\begin{equation}
  \label{eq:hgamma}
  \begin{split}
    \iota_\Delta:   \SL_2(\Rb) &\to \SO(V) \\
\gamma & \mapsto h(\gamma): A \mapsto \gamma \cdot A\cdot  \transpose{\gamma}.
  \end{split}
\end{equation}
The pushforward of $\iota_\Delta$ identifies the Lie algebra $\slf_2(\Rb)$ as a subalgebra of $\mathfrak{o}(V)$.
On the generating elements $E := \smat{0}{1}{0}{0}, F:= \smat{0}{0}{1}{0}$ and $H := \smat{1}{0}{0}{-1}$, it is easily checked
$$
\iota_{\Delta *}(E) = X_{14} + X_{34}, \;
\iota_{\Delta *}(F) = X_{14} - X_{34}, \;
\iota_{\Delta *}(H) = -2 X_{13}.
$$
The elements $R:= \half(H + iE + iF), L:= \half(H - i E - iF)$ in $\slf_2(\Cb)$ are then sent to $-X_{13} + i X_{14}$ and $-X_{13} - iX_{14}$ respectively.

On the symplectic side, we take $W = \Rb e + \Rb f$ to be the real vector space with the skew-symmetric pairing $\langle, \rangle$ satisfying $\langle e, f\rangle = 1$, and the positive definite complex structure $J := \smat{0}{-1}{1}{0}$ with respect to the basis $\{e, f\}$.
Then $W\otimes \Cb = W' + W''$ with $W' := \Cb w',\, W'' := \Cb w''$ and
$$
w' := e - if, w'' := e + if
$$
eigenvectors of $J$ with eigenvalues $i$ and $-i$ respectively.
For $a, b \in W$, we denote $a \circ b$ the element $a \otimes b + b \otimes a$ in the symmetric algebra $\mathrm{Sym}^2(W)$, and identify $\mathrm{Sym}^2(W)$ with $\spf(W)$ via the $\Rb$-linear map $\varrho: \mathrm{Sym}^2(W) \to \spf(W)$ given by
\begin{align*}
\varrho(a \circ b)( c) := \langle a, c \rangle b + \langle b, c \rangle a.
\end{align*}
Under the isomorphism $\SL_2(\Rb) \cong \Sp(W)$ induced by left multiplication, the Lie algebras $\slf_2(\Cb)$ and $\spf(W\otimes \Cb)$ are isomorphic and $L, R$ are identified with $-\frac{i}{4} w' \circ w', \frac{i}{4} w'' \circ w''$ respectively.

Let $\Wb := V \otimes W$ be the symplectic space with the skew-symmetric form $(,)\otimes \langle, \rangle$ and $\omega$ the oscillator representation of $\Sp(\Wb)$, which contains $\Sp(W) \times \mathrm{O}(V)$.
Different polarizations of $\Wb$ gives rise to different models $\omega$. We recover the Schr\"{o}dinger model by taking the polarization $\Wb = V \otimes\Rb e + V \otimes \Rb f$, where $\omega$ acts on $\Ss(V)$ by
$$
(\omega(h)(\varphi))(x) := \varphi(h^{-1} x), \;
(\omega(n(b)) (\varphi))(x) := \ebf(b Q(x)) \varphi( x), \;
(\omega(m(a)) (\varphi))(x) := |a|^{2} \varphi( xa)
$$
for $n(b) := \smat{1}{b}{0}{1}, m(a) := \smat{a}{0}{0}{a^{-1}} \in \SL_2(\Rb)$, $h \in \mathrm{O}(V)$, $\varphi \in \Ss(V)$ and $x \in V$.

The Lie algebra $\spf(\Wb \otimes \Cb)$ now also acts on vectors in $\Ss(V)$ through the infinitesimal action $d \omega$ induced by $\omega$, which satisfies
$$
\domega(A) \omega(g) \varphi := \partial_t \omega(g e^{tA}) \varphi \mid_{t = 0} = \omega(g) \domega(A) \varphi.
$$
for any $g \in \Sp(\Wb)$ and $A \in \spf(\Wb)$.
For $1 \le r \le 4$, define the following operators on $\Ss(V)$
\begin{equation}
  \label{eq:Dj}
D_r := \partial_{x_r} - 2\pi x_r.  
\end{equation}
One can interpret $\spf(\Wb)$ as quotients of graded pieces of the Weyl algebra $\mathscr{W}_{2\pi i}$ of $\Wb$, and view $D_j$ as the actions by generators of $\mathscr{W}_{2\pi i}$ (see \cite[section 6]{KM90}). The subspace $\mathbb{S}(V) \subset \Ss(V)$ spanned by the Gaussian 
$$
\varphi_0(x_1, x_2, x_3, x_4) := e^{-\pi(x_1^2 + x_2^2 + x_3^2 + x_4^2)}
$$
and functions of the form $\prod_{1 \le j \le 4} D_j^{r_j} \varphi_0$ for $r_j \in \Nb$ is called the \textit{polynomial Fock space}.

It is easier to describe the infinitesimal action of $\spf(\Wb)$ if we choose the polarization $\Wb \otimes \Cb = \Wb' + \Wb''$ with 
$$
\Wb' := V_+ \otimes W' + V_- \otimes W'', \,
\Wb'' := V_- \otimes W' + V_+ \otimes W''
$$
and switch to the Fock model of the Weil representation.
The underlying vector space that $\Sp(\Wb)$ acts on now is $\mathrm{Sym}^\bullet(\Wb'')$ and we also use $\omega$ to represent this action.
We introduce the linear functionals $\{\zf_1, \zf_2, \zf_3, \zf_4\}$ on $\mathrm{Sym}^\bullet(\Wb'')$ given by
\begin{equation}
  \label{eq:zr}
\zf_r(v \otimes w) :=
  \begin{cases}
     \langle v \otimes w , v_r \otimes w'' \rangle, \, &r = 1, 2, \\
     \langle v \otimes w, v_r \otimes w' \rangle, \, &r = 3, 4,
  \end{cases}
\end{equation}
 which identifies $\mathrm{Sym}^\bullet(\Wb'')$ with $\mathscr{P}(\Cb^4)$, the algebra of polynomials in $\zf_1, \zf_2, \zf_3, \zf_4$. For convenience, we also denote
 \begin{equation}
   \label{eq:wf}
\wf := \zf_3 - i\zf_4 
 \end{equation}
in $\mathscr{P}(\Cb^4)$.
Furthermore, there is a unique intertwining operator $\iota$ from $\mathbb{S}(V)$ to $\mathscr{P}(\Cb^4)$ that sends $\varphi_0$ to 1. The following lemma describes $d \omega$ in the Fock model and the effect of $\iota$.

\begin{lemma}[Lemma A.1-A.3 \cite{FM06}]
\label{lemma:KM}
In the notations above, the following elements in $\slf_2(\Cb) \cong \spf(W) \subset \spf(\Wb)$ acts on $\mathscr{P}(\Cb^4)$ as
\begin{equation}
  \label{eq:RLtau}
  \domega(L) = - 2\pi (\partial_{\zf_1}^2 + \partial_{\zf_2}^2)  + \frac{1}{8\pi} \wf \overline{\wf}, \;
  \domega(R) = - 8\pi \partial_{\wf} \partial_{\overline{\wf}} + \frac{1}{8\pi} (\zf_1^2 + \zf_2^2).
\end{equation}
Recall that $\iota_\Delta: \SL_2(\Rb) \hookrightarrow \SO(V)$ is the map in \eqref{eq:hgamma}. Then 
\begin{equation}
  \label{eq:RLz}
  \begin{split}
      \domega(\iota_{\Delta *}(L)) &= \domega(-X_{13} - iX_{14}) = 8\pi \partial_{\zf_1} \partial_{{\wf}} - \frac{1}{4\pi} \zf_1 \overline{\wf}, \\
  \domega(\iota_{\Delta *}(R)) &= \domega(-X_{13} + iX_{14}) = 8\pi \partial_{\zf_1} \partial_{\overline{\wf}} - \frac{1}{4\pi} \zf_1 {\wf}.
  \end{split}
  \end{equation}
Under the intertwining map $\iota: \mathbb{S}(V) \to \mathscr{P}(\Cb^4)$, the operator $D_r$ acts as follows
\begin{equation}
  \label{eq:Djiota}
      \iota D_r \iota^{-1} =
  \begin{cases}
 i z_r, \, &r = 1, 2, \\
-i z_r, \, &r = 3, 4.
  \end{cases}
\end{equation}
\end{lemma}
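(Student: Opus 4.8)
This is a statement purely about the oscillator (Weil) representation $\omega$ of $\Sp(\Wb)$ on the space $\mathbb{S}(V)\cong\mathscr{P}(\Cb^4)$, $\Wb=V\otimes W$; in fact it is exactly \cite[Lemma A.1--A.3]{FM06}, so one legitimate option is to invoke that reference. For a self-contained argument, the organizing device is to realize $\spf(\Wb)=\mathrm{Sym}^2(\Wb)$ as the degree-two part of the Weyl algebra $\mathscr{W}_{2\pi i}(\Wb)$, so that every element of the Lie algebra is a Weyl-ordered quadratic in ``creation/annihilation'' generators, and then to read off its action in each model. In the Schr\"odinger model on $\Ss(V)$, the generator $v\otimes e$ acts (up to the universal constant of $\mathscr{W}_{2\pi i}$) by multiplication by the corresponding coordinate and $v\otimes f$ by the corresponding $\tfrac{1}{2\pi i}\partial$; in the Fock model on $\mathscr{P}(\Cb^4)=\mathrm{Sym}^\bullet(\Wb'')$ each element of $\Wb'$ acts by multiplication and each element of $\Wb''$ by a constant multiple of a derivative, the matching being pinned down by $\iota\varphi_0=1$.

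\textbf{The identity \eqref{eq:Djiota}.} First I would observe that $D_r=\partial_{x_r}-2\pi x_r$ is, up to a scalar, the image in $\mathscr{W}_{2\pi i}$ of the vector $v_r\otimes w'$ for a positive direction ($r=1,2$) and of $v_r\otimes w''$ for a negative direction ($r=3,4$), where $w'=e-if$, $w''=e+if$; this is essentially the statement that $\partial_{x_r}+2\pi x_r$ annihilates $\varphi_0$, so that $D_r$ is the corresponding raising generator. Under the polarization $\Wb'=V_+\otimes W'+V_-\otimes W''$, the vectors $v_1\otimes w',\,v_2\otimes w'$ and $v_3\otimes w'',\,v_4\otimes w''$ all lie in $\Wb'$, hence act by multiplication by $\zf_1,\zf_2,\zf_3,\zf_4$ respectively; the scalars $\pm i$ emerge from expressing $D_r$ in terms of these vectors together with the normalization of $\iota$. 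Since $\iota$ is the unique intertwiner with $\iota\varphi_0=1$, it is enough to check $\iota D_r\iota^{-1}$ on the vacuum and that it commutes correctly with the $\Sp(W)$-action, both immediate, giving \eqref{eq:Djiota}.

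\textbf{The identities \eqref{eq:RLtau} and \eqref{eq:RLz}.} Under $\varrho:\mathrm{Sym}^2(W)\to\spf(W)$ one has $L=-\tfrac i4\,w'\circ w'$ and $R=\tfrac i4\,w''\circ w''$, and under $\rho:\bigwedge^2 V\to\mathfrak{o}(V)$ one has the $X_{ij}$. I would embed both $\spf(W)$ and $\mathfrak{o}(V)$ into $\spf(\Wb)=\mathrm{Sym}^2(\Wb)$: the element $a\circ b\in\mathrm{Sym}^2(W)$ lifts to $\sum_i\varepsilon_i\,(v_i\otimes a)\circ(v_i\otimes b)$ with $\varepsilon_i=(v_i,v_i)$ equal to $+1$ for $i=1,2$ and $-1$ for $i=3,4$ (the sign being the passage to the dual basis of $V$), while $X_{ij}=\rho(v_i\wedge v_j)$ lifts to the antisymmetric combination $(v_i\otimes e)\circ(v_j\otimes f)-(v_j\otimes e)\circ(v_i\otimes f)$ up to the analogous $\varepsilon$-signs. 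Decomposing each $v_i\otimes w'$ and $v_i\otimes w''$ into its $\Wb'$- and $\Wb''$-components via $\Wb'=V_+\otimes W'+V_-\otimes W''$, replacing $\Wb'$-components by the multiplications $\zf_r$ and $\Wb''$-components by the derivatives $\partial_{\zf_r}$, and Weyl-symmetrizing the products, I would then substitute $\wf=\zf_3-i\zf_4$; the four-term expressions collapse to the stated two-term operators, yielding \eqref{eq:RLtau}, and the same computation applied to the images $-X_{13}\mp iX_{14}$ of $L,R$ under $\iota_{\Delta *}$ gives \eqref{eq:RLz}.

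\textbf{Main obstacle.} Everything above is elementary once the conventions are fixed; the real work is the bookkeeping of normalizations, where essentially all the care is required. One must keep straight the $e^{-\pi(\cdot)}$ Gaussian against the $\ebf(bQ(x))$ appearing in $\omega(n(b))$, the quantization constant $2\pi i$ built into $\mathscr{W}_{2\pi i}$, the factors $\mp\tfrac i4$ in $L$ and $R$, the dual-basis signs $\varepsilon_i$ on the negative part of $V$, and the fact that the compatible complex structure on $\Wb$ forces $W'$ on the $V_+$-part but $W''$ on the $V_-$-part of the polarization. Only one consistent choice produces the precise constants $-2\pi$, $\tfrac1{8\pi}$, $-8\pi$, $8\pi$, $-\tfrac1{4\pi}$, so I would verify them independently, e.g.\ by checking $\domega(R)$ against the known weight-raising operator $R_{\tau,k}$ acting on the theta kernels, or by evaluating both sides of each identity on $\varphi_0$ and on $D_1\varphi_0$; this is the same delicacy that forces the sign correction to \cite[Theorem 7.1]{Borcherds98} noted in the introduction.
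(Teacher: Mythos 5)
The paper does not actually prove this lemma: it is imported verbatim from \cite{FM06} (their Lemmas A.1--A.3), as the citation in the theorem heading already makes explicit, and the appendix text preceding the statement only sets up the notation ($\zf_r$, $\wf$, $\iota$, the polarization $\Wb\otimes\Cb=\Wb'+\Wb''$) needed to state it. There is therefore no in-paper argument to compare against. Your sketch is a correct outline of the Fock-model computation that Funke--Millson carry out in the cited reference: realize $\spf(\Wb)\cong\mathrm{Sym}^2(\Wb)$ as the degree-two piece of the Weyl algebra, embed $\spf(W)$ and $\mathfrak{o}(V)$ via the symmetric form on $V$ (hence the signs $\varepsilon_i=(v_i,v_i)$) and the symplectic form on $W$, split into $\Wb'$- and $\Wb''$-components, and translate to multiplication and differentiation on $\mathscr{P}(\Cb^4)$ using the uniqueness of the intertwiner normalized by $\iota\varphi_0=1$. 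So in effect you are re-deriving the cited result by the same method, which is the only reasonable method, and your structural reasoning is sound --- including the observation that the compatible polarization forces $W'$ on $V_+$ and $W''$ on $V_-$, and that $D_r$ is the raising generator because $\partial_{x_r}+2\pi x_r$ kills $\varphi_0$.

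Your self-diagnosis of where the real difficulty lies is also accurate, but it is worth being blunt that the proposal as written stops exactly at that difficulty. Everything up to the final constants is structural and essentially forced; the payload of the lemma is the specific scalars $-2\pi$, $\tfrac1{8\pi}$, $-8\pi$, $-\tfrac1{4\pi}$ and the $\pm i$ in \eqref{eq:Djiota}, and an error in any one of the conventions you list (the $e^{-\pi|\underline{x}|^2}$ Gaussian, the $\ebf(bQ(x))$ in $\omega(n(b))$, the constant $2\pi i$ in $\mathscr{W}_{2\pi i}$, the coefficients $\mp\tfrac i4$ in $L,R$, the $\varepsilon_i$) would shift them silently. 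The checks you propose --- evaluating both sides on $\varphi_0$ and $D_1\varphi_0$, and cross-checking $\domega(R)$ against the classical weight-raising operator $R_{\tau,k}$ on the theta kernel (compare the use of \eqref{eq:Runary} and Lemma~\ref{lemma:LRrelation}) --- are exactly the right way to pin them down, and would need to be actually carried out rather than invoked, since those constants are not determined by the abstract argument alone.
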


For $\tau = u + iv \in \Hb$, let $g_\tau \in \SL_2(\Rb)$ be any element to takes $i$ to $\tau$. In particular, any $g_\tau$ differs from $n(u)m(\sqrt{v})$ by multiplying an element in the maximal compact $K:= \SO_2(\Rb) \subset \SL_2(\Rb)$ on the right.
For $a, b, c \in \Nb$, we define a family of polynomials $p_{a, b, c}$ on $V = \Rb^{2, 2}$ by
\begin{equation}
  \label{eq:pabc}
  p_{a, b, c}(\underline{x}) := (ix_1)^a x_2^b (ix_3 + x_4)^c.
\end{equation}
It is homogeneous of degrees $a + b$ and $c$ in the first two and last two variables respectively. From this, we can construct a Schwartz function 
$$
\varphi_{a, b, c}(\underline{x}) := \varphi(\underline{x}; p_{a, b, c}) \in \mathbb{S}(V) \subset \Ss(V).
$$
By \eqref{eq:hermite} and Lemma \ref{lemma:KM}, we have
\begin{equation}
  \label{eq:iotaabn}
  \iota( \varphi_{a, b, c}) = (-4\pi)^{-a-b-c} i^{2a + b} \zf_1^a \zf_2^b {\wf}^c.
\end{equation}
Under the action of $\omega(r_\theta)$ with $r_\theta := \smat{\cos \theta}{\sin \theta}{-\sin \theta}{\cos \theta} \in K$, the function $\varphi_{a, b, c}$ is an eigenvector with eigenvalue $e^{i k \theta}$ and $k := a + b - c$.
For $z \in \Hb$, the isometry $\nu_z: V \to \Rb^{2, 2}$ in \eqref{eq:nu} is given by applying the map $h(g_z) \in \SO(V)$ in \eqref{eq:hgamma}.
Therefore, the Schwartz function $\varphi(\lambda; \tau, z; p_{a, b, c})$ in \eqref{eq:varphi2} can be written as
\begin{equation}
  \label{eq:varphiweil}
\varphi(\lambda; \tau, z; p_{a, b, c}) = v^{-k/2} (\omega(g_\tau) \omega(h(g_z)^{-1}) \varphi_{a, b, c})(\lambda).  
\end{equation}
For $\kappa \in \Zb$, recall that $R_{\tau, \kappa}$ and $L_{\tau, \kappa}$ are the raising and lowering operators. 
Using Lemma \ref{lemma:KM}, we can calculate their effects on $\varphi(\lambda; \tau, z; p_{a, b, c})$ as follows.
\begin{lemma}
  \label{lemma:LRrelation}
For any $a, b, c \in \Nb$, let $k := a + b - c$. Then we have
\begin{equation}
\label{eq:LRrelation}
\begin{split}
\frac{1}{2\pi}  R_{\tau, k} \varphi(\lambda; \tau, z; p_{a, b, c}) &=  \varphi(\lambda; \tau, z; p_{a+2, b, c}) - \varphi(\lambda; \tau, z; p_{a, b+2, c}), \\
R_{z, 2c} y^{-c} \varphi(\lambda; \tau, z; p_{a, b, c}) &= 4\pi y^{-c-1} \varphi(\lambda; \tau, z; p_{a + 1, b, c + 1}), \\
2 L_{\tau, k} y^{-c} \varphi(\lambda; \tau, z; p_{a+1, b, c}) &= 
\varphi(\lambda; \tau, z; (2c + 1 - a) a p_{a-1, b, c} + b(b-1)p_{a+1, b-2, c})\\
& \quad + L_{z, 2c+2} y^{-c-1} \varphi(\lambda; \tau, z; p_{a, b, c+1})
\end{split}
\end{equation}
for all $\lambda \in V$.
\end{lemma}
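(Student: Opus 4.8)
The plan is to prove all three identities inside the Fock model of the Weil representation, where each of the four operators involved becomes an explicit second-order differential operator on a polynomial ring and the statements reduce to polynomial identities. The bridge is formula \eqref{eq:varphiweil}: up to the normalizing factor $v^{-k/2}$, the Schwartz function $\varphi(\lambda;\tau,z;p_{a,b,c})$ is the value at $\lambda$ of $\omega(g_\tau)\omega(h(g_z)^{-1})\varphi_{a,b,c}$. Under this dictionary, applying $R_{\tau,k}$ or $L_{\tau,k}$ from \eqref{eq:diffops} amounts (up to the power of $v$ dictated by the weight and an explicit scalar) to acting on $\varphi_{a,b,c}$ by $\domega(R)$ or $\domega(L)$ for the $\slf_2(\Cb)$-triple inside $\spf(W)$, while applying $R_{z,2c}$ or $L_{z,2c+2}$ amounts (up to a power of $y$ and a scalar) to acting by $\domega(\iota_{\Delta *}(R))$ or $\domega(\iota_{\Delta *}(L))$ for the image of $\slf_2$ in $\mathfrak{o}(V)$. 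First I would record these correspondences precisely, since it is the constants that the final identities hinge on.

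Next I would transport everything through the intertwining operator $\iota$ of Lemma \ref{lemma:KM}. By \eqref{eq:iotaabn}, $\varphi_{a,b,c}$ maps to the monomial $(-4\pi)^{-a-b-c} i^{2a+b}\zf_1^a\zf_2^b\wf^c$, and by that lemma the four operators act on $\mathscr{P}(\Cb^4)$ as $\domega(L) = -2\pi(\partial_{\zf_1}^2+\partial_{\zf_2}^2)+\tfrac{1}{8\pi}\wf\overline{\wf}$, $\domega(R) = -8\pi\partial_{\wf}\partial_{\overline{\wf}}+\tfrac{1}{8\pi}(\zf_1^2+\zf_2^2)$, $\domega(\iota_{\Delta *}(L)) = 8\pi\partial_{\zf_1}\partial_{\wf}-\tfrac{1}{4\pi}\zf_1\overline{\wf}$, and $\domega(\iota_{\Delta *}(R)) = 8\pi\partial_{\zf_1}\partial_{\overline{\wf}}-\tfrac{1}{4\pi}\zf_1\wf$. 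On a monomial $\zf_1^a\zf_2^b\wf^c$ these are elementary: every $\partial_{\overline{\wf}}$ kills it, so $\domega(R)$ acts as multiplication by $\tfrac{1}{8\pi}(\zf_1^2+\zf_2^2)$ and $\domega(\iota_{\Delta *}(R))$ as multiplication by $-\tfrac{1}{4\pi}\zf_1\wf$. Comparing $\tfrac{1}{2\pi}\domega(R)(\zf_1^a\zf_2^b\wf^c)$ with $\iota$ applied to the claimed right-hand side of the first identity yields it, once one observes that the $i$-powers in \eqref{eq:iotaabn} contribute $i^4=1$ to the $p_{a+2,b,c}$ term and $i^2=-1$ to the $p_{a,b+2,c}$ term — exactly the source of the minus sign in the statement. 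The second identity is obtained the same way from $\domega(\iota_{\Delta *}(R))$, the factor $4\pi$ in the statement arising from the $\iota$-normalizations of $\varphi_{a,b,c}$ and $\varphi_{a+1,b,c+1}$ together with the $y^{-c}\mapsto y^{-c-1}$ weight shift.

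For the third identity I would apply $\domega(L)$ to $\iota(\varphi_{a+1,b,c})\propto\zf_1^{a+1}\zf_2^b\wf^c$, producing the three terms $-2\pi(a+1)a\,\zf_1^{a-1}\zf_2^b\wf^c$, $-2\pi b(b-1)\,\zf_1^{a+1}\zf_2^{b-2}\wf^c$ and $\tfrac{1}{8\pi}\zf_1^{a+1}\zf_2^b\wf^{c+1}\overline{\wf}$, and separately apply $\domega(\iota_{\Delta *}(L))$ to $\iota(\varphi_{a,b,c+1})\propto\zf_1^a\zf_2^b\wf^{c+1}$, producing $8\pi a(c+1)\,\zf_1^{a-1}\zf_2^b\wf^c$ and $-\tfrac{1}{4\pi}\zf_1^{a+1}\zf_2^b\wf^{c+1}\overline{\wf}$. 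A short check with \eqref{eq:iotaabn} shows that the $\iota$-normalizations of $\varphi_{a+1,b,c}$ and $\varphi_{a,b,c+1}$ differ by a sign, so in the combination $2L_{\tau,k}(y^{-c}\varphi(\lambda;\tau,z;p_{a+1,b,c})) - L_{z,2c+2}(y^{-c-1}\varphi(\lambda;\tau,z;p_{a,b,c+1}))$ the two $\overline{\wf}$-terms cancel; what remains is a linear combination of $\zf_1^{a-1}\zf_2^b\wf^c$ and $\zf_1^{a+1}\zf_2^{b-2}\wf^c$, and collecting the scalars one recovers precisely $\varphi(\lambda;\tau,z;(2c+1-a)a\,p_{a-1,b,c}+b(b-1)p_{a+1,b-2,c})$.

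The main obstacle is not the differentiation but the bookkeeping of constants: one must pin down exactly how $R_{\tau,k}$, $L_{\tau,k}$, $R_{z,2c}$, $L_{z,2c+2}$ correspond to $\domega$ of the normalized $\slf_2$-generators, including every power of $v$ and $y$ forced by the weight normalization in \eqref{eq:varphiweil} and every power of $i$ from \eqref{eq:iotaabn}. A convenient consistency check at each step is that the weights under the two relevant maximal compact subgroups (one acting on $\tau$, one on $z$) agree on both sides — in the third identity, for instance, the operator $\wf\overline{\wf}$ preserves the $z$-weight of $\domega(L)$ because $\wf$ and $\overline{\wf}$ carry opposite weights, which is precisely why its contribution has to be reassembled as a $z$-lowering of $\varphi(\lambda;\tau,z;p_{a,b,c+1})$ rather than a genuine polynomial term.
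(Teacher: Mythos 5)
Your proposal follows the same strategy as the paper: transport all four differential operators to the Fock model via Lemma~\ref{lemma:KM} and the normalization~\eqref{eq:iotaabn}, compute their action on the monomials $\zf_1^a\zf_2^b\wf^c$, and match coefficients. The step-by-step differentiations you record for $\domega(R)$, $\domega(L)$ and $\domega(\iota_{\Delta*}(L))$ are correct. One small caution: the inference ``the $\iota$-normalizations of $\varphi_{a+1,b,c}$ and $\varphi_{a,b,c+1}$ differ by a sign, so the two $\overline{\wf}$-terms cancel'' is not self-contained as stated, because the sign with which $L_{z,2c+2}$ corresponds to $\domega(\iota_{\Delta*}(L))$ (coming from the $h(g_z)^{-1}$ inversion in \eqref{eq:varphiweil}) enters the same computation; one must check that this sign and the $i^2$ from \eqref{eq:iotaabn} combine so that the $\wf\overline{\wf}$-contribution of $\domega(L)$ and the $\zf_1\overline{\wf}$-contribution of $\domega(\iota_{\Delta*}(L))$ really do cancel, rather than add, in $2L_\tau(\cdot)-L_z(\cdot)$. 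The paper's own proof is equally terse on this point, so this is a matter of completing the bookkeeping you already flagged as the real obstacle, not a gap in the method.
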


\begin{proof}
Using the relation between the differential operator $R_{\tau, k}$ and the action of the Lie algebra element $R$, it is enough to calculate the effect of $\domega(R), \domega(L)$ and $\domega(\iota_{\Delta*}(L))$ on $\varphi_{a, b, c}$.
For example, the equations
\begin{align*}
R_{\tau, k} &\varphi(\lambda; \tau, z; p_{a, b, c}) =   
v^{-1-k/2} (\omega(g_\tau) \omega(h(g_z)^{-1}) \domega(R) \varphi_{a, b, c})(\varphi), \\
L_{\tau, k} &\varphi(\lambda; \tau, z; p_{a, b, c}) =   
v^{1-k/2} (\omega(g_\tau) \omega(h(g_z)^{-1}) \domega(L) \varphi_{a, b, c})(\varphi), \\
R_{z, 2c} &y^{-c} \varphi(\lambda; \tau, z; p_{a, b, c}) =   
- y^{-1-c} (\omega(g_\tau) \omega(h(g_z)^{-1}) \domega(\iota_{\Delta *}(R)) \varphi_{a, b, c})(\varphi),\\
L_{z, 2c} &y^{-c} \varphi(\lambda; \tau, z; p_{a, b, c}) =   
- y^{1-c} (\omega(g_\tau) \omega(h(g_z)^{-1}) \domega(\iota_{\Delta *}(L)) \varphi_{a, b, c})(\varphi)
\end{align*}
follows from \eqref{eq:varphiweil}.
The negative sign in the last equation is a result of the inversion action by $h(g_z)$ in \eqref{eq:varphiweil}.
From \eqref{eq:RLtau}, \eqref{eq:RLz} and \eqref{eq:iotaabn}, we can easily calculate that
\begin{align*}
  \domega(R)\varphi_{a, b, c} &= 2\pi (\varphi_{a+2, b, c} - \varphi_{a, b+2, c}), \\
-2 \domega(L) \varphi_{a+1, b, c} &= \domega(\iota_{\Delta*}(L)) \varphi_{a, b, c+1} - (2c + 1 - a)a \varphi_{a-1, b, c} - b(b-1) \varphi_{a+1, b-2, c}.
\end{align*}
These then imply the lemma.
\end{proof}

\subsection{Legendre Polynomials}
For $n \ge 0$, the $n$\tth Legendre polynomial $P_n(x)$ is a polynomial solution to Legendre's differential equation. It can be defined from the recursion
$$
(n+1) P_{n+1}(x) = (2n + 1) x P_n(x) - nP_{n-1}(x), \; P_0(x) = 1, P_1(x) = x,
$$
and satisfies 
\begin{equation}
  \label{eq:Taylor}
  \sum_{n = 0}^\infty P_n(x) t^n = \frac{1}{\sqrt{ 1 - 2xt + t^2}}.
\end{equation}
Furthermore, it has the explicit representation
\begin{equation}
  \label{eq:Pnexplicit}
  P_n(x) =  \sum_{b = 0}^n c_{n, b}\cdot x^b, \quad
c_{n, b} := 2^n \binom{n}{b} \binom{\tfrac{n+b-1}{2}}{n} \in \Zb,
\end{equation}
where $\binom{a}{m} := \frac{a(a-1)\dots (a-m+1)}{m!}$ for any $m \in \Nb$ and $a \in \Rb$.
Since $\tfrac{n+b-1}{2} \in \Nb$ is less than $n$ when $b \equiv n + 1 \bmod{2}$, we have $P_n(-x) = (-1)^n P_n(x)$. 
From \eqref{eq:Taylor}, it is clear that $P_n(1) = 1$ for all $n \ge 0$.
Using the Legendre polynomial, we can now define the following polynomial for each $n \in \Nb$ on $\Rb^{2, 2}$. 
\begin{equation}
  \label{eq:keypols}
  \begin{split}
\tp_n &:= P_n \lp \frac{x_2}{ix_1} \rp (ix_1)^n (ix_3 + x_4)^{n+1}, \quad
\hp_n := P_n \lp \frac{x_2}{ix_1} \rp (ix_1)^{n+1} (ix_3 + x_4)^{n}.
  \end{split}
\end{equation}
Note that we can write $\tp_n$ and $\hp_n$ as a linear combinations of $p_{a, b, c}$ as
\begin{equation}
\label{eq:lincomb}
  \tp_n := 
 \sum_{b = 0}^n c_{n, b} p_{n - b, b, n+1},  \quad \hp_n := 
 \sum_{b = 0}^n c_{n, b} p_{n-b+1, b, n}.
 \end{equation}
Using these two polynomials, we can form a theta function as above. The main result is as follows.
\begin{prop}
  \label{prop:diffswitch}
For $n \in \Nb$, let $\tp_n$ and $\hp_n$ be the polynomials on $\Rb^{2, 2}$ defined in \eqref{eq:keypols}. Then the corresponding Schwartz functions $\varphi(\cdot; \tau, z; \tp_n)$ and $\varphi(\cdot; \tau, z; \hp_n)$ satisfy the following relation
\begin{equation}
  \label{eq:diffswitch}
 L_{z, 2n+2} y^{-n-1} \varphi(\lambda; \tau, z; \tp_n)  =
2 L_{\tau, 1} y^{-n} \varphi(\lambda; \tau, z; \hp_n) 
\end{equation}
for all $\tau, z \in \Hb$ and $\lambda \in V$.
\end{prop}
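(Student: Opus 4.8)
The plan is to derive \eqref{eq:diffswitch} from the last relation in Lemma~\ref{lemma:LRrelation} by expanding $\tp_n$ and $\hp_n$ into the monomial polynomials $p_{a,b,c}$ via \eqref{eq:lincomb} and taking a linear combination. Concretely, I would apply the third identity of \eqref{eq:LRrelation} with $(a,b,c)$ replaced by $(n-b,\,b,\,n)$ for each $0\le b\le n$; since $2n+1-(n-b)=n+b+1$ and the lowering operator $L_{\tau,\kappa}=-2iv^2\partial_{\overline\tau}$ does not depend on the weight index $\kappa$, this reads
\begin{equation*}
2 L_{\tau,1}\, y^{-n}\varphi(\lambda;\tau,z;p_{n-b+1,b,n})
= \varphi\big(\lambda;\tau,z;\, (n+b+1)(n-b)\,p_{n-b-1,b,n} + b(b-1)\,p_{n-b+1,b-2,n}\big)
+ L_{z,2n+2}\, y^{-n-1}\varphi(\lambda;\tau,z;p_{n-b,b,n+1}).
\end{equation*}
Multiplying by the Legendre coefficient $c_{n,b}$ from \eqref{eq:Pnexplicit}, summing over $b$, and using that $\varphi(\,\cdot\,;\tau,z;p)$ is linear in $p$ together with \eqref{eq:lincomb}, the left-hand side collapses to $2L_{\tau,1}y^{-n}\varphi(\lambda;\tau,z;\hp_n)$ and the final term to $L_{z,2n+2}y^{-n-1}\varphi(\lambda;\tau,z;\tp_n)$. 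Comparing with \eqref{eq:diffswitch}, the proposition follows provided the middle term vanishes after summation.

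Thus the only real content is the polynomial identity
\begin{equation*}
\sum_{b=0}^{n} c_{n,b}\Big( (n+b+1)(n-b)\,p_{n-b-1,b,n} + b(b-1)\,p_{n-b+1,b-2,n}\Big) = 0 \qquad \text{on } \Rb^{2,2}.
\end{equation*}
Factoring out the common $(ix_3+x_4)^n$ and grouping by the powers of $ix_1$ and $x_2$ (equivalently, reindexing $b\mapsto b+2$ in the second sum), this reduces to the vanishing, for every $b$, of $c_{n,b}(n-b)(n+b+1)+c_{n,b+2}(b+2)(b+1)$, with the convention $c_{n,m}=0$ for $m<0$ or $m>n$. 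This is precisely the coefficient of $x^b$ in Legendre's differential equation $(1-x^2)P_n''(x)-2xP_n'(x)+n(n+1)P_n(x)=0$ after substituting $P_n(x)=\sum_b c_{n,b}x^b$; the boundary cases $b=n-1$ and $b=n$ are handled using $c_{n,m}=0$ whenever $m\not\equiv n\pmod 2$, so that the relevant terms either cancel in pairs or vanish identically.

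I do not expect a genuine obstacle: the substantive work is Lemma~\ref{lemma:LRrelation}, which is established in the Fock model in Section~\ref{sec:Fock}, and the remainder is bookkeeping plus the elementary Legendre recurrence. The only points demanding care are the index shifts when specializing Lemma~\ref{lemma:LRrelation} and the tracking of the powers of $y$ and of the weight indices, so that the $L_z$- and $L_\tau$-terms emerge with exactly the normalizations in \eqref{eq:diffswitch}; as a sanity check I would confirm that $\tp_n$ and $\hp_n$ have bidegrees $(n,n+1)$ and $(n+1,n)$ in $(x_1,x_2;x_3,x_4)$, hence weights $-1$ and $1$ respectively, matching the operators $L_{z,2n+2}$ and $L_{\tau,1}$ in the statement.
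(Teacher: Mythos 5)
Your proposal is correct and follows essentially the same route as the paper: specialize the third identity of Lemma~\ref{lemma:LRrelation} at $(a,b,c)=(n-b,b,n)$, multiply by $c_{n,b}$, sum, and reduce the statement to the coefficient identity $c_{n,b}(n+b+1)(n-b)+c_{n,b+2}(b+2)(b+1)=0$. Your identification of that identity as the coefficient of $x^b$ in Legendre's differential equation is a cleaner justification than the paper's bare appeal to the closed formula \eqref{eq:Pnexplicit}, and your remark that $L_{\tau,\kappa}$ is independent of $\kappa$ (so the index mismatch $k=0$ versus $1$ is harmless) is exactly the bookkeeping point one must notice.
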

\begin{proof}
  From the third equation in lemma \ref{lemma:LRrelation} and \eqref{eq:lincomb}, we see that it suffices to prove that
$$
\sum_{b = 0}^{n} 
c_{n, b}
\lp (2n + 1 - (n-b))(n-b)  p_{n-b-1, b, n} + b(b-1)p_{n-b+1, b-2, n}
\rp= 0.
$$
The left hand side can be rewritten as
$$
\sum_{b = 0}^{n-2} 
\lp c_{n, b} (n+1+b)(n-b) +
c_{n, b+2} (b+2)(b+1) \rp
p_{n-b-1, b, n}
$$
which vanishes identically for all $0 \le b \le n-2$ from the definition of $c_{n, b}$ in \eqref{eq:Pnexplicit}.
If $n$ is odd, then there is an extra 
\end{proof}

As an immediate consequence of the definition in \eqref{eq:Theta}, the theta function formed from $p_{a, b, c}$, $\hp_n$ and $\tp_n$ satisfy the same differential equation.

\begin{cor}
  \label{cor:diffswitch}
Let $M \subset V$ be any even, integral lattice of full rank.
Then
\begin{equation}
  \label{eq:diffswitchTheta}
  \begin{split}
    \frac{1}{2\pi}  R_{\tau, a+b-c} \Theta_M(\tau, z; p_{a, b, c}) &=  \Theta_M(\tau, z; p_{a+2, b, c}) - \Theta_M(\tau, z; p_{a, b+2, c}), \\
R_{z, 2c} y^{-c} \Theta_M(\tau, z; p_{a, b, c}) &= 4\pi y^{-c-1} \Theta_M(\tau, z; p_{a + 1, b, c + 1}), \\
 L_{z, 2n+2} y^{-n-1} \Theta_M(\tau, z; \tp_n)  &=
2 L_{\tau, 1} y^{-n} \Theta_M(\tau, z; \hp_n)
  \end{split}
\end{equation}
for any $a, b, c, n\in \Nb$ and $\tau, z\in \Hb$.
\end{cor}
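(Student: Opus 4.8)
The plan is to obtain all three identities in \eqref{eq:diffswitchTheta} by applying the corresponding pointwise relations summand‑by‑summand over the lattice $M$; the only genuine issue is to justify interchanging the differential operators in $\tau$ and $z$ with the infinite sum defining the theta function, and once that is done the corollary is a direct transcription of Lemma \ref{lemma:LRrelation} and Proposition \ref{prop:diffswitch}.

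First I would recall that $\Theta_M(\tau, z; p) = \sum_{h \in A_M} \ef_h \sum_{\lambda \in M + h} \varphi(\lambda; \tau, z; p)$, and that for fixed polynomial $p$ the summand $\varphi(\lambda; \tau, z; p)$ is, up to a polynomial factor in the entries of $\nu_z(\lambda)$, equal to $e^{\pi i (Q(\lambda_z)\tau + Q(\lambda_{z^\perp})\overline\tau)}$, whose absolute value is $e^{-\pi v (\lambda,\lambda)_z}$ with $(\cdot,\cdot)_z$ the majorant in \eqref{eq:majorant}. Since $(\cdot,\cdot)_z$ is positive definite on $M \otimes \Rb$ and depends continuously on $z$, this gives a convergent Gaussian majorant, uniform for $(\tau, z)$ in compact subsets of $\Hb \times \Hb$. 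Moreover — and this is exactly the point of Lemma \ref{lemma:LRrelation} — applying any of $R_{\tau,k}$, $L_{\tau,k}$, $R_{z,2c}y^{-c}$ or $L_{z,2c+2}y^{-c-1}$ to $\varphi(\lambda;\tau,z;p)$ produces again a finite linear combination of Schwartz functions of the form $\varphi(\lambda;\tau,z;p')$ for polynomials $p'$, hence still admits such a majorant. Therefore each of the differentiated series converges absolutely and locally uniformly, and the operators commute with $\sum_{h \in A_M}\sum_{\lambda \in M+h}$.

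With the interchange justified, the first two identities follow by applying the first two relations of Lemma \ref{lemma:LRrelation} to each $\varphi(\lambda;\tau,z;p_{a,b,c})$ and resumming over $\lambda \in M + h$ and $h \in A_M$. For the third identity I would write $\tp_n$ and $\hp_n$ as the linear combinations $\sum_b c_{n,b}p_{n-b,b,n+1}$ and $\sum_b c_{n,b}p_{n-b+1,b,n}$ as in \eqref{eq:lincomb}, so that $\Theta_M(\tau,z;\tp_n)$ and $\Theta_M(\tau,z;\hp_n)$ are the corresponding combinations of $\Theta_M(\tau,z;p_{a,b,c})$'s; then apply $L_{z,2n+2}y^{-n-1}$ and $2L_{\tau,1}y^{-n}$ termwise and invoke Proposition \ref{prop:diffswitch}, which asserts precisely $L_{z,2n+2}y^{-n-1}\varphi(\lambda;\tau,z;\tp_n) = 2L_{\tau,1}y^{-n}\varphi(\lambda;\tau,z;\hp_n)$ for every $\lambda \in V$. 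Summing this equality over $\lambda \in M+h$ and over $h \in A_M$ yields the third line of \eqref{eq:diffswitchTheta}. The main (and essentially only) obstacle is the convergence bookkeeping of the preceding paragraph; everything after that is formal.
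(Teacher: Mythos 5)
Your proposal is correct and follows the same route as the paper: the paper simply asserts the corollary is an immediate termwise consequence of the pointwise relations in Lemma~\ref{lemma:LRrelation} and Proposition~\ref{prop:diffswitch}, which is exactly what you do. Your Gaussian‑majorant justification for interchanging the differential operators with the lattice sum is the routine but correct bookkeeping that the paper leaves implicit.
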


\bibliography{HGreen}{}
\bibliographystyle{amsplain}

\end{document}